\theoremstyle{plain}
\newtheorem{theorem}{Theorem}[section]
\newtheorem{lemma}[theorem]{Lemma}
\newtheorem{proposition}[theorem]{Proposition}
\newtheorem{corollary}[theorem]{Corollary}
\theoremstyle{definition}
\newtheorem{definition}[theorem]{Definition}
\newtheorem{assumption}[theorem]{Assumption}
\theoremstyle{remark}
\newtheorem{remark}[theorem]{Remark}
\numberwithin{equation}{section}
\begin{document}

\title[Cahn--Hilliard equation in a curved thin domain]{Thin-film limit of the Cahn--Hilliard equation in a curved thin domain}

\author[T.-H. Miura]{Tatsu-Hiko Miura}
\address{Graduate School of Science and Technology, Hirosaki University, 3, Bunkyo-cho, Hirosaki-shi, Aomori, 036-8561, Japan}
\email{thmiura623@hirosaki-u.ac.jp}

\subjclass[2020]{35B25, 35K35, 35R01}

\keywords{Cahn--Hilliard equation, curved thin domain, thin-film limit}

\begin{abstract}
  We consider the Cahn--Hilliard equation with Neumann boundary conditions in a three-dimensional curved thin domain around a given closed surface.
  When the thickness of the curved thin domain tends to zero, we show that the weighted average in the thin direction of a weak solution to the thin-domain problem converges on the limit surface in an appropriate sense.
  Moreover, we rigorously derive a limit problem, which is the surface Cahn--Hilliard equation with weighted Laplacian, by characterizing the limit function as a unique weak solution to the limit problem.
  The proof is based on a detailed analysis of the weighted average and the use of Sobolev inequalities and elliptic regularity estimates on the curved thin domain with constants explicitly depending on the thickness.
  This is the first result on a rigorous thin-film limit of nonlinear fourth order equations in general curved thin domains.
\end{abstract}

\maketitle

\section{Introduction} \label{S:Intro}

\subsection{Problem settings and main results} \label{SS:Int_Main}
Let $\Gamma$ be a given closed $C^5$ surface in $\mathbb{R}^3$, and let $\bm{\nu}$ be the unit outward normal vector field of $\Gamma$.
Also, let $g_0$ and $g_1$ be $C^3$ functions on $\Gamma$ such that $g:=g_1-g_0\geq c$ on $\Gamma$ with some constant $c>0$ (note that we do not make any assumptions on the signs of $g_0$ and $g_1$).
For a sufficiently small $\varepsilon>0$, we define a curved thin domain in $\mathbb{R}^3$ by
\begin{align} \label{E:Def_CTD}
    \Omega_\varepsilon := \{y+r\bm{\nu}(y,t) \mid y\in\Gamma, \, \varepsilon g_0(y)<r<\varepsilon g_1(y)\}
\end{align}
and consider the Cahn--Hilliard equation with Neumann boundary conditions
\begin{align} \label{E:CH_CTD}
  \left\{
  \begin{aligned}
    \partial_tu^\varepsilon &= \Delta w^\varepsilon, \quad w^\varepsilon = -\Delta u^\varepsilon+F'(u^\varepsilon) \quad\text{in}\quad \Omega_\varepsilon\times(0,\infty), \\
    \partial_{\nu_\varepsilon}u^\varepsilon &= 0, \quad \partial_{\nu_\varepsilon}w^\varepsilon = 0 \quad\text{on}\quad \partial\Omega_\varepsilon\times(0,\infty), \\
    u^\varepsilon|_{t=0} &= u_0^\varepsilon \quad\text{in}\quad \Omega_\varepsilon,
  \end{aligned}
  \right.
\end{align}
where $\partial_{\nu_\varepsilon}$ is the outer normal derivative on $\partial\Omega_\varepsilon$.

The Cahn--Hilliard equation is a diffuse interface model of phase separation of binary mixtures, such as the spinodal decomposition of binary alloys \cite{CahHil58,Ell89,Nov08}.
Here, $u^\varepsilon$ is the order parameter which usually represents the concentration of one of the two components or the difference of concentrations of the two components.
Also, $w^\varepsilon$ is the chemical potential which is the functional derivative of the Ginzburg--Landau free energy
\begin{align*}
  E_\varepsilon(u^\varepsilon) := \int_{\Omega_\varepsilon}\left(\frac{|\nabla u^\varepsilon|^2}{2}+F(u^\varepsilon)\right)\,dx,
\end{align*}
where $F$ is the homogeneous free energy potential which is usually assume to have a double-well structure.
The Neumann boundary conditions are imposed to make sure that the conservation of mass and energy holds:
\begin{align*}
  \frac{d}{dt}\int_{\Omega_\varepsilon}u^\varepsilon(t)\,dx = 0, \quad \frac{d}{dt}E_\varepsilon(u^\varepsilon(t))+\int_{\Omega_\varepsilon}|\nabla w^\varepsilon(t)|^2\,dx = 0, \quad t>0.
\end{align*}
It is also known (see e.g. \cite{Fif00,Wu22}) that the Cahn--Hilliard equation is the gradient flow of $E_\varepsilon$ under the mass conservation with respect to the inner product
\begin{align*}
  (u_1,u_2)_{\mathcal{H}_\varepsilon} := \langle u_1,L_\varepsilon u_2\rangle_{H^1(\Omega_\varepsilon)} = \int_{\Omega_\varepsilon}(\nabla L_\varepsilon u_1)\cdot(\nabla L_\varepsilon u_2)\,dx, \quad u_1,u_2\in[H^1(\Omega_\varepsilon)]'.
\end{align*}
Here, $[H^1(\Omega_\varepsilon)]'$ is the dual space of $H^1(\Omega_\varepsilon)$ and $\langle\cdot,\cdot\rangle_{H^1(\Omega_\varepsilon)}$ is the duality product between $[H^1(\Omega_\varepsilon)]'$ and $H^1(\Omega_\varepsilon)$.
Also, $L_\varepsilon$ is the inverse of the negative Neumann Laplacian on $\Omega_\varepsilon$.

In this paper, we assume that $F$ is a regular potential and make the following assumption on $F$, which is satisfied by the standard double-well polynomial potentials
\begin{align*}
  F(z) = \frac{1}{4}(z^2-1)^2, \quad F(z) = \frac{1}{4}z^4-\frac{1}{2}z^2.
\end{align*}

\begin{assumption} \label{A:Poten}
  We assume that $F\in C^3(\mathbb{R})$ and it satisfies
  \begin{align} \label{E:Poten}
    F(z) \geq -C_0, \quad F''(z) \geq -C_2, \quad |F'''(z)| \leq C_3(|z|+1) \quad\text{for all}\quad z\in\mathbb{R},
  \end{align}
  where $C_0$, $C_2$, and $C_3$ are nonnegative constants independent of $z$.
\end{assumption}

\begin{remark} \label{R:Po_Grow}
  When the last inequality of \eqref{E:Poten} holds, for each $z\in\mathbb{R}$ we have
  \begin{align} \label{E:Int_F2nd}
    |F''(z)| = \left|F''(0)+\int_0^zF'''(s)\,ds\right| \leq |F''(0)|+C_2\left(\frac{1}{2}|z|^2+|z|\right) \leq c(|z|^2+1)
  \end{align}
  by Young's inequality, where $c>0$ is a constant independent of $z$.
  Similarly,
  \begin{align} \label{E:Int_Fzero}
    |F'(z)| \leq c(|z|^3+1), \quad |F(z)| \leq c(|z|^4+1), \quad z\in\mathbb{R}.
  \end{align}
  Hence, in general, $F$ may grow polynomially up to the fourth order.
\end{remark}

We consider a weak solution to \eqref{E:CH_CTD} in the $L^2$-class (see Definitions \ref{D:CHT_WeSo} and \ref{D:CHT_GlW} for the definition).
For $u_0^\varepsilon\in H^1(\Omega_\varepsilon)$, we can get the global-in-time existence and uniqueness of a weak solution by the Galerkin and energy methods (see Sections \ref{S:CHT_Weak} and \ref{S:Galer}).
The purpose of this paper is to study the thin-film limit of \eqref{E:CH_CTD}.
More precisely, we prove the convergence of a weak solution to \eqref{E:CH_CTD} as $\varepsilon\to0$ in an appropriate sense and rigorously derive a limit problem on $\Gamma$ as an equation satisfied by the limit of the weak solution to \eqref{E:CH_CTD}.
We also compare solutions to \eqref{E:CH_CTD} and the limit problem explicitly in terms of $\varepsilon$.

To state the main results, we fix some notations (see Sections \ref{S:Pre} and \ref{S:Ave} for details).
Let $\nabla_\Gamma$ and $\mathrm{div}_\Gamma$ be the tangential gradient and the surface divergence on $\Gamma$, respectively.
Also, let $\mathcal{H}^2$ be the two-dimensional Hausdorff measure.
We set
\begin{align*}
  A_gv := \frac{1}{g}\mathrm{div}_\Gamma(g\nabla_\Gamma v) \quad\text{on}\quad \Gamma, \quad E_g(v) := \int_\Gamma g\left(\frac{|\nabla_\Gamma v|^2}{2}+F(v)\right)\,d\mathcal{H}^2
\end{align*}
for a function $v$ on $\Gamma$, and call $A_g$ the weighted Laplacian and $E_g$ the weighted Ginzburg--Landau free energy.
Let $[H^1(\Gamma)]'$ be the dual space of $H^1(\Gamma)$, and let
\begin{align*}
  \mathcal{E}_T(\Gamma) := \{v\in L^2(0,T;H^1(\Gamma)) \mid \partial_tv\in L^2(0,T;[H^1(\Gamma)]')\}, \quad T>0.
\end{align*}
For a function $\varphi$ on $\Omega_\varepsilon$, we define the weighted average of $\varphi$ in the thin direction by
\begin{align*}
  \mathcal{M}_\varepsilon\varphi(y) := \frac{1}{\varepsilon g(y)}\int_{\varepsilon g_0(y)}^{\varepsilon g_1(y)}\varphi(y+r\bm{\nu}(y))J(y,r)\,dr, \quad y\in\Gamma.
\end{align*}
Here, $J(y,r):=\{1-r\kappa_1(y)\}\{1-r\kappa_2(y)\}$ with principal curvatures $\kappa_1$ and $\kappa_2$ of $\Gamma$, which is the Jacobian appearing in the change of variables formula
\begin{align} \label{E:Int_CoV}
  \int_{\Omega_\varepsilon}\varphi(x)\,dx = \int_\Gamma\int_{\varepsilon g_0(y)}^{\varepsilon g_1(y)}\varphi(y+r\bm{\nu}(y))J(y,r)\,dr\,d\mathcal{H}^2(y).
\end{align}
Now, let us state the main results.
The first one is a rigorous derivation of a limit problem of \eqref{E:CH_CTD} as $\varepsilon\to0$ by convergence of a solution and characterization of the limit.

\begin{theorem} \label{T:TFL_Weak}
  Let $u_0^\varepsilon\in H^1(\Omega_\varepsilon)$ and Assumption \ref{A:Poten} be satisfied.
  Also, let $(u^\varepsilon,w^\varepsilon)$ be the unique weak solution to \eqref{E:CH_CTD}.
  Suppose that the following conditions are satisfied:
  \begin{itemize}
    \item[(a)] there exist constants $c>0$, $\alpha\in[0,6/25]$, and $\varepsilon_0\in(0,1)$ such that
    \begin{align}
      \varepsilon^{-1}\|u_0^\varepsilon\|_{L^2(\Omega_\varepsilon)}^2+\varepsilon^{-1}|E_\varepsilon(u_0^\varepsilon)| &\leq c\varepsilon^{-2\alpha}, \label{E:TFu0_CTD} \\
      \|\mathcal{M}_\varepsilon u_0^\varepsilon\|_{L^2(\Gamma)}^2+|E_g(\mathcal{M}_\varepsilon u_0^\varepsilon)| &\leq c \label{E:TFu0_Sur}
    \end{align}
    for all $\varepsilon\in(0,\varepsilon_0)$,
    \item[(b)] there exists a function $v_0\in H^1(\Gamma)$ such that
    \begin{align*}
      \lim_{\varepsilon\to0}\mathcal{M}_\varepsilon u_0^\varepsilon = v_0 \quad\text{weakly in}\quad L^2(\Gamma).
    \end{align*}
  \end{itemize}
  Then, there exist functions $v$ and $\mu$ on $\Gamma\times(0,\infty)$ such that
  \begin{align*}
    v \in \mathcal{E}_T(\Gamma)\cap L^\infty(0,T;H^1(\Gamma)), \quad \mu\in L^2(0,T;H^1(\Gamma))
  \end{align*}
  and
  \begin{align} \label{E:TFL_Weak}
    \begin{alignedat}{3}
      \lim_{\varepsilon\to0}\mathcal{M}_\varepsilon u^\varepsilon &= v &\quad &\text{weakly-$\ast$ in} &\quad &L^\infty(0,T;H^1(\Gamma)), \\
      \lim_{\varepsilon\to0}\mathcal{M}_\varepsilon w^\varepsilon &= \mu &\quad &\text{weakly in} &\quad &L^2(0,T;H^1(\Gamma))
    \end{alignedat}
  \end{align}
  for all $T>0$, and $(v,\mu)$ is a unique global weak solution to the limit problem
  \begin{align} \label{E:CH_Lim}
    \left\{
    \begin{aligned}
      \partial_tv &= A_g\mu, \quad \mu = -A_gv+F'(v) \quad\text{on}\quad \Gamma\times(0,\infty), \\
      v|_{t=0} & = v_0 \quad\text{on}\quad \Gamma.
    \end{aligned}
    \right.
  \end{align}
\end{theorem}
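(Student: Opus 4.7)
The plan is to follow the standard weak-compactness scheme for thin-domain limits: derive $\varepsilon$-uniform estimates in a properly scaled norm, extract weak limits of the averages on $\Gamma$, and then pass to the limit in the weak formulation of \eqref{E:CH_CTD} tested against lifts of functions on $\Gamma$. The basic scaling $|\Omega_\varepsilon|\sim\varepsilon$ explains the factor $\varepsilon^{-1}$ in \eqref{E:TFu0_CTD}. From the energy identity $E_\varepsilon(u^\varepsilon(t))+\int_0^t\|\nabla w^\varepsilon\|_{L^2(\Omega_\varepsilon)}^2\,ds=E_\varepsilon(u_0^\varepsilon)$, mass conservation, the lower bounds of $F$ and $F''$ in \eqref{E:Poten}, and \eqref{E:TFu0_CTD}, I first obtain $\varepsilon^{-1}\|u^\varepsilon\|_{L^\infty(0,T;H^1(\Omega_\varepsilon))}^2$ and $\varepsilon^{-1}\|\nabla w^\varepsilon\|_{L^2(0,T;L^2(\Omega_\varepsilon))}^2$ bounded by $C\varepsilon^{-2\alpha}$. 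To upgrade the gradient bound on $w^\varepsilon$ to a full $H^1(\Omega_\varepsilon)$-bound, I test the chemical potential equation with the constant $1$, so $\int_{\Omega_\varepsilon}w^\varepsilon=\int_{\Omega_\varepsilon}F'(u^\varepsilon)$, and control the right-hand side via the cubic growth \eqref{E:Int_Fzero}, a Sobolev inequality on $\Omega_\varepsilon$ with constants explicit in $\varepsilon$, and the $H^1$-bound on $u^\varepsilon$; the restriction $\alpha\leq 6/25$ is precisely what keeps the resulting interpolation subcritical. Transferring these estimates through the weighted average then gives $\mathcal{M}_\varepsilon u^\varepsilon$ bounded in $L^\infty(0,T;H^1(\Gamma))$ and $\mathcal{M}_\varepsilon w^\varepsilon$ bounded in $L^2(0,T;H^1(\Gamma))$, with $\eqref{E:TFu0_Sur}$ handling the corresponding estimate at the initial time.

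For the strong compactness needed to treat the nonlinearity, I test $\partial_tu^\varepsilon=\Delta w^\varepsilon$ against constant-in-$r$ lifts of $H^1(\Gamma)$-functions to obtain $\partial_t\mathcal{M}_\varepsilon u^\varepsilon$ bounded in $L^2(0,T;[H^1(\Gamma)]')$; Aubin--Lions combined with assumption (b) then yields a subsequence along which $\mathcal{M}_\varepsilon u^\varepsilon\to v$ strongly in $L^2(0,T;L^p(\Gamma))$ for every finite $p$ and along which \eqref{E:TFL_Weak} holds. The crucial remaining step is
\[ \mathcal{M}_\varepsilon F'(u^\varepsilon)\longrightarrow F'(v)\quad\text{in }L^q(\Gamma\times(0,T))\text{ for some }q>1. \]
I would decompose
\[ \mathcal{M}_\varepsilon F'(u^\varepsilon)-F'(v)=\mathcal{M}_\varepsilon\bigl[F'(u^\varepsilon)-F'(\mathcal{M}_\varepsilon u^\varepsilon)\bigr]+\bigl[F'(\mathcal{M}_\varepsilon u^\varepsilon)-F'(v)\bigr], \]
control the first bracket via the mean-value estimate $|F'(a)-F'(b)|\leq c(|a|^2+|b|^2+1)|a-b|$ (a consequence of \eqref{E:Int_F2nd}), Hölder, and the thin-domain Poincaré inequality $\|u^\varepsilon-\mathcal{M}_\varepsilon u^\varepsilon\|_{L^2(\Omega_\varepsilon)}\leq C\varepsilon\|\nabla u^\varepsilon\|_{L^2(\Omega_\varepsilon)}$, which quantifies how $u^\varepsilon$ becomes one-dimensional in the thin direction, and handle the second bracket by the strong convergence above together with the cubic growth \eqref{E:Int_Fzero}.

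Finally, to derive the limit equation I test \eqref{E:CH_CTD} against a constant-in-$r$ lift $\bar\varphi(y+r\bm{\nu}(y))=\varphi(y)$ of $\varphi\in H^1(\Gamma)$ multiplied by $\eta\in C_c^\infty(0,T)$. The change of variables \eqref{E:Int_CoV} together with $\nabla\bar\varphi\cdot\bm{\nu}=0$ converts the bulk integrals, after division by $\varepsilon$, into surface integrals weighted by $g$ plus an $O(\varepsilon)$ Jacobian remainder; together with \eqref{E:TFL_Weak} and the $F'$-convergence above this produces the weak formulation of $\partial_tv=A_g\mu$ and $\mu=-A_gv+F'(v)$. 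The initial condition $v|_{t=0}=v_0$ is inherited from (b) via the $\mathcal{E}_T(\Gamma)$-regularity of $v$. Uniqueness of $(v,\mu)$ for \eqref{E:CH_Lim}, proved by a Gronwall/energy argument on the difference of two solutions using the $H^1(\Gamma)$-bound on $v$ and \eqref{E:Int_F2nd}, then upgrades the subsequential convergence to the full convergence \eqref{E:TFL_Weak}. The principal obstacle is exactly the nonlinear passage to the limit: it requires both a quantitative thin-direction Poincaré inequality and Sobolev-type estimates on $\Omega_\varepsilon$ with constants tracked explicitly in $\varepsilon$, which is the technical core emphasized in the abstract.
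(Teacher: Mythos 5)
Your overall scheme (uniform estimates $\to$ weak compactness $\to$ pass to the limit in the weak form tested against constant-in-$r$ lifts) is the right skeleton, and your treatment of the time derivative, the Aubin--Lions step, and the final identification/uniqueness are essentially the paper's. But there is a genuine gap at the step you describe as ``Transferring these estimates through the weighted average then gives $\mathcal{M}_\varepsilon u^\varepsilon$ bounded in $L^\infty(0,T;H^1(\Gamma))$.''

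The averaging operator satisfies $\|\mathcal{M}_\varepsilon u\|_{H^1(\Gamma)}\leq c\varepsilon^{-1/2}\|u\|_{H^1(\Omega_\varepsilon)}$ (Lemma~\ref{L:Ave_Hk}). Combined with the bulk bound $\varepsilon^{-1}\|u^\varepsilon\|_{L^\infty(0,T;H^1(\Omega_\varepsilon))}^2\leq c\varepsilon^{-2\alpha}$, this only gives $\|\mathcal{M}_\varepsilon u^\varepsilon\|_{L^\infty(0,T;H^1(\Gamma))}^2\leq c\varepsilon^{-2\alpha}$, which \emph{diverges} for any $\alpha>0$, precisely the regime the theorem is designed to allow. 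Invoking \eqref{E:TFu0_Sur} ``at the initial time'' does not repair this, because you still need a mechanism to propagate a uniform surface bound forward in time, and the transferred bulk bound cannot supply one. The paper's actual mechanism is to differentiate the \emph{weighted surface} free energy $E_g(v^\varepsilon(t))$ in time (Proposition~\ref{P:AEna_GL}), which produces $\tilde\mu^\varepsilon:=-A_g v^\varepsilon+F'(v^\varepsilon)$ in the test slot; to close the estimate one must relate $\tilde\mu^\varepsilon$ to $\mu^\varepsilon=\mathcal{M}_\varepsilon w^\varepsilon$ in a \emph{strong} form, namely $\mu^\varepsilon=-A_gv^\varepsilon+F'(v^\varepsilon)+\zeta^\varepsilon$ with $\zeta^\varepsilon=-\zeta_\Delta^\varepsilon+\zeta_{F'}^\varepsilon$ (Proposition~\ref{P:ACW_wSt}), and then show $\zeta^\varepsilon$ and $\nabla_\Gamma\zeta^\varepsilon$ are $o(1)$ as $\varepsilon\to0$ despite the $\varepsilon^{-\alpha}$-growing bulk norms. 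The weak form of the averaged pair, which is all your proposal derives, is not enough for this; you need the averaged strong form, which requires explicitly rewriting $\mathcal{M}_\varepsilon(\Delta u^\varepsilon)$ in terms of $A_g\mathcal{M}_\varepsilon u^\varepsilon$ plus a remainder (Lemma~\ref{L:Ave_Lap}) and the commutator estimates for $F'$ (Lemmas~\ref{L:ANL_L2} and~\ref{L:ANL_TGr}).

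Relatedly, your explanation of where $\alpha\leq 6/25$ comes from is not the paper's. The restriction does not arise from testing the chemical potential equation with the constant $1$ and interpolating $F'(u^\varepsilon)$. It arises from the estimate $\|\nabla_\Gamma\zeta^\varepsilon\|_{L^2(0,T;L^2(\Gamma))}\leq c\varepsilon^{1-25\alpha/6}(1+T)^{25/6}$ (Proposition~\ref{P:ACW_wSt}), whose exponent is produced by combining the $\varepsilon$-explicit interpolation inequality on $\Omega_\varepsilon$ (Lemma~\ref{L:Ipl_CTD}), the uniform elliptic regularity (Lemma~\ref{L:UER_CTD}), and the $H^3(\Omega_\varepsilon)$ bound \eqref{E:TFLE_H3}; the surface Gronwall argument for $E_g(v^\varepsilon)$ then requires $2-25\alpha/3\geq 0$, i.e.\ $\alpha\leq 6/25$. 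The ingredients you flag as ``the technical core'' (thin-direction Poincar\'e and $\varepsilon$-explicit Sobolev estimates) are indeed used, but the piece you are missing is the averaged strong form of the $\mu$-equation and the $H^1(\Gamma)$ control of its defect; without it the scheme does not close for $\alpha>0$.
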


We refer to Definitions \ref{D:CLi_WeSo} and \ref{D:CLi_GlW} for the definition of a weak solution to \eqref{E:CH_Lim}.
As in the case of the thin-domain problem \eqref{E:CH_CTD}, the global-in-time existence and uniqueness of a weak solution to \eqref{E:CH_Lim} can be shown by the Galerkin and energy methods for the general initial data $v_0\in H^1(\Gamma)$.
However, that result is also obtained as a consequence of Theorem \ref{T:TFL_Weak} (see Corollary \ref{C:CLi_Ex}).

\begin{remark} \label{R:TFL_LP}
  If $g\equiv1$, then $A_g$ reduces to the canonical Laplace--Beltrami operator on $\Gamma$.
  Thus, the limit problem \eqref{E:CH_Lim} can be seen as the surface Cahn--Hilliard equation with weighted Laplacian.
  For a (smooth) solution to \eqref{E:CH_Lim}, we see by integration by parts on $\Gamma$ (see \eqref{E:IbP_Ag}) that the conservation of weighted mass and energy holds:
  \begin{align*}
    \frac{d}{dt}\int_\Gamma gv(t)\,d\mathcal{H}^2 = 0, \quad \frac{d}{dt}E_g(v(t))+\int_\Gamma g|\nabla_\Gamma\mu|^2\,d\mathcal{H}^2 = 0, \quad t>0.
  \end{align*}
  Note that $\Gamma$ and $g$ are independent of time.
  Also, as in the case of flat domains with $g\equiv1$ (see e.g. \cite{Fif00,Wu22}), the limit problem \eqref{E:CH_Lim} can be seen as the gradient flow of $E_g$ under the weighted mass conservation with respect to the weighted inner product
  \begin{align*}
    (v_1,v_2)_{\mathcal{H}_g} := \langle v_1,gL_g v_2\rangle_{H^1(\Gamma)} = \int_\Gamma g(\nabla_\Gamma L_gv_1)\cdot(\nabla_\Gamma L_gv_2)\,d\mathcal{H}^2, \quad v_1,v_2\in[H^1(\Gamma)]'.
  \end{align*}
  Here, $\langle\cdot,\cdot\rangle_{H^1(\Gamma)}$ is the duality product between $[H^1(\Gamma)]'$ and $H^1(\Gamma)$.
  Also, $L_g$ is the inverse of $-A_g$ (see Lemma \ref{L:InA_Sur} for the precise definition).
\end{remark}

\begin{remark} \label{R:TFL_Ini}
  We take the absolute values of $E_\varepsilon(u_0^\varepsilon)$ and $E_g(\mathcal{M}_\varepsilon u_0^\varepsilon)$ in \eqref{E:TFu0_CTD} and \eqref{E:TFu0_Sur}, since they may get negative under Assumption \ref{A:Poten}.
  Moreover, in \eqref{E:TFu0_CTD}, we divide the integrals over $\Omega_\varepsilon$ by $\varepsilon$, the scale of the thickness of $\Omega_\varepsilon$.
  It seems to be more natural that the scaled energy of $u_0^\varepsilon$ on $\Omega_\varepsilon$ is assumed to be uniformly bounded with respect to $\varepsilon$, but it is allowed in \eqref{E:TFu0_CTD} that the scaled energy grows as $\varepsilon\to0$ of order $\varepsilon^{-2\alpha}$.
  However, in \eqref{E:TFu0_Sur}, the energy of the weighted average $\mathcal{M}_\varepsilon u_0^\varepsilon$ on $\Gamma$ is assumed to be bounded uniformly in $\varepsilon$.
\end{remark}

\begin{remark} \label{R:TFL_IWC}
  Actually, the condition (b) follows from (a), if we replace the limit $\varepsilon\to0$ by that of a subsequence.
  Indeed, for $\zeta\in H^1(\Gamma)$, we see by $g>0$ on $\Gamma$ and \eqref{E:Poten} that
  \begin{align*}
    \frac{1}{2}\int_\Gamma g|\nabla_\Gamma\zeta|^2\,d\mathcal{H}^2 = E_g(\zeta)-\int_\Gamma gF(\zeta)\,d\mathcal{H}^2 \leq E_g(\zeta)+C_0\int_\Gamma g\,d\mathcal{H}^2.
  \end{align*}
  By this inequality, \eqref{E:TFu0_Sur}, and $g\geq c$ on $\Gamma$, we find that $\mathcal{M}_\varepsilon u_0^\varepsilon$ is bounded in $H^1(\Gamma)$ and thus converges weakly in $H^1(\Gamma)$ up to a subsequence.
  Here, we impose the condition (b) in order to get the convergence \eqref{E:TFL_Weak} of the full sequence.
  Also, for the proof, it is sufficient to assume the weak convergence in $L^2(\Gamma)$, not in $H^1(\Gamma)$.
\end{remark}

Next, we estimate the difference of $\mathcal{M}_\varepsilon u^\varepsilon$ and $v$ on $\Gamma$ explicitly in terms of $\varepsilon$.

\begin{theorem} \label{T:DiE_Sur}
  Let Assumption \ref{A:Poten} be satisfied, and let $u_0^\varepsilon\in H^1(\Omega_\varepsilon)$ and $(u^\varepsilon,w^\varepsilon)$ be the unique weak solution to \eqref{E:CH_CTD}.
  Also, let $v_0\in H^1(\Gamma)$ and $(v,\mu)$ be the unique weak solution to \eqref{E:CH_Lim}.
  Suppose that the condition (a) of Theorem \ref{T:TFL_Weak} holds and
  \begin{align} \label{E:DES_Ini}
    \int_\Gamma g\mathcal{M}_\varepsilon u_0^\varepsilon\,d\mathcal{H}^2 = \int_\Gamma gv_0\,d\mathcal{H}^2.
  \end{align}
  Then, there exists a constant $c>0$ independent of $\varepsilon$ such that
  \begin{align} \label{E:DiE_Sur}
    \|\mathcal{M}_\varepsilon u^\varepsilon-v\|_{L^2(0,T;H^1(\Gamma))} \leq ce^{cT}\Bigl\{\|\mathcal{M}_\varepsilon u_0^\varepsilon-v_0\|_{L^2(\Gamma)}+\varepsilon^{1-3\alpha}(1+T)^3\Bigr\}
  \end{align}
  for all $T>0$.
  In particular, if $\mathcal{M}_\varepsilon u_0^\varepsilon\to v_0$ strongly in $L^2(\Gamma)$ as $\varepsilon\to0$, then
  \begin{align*}
    \lim_{\varepsilon\to0}\mathcal{M}_\varepsilon u^\varepsilon = v \quad\text{strongly in}\quad L^2(0,T;H^1(\Gamma)).
  \end{align*}
\end{theorem}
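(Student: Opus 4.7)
My strategy is a weighted $H^{-1}$-type energy estimate for the difference, exploiting the fact that $(\mathcal{M}_\varepsilon u^\varepsilon,\mathcal{M}_\varepsilon w^\varepsilon)$ satisfies an approximate version of \eqref{E:CH_Lim} with explicit residuals. First I would derive an averaged equation on $\Gamma$ by testing the weak formulation of \eqref{E:CH_CTD} against functions of the form $\eta(y)$ extended constantly in the normal direction, then applying \eqref{E:Int_CoV} and the $g$-weighted integration by parts for $A_g$. This produces a weak system of the form
\begin{align*}
  \partial_t\mathcal{M}_\varepsilon u^\varepsilon &= A_g\mathcal{M}_\varepsilon w^\varepsilon+R_1^\varepsilon, \\
  \mathcal{M}_\varepsilon w^\varepsilon &= -A_g\mathcal{M}_\varepsilon u^\varepsilon+F'(\mathcal{M}_\varepsilon u^\varepsilon)+R_2^\varepsilon
\end{align*}
on $\Gamma\times(0,T)$, where $R_1^\varepsilon$ and $R_2^\varepsilon$ collect the curvature- and $J$-commutators between $\mathcal{M}_\varepsilon$ and $\Delta$ together with the nonlinearity mismatch $F'(\mathcal{M}_\varepsilon u^\varepsilon)-\mathcal{M}_\varepsilon F'(u^\varepsilon)$. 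These residuals should be controllable in a negative Sobolev norm on $\Gamma$ by the averaging lemmas and $\varepsilon$-explicit Sobolev and elliptic regularity estimates of the paper, combined with condition~(a) and the energy dissipation along the flow.

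Next, set $V:=\mathcal{M}_\varepsilon u^\varepsilon-v$ and $W:=\mathcal{M}_\varepsilon w^\varepsilon-\mu$. The conservation of mass for \eqref{E:CH_CTD} and \eqref{E:CH_Lim} together with \eqref{E:DES_Ini} ensures $\int_\Gamma gV(t)\,d\mathcal{H}^2=0$ for all $t$, so $L_gV(t)$ is well-defined. Subtracting the averaged system from \eqref{E:CH_Lim}, testing the $V$-equation with $L_gV$, using $\int_\Gamma gA_gW\cdot L_gV\,d\mathcal{H}^2=-\int_\Gamma gWV\,d\mathcal{H}^2$, and eliminating $W$ via the second equation yields
\begin{align*}
  \frac{1}{2}\frac{d}{dt}\|V\|_{\mathcal{H}_g}^2+\int_\Gamma g|\nabla_\Gamma V|^2\,d\mathcal{H}^2 = -\int_\Gamma g\{F'(\mathcal{M}_\varepsilon u^\varepsilon)-F'(v)\}V\,d\mathcal{H}^2+\text{(residual terms)}.
\end{align*}
The nonlinear term is controlled from below by the one-sided bound $F''\geq-C_2$ in Assumption~\ref{A:Poten}, giving an upper bound of $C_2\int_\Gamma gV^2\,d\mathcal{H}^2$, which is then interpolated between $\|V\|_{\mathcal{H}_g}$ and $\|\nabla_\Gamma V\|_{L^2(\Gamma)}$ and partially absorbed into the dissipation. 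Gr\"onwall's inequality applied to $\|V\|_{\mathcal{H}_g}^2$ and time integration of the remaining $\|V\|_{H^1(\Gamma)}^2$ term then produce \eqref{E:DiE_Sur}; the final strong convergence statement follows from the case $\|\mathcal{M}_\varepsilon u_0^\varepsilon-v_0\|_{L^2(\Gamma)}\to 0$.

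The main obstacle is the quantitative control of $R_1^\varepsilon$ and $R_2^\varepsilon$ in the dual norms against $L_gV$ and $V$, respectively. The delicate piece is the nonlinearity-averaging mismatch $F'(\mathcal{M}_\varepsilon u^\varepsilon)-\mathcal{M}_\varepsilon F'(u^\varepsilon)$: by \eqref{E:Int_Fzero} the integrand grows cubically in $u^\varepsilon$, while the only uniform-in-$\varepsilon$ information available on $u^\varepsilon$ is the (possibly degenerate) rate $\varepsilon^{-\alpha}$ permitted by condition~(a). This is exactly where the factor $\varepsilon^{1-3\alpha}$ in \eqref{E:DiE_Sur} originates: one power of $\varepsilon$ comes from a Poincar\'e-type control in the thin direction of $u^\varepsilon-(\mathcal{M}_\varepsilon u^\varepsilon)$ on $\Omega_\varepsilon$, while the cube of the $\varepsilon^{-\alpha}$ factor comes from the cubic growth of the nonlinearity. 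The polynomial $(1+T)^3$ reflects the need to bootstrap $L^\infty(0,T;L^p)$-type bounds on $u^\varepsilon$ and $\mu$ along the flow when only the energy and the mass are conserved in time.
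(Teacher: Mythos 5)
Your proposal matches the paper's proof in all essential respects: the paper likewise derives an averaged weak system with residuals $R_\varepsilon(\eta;T)$ and $\zeta^\varepsilon$ (Propositions \ref{P:ACW_WFv}, \ref{P:ACW_wSt}, \ref{P:ACW_wWe}), sets $\chi_v^\varepsilon:=\mathcal{M}_\varepsilon u^\varepsilon-v$, verifies $\int_\Gamma g\chi_v^\varepsilon\,d\mathcal{H}^2=0$ from \eqref{E:DES_Ini} and mass conservation, tests the difference system with $L_g\chi_v^\varepsilon$ (respectively $\chi_v^\varepsilon$), uses $F''\geq-C_2$ plus the interpolation $\|\sqrt{g}\chi_v\|_{L^2}^2\leq\|\sqrt{g}\nabla_\Gamma L_g\chi_v\|_{L^2}\|\sqrt{g}\nabla_\Gamma\chi_v\|_{L^2}$ to absorb the lower-order term, and applies Gr\"onwall (Theorem \ref{T:DES_Pre}). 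The only cosmetic difference is that the paper states the intermediate estimate in terms of $\|\nabla_\Gamma L_g(\cdot)\|_{L^2(\Gamma)}$ and then weakens via \eqref{E:IAS_Bdd}, and its residual $R_\varepsilon(\eta;T)$ is a linear functional bounded only through $\|\nabla_\Gamma\eta\|_{L^2(0,T;L^2)}$ rather than a pointwise function $R_1^\varepsilon$ on $\Gamma$ --- but your dual-norm pairing against $L_gV$ is exactly what that bound supplies.
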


In Theorem \ref{T:DES_Pre}, we give a more precise version of \eqref{E:DiE_Sur} in terms of the inverse $L_g$ of $-A_g$ (see Lemma \ref{L:InA_Sur} for the definition of $L_g$).
Note that the condition \eqref{E:DES_Ini} is preserved in time and required to apply $L_g$ to the difference of $\mathcal{M}_\varepsilon u^\varepsilon$ and $v$.

We also have a difference estimate for $u^\varepsilon$ and $v$ on $\Omega_\varepsilon$.
Let $\mathbf{P}$ be the orthogonal projection onto the tangent plane of $\Gamma$.
For a function $\eta$ on $\Gamma$, we write $\bar{\eta}$ for the constant extension of $\eta$ in the normal direction of $\Gamma$ (see Section \ref{SS:Pr_Sur} for details).

\begin{theorem} \label{T:DiE_CTD}
  Under the assumptions of Theorem \ref{T:DiE_Sur}, we have
  \begin{multline} \label{E:DiE_CTD}
    \varepsilon^{-1/2}\Bigl(\|u^\varepsilon-\bar{v}\|_{L^2(0,T;L^2(\Omega_\varepsilon))}+\left\|\overline{\mathbf{P}}\nabla u^\varepsilon-\overline{\nabla_\Gamma v}\right\|_{L^2(0,T;L^2(\Omega_\varepsilon))}\Bigr) \\
    \leq ce^{cT}\Bigl\{\|\mathcal{M}_\varepsilon u_0^\varepsilon-v_0\|_{L^2(\Gamma)}+\varepsilon^{1-3\alpha}(1+T)^3\Bigr\}
  \end{multline}
  for all $T>0$, where $c>0$ is a constant independent of $\varepsilon$ and $T$.
  Moreover,
  \begin{align} \label{E:DiE_ND}
    \varepsilon^{-1/2}\|\partial_\nu u^\varepsilon\|_{L^2(0,T;L^2(\Omega_\varepsilon))} \leq c\varepsilon^{1-\alpha}(1+T) \quad\text{for all}\quad T>0,
  \end{align}
  where $\partial_\nu u^\varepsilon:=\bar{\bm{\nu}}\cdot\nabla u^\varepsilon$ is the derivative of $u^\varepsilon$ in the normal direction of $\Gamma$.
\end{theorem}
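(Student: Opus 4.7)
The plan is to reduce the bulk estimates on $\Omega_\varepsilon$ to the surface estimate of Theorem \ref{T:DiE_Sur}, modulo a price we pay for the normal derivative $\partial_\nu u^\varepsilon$, which is controlled separately via the Neumann boundary condition. Accordingly, I would first establish \eqref{E:DiE_ND} and then combine it with Theorem \ref{T:DiE_Sur} to close \eqref{E:DiE_CTD}.

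For \eqref{E:DiE_ND}, the key observation is that although the outer unit normal $\bm{\nu}_\varepsilon$ to $\partial\Omega_\varepsilon$ is not exactly $\pm\bar{\bm{\nu}}$, a direct geometric calculation from \eqref{E:Def_CTD} gives $\bm{\nu}_\varepsilon = \pm\bar{\bm{\nu}} + O(\varepsilon)$ on each component of $\partial\Omega_\varepsilon$, with the error produced by $\overline{\nabla_\Gamma g_0}$ and $\overline{\nabla_\Gamma g_1}$. Hence the Neumann condition $\partial_{\nu_\varepsilon}u^\varepsilon=0$ forces $|\partial_\nu u^\varepsilon|\leq c\varepsilon|\mathbf{P}\nabla u^\varepsilon|$ on $\partial\Omega_\varepsilon$. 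Writing
\[
  \partial_\nu u^\varepsilon(y+r\bm{\nu}(y)) = \partial_\nu u^\varepsilon|_{r=\varepsilon g_0(y)} + \int_{\varepsilon g_0(y)}^r \partial_\nu^2 u^\varepsilon(y+s\bm{\nu}(y))\,ds,
\]
integrating in $(y,r)$, and combining with a trace inequality in the thin direction and an elliptic $H^2$-regularity estimate (with $\varepsilon$-independent constant) for the Neumann problem $-\Delta u^\varepsilon = w^\varepsilon - F'(u^\varepsilon)$ should yield $\|\partial_\nu u^\varepsilon\|_{L^2(\Omega_\varepsilon)} \leq c\varepsilon\bigl(\|\nabla u^\varepsilon\|_{L^2(\Omega_\varepsilon)}+\|u^\varepsilon\|_{H^2(\Omega_\varepsilon)}\bigr)$. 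Feeding in the $L^\infty_t H^1_x$ energy bound of order $\varepsilon^{1/2-\alpha}$ and the $L^2_t L^2_x$ bound on $w^\varepsilon - F'(u^\varepsilon)$ of order $\varepsilon^{1/2-\alpha}(1+T)^{1/2}$ (both consequences of the energy and dissipation identities together with the growth bounds of Remark \ref{R:Po_Grow}) then delivers \eqref{E:DiE_ND}.

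For \eqref{E:DiE_CTD} I use the decomposition
\begin{align*}
  u^\varepsilon - \bar{v} &= \bigl(u^\varepsilon - \overline{\mathcal{M}_\varepsilon u^\varepsilon}\bigr) + \overline{\mathcal{M}_\varepsilon u^\varepsilon - v}, \\
  \overline{\mathbf{P}}\nabla u^\varepsilon - \overline{\nabla_\Gamma v} &= \bigl(\overline{\mathbf{P}}\nabla u^\varepsilon - \overline{\nabla_\Gamma \mathcal{M}_\varepsilon u^\varepsilon}\bigr) + \overline{\nabla_\Gamma(\mathcal{M}_\varepsilon u^\varepsilon - v)}.
\end{align*}
The rightmost summands are constant normal extensions of surface quantities, so \eqref{E:Int_CoV} with $g\geq c$ implies $\|\bar{\eta}\|_{L^2(\Omega_\varepsilon)}^2 \leq c\varepsilon\|\eta\|_{L^2(\Gamma)}^2$; after dividing by $\varepsilon^{1/2}$, these contributions are controlled by $\|\mathcal{M}_\varepsilon u^\varepsilon - v\|_{L^2(0,T;H^1(\Gamma))}$, which is precisely the left-hand side of Theorem \ref{T:DiE_Sur}. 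The oscillation summands have vanishing weighted average in the thin direction (up to geometric corrections from $J$) and therefore satisfy Poincaré-type estimates of the schematic form
\[
  \|\varphi - \overline{\mathcal{M}_\varepsilon\varphi}\|_{L^2(\Omega_\varepsilon)} \leq c\varepsilon\|\partial_\nu\varphi\|_{L^2(\Omega_\varepsilon)} + c\varepsilon\|\varphi\|_{L^2(\Omega_\varepsilon)},
\]
with an analogous bound for the tangential gradient, both of which should be part of the averaging toolkit of Section \ref{S:Ave}. Applying them with $\varphi=u^\varepsilon$ and inserting \eqref{E:DiE_ND} together with the $H^1$ energy bound shows that after dividing by $\varepsilon^{1/2}$ these terms contribute $\lesssim \varepsilon^{1-\alpha}(1+T) \leq \varepsilon^{1-3\alpha}(1+T)$, which is absorbed into the right-hand side of \eqref{E:DiE_CTD}.

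The main obstacle is calibrating the scalings in the normal-derivative bound \eqref{E:DiE_ND}: one must simultaneously exploit the near-Neumann character of $\partial_\nu u^\varepsilon$ on $\partial\Omega_\varepsilon$ and a thin-domain $H^2$-regularity estimate whose constant does not deteriorate as $\varepsilon\to0$, and one must verify that the polynomial growth of $F'(u^\varepsilon)$ does not overrun the available $L^2_t L^2_x$ bounds obtained from the energy and dissipation identities. Once \eqref{E:DiE_ND} is secured, the rest of the proof is essentially bookkeeping on the Poincaré-type and change-of-variables estimates already collected for the proof of Theorem \ref{T:DiE_Sur}.
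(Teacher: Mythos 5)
Your proof uses the same two-step decomposition of $u^\varepsilon-\bar v$ and $\overline{\mathbf{P}}\nabla u^\varepsilon-\overline{\nabla_\Gamma v}$ as the paper and invokes essentially the same ingredients: the averaging estimates \eqref{E:ADif_Lp} and \eqref{E:ATG_L2D} from Section~\ref{S:Ave}, the change-of-variables bound \eqref{E:Lp_CE}, Theorem~\ref{T:DiE_Sur}, and the $\varepsilon$-uniform $L^2_tH^2_x$ bound \eqref{E:TFLE_H1} (note that the gradient oscillation term genuinely needs the $H^2$ bound, not just the $H^1$ energy bound you mention, but this is exactly what \eqref{E:TFLE_H1} supplies via the uniform elliptic regularity of Lemma~\ref{L:UER_CTD}). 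For \eqref{E:DiE_ND}, the paper simply combines Lemma~\ref{L:ND_CTD}, Proposition~\ref{P:CHT_Reg}, and \eqref{E:TFLE_H1}; your sketch re-derives the content of Lemma~\ref{L:ND_CTD} from the near-alignment of $\bm{\nu}_\varepsilon$ with $\pm\bar{\bm{\nu}}$, which is indeed the mechanism behind that lemma, so the routes coincide.
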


Note that the $L^2(\Omega_\varepsilon)$-norm is divided by $\varepsilon^{1/2}$ in \eqref{E:DiE_CTD}, since it involves the square root of the thickness of $\Omega_\varepsilon$.
If $\alpha=0$ in \eqref{E:TFu0_CTD}, i.e. the scaled energy of $u_0^\varepsilon$ on $\Omega_\varepsilon$ is uniformly bounded, then the estimate \eqref{E:DiE_CTD} shows that $v$ approximates $u^\varepsilon$ of order $\varepsilon$.

\begin{remark} \label{R:Po_Exam}
  The above main results are obtained under Assumption \ref{A:Poten}, which is satisfied by the double-well polynomial potentials
 \begin{align*}
  F(z) = \frac{1}{4}(z^2-1)^2, \quad F(z) = \frac{1}{4}z^4-\frac{1}{2}z^2.
  \end{align*}
  These potentials are often used as an approximation of the logarithmic potential
  \begin{align*}
    F_{\textrm{log}}(z) = \frac{\theta}{2}\{(1-z)\log(1-z)+(1+z)\log(1+z)\}+\frac{\theta_c}{2}(1-z^2), \quad z\in(-1,1).
  \end{align*}
  Here, $\theta$ and $\theta_c$ are positive constants that represent the absolute temperature of a mixture and a critical temperature of phase separation, respectively.
  Since $F_{\log}$ is a thermodynamically relevant potential \cite{CahHil58}, it is meaningful to extend the main results of this paper to the case of $F_{\log}$.
  This is the subject of a future work.
\end{remark}

\subsection{Outline of proof} \label{SS:Int_Pro}
We give the proofs of Theorems \ref{T:TFL_Weak}, \ref{T:DiE_Sur}, and \ref{T:DiE_CTD} in Section \ref{S:TFL}.
Here, we explain the outline and main idea of the proofs.

To prove the main results, we proceed basically in the same way as in the case of second order equations \cite{Miu17,Miu20_03,Miu24_GL}:
\begin{enumerate}
  \item For the weak solution $(u^\varepsilon,w^\varepsilon)$ to the thin-domain problem \eqref{E:CH_CTD}, let
  \begin{align*}
    v^\varepsilon := \mathcal{M}_\varepsilon u^\varepsilon, \quad \mu^\varepsilon := \mathcal{M}_\varepsilon w^\varepsilon \quad\text{on}\quad \Gamma\times(0,\infty)
  \end{align*}
  be the weighted average.
  We first derive a weak form satisfied by $v^\varepsilon$ and $\mu^\varepsilon$ from a weak form of \eqref{E:CH_CTD} by using the change of variables formula \eqref{E:Int_CoV}.
  \item Next, we differentiate in time the energy functional
  \begin{align} \label{E:Int_wGLE}
    E_g(v^\varepsilon(t)) = \int_\Gamma g\left(\frac{|\nabla_\Gamma v^\varepsilon(t)|^2}{2}+F(v^\varepsilon(t))\right)\,d\mathcal{H}^2
  \end{align}
  and use relations of $v^\varepsilon$ and $\mu^\varepsilon$ including their weak form to derive energy estimates for $v^\varepsilon$ and $\mu^\varepsilon$ which are uniform with respect to $\varepsilon$.
  We also show some uniform estimates for $\partial_tv^\varepsilon$ and $F(v^\varepsilon)$ by using estimates for $v^\varepsilon$ and $\mu^\varepsilon$.
  \item By the uniform estimates derived above, we have the convergence of (subsequences of) $v^\varepsilon$ and $\mu^\varepsilon$ on $\Gamma$ as $\varepsilon\to0$ in an appropriate sense.
  Letting $\varepsilon\to0$ in the weak form of $v^\varepsilon$ and $\mu^\varepsilon$, we find that the pair $(v,\mu)$ of the limit functions is indeed a weak solution to the limit problem \eqref{E:CH_Lim}.
  This establishes Theorem \ref{T:TFL_Weak}, when combined with uniqueness of a weak solution to \eqref{E:CH_Lim} (see Proposition \ref{P:CLi_Uni}).
  \item We subtract the weak form of $v$ and $\mu$ from that of $v^\varepsilon$ and $\mu^\varepsilon$, and then apply an energy method to obtain the difference estimate \eqref{E:DiE_Sur} on $\Gamma$ in Theorem \ref{T:DiE_Sur}.
  \item Combining \eqref{E:DiE_Sur} and estimates for the difference of $u^\varepsilon$ and $v^\varepsilon$ (see Lemmas \ref{L:ADif_Lp} and \ref{L:ATG_L2D}), we get the difference estimate \eqref{E:DiE_CTD} on $\Omega_\varepsilon$ in Theorem \ref{T:DiE_CTD}.
  The estimate \eqref{E:DiE_ND} just follows from an estimate for $\partial_\nu u^\varepsilon$ in $L^2(\Omega_\varepsilon)$ given in Lemma \ref{L:ND_CTD} and an energy estimate for $u^\varepsilon$.
\end{enumerate}
The main effort is devoted to the first two steps, especially to analysis of errors appearing in the process of the average.
In the step (i), we take a test function on $\Gamma$, substitute its constant extension in the normal direction of $\Gamma$ for the weak form of \eqref{E:CH_CTD}, and rewrite the weak form by using the change of variables formula \eqref{E:Int_CoV}.
Then, we get the weak form of $v^\varepsilon$ and $\mu^\varepsilon$ which is close to the weak form of the limit problem \eqref{E:CH_Lim}, but it involves error terms since the weighted average does not commute with the gradient and $F'$.
In order to derive uniform energy estimates for $v^\varepsilon$ and $\mu^\varepsilon$ and show that the weak form of $v^\varepsilon$ and $\mu^\varepsilon$ actually converges to the weak form of \eqref{E:CH_Lim} as $\varepsilon\to0$, we need to estimate the error terms explicitly in terms of $\varepsilon$ and show that they are sufficiently small as $\varepsilon\to0$.
To this end, we carefully analyze the weighted average and prove several useful estimates with constants explicitly depending on $\varepsilon$ in Section \ref{S:Ave}, and combine them with energy estimates for the weak solution $(u^\varepsilon,w^\varepsilon)$ to the thin-domain problem \eqref{E:CH_CTD} given in Section \ref{S:CHT_Weak}.

In the case of the second order equations \cite{Miu17,Miu20_03,Miu24_GL}, it was enough to take the average of a weak form and apply the above idea in order to derive energy estimates for an averaged solution, since we could get them just by testing the averaged solution itself to the averaged weak form.
In the step (ii), however, we cannot derive energy estimates for $v^\varepsilon$ and $\mu^\varepsilon$ only by using their weak form.
Indeed, for the energy functional \eqref{E:Int_wGLE}, we have
\begin{align*}
  \frac{d}{dt}E_g(v^\varepsilon) = \langle\partial_tv^\varepsilon,g\tilde{\mu}^\varepsilon\rangle_{H^1(\Gamma)}, \quad \tilde{\mu}^\varepsilon := -A_gv^\varepsilon+F'(v^\varepsilon),
\end{align*}
but the integral equality for $\partial_tv^\varepsilon$ obtained in the step (i) is
\begin{align} \label{E:Int_WFve}
  \int_0^T\langle\partial_tv^\varepsilon,g\eta\rangle_{H^1(\Gamma)}\,dt+\int_0^T(g\nabla_\Gamma\mu^\varepsilon,\nabla_\Gamma\eta)_{L^2(\Gamma)}\,dt = R_\varepsilon(\eta;T)
\end{align}
for $T>0$ and a test function $\eta$, where $R_\varepsilon(\eta;T)$ is an error term.
Thus, to derive an energy estimate from the above relations, we need to further relate $\tilde{\mu}^\varepsilon$ with $\mu^\varepsilon$ in a strong form.
However, such an relation cannot be obtained from the weak form of $v^\varepsilon$ and $\mu^\varepsilon$ since it involves error terms.
Hence, in addition to the average of the weak form, we also take the weighted average of the strong form
\begin{align*}
  w^\varepsilon = -\Delta u^\varepsilon+F'(u^\varepsilon) \quad\text{in}\quad \Omega_\varepsilon\times(0,\infty)
\end{align*}
to derive the relation of $v^\varepsilon$ and $\mu^\varepsilon$ in the strong form
\begin{align*}
  \mu^\varepsilon = -A_gv^\varepsilon+F'(v^\varepsilon)+\zeta^\varepsilon = \tilde{\mu}^\varepsilon+\zeta^\varepsilon \quad\text{on}\quad \Gamma\times(0,\infty),
\end{align*}
and show that the error term $\zeta^\varepsilon$ and $\nabla_\Gamma\zeta^\varepsilon$ are sufficiently small as $\varepsilon\to0$ in the $L^2$ space (note that $\nabla_\Gamma\zeta^\varepsilon$ appears when we use \eqref{E:Int_WFve}).
This requires the most effort.

In fact, the error term is defined as $\zeta^\varepsilon:=-\zeta_\Delta^\varepsilon+\zeta_{F'}^\varepsilon$ with
\begin{align*}
   \zeta_\Delta^\varepsilon &:= \mathcal{M}_\varepsilon(\Delta u^\varepsilon)-A_gv^\varepsilon = \mathcal{M}_\varepsilon(\Delta u^\varepsilon)-A_g\mathcal{M}_\varepsilon u^\varepsilon, \\
  \zeta_{F'}^\varepsilon &:= \mathcal{M}_\varepsilon(F'(u^\varepsilon))-F'(v^\varepsilon) = \mathcal{M}_\varepsilon(F'(u^\varepsilon))-F'(\mathcal{M}_\varepsilon u^\varepsilon),
\end{align*}
so we estimate them separately.
To estimate $\zeta_\Delta^\varepsilon$, we rewrite it in a more convenient form by calculating $A_g\mathcal{M}_\varepsilon u^\varepsilon$.
However, it is too hard to compute the second order derivatives of $\mathcal{M}_\varepsilon u^\varepsilon$, since its tangential gradient already has a complicated expression
\begin{align} \label{E:Int_AvGr}
  \nabla_\Gamma\mathcal{M}_\varepsilon u^\varepsilon = \mathcal{M}_\varepsilon(\mathbf{B}\nabla u^\varepsilon)+\mathcal{M}_\varepsilon((\partial_\nu u^\varepsilon+u^\varepsilon f_J)\mathbf{b}_\varepsilon)+\mathcal{M}_\varepsilon(u^\varepsilon\mathbf{b}_J) \quad\text{on}\quad \Gamma,
\end{align}
where $\mathbf{B}$, $f_J$, $\mathbf{b}_\varepsilon$, and $\mathbf{b}_J$ are functions on $\Omega_\varepsilon$ involving $g_0$, $g_1$, and the curvatures of $\Gamma$ (see Lemma \ref{L:Ave_TGr} for details).
Hence, instead, we consider the weak form
\begin{align*}
  \int_\Gamma g(A_g\mathcal{M}_\varepsilon u^\varepsilon)\eta\,d\mathcal{H}^2 = -\int_\Gamma g\nabla_\Gamma\mathcal{M}_\varepsilon u^\varepsilon\cdot\nabla_\Gamma\eta\,d\mathcal{H}^2
\end{align*}
for a test function $\eta$ on $\Gamma$, and rewrite the right-hand side in terms of $\mathcal{M}_\varepsilon(\Delta u^\varepsilon)$ and error terms by using \eqref{E:Int_AvGr}, the equality
\begin{align*}
  \varepsilon\int_\Gamma g(y)\mathcal{M}_\varepsilon\varphi(y)\zeta(y)\,d\mathcal{H}^2(y) = \int_{\Omega_\varepsilon}\varphi(x)\bar{\zeta}(x)\,dx
\end{align*}
for functions $\varphi$ on $\Omega_\varepsilon$ and $\zeta$ on $\Gamma$, and integration by parts.
This approach enables us to avoid computing the second order derivatives of $\mathcal{M}_\varepsilon u^\varepsilon$.
We also note that, when we estimate the $H^1(\Gamma)$-norm of $\zeta_\Delta^\varepsilon$, we need to assume that the third-order derivatives of $g_0$, $g_1$, and the curvatures of $\Gamma$ are bounded on $\Gamma$.
Thus, we require the $C^5$-regularity of $\Gamma$ and the $C^3$-regularity of $g_0$ and $g_1$.
For details, we refer to Lemma \ref{L:Ave_Lap}.

Another task is to estimate $\zeta_{F'}^\varepsilon$.
We split $\zeta_{F'}^\varepsilon=K_1+K_2$ into
\begin{align*}
  K_1(y) &:= \frac{1}{\varepsilon g(y)}\int_{\varepsilon g_1(y)}^{\varepsilon g_2(y)}F'\Bigl(u^\varepsilon(y+r\bm{\nu}(y))\Bigr)\{J(y,r)-1\}\,dr, \\
  K_2(y) &:= \frac{1}{\varepsilon g(y)}\int_{\varepsilon g_1(y)}^{\varepsilon g_2(y)}\Bigl\{F'\Bigl(u^\varepsilon(y+r\bm{\nu}(y))\Bigr)-F'\Bigl(\mathcal{M}_\varepsilon u^\varepsilon(y)\Bigr)\Bigr\}\,dr
\end{align*}
for $y\in\Gamma$.
To $K_1$, we use $|J-1|\leq c\varepsilon$ and the bound \eqref{E:Int_Fzero} of $F'$.
Also, we use the mean value theorem for $F'$ and the bound \eqref{E:Int_F2nd} of $F''$ to $K_2$, and then apply the inequality
\begin{align*}
  |u^\varepsilon(y+r\bm{\nu}(y))-\mathcal{M}_\varepsilon u^\varepsilon(y)| \leq c\varepsilon[\mathcal{M}_\varepsilon(|u^\varepsilon|+|\partial_\nu u^\varepsilon|)](y)
\end{align*}
given in Lemma \ref{L:ADif_Lp}.
After that, we take the $L^2(\Gamma)$-norm, use H\"{o}lder's inequality and the estimate for the weighted average (see Lemma \ref{L:Ave_Lp})
\begin{align*}
  \|\mathcal{M}_\varepsilon\varphi\|_{L^p(\Gamma)} \leq c\varepsilon^{-1/p}\|\varphi\|_{L^p(\Omega_\varepsilon)}, \quad \varphi\in L^p(\Omega_\varepsilon), \quad p\in[1,\infty]
\end{align*}
and apply the Sobolev inequality (see Lemma \ref{L:Sob_CTD})
\begin{align*}
  \varepsilon^{-1/p}\|\varphi\|_{L^p(\Omega_\varepsilon)} \leq c\varepsilon^{-1/2}\|\varphi\|_{H^1(\Omega_\varepsilon)}, \quad \varphi\in H^1(\Omega_\varepsilon), \quad p\in[2,6]
\end{align*}
to get a good estimate for $\zeta_{F'}^\varepsilon$, which gives the smallness of $\zeta_{F'}^\varepsilon$ when combined with energy estimates for $u^\varepsilon$.
We also estimate $\nabla_\Gamma\zeta_{F'}^\varepsilon$ in a similar way, but calculations become more involved.
In particular, we make use of the interpolation inequality (see Lemma \ref{L:Ipl_CTD})
\begin{align*}
  \varepsilon^{-1/p}\|\varphi\|_{L^p(\Omega_\varepsilon)} \leq c\varepsilon^{-1/2}\|\varphi\|_{L^2(\Omega_\varepsilon)}^{1-\sigma}\|\varphi\|_{H^2(\Omega_\varepsilon)}^\sigma, \quad \sigma = \frac{3}{4}-\frac{3}{2p}
\end{align*}
for $\varphi\in H^2(\Omega_\varepsilon)$ and $p\in[2,\infty]$ besides the Sobolev inequality in order to get an estimate for $\nabla_\Gamma\zeta_{F'}^\varepsilon$ which can be properly combined with energy estimates for $u^\varepsilon$.
For details, we refer to Section \ref{SS:Ave_NL}.

Let us also give a remark on constants appearing in inequalities on $\Omega_\varepsilon$.
In the study of the thin-film limit problems, we need to clarify how the constants depend on $\varepsilon$ like the above Sobolev and interpolation inequalities.
As one may easily imagine, this requires long and careful discussions, especially when we consider curved thin domains around surfaces.
In particular, when we use energy estimates for $u^\varepsilon$ in the analysis of error terms, we need to combine them with the uniform elliptic regularity estimate (see Lemma \ref{L:UER_CTD})
\begin{align*}
  \|u^\varepsilon\|_{H^{2+k}(\Omega_\varepsilon)} \leq c\Bigl(\|\Delta u^\varepsilon\|_{H^k(\Omega_\varepsilon)}+\|u^\varepsilon\|_{L^2(\Omega_\varepsilon)}\Bigr), \quad k=0,1.
\end{align*}
This is highly nontrivial since we need to confirm that the constant $c$ is independent of $\varepsilon$, and it is proved by a careful re-examination of a localization argument and the method of difference quotient.
We refer to Section \ref{S:Uni_ER} for details.

\subsection{Literature overview} \label{SS:Int_Lit}
In the study of partial differential equations (PDEs) in thin domains, a main subject is to find limit problems as the thickness of thin domains tends to zero and to compare the thin-domain and limit problems in terms of the thickness.
Such a thin-film limit problem is important in view of dimension reduction as well as of  modelling of various phenomena in thin sets like tubes and membranes and in lower dimensional sets like curves and surfaces.

PDEs in thin domains have been studied extensively in the case of second order equations.
Since the works by Hale and Raugel \cite{HalRau92_DH,HalRau92_RD}, many authors have studied second order PDEs in flat thin domains around lower dimensional domains.
Also, some works deal with a thin L-shaped domain \cite{HalRau95}, flat thin domains with holes \cite{PriRyb01} and moving boundaries \cite{PerSil13}, and thin tubes and networks \cite{Kos00,Kos02,Yan90}.
Curved thin domains around surfaces and manifolds appear in the shell theory \cite{Cia00} and in the study of reaction-diffusion equations \cite{PrRiRy02,PriRyb03_Cur}, the Navier--Stokes equations \cite{TemZia97,Miu20_03,Miu21_02,Miu22_01,Miu24_SNS}, the Ginzburg--Landau heat flow \cite{Miu24_GL}, and the eigenvalue problem for the Laplacian \cite{Sch96,Kre14,JimKur16,Yac18}.
Moreover, diffusion equations in moving thin domains around moving surfaces were studied in \cite{EllSti09,ElStStWe11,Miu17,MiGiLi18,Miu23}.
We also refer to \cite{Rau95} for other examples of thin domains.

Compared to the second order case, the thin-domain problem of higher order equations has been less studied.
There are a few works on the biharmonic equation in a thin rectangular \cite{ArFeLa17}, a thin T-shaped domain \cite{GaPaPi16}, and a curved thin domain around a closed curve \cite{FerPro23}.
Also, in \cite{Abo96}, the Cahn--Hilliard equation in a thin annulus around a circle was analyzed.
These works deal with thin domains in $\mathbb{R}^2$.
A higher dimensional case was recently studied in \cite{CaJaWo24}, where the authors analyzed the Stokes--Cahn--Hilliard equations in a flat thin layer in $\mathbb{R}^2$ or $\mathbb{R}^3$.
In this paper, we deal with a curved thin domain in $\mathbb{R}^3$ around a general closed surface.
This paper gives the first result on a rigorous thin-film limit of nonlinear fourth order equations in general curved thin domains.

Let us also mention the literature on the limit problem \eqref{E:CH_Lim}.
As mentioned in Remark \ref{R:TFL_LP}, the limit problem \eqref{E:CH_Lim} is the surface Cahn--Hilliard equation.
It is used to describe dealloying of a binary alloy \cite{EAKDS01,EilEll08} and phase separation in cell membranes \cite{MPKHWJ12,MMRH13,GKRR16,YuQuMaOl19,ZhWaQuOlMa21}.
In view of applications, there is a growing interest in the analysis of the surface Cahn--Hilliard equation like other surface PDEs (see e.g. \cite{DziEll13_AN}).
We refer to \cite{RatVoi06,DuJuTi11,EllRan15,OCoSti16,Rat16,AZVR19,ZTLHMS19,YuQuOl20,CaeEll21,CaElGrPo23} and the references cited therein for the mathematical and numerical studies.
Here, we note that the surface Cahn--Hilliard equation is usually introduced as a surface version of the planar equation, not as a thin-film limit of the bulk equation.
Our results verify the characterization of the surface Cahn--Hilliard equation as a thin-film limit and justify the use of the surface Cahn--Hilliard equation as a model of phase separation in curved thin layers like cell membranes.

Lastly, we give comments on the directions of further studies.
In the case of flat domains, several authors analyzed the Cahn--Hilliard equation with degenerate mobility \cite{Yin92,EllGar96,DaiDu16} and logarithmic potential \cite{BloEll91,EllLuk91}.
Also, the Cahn--Hilliard equation on moving surfaces was introduced and studied in the recent works \cite{ZTLHMS19,YuQuOl20,CaeEll21,CaElGrPo23}.
It is the subject of future works to extend our results to these cases.
We also note that the surface Cahn--Hilliard equation appears as (a part of) the dynamic boundary conditions proposed in \cite{GoMiSc11,LiuWu19}.
It is an interesting problem to derive the Cahn--Hilliard equation with dynamic boundary conditions as a thin-film limit of a two-phase Cahn--Hilliard problem posed on a bulk domain and the thin tubular neighborhood of the boundary.
In the recent work \cite{GiLaRy25}, an abstract framework for singular limits of gradient flows on Hilbert spaces varying with some parameters was developed and applied to the parabolic $p$-Laplace equation in a flat thin domain.
Since the thin-domain and limit problems \eqref{E:CH_CTD} and \eqref{E:CH_Lim} have the gradient flow structures, it may be possible to re-examine our results in the framework of \cite{GiLaRy25}.
This is also a future topic.

\subsection{Organization of the paper} \label{SS:Int_Org}
The rest of this paper is organized as follows.
Section \ref{S:Pre} provides notations and basic results on a closed surface and a curved thin domain.
In Section \ref{S:CHT_Weak}, we give the definition of a weak solution to the thin-domain problem \eqref{E:CH_CTD} and show basic properties and estimates of a weak solution.
Section \ref{S:Ave} is devoted to analysis of the weighted average operator in the thin direction.
We study the thin-film limit problem of \eqref{E:CH_CTD} and establish the main results in Section \ref{S:TFL}.
In Section \ref{S:Uni_ER}, we prove uniform elliptic regularity estimates on the curved thin domain with constants independent of $\varepsilon$.
Section \ref{S:Pf_Aux} gives the proofs of some auxiliary lemmas.
In Section \ref{S:Galer}, we briefly explain the outline of construction of a weak solution to \eqref{E:CH_CTD} by the Galerkin method.

\section{Preliminaries} \label{S:Pre}
We fix notations and give basic results on surfaces and thin domains.

\subsection{Notations used throughout the paper} \label{SS:Pr_Nota}
In what follows, we always write $c$ for a general positive constant independent of $\varepsilon$ and $t$.

We fix a coordinate system of $\mathbb{R}^3$.
Under the fixed coordinate system, let $x_i$ be the $i$-th component of $x\in\mathbb{R}^3$ for $i=1,2,3$.
We write a vector $\mathbf{a}\in\mathbb{R}^3$ and a matrix $\mathbf{A}\in\mathbb{R}^{3\times 3}$ as
\begin{align*}
  \mathbf{a} =
  \begin{pmatrix}
    a_1 \\ a_2 \\ a_3
  \end{pmatrix}
  = (a_1,a_2,a_3)^{\mathrm{T}}, \quad \mathbf{A} = (A_{ij})_{i,j} =
  \begin{pmatrix}
    A_{11} & A_{12} & A_{13} \\
    A_{21} & A_{22} & A_{23} \\
    A_{31} & A_{32} & A_{33}
  \end{pmatrix}.
\end{align*}
Let $|\mathbf{a}|$, $\mathbf{a}\cdot\mathbf{b}$, and $\mathbf{a}\otimes\mathbf{b}:=(a_ib_j)_{i,j}$ be the norm, the inner product, and the tensor product of $\mathbf{a},\mathbf{b}\in\mathbb{R}^3$, respectively.
We write $\mathbf{I}_3$ and $\mathbf{A}^{\mathrm{T}}$ for the $3\times 3$ identity matrix and the transpose of $\mathbf{A}\in\mathbb{R}^{3\times3}$, respectively.
Also, we denote by
\begin{align*}
  \mathbf{A}:\mathbf{B} := \mathrm{tr}[\mathbf{A}^{\mathrm{T}}\mathbf{B}], \quad |\mathbf{A}| := \sqrt{\mathbf{A}:\mathbf{A}}, \quad \mathbf{A},\mathbf{B}\in\mathbb{R}^{3\times3}
\end{align*}
the inner product and the Frobenius norm of matrices, respectively.

For a function $\varphi$ and a vector field $\mathbf{u}$ on an open set in $\mathbb{R}^3$, we write
\begin{align*}
  \nabla\varphi := (\partial_1\varphi,\partial_2\varphi,\partial_3\varphi)^{\mathrm{T}}, \quad \nabla^2\varphi := (\partial_i\partial_j\varphi)_{i,j}, \quad \nabla\mathbf{u} := (\partial_i\mathrm{u}_j)_{i,j},
\end{align*}
where $\mathbf{u}=(\mathrm{u}_1,\mathrm{u}_2,\mathrm{u}_3)^T$.
We also use the notations
\begin{align*}
  |\nabla^0\varphi| := |\varphi|, \quad |\nabla^k\varphi| := (\textstyle\sum_{i_1,\dots,i_k=1}^3|\partial_{i_1}\cdots\partial_{i_k}\varphi|^2)^{1/2}, \quad k\geq1
\end{align*}
including the case where $\varphi$ is vector-valued and matrix-valued.

When $\mathcal{X}$ is a Banach space, we write $[\mathcal{X}]'$ for the dual space of $\mathcal{X}$.
Also, we denote by $\langle\cdot,\cdot\rangle_{\mathcal{X}}$ the duality product between $[\mathcal{X}]'$ and $\mathcal{X}$.

\subsection{Closed surface} \label{SS:Pr_Sur}
Let $\Gamma$ be a closed (i.e. compact and without boundary), connected, and oriented surface in $\mathbb{R}^3$.
We assume that $\Gamma$ is of class $C^5$, and it is the boundary of a bounded domain $\Omega$ in $\mathbb{R}^3$.
Moreover, we denote by $\bm{\nu}$ the unit outward normal vector field of $\Gamma$ which points from $\Omega$ into $\mathbb{R}^3\setminus\Omega$.
Then, $\bm{\nu}$ is of class $C^4$ on $\Gamma$.

Let $d$ be the signed distance function from $\Gamma$ increasing in the direction of $\bm{\nu}$.
Also, let $\kappa_1$ and $\kappa_2$ be the principal curvatures of $\Gamma$.
Then, $\kappa_1$ and $\kappa_2$ are continuous and thus bounded on $\Gamma$.
Hence, $\Gamma$ has a tubular neighborhood
\begin{align*}
  \mathcal{N}_\delta := \{x\in\mathbb{R}^3 \mid -\delta<d(x)<\delta\}
\end{align*}
of radius $\delta>0$ such that for each $x\in\Gamma$ there exists a unique $\pi(x)\in\Gamma$ satisfying
\begin{align} \label{E:Fermi}
  x = \pi(x)+d(x)\bm{\nu}(\pi(x)), \quad \nabla d(x) = \bm{\nu}(\pi(x)),
\end{align}
and $d$ and $\pi$ are of class $C^5$ and $C^4$ on $\overline{\mathcal{N}_\delta}$, respectively (see e.g. \cite[Section 14.6]{GilTru01}).
Taking $\delta>0$ sufficiently small, we may also assume that
\begin{align} \label{E:Cur_Bd}
  c^{-1} \leq 1-r\kappa_k(y) \leq c \quad\text{for all}\quad y\in\Gamma, \, r\in[-\delta,\delta], \, k=1,2.
\end{align}
Let $\mathbf{P}:=\mathbf{I}_3-\bm{\nu}\otimes\bm{\nu}$ be the orthogonal projection onto the tangent plane of $\Gamma$, which is of class $C^4$ on $\Gamma$.
We define the tangential gradient of a function $\eta$ on $\Gamma$ by
\begin{align*}
  \nabla_\Gamma\eta = (\underline{D}_1\eta,\underline{D}_2\eta,\underline{D}_3\eta)^{\mathrm{T}} := \mathbf{P}\nabla\tilde{\eta} \quad\text{on}\quad \Gamma.
\end{align*}
Here, $\tilde{\eta}$ is an extension of $\eta$ to $\mathcal{N}_\delta$, and the value of $\nabla_\Gamma\eta$ is independent of the choice of $\tilde{\eta}$.
It immediately follows from the above definition that
\begin{align} \label{E:TGr_Tan}
  \mathbf{P}\nabla_\Gamma\eta = \nabla_\Gamma\eta, \quad \nabla_\Gamma\eta\cdot\bm{\nu} = 0 \quad\text{on}\quad \Gamma.
\end{align}
Let $\bar{\eta}:=\eta\circ\pi$ be the constant extension of $\eta$ in the normal direction of $\Gamma$.
Then, since $\nabla\pi(y)=\mathbf{P}(y)$ for $y\in\Gamma$ by \eqref{E:Fermi} and $d(y)=0$, it follows that
\begin{align} \label{E:CEGr_Sur}
  \nabla\bar{\eta}(y) = \nabla_\Gamma\eta(y), \quad \partial_i\bar{\eta}(y) = \underline{D}_i\eta(y) \quad\text{for all}\quad y\in\Gamma.
\end{align}
In what follows, we always write $\bar{\eta}$ for the constant extension of a function $\eta$ on $\Gamma$.

Let $\mathbf{v}=(\mathrm{v}_1,\mathrm{v}_2,\mathrm{v}_3)^{\mathrm{T}}$ be a (not necessarily tangential) vector field on $\Gamma$.
We define the tangential gradient matrix and the surface divergence of $\mathbf{v}$ by
\begin{align*}
  \nabla_\Gamma\mathbf{v} := (\underline{D}_i\mathrm{v}_j)_{i,j}, \quad \mathrm{div}_\Gamma\mathbf{v} := \mathrm{tr}[\nabla_\Gamma\mathbf{v}] = \textstyle\sum_{i=1}^3\underline{D}_i\mathrm{v}_i \quad\text{on}\quad \Gamma.
\end{align*}
Note that the indices of $\nabla_\Gamma\mathbf{v}$ are reversed in some literature.
Under our notation, we have $\nabla_\Gamma\mathbf{v}=\mathbf{P}\nabla\tilde{\mathbf{v}}$ on $\Gamma$ for any extension $\tilde{\mathbf{v}}$ of $\mathbf{v}$ to $\mathcal{N}_\delta$.
When $\mathbf{v}=\nabla_\Gamma\eta$, we set
\begin{align*}
  \nabla_\Gamma^2\eta := \nabla_\Gamma(\nabla_\Gamma\eta) = (\underline{D}_i\underline{D}_j\eta)_{i,j}, \quad \Delta_\Gamma\eta := \mathrm{div}_\Gamma(\nabla_\Gamma\eta) \quad\text{on}\quad \Gamma
\end{align*}
and call them the tangential Hessian and the Laplace--Beltrami operator on $\Gamma$, respectively.
Moreover, we use the notations
\begin{align*}
  |\nabla_\Gamma^0\eta| := |\eta|, \quad |\nabla_\Gamma^k\eta| := (\textstyle\sum_{i_1,\dots,i_k=1}^3|\underline{D}_{i_1}\cdots\underline{D}_{i_k}\eta|^2)^{1/2}, \quad k\geq1
\end{align*}
including the case where $\eta$ is vector-valued and matrix-valued.

We define the Weingarten map $\mathbf{W}$ and the mean curvature $H$ of $\Gamma$ by
\begin{align*}
  \mathbf{W} = (W_{ij})_{i,j} := -\nabla_\Gamma\bm{\nu}, \quad H := \mathrm{tr}[\mathbf{W}] = -\mathrm{div}_\Gamma\bm{\nu} \quad\text{on}\quad \Gamma.
\end{align*}
Also, let $K:=\kappa_1\kappa_2$ be the Gaussian curvature of $\Gamma$.
Then, $\mathbf{W}$, $H$, and $K$ are of class $C^3$ on $\Gamma$.
Moreover, $\mathbf{W}$ is symmetric and has the eigenvalue zero with eigenvector $\bm{\nu}$.
Indeed, by \eqref{E:Fermi}, \eqref{E:CEGr_Sur}, and $|\bm{\nu}|=1$ on $\Gamma$, we have
\begin{align*}
  \mathbf{W} = -\nabla^2d = \mathbf{W}^{\mathrm{T}}, \quad \mathbf{W}\bm{\nu} = -\nabla_\Gamma(|\bm{\nu}|^2/1) = 0 \quad\text{on}\quad \Gamma.
\end{align*}
From these relations and $\mathbf{P}=\mathbf{I}_3-\bm{\nu}\otimes\bm{\nu}$, it also follows that
\begin{align} \label{E:WP_PW}
  \mathbf{W}\mathbf{P} = \mathbf{P}\mathbf{W} = \mathbf{W} \quad\text{on}\quad \Gamma.
\end{align}
Moreover, $\kappa_1$ and $\kappa_2$ are the eigenvalues of $\mathbf{W}$ (see e.g. \cite{Lee18}).
Thus, taking an orthonormal basis of $\mathbb{R}^3$ consisting of eigenvectors of $\mathbf{W}$, we easily find that $H=\kappa_1+\kappa_2$ and
\begin{align*}
  \det[\mathbf{I}_3-r\mathbf{W}(y)] = \{1-r\kappa_1(y)\}\{1-r\kappa_2(y)\} \quad\text{for all}\quad y\in\Gamma, \, r\in[-\delta,\delta].
\end{align*}
Since the determinant does not vanish by \eqref{E:Cur_Bd}, the matrix
\begin{align*}
  \mathbf{I}_3-d(x)\overline{\mathbf{W}}(x) = \mathbf{I}_3-r\mathbf{W}(y)
\end{align*}
is invertible for all $x=y+r\bm{\nu}(y)\in\overline{\mathcal{N}_\delta}$ with $y=\pi(x)\in\Gamma$ and $r=d(x)\in[-\delta,\delta]$.

\begin{lemma} \label{L:IdW_Inv}
  We define
  \begin{align*}
    \mathbf{R}(x) := \{\mathbf{I}_3-d(x)\overline{\mathbf{W}}(x)\}^{-1}, \quad x\in\overline{\mathcal{N}_\delta}.
  \end{align*}
  Then, $\mathbf{R}$ is symmetric and of class $C^3$ on $\overline{\mathcal{N}_\delta}$.
  Moreover, $\mathbf{R}\overline{\mathbf{P}}=\overline{\mathbf{P}}\mathbf{R}$ in $\overline{\mathcal{N}_\delta}$.
\end{lemma}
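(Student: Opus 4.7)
The plan is to verify the three claims — invertibility of $\mathbf{I}_3-d\overline{\mathbf{W}}$, the symmetry and $C^3$ regularity of $\mathbf{R}$, and commutation with $\overline{\mathbf{P}}$ — by directly leaning on identities already collected above the lemma. None of the steps looks difficult; the argument is essentially a bookkeeping of inherited properties.

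First I would observe that invertibility is free from the paragraph preceding the lemma: the computation $\det[\mathbf{I}_3-r\mathbf{W}(y)]=\{1-r\kappa_1(y)\}\{1-r\kappa_2(y)\}$ together with the curvature bound \eqref{E:Cur_Bd} yields $\det(\mathbf{I}_3-d\overline{\mathbf{W}})\geq c^{-2}>0$ uniformly on $\overline{\mathcal{N}_\delta}$, so $\mathbf{R}$ is well defined. Symmetry is then immediate: since $\mathbf{W}^{\mathrm{T}}=\mathbf{W}$ on $\Gamma$ (stated above as $\mathbf{W}=-\nabla^2 d$), constant extension gives $\overline{\mathbf{W}}^{\mathrm{T}}=\overline{\mathbf{W}}$, hence $\mathbf{I}_3-d\overline{\mathbf{W}}$ is symmetric, and the inverse of an invertible symmetric matrix is symmetric.

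For the regularity, I would use that $d\in C^5(\overline{\mathcal{N}_\delta})$, $\pi\in C^4(\overline{\mathcal{N}_\delta})$, and $\mathbf{W}\in C^3(\Gamma)$ (which was noted when introducing $\mathbf{W}$), so $\overline{\mathbf{W}}=\mathbf{W}\circ\pi$ is $C^3$ on $\overline{\mathcal{N}_\delta}$ and $\mathbf{I}_3-d\overline{\mathbf{W}}\in C^3(\overline{\mathcal{N}_\delta};\mathbb{R}^{3\times 3})$. Cramer's rule then expresses each entry of $\mathbf{R}$ as a cofactor polynomial in the $C^3$ entries divided by the determinant, which is $C^3$ and bounded away from zero; hence $\mathbf{R}\in C^3(\overline{\mathcal{N}_\delta})$.

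Finally, for the commutation relation I would start from \eqref{E:WP_PW}, namely $\mathbf{W}\mathbf{P}=\mathbf{P}\mathbf{W}$ on $\Gamma$. Constant extension respects matrix products (it is just composition with $\pi$), so $\overline{\mathbf{W}}\,\overline{\mathbf{P}}=\overline{\mathbf{P}}\,\overline{\mathbf{W}}$ on $\overline{\mathcal{N}_\delta}$, and therefore $(\mathbf{I}_3-d\overline{\mathbf{W}})\overline{\mathbf{P}}=\overline{\mathbf{P}}(\mathbf{I}_3-d\overline{\mathbf{W}})$. Multiplying this equality by $\mathbf{R}$ on both sides gives $\overline{\mathbf{P}}\mathbf{R}=\mathbf{R}\overline{\mathbf{P}}$, which completes the proof. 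If there is any mild subtlety, it is only the observation that extension by composition with $\pi$ commutes with pointwise matrix multiplication — otherwise the lemma is a direct consequence of facts already established.
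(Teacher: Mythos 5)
Your proof is correct and follows the same route as the paper's (very terse) proof: symmetry from $\overline{\mathbf{W}}^{\mathrm{T}}=\overline{\mathbf{W}}$, regularity from that of $d$ and $\overline{\mathbf{W}}=\mathbf{W}\circ\pi$, and commutation from \eqref{E:WP_PW}. You have simply spelled out the intermediate steps (determinant bound, Cramer's rule, extending $\mathbf{W}\mathbf{P}=\mathbf{P}\mathbf{W}$ and multiplying by $\mathbf{R}$ on both sides) that the paper leaves implicit.
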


\begin{proof}
  We see that $\mathbf{R}$ is symmetric, since $\mathbf{W}$ is so.
  Also, the regularity of $\mathbf{R}$ follows from that of $d$ and $\overline{\mathbf{W}}=\mathbf{W}\circ\pi$.
  We have $\mathbf{R}\overline{\mathbf{P}}=\overline{\mathbf{P}}\mathbf{R}$ by \eqref{E:WP_PW}.
\end{proof}

\begin{lemma} \label{L:CEGr_NB}
  Let $\eta$ be a function on $\Gamma$ and $\bar{\eta}$ be its constant extension.
  Then,
  \begin{align} \label{E:CEGr_NB}
    \nabla\bar{\eta}(x) = \mathbf{R}(x)\overline{\nabla_\Gamma\eta}(x), \quad \bar{\bm{\nu}}(x)\cdot\nabla\bar{\eta}(x) = 0 \quad\text{for all}\quad x\in\overline{\mathcal{N}_\delta}.
  \end{align}
  Moreover, for all $x\in\overline{\mathcal{N}_\delta}$ and $k=1,2,3$, we have
  \begin{align}
    |\nabla^k\bar{\eta}(x)| &\leq c\sum_{\ell=1}^k\left|\overline{\nabla_\Gamma^\ell\eta}(x)\right|, \label{E:CEGr_Bd} \\
    \left|\nabla\bar{\eta}(x)-\overline{\nabla_\Gamma\eta}(x)\right| &\leq c|d(x)|\left|\overline{\nabla_\Gamma\eta}(x)\right|. \label{E:CEGr_Df}
  \end{align}
\end{lemma}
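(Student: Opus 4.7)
The plan is to derive the first identity in \eqref{E:CEGr_NB} directly from the defining relation $\bar{\eta}(y+r\bm{\nu}(y))=\eta(y)$, bypassing an explicit computation of $\nabla\pi$. Differentiating this identity in $r$ at fixed $y$ immediately yields $\nabla\bar{\eta}(x)\cdot\bar{\bm{\nu}}(x)=0$, which is the second assertion of \eqref{E:CEGr_NB} and which I also use below.

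For the first identity of \eqref{E:CEGr_NB}, I would fix $y\in\Gamma$, $r\in[-\delta,\delta]$, set $x:=y+r\bm{\nu}(y)$, pick a tangent vector $\mathbf{v}$ to $\Gamma$ at $y$, and choose a $C^1$ curve $\gamma$ on $\Gamma$ with $\gamma(0)=y$ and $\gamma'(0)=\mathbf{v}$. Setting $\gamma_r(t):=\gamma(t)+r\bm{\nu}(\gamma(t))$, the identity $\bar{\eta}(\gamma_r(t))=\eta(\gamma(t))$ differentiated at $t=0$, together with $(\bm{\nu}\circ\gamma)'(0)=-\mathbf{W}(y)\mathbf{v}$, gives
\begin{align*}
  \nabla\bar{\eta}(x)\cdot\bigl(\mathbf{I}_3-r\mathbf{W}(y)\bigr)\mathbf{v} = \nabla_\Gamma\eta(y)\cdot\mathbf{v}.
\end{align*}
Both $\nabla\bar{\eta}(x)$ (by the normal-orthogonality just shown) and $\nabla_\Gamma\eta(y)$ (by \eqref{E:TGr_Tan}) are tangent at $y$, and by \eqref{E:WP_PW} the symmetric matrix $\mathbf{I}_3-r\mathbf{W}(y)$ preserves the tangent plane. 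Since $\mathbf{v}$ ranges over all tangent vectors at $y$, I conclude $(\mathbf{I}_3-r\mathbf{W}(y))\nabla\bar{\eta}(x)=\nabla_\Gamma\eta(y)$, and applying $\mathbf{R}(x)$ yields $\nabla\bar{\eta}(x)=\mathbf{R}(x)\overline{\nabla_\Gamma\eta}(x)$.

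For the difference estimate \eqref{E:CEGr_Df}, I rely on the algebraic identity $\mathbf{R}-\mathbf{I}_3=d\mathbf{R}\overline{\mathbf{W}}$, which follows at once from $\mathbf{R}^{-1}=\mathbf{I}_3-d\overline{\mathbf{W}}$ by multiplication on the right by $\mathbf{R}$. Combined with the formula just derived and the uniform boundedness of $\mathbf{R}$ on $\overline{\mathcal{N}_\delta}$ (Lemma \ref{L:IdW_Inv}) and of $\mathbf{W}$ on $\Gamma$, this gives \eqref{E:CEGr_Df} at once.

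The higher-order estimate \eqref{E:CEGr_Bd} I would prove by induction on $k$. The case $k=1$ is exactly the first identity of \eqref{E:CEGr_NB}. For the inductive step, applying that identity to each component $\underline{D}_i\eta$ of $\nabla_\Gamma\eta$ (itself a $C^{k-1}$ function on $\Gamma$) gives $\nabla(\overline{\underline{D}_i\eta})=\mathbf{R}\,\overline{\nabla_\Gamma\underline{D}_i\eta}$; hence differentiating $\nabla\bar{\eta}=\mathbf{R}\,\overline{\nabla_\Gamma\eta}$ via Leibniz and using the $C^3$ regularity of $\mathbf{R}$ on $\overline{\mathcal{N}_\delta}$ controls $|\nabla^2\bar{\eta}|$ by a constant multiple of $|\overline{\nabla_\Gamma\eta}|+|\overline{\nabla_\Gamma^2\eta}|$, and iterating once more produces \eqref{E:CEGr_Bd} for $k=3$. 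The only delicate point in the whole argument is the passage from the scalar test-vector equality to the vector identity in the derivation of the main formula, which is why the tangency of both sides is established first; all other steps are routine manipulations with bounded smooth matrix fields.
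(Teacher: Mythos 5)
Your argument is correct, and it fills in exactly what the paper leaves implicit by citing \cite[Lemma 2.2]{Miu23} for \eqref{E:CEGr_NB} and \eqref{E:CEGr_Df}, and by the one-line remark that \eqref{E:CEGr_Bd} follows from repeated use of \eqref{E:CEGr_NB} and the regularity of $\mathbf{R}$. The curve argument, the observation that both sides of $(\mathbf{I}_3-r\mathbf{W}(y))\nabla\bar\eta(x)=\nabla_\Gamma\eta(y)$ are tangential so that testing against tangent vectors suffices, the Neumann-series identity for $\mathbf{R}-\mathbf{I}_3$, and the inductive Leibniz bound for the higher derivatives are all the standard route. One tiny slip: from $\mathbf{R}^{-1}=\mathbf{I}_3-d\overline{\mathbf{W}}$, multiplying on the \emph{right} by $\mathbf{R}$ gives $\mathbf{R}-\mathbf{I}_3=d\,\overline{\mathbf{W}}\mathbf{R}$; to get the form $d\,\mathbf{R}\overline{\mathbf{W}}$ you multiply on the \emph{left} (or invoke the commutativity of $\mathbf{R}$ and $\overline{\mathbf{W}}$). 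Either way the estimate \eqref{E:CEGr_Df} follows, since $\mathbf{R}$ and $\overline{\mathbf{W}}$ are uniformly bounded on $\overline{\mathcal{N}_\delta}$, so this does not affect the conclusion.
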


\begin{proof}
  We refer to \cite[Lemma 2.2]{Miu23} for the proofs of \eqref{E:CEGr_NB} and \eqref{E:CEGr_Df}.
  Also, we can get \eqref{E:CEGr_Bd} by the repeated use of \eqref{E:CEGr_NB} and the regularity of $\mathbf{R}$ on $\overline{\mathcal{N}_\delta}$.
\end{proof}

For $\eta,\zeta\in C^1(\Gamma)$ and $i=1,2,3$, the integration by parts formula
\begin{align} \label{E:IbP_Sur}
  \int_\Gamma\zeta\underline{D}_i\eta\,d\mathcal{H}^2 = -\int_\Gamma\eta(\underline{D}_i\zeta+\zeta H\nu_i)\,d\mathcal{H}^2
\end{align}
holds (see e.g. \cite[Lemma 16.1]{GilTru01} and \cite[Theorem 2.10]{DziEll13_AN}).
Let $p\in[1,\infty]$ and $\eta\in L^p(\Gamma)$.
We say that $\eta$ has the $i$-th weak tangential derivative if there exists a function $\underline{D}_i\eta\in L^p(\Gamma)$ such that \eqref{E:IbP_Sur} holds for all $\zeta\in C^1(\Gamma)$.
Also, we define the Sobolev space
\begin{align*}
  W^{1,p}(\Gamma) := \{\eta\in L^p(\Gamma) \mid \underline{D}_1\eta,\underline{D}_2\eta,\underline{D}_3\eta\in L^p(\Gamma)\}
\end{align*}
and similarly the higher order spaces $W^{m,p}(\Gamma)$.
We write
\begin{align*}
  H^m(\Gamma) := W^{m,2}(\Gamma), \quad W^{0,p}(\Gamma) := L^p(\Gamma), \quad H^0(\Gamma) = W^{0,2}(\Gamma) = L^2(\Gamma)
\end{align*}
and define the norm on $W^{m,p}(\Gamma)$ and the inner product on $H^m(\Gamma)$ as usual.

Let $[H^1(\Gamma)]'$ be the dual space of $H^1(\Gamma)$.
Setting
\begin{align*}
  \langle f,\eta\rangle_{H^1(\Gamma)} := (f,\eta)_{L^2(\Gamma)}, \quad f\in L^2(\Gamma), \, \eta\in H^1(\Gamma),
\end{align*}
we have $L^2(\Gamma)\subset[H^1(\Gamma)]'$ and $\|f\|_{[H^1(\Gamma)]'}\leq\|f\|_{L^2(\Gamma)}$.

For $\eta\in H^2(\Gamma)$, we define
\begin{align*}
  A_g\eta := \frac{1}{g}\mathrm{div}_\Gamma(g\nabla_\Gamma\eta) \quad\text{on}\quad \Gamma.
\end{align*}
Then, for all $\eta\in H^2(\Gamma)$ and $\zeta\in H^1(\Gamma)$, we have
\begin{align} \label{E:IbP_Ag}
  (gA_g\eta,\zeta)_{L^2(\Gamma)} = -\int_\Gamma g\nabla_\Gamma\eta\cdot(\nabla_\Gamma\zeta+\zeta H\bm{\nu})\,d\mathcal{H}^2 = -(g\nabla_\Gamma\eta,\nabla_\Gamma\zeta)_{L^2(\Gamma)}
\end{align}
by integration by parts \eqref{E:IbP_Sur} and $\nabla_\Gamma\eta\cdot\bm{\nu}=0$ on $\Gamma$.

\subsection{Curved thin domain} \label{SS:Pr_CTD}
Let $g_0$ and $g_1$ be $C^3$ functions on $\Gamma$ such that
\begin{align} \label{E:G_Bdd}
  c^{-1} \leq g(y) := g_1(y)-g_0(y) \leq c \quad\text{for all}\quad y\in\Gamma
\end{align}
with some constant $c>0$.
Note that we do not make any assumptions on the signs of $g_0$ and $g_1$.
For a sufficiently small $\varepsilon>0$, we define the curved thin domain $\Omega_\varepsilon$ in $\mathbb{R}^3$ by \eqref{E:Def_CTD}.
Note that the boundary $\partial\Omega_\varepsilon$ is of the form
\begin{align*}
  \partial\Omega_\varepsilon = \Gamma_\varepsilon^0\cup\Gamma_\varepsilon^1, \quad \Gamma_\varepsilon^i := \{y+\varepsilon g_i(y)\bm{\nu}(y) \mid y\in\Gamma\}, \quad i=0,1,
\end{align*}
and it is of class $C^3$ by the regularity of $\Gamma$, $g_0$, and $g_1$.
Let $\bm{\nu}_\varepsilon$ be the unit outward normal vector field of $\partial\Omega_\varepsilon$.
We write $\partial_{\nu_\varepsilon}:=\bm{\nu}_\varepsilon\cdot\nabla$ for the outer normal derivative on $\partial\Omega_\varepsilon$.

Let $\delta$ be the radius of the tubular neighborhood $\mathcal{N}_\delta$ of $\Gamma$ given in Section \ref{SS:Pr_Sur}.
Since $g_0$ and $g_1$ are bounded on $\Gamma$, we can take an $\varepsilon_\delta\in(0,1)$ such that $\varepsilon_\delta|g_i|\leq\delta$ on $\Gamma$ for $i=0,1$.
Hence, $\overline{\Omega_\varepsilon}\subset\overline{\mathcal{N}_\delta}$ for $\varepsilon\in(0,\varepsilon_\delta]$.
In what follows, we always assume $0<\varepsilon\leq\varepsilon_\delta<1$.

For $y\in\Gamma$ and $r\in[-\delta,\delta]$, we define
\begin{align} \label{E:Def_J}
  J(y,r) := \det[\mathbf{I}_3-r\mathbf{W}(y)] =\{1-r\kappa_1(y)\}\{1-r\kappa_2(y)\}.
\end{align}
Using $H=\kappa_1+\kappa_2$ and $K=\kappa_1\kappa_2$, we can also write
\begin{align*}
  J(y,r) = 1-rH(y)+r^2K(y).
\end{align*}
Since $\mathbf{W}$ is of class $C^3$ on $\Gamma$ and \eqref{E:Cur_Bd} holds, we have
\begin{align} \label{E:J_Bdd}
  c^{-1} \leq J \leq c, \quad |\partial_r^\ell J| \leq c, \quad |\nabla_\Gamma^\ell J| \leq c \quad\text{on}\quad \Gamma\times[-\delta,\delta], \, \ell=1,2,3.
\end{align}
Here, $\nabla_\Gamma$ applies to the $y$-variable of $J$.
Also, since $g_0$ and $g_1$ are bounded on $\Gamma$,
\begin{align} \label{E:J_Diff}
  |J(y,r)-1| \leq c\varepsilon \quad \text{for all}\quad y\in\Gamma, \, r\in[\varepsilon g_0(y),\varepsilon g_1(y)].
\end{align}
The function $J$ is the Jacobian in the change of variables formula
\begin{align} \label{E:CoV_CTD}
  \int_{\Omega_\varepsilon}\varphi(x)\,dx = \int_{\Gamma}\int_{\varepsilon g_0(y)}^{\varepsilon g_1(y)}\varphi^\sharp(y,r)J(y,r)\,dr\,d\mathcal{H}^2(y)
\end{align}
for a function $\varphi$ on $\Omega_\varepsilon$ (see e.g. \cite[Section 14.6]{GilTru01}), where we set
\begin{align} \label{E:Pull_CTD}
  \varphi^\sharp(y,r) := \varphi(y+r\bm{\nu}(y,t)), \quad y\in\Gamma, \, r\in[\varepsilon g_0(y),\varepsilon g_1(y)].
\end{align}
For $p\in[1,\infty)$, it follows from \eqref{E:J_Bdd} and \eqref{E:CoV_CTD} that
\begin{align} \label{E:Lp_CTD}
  c^{-1}\|\varphi\|_{L^p(\Omega_\varepsilon)}^p \leq \int_{\Gamma}\int_{\varepsilon g_0(y)}^{\varepsilon g_1(y)}|\varphi^\sharp(y,r)|^p\,dr\,d\mathcal{H}^{n-1}(y) \leq c\|\varphi\|_{L^p(\Omega_\varepsilon)}^p.
\end{align}
Also, for a function $\eta$ on $\Gamma$ and its constant extension $\bar{\eta}$, we have
\begin{align} \label{E:Lp_CE}
  c^{-1}\varepsilon^{1/p}\|\eta\|_{L^p(\Gamma)} \leq \|\bar{\eta}\|_{L^p(\Omega_\varepsilon)} \leq c\varepsilon^{1/p}\|\eta\|_{L^p(\Gamma)}
\end{align}
by $\bar{\eta}^\sharp(y,r)=\eta(y)$, \eqref{E:G_Bdd}, and \eqref{E:J_Bdd}.

In what follows, we write $|\Omega_\varepsilon|$ for the volume of $\Omega_\varepsilon$.
Then,
\begin{align*}
  |\Omega_\varepsilon| = \int_{\Omega_\varepsilon}dx = \int_\Gamma\int_{\varepsilon g_0(y)}^{\varepsilon g_1(y)}J(y,r)\,dr\,d\mathcal{H}^2(y) \leq c\varepsilon
\end{align*}
by \eqref{E:G_Bdd}, \eqref{E:J_Bdd}, and \eqref{E:CoV_CTD}.

Let $[H^1(\Omega_\varepsilon)]'$ be the dual space of $H^1(\Omega_\varepsilon)$.
As in the case of $\Gamma$, we set and consider
\begin{align*}
  \langle f,\cdot\rangle_{H^1(\Omega_\varepsilon)} := (f,\cdot)_{L^2(\Omega_\varepsilon)}, \quad f\in L^2(\Omega_\varepsilon), \quad  L^2(\Omega_\varepsilon) \subset [H^1(\Omega_\varepsilon)]'.
\end{align*}

\subsection{Basic results} \label{SS:Pr_Bas}
Let us give some basic results used in the analysis of the thin-domain problem \eqref{E:CH_CTD} and the limit problem \eqref{E:CH_Lim}.
First, we consider the closed surface $\Gamma$.

\begin{lemma} \label{L:Poi_Sur}
  Let $v\in H^1(\Gamma)$ satisfy $\int_\Gamma gv\,d\mathcal{H}^2=0$.
  Then,
  \begin{align} \label{E:Poi_Sur}
    \|v\|_{L^2(\Gamma)} \leq c\|\nabla_\Gamma v\|_{L^2(\Gamma)}.
  \end{align}
\end{lemma}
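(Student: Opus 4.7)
The plan is to reduce this weighted Poincar\'e inequality to the classical (unweighted) Poincar\'e inequality on the closed surface $\Gamma$, which states that any $w \in H^1(\Gamma)$ with $\int_\Gamma w \, d\mathcal{H}^2 = 0$ satisfies $\|w\|_{L^2(\Gamma)} \le c\|\nabla_\Gamma w\|_{L^2(\Gamma)}$. This is a standard fact on compact surfaces without boundary (e.g.\ via Rellich compactness or the spectral theory of $-\Delta_\Gamma$), so I would invoke it directly.

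First, set $\bar v := |\Gamma|^{-1}\int_\Gamma v\,d\mathcal{H}^2$, the unweighted mean of $v$. Then $v - \bar v$ has zero unweighted mean, so by the classical Poincar\'e inequality on $\Gamma$,
\begin{equation*}
  \|v - \bar v\|_{L^2(\Gamma)} \le c\|\nabla_\Gamma(v - \bar v)\|_{L^2(\Gamma)} = c\|\nabla_\Gamma v\|_{L^2(\Gamma)}.
\end{equation*}
Second, I would use the weighted zero-mean hypothesis to control the constant $\bar v$. Writing
\begin{equation*}
  0 = \int_\Gamma g v \, d\mathcal{H}^2 = \int_\Gamma g (v - \bar v)\, d\mathcal{H}^2 + \bar v \int_\Gamma g \, d\mathcal{H}^2,
\end{equation*}
and using $\int_\Gamma g\, d\mathcal{H}^2 \ge c^{-1}|\Gamma| > 0$ from \eqref{E:G_Bdd}, I would solve for $\bar v$ and apply the Cauchy--Schwarz inequality together with the boundedness of $g$ to obtain
\begin{equation*}
  |\bar v| \le c\|v - \bar v\|_{L^2(\Gamma)} \le c\|\nabla_\Gamma v\|_{L^2(\Gamma)}.
\end{equation*}
Combining the two bounds via the triangle inequality $\|v\|_{L^2(\Gamma)} \le \|v-\bar v\|_{L^2(\Gamma)} + |\bar v|\,|\Gamma|^{1/2}$ yields \eqref{E:Poi_Sur}.

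There is no real obstacle here: the only substantive ingredient is the classical Poincar\'e inequality on the closed surface $\Gamma$, and everything else is elementary algebra exploiting the two-sided bound $c^{-1} \le g \le c$. If one preferred to avoid quoting the classical inequality as a black box, a contradiction/compactness argument would work equally well: a normalized sequence $v_n$ with $\|v_n\|_{L^2(\Gamma)} = 1$, $\int_\Gamma g v_n\,d\mathcal{H}^2 = 0$, and $\|\nabla_\Gamma v_n\|_{L^2(\Gamma)} \to 0$ would, by Rellich compactness of $H^1(\Gamma) \hookrightarrow L^2(\Gamma)$, converge in $L^2(\Gamma)$ to a constant (since its weak gradient vanishes in the limit), and the weighted zero-mean condition would force that constant to be zero, contradicting the normalization.
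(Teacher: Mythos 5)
Your proof is correct, and your primary argument takes a genuinely different route from the paper's. The paper sketches only a compactness--contradiction argument (a normalized sequence $v_n$ with $\|\nabla_\Gamma v_n\|_{L^2(\Gamma)} \to 0$ converges by Rellich to a constant, which the weighted zero-mean condition forces to be zero, contradicting normalization); this is exactly your fall-back option in the last paragraph. Your main argument instead \emph{reduces} the weighted inequality to the classical unweighted Poincar\'e inequality on $\Gamma$: subtract the unweighted mean $\bar v$, apply the classical inequality to $v-\bar v$, and then use the weighted zero-mean hypothesis together with $c^{-1}\le g\le c$ to bound $|\bar v|$ by $\|v-\bar v\|_{L^2(\Gamma)}$. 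The algebra you present for that last step is correct. The trade-off is familiar: your direct route gives explicit constant-tracking in terms of the constant of the unweighted inequality and the bounds on $g$, but it presupposes the unweighted Poincar\'e inequality (which itself is usually established by the same compactness argument); the paper's contradiction argument is self-contained but non-quantitative. Since the paper does not need explicit constants here (only uniformity in $\varepsilon$, and $g$ is a fixed function on $\Gamma$), the two approaches are of equal utility for the purposes of this work.
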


As in the case $g\equiv 1$ (see e.g. \cite[Lemma 3.8]{Miu20_03}), Lemma \ref{L:Poi_Sur} is shown by a contradiction argument, the compact embedding $H^1(\Gamma)\hookrightarrow L^2(\Gamma)$, and $\int_\Gamma g\,d\mathcal{H}^2>0$ due to \eqref{E:G_Bdd}.

\begin{lemma} \label{L:Sob_Sur}
  We have
  \begin{alignat}{3}
    \|v\|_{L^2(\Gamma)} &\leq c\|v\|_{W^{1,1}(\Gamma)} &\quad &\text{for all} &\quad &v\in W^{1,1}(\Gamma), \label{E:SoSu_W11} \\
    \|v\|_{L^p(\Gamma)} &\leq c\|v\|_{H^1(\Gamma)} &\quad &\text{for all} &\quad &v\in H^1(\Gamma), \, p\in[2,\infty). \label{E:SoSu_H1}
  \end{alignat}
\end{lemma}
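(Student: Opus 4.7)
The plan is to reduce both inequalities to the standard Euclidean Sobolev embeddings in dimension two by means of local charts and a partition of unity on the compact $C^5$ surface $\Gamma$.

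First I would fix a finite atlas $\{(U_k,\psi_k)\}_{k=1}^N$ of $\Gamma$ with $C^5$ charts $\psi_k\colon U_k\to V_k\subset\mathbb{R}^2$ whose metric coefficients $g_{ij}^{(k)}$ and Jacobians $\sqrt{\det g^{(k)}}$ are uniformly bounded above and below (this is possible because $\Gamma$ is compact and smooth). I would then choose a $C^\infty$ partition of unity $\{\chi_k\}_{k=1}^N$ subordinate to $\{U_k\}$, so that $v=\sum_{k=1}^N \chi_k v$ and each $\chi_k v$ has support in $U_k$. Under the chart, $\widetilde{v}_k:=(\chi_k v)\circ\psi_k^{-1}$ is a compactly supported function on $V_k\subset\mathbb{R}^2$, and by the bounds on the metric, the intrinsic $L^p$ and $W^{1,p}$ norms of $\chi_k v$ on $\Gamma$ are comparable to the Euclidean norms of $\widetilde{v}_k$ on $\mathbb{R}^2$, with constants depending only on $\Gamma$ and the atlas.

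For \eqref{E:SoSu_W11}, I would apply the Gagliardo--Nirenberg--Sobolev inequality in $\mathbb{R}^2$: for every $u\in W^{1,1}(\mathbb{R}^2)$ with compact support one has $\|u\|_{L^2(\mathbb{R}^2)}\leq c\|\nabla u\|_{L^1(\mathbb{R}^2)}$. Applying this to each $\widetilde{v}_k$, using $\nabla(\chi_k v)=(\nabla_\Gamma\chi_k)v+\chi_k\nabla_\Gamma v$ and the uniform bound $\|\chi_k\|_{C^1(\Gamma)}\leq c$, pulling back to $\Gamma$, and summing in $k$ gives
\begin{equation*}
  \|v\|_{L^2(\Gamma)}\leq\sum_{k=1}^N\|\chi_k v\|_{L^2(\Gamma)}\leq c\sum_{k=1}^N\bigl(\|v\|_{L^1(\Gamma)}+\|\nabla_\Gamma v\|_{L^1(\Gamma)}\bigr)\leq c\|v\|_{W^{1,1}(\Gamma)}.
\end{equation*}
For \eqref{E:SoSu_H1}, I would use the two-dimensional Sobolev embedding $H^1(\mathbb{R}^2)\hookrightarrow L^p(\mathbb{R}^2)$ for every $p\in[2,\infty)$, applied to each $\widetilde{v}_k$, and repeat the same pullback/partition-of-unity argument to obtain $\|v\|_{L^p(\Gamma)}\leq c\|v\|_{H^1(\Gamma)}$, with the constant depending on $p$ and $\Gamma$.

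There is no essential obstacle; the only point requiring care is the verification that the constants coming from the change of variables depend only on the fixed (finite) atlas and partition of unity, which is immediate from the compactness and $C^5$ regularity of $\Gamma$. In particular, neither inequality depends on $\varepsilon$, so they will serve as ambient-independent tools in the subsequent analysis of the limit problem.
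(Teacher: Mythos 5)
Your proposal is correct and matches the approach the paper itself indicates: localization via a finite atlas and a subordinate partition of unity, pushing forward to $\mathbb{R}^2$, applying the flat two-dimensional Sobolev embeddings (from Adams--Fournier), and pulling back using the uniform comparability of norms coming from compactness and $C^5$ regularity of $\Gamma$. The details you supply — the Gagliardo--Nirenberg--Sobolev inequality $\|u\|_{L^2(\mathbb{R}^2)}\leq c\|\nabla u\|_{L^1(\mathbb{R}^2)}$ for \eqref{E:SoSu_W11} and $H^1(\mathbb{R}^2)\hookrightarrow L^p(\mathbb{R}^2)$ for \eqref{E:SoSu_H1}, together with the product rule $\nabla_\Gamma(\chi_k v)=(\nabla_\Gamma\chi_k)v+\chi_k\nabla_\Gamma v$ — are exactly what the omitted proof would use.
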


\begin{lemma} \label{L:Ipl_Sur}
  Let $v\in H^2(\Gamma)$ and $p\in[2,\infty]$.
  Then,
  \begin{align} \label{E:Ipl_Sur}
    \|v\|_{L^p(\Gamma)} \leq c\|v\|_{L^2(\Gamma)}^{1-\sigma}\|v\|_{H^2(\Gamma)}^\sigma, \quad \sigma = \frac{1}{2}-\frac{1}{p}.
  \end{align}
\end{lemma}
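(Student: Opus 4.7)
The plan is to reduce the inequality on $\Gamma$ to the classical Gagliardo--Nirenberg inequality on $\mathbb{R}^2$ via a partition of unity argument, exploiting the fact that $\Gamma$ is a compact two-dimensional $C^5$ surface. Since $\Gamma$ is compact, I would fix a finite atlas $\{(U_j,\phi_j)\}_{j=1}^N$ of $C^5$ coordinate charts $\phi_j:U_j\to V_j\subset\mathbb{R}^2$ together with a smooth subordinate partition of unity $\{\psi_j\}_{j=1}^N$ satisfying $\mathrm{supp}\,\psi_j\subset U_j$ and $\sum_j\psi_j\equiv 1$ on $\Gamma$. The Leibniz rule together with the smoothness of $\psi_j$ (and its tangential derivatives up to order two) yields $\|\psi_j v\|_{L^2(\Gamma)}\leq c\|v\|_{L^2(\Gamma)}$ and $\|\psi_j v\|_{H^2(\Gamma)}\leq c\|v\|_{H^2(\Gamma)}$, so the localization is harmless.

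Next, for each $j$, I would consider the pullback $v_j:=(\psi_j v)\circ\phi_j^{-1}$, extended by zero to all of $\mathbb{R}^2$; this is an $H^2$ function compactly supported in $V_j$. Because $\phi_j$ is a $C^5$ diffeomorphism, the first and second fundamental forms of $\Gamma$ are comparable to the Euclidean metric on $V_j$, so the intrinsic Sobolev norms on $\Gamma$ (defined via the tangential derivatives $\underline{D}_i$ as in Section~2.2) are equivalent to the standard Euclidean ones on $\mathbb{R}^2$, with constants depending only on the atlas:
\begin{equation*}
  c^{-1}\|v_j\|_{W^{m,p}(\mathbb{R}^2)} \leq \|\psi_j v\|_{W^{m,p}(\Gamma)} \leq c\|v_j\|_{W^{m,p}(\mathbb{R}^2)}, \quad m=0,1,2.
\end{equation*}
To each $v_j$ I would then apply the classical Gagliardo--Nirenberg inequality in $\mathbb{R}^2$: for $p\in[2,\infty]$ and $\sigma=1/2-1/p$,
\begin{equation*}
  \|v_j\|_{L^p(\mathbb{R}^2)} \leq c\|v_j\|_{L^2(\mathbb{R}^2)}^{1-\sigma}\|v_j\|_{H^2(\mathbb{R}^2)}^{\sigma}.
\end{equation*}
The particular exponent $\sigma=1/2-1/p$ is exactly what dimension-counting gives for $k=2$ derivatives in dimension $n=2$ via the relation $1/p=1/2-(1-\sigma)\cdot 0-\sigma\cdot(1/2-2/2)$; in particular $\sigma=1/2$ at $p=\infty$ corresponds to the 2D borderline embedding $H^2\hookrightarrow L^\infty$.

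Finally, I would transfer back to $\Gamma$ using the norm equivalence, sum over $j$ via the triangle inequality, and conclude
\begin{equation*}
  \|v\|_{L^p(\Gamma)} \leq \sum_{j=1}^N\|\psi_j v\|_{L^p(\Gamma)} \leq c\sum_{j=1}^N\|\psi_j v\|_{L^2(\Gamma)}^{1-\sigma}\|\psi_j v\|_{H^2(\Gamma)}^{\sigma} \leq cN\|v\|_{L^2(\Gamma)}^{1-\sigma}\|v\|_{H^2(\Gamma)}^{\sigma},
\end{equation*}
which yields \eqref{E:Ipl_Sur}. The only delicate points are bookkeeping rather than mathematics: one must verify that the equivalence of Sobolev norms under the chart $\phi_j$ covers both $H^2$ and $L^p$ simultaneously (straightforward from the chain rule and the $C^5$ regularity of $\Gamma$), and that the endpoint $p=\infty$ falls within the admissible range of the Euclidean Gagliardo--Nirenberg inequality in dimension two. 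Since the latter is standard, the main obstacle---verifying the precise form of $\sigma$ in 2D---is already built into the classical result we invoke.
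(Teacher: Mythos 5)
Your proposal is correct and follows essentially the same route the paper sketches: the paper states that Lemma \ref{L:Ipl_Sur} is proved ``by a localization argument with a partition of unity on $\Gamma$ and application of the same inequalities on flat domains in $\mathbb{R}^2$'' (citing Adams--Fournier Theorems 4.12 and 5.8), which is precisely the chart-plus-partition-of-unity reduction to the two-dimensional Gagliardo--Nirenberg inequality that you carry out. The exponent check $\sigma=1/2-1/p$ and the endpoint $p=\infty$ (where $\sigma=1/2<1$, safely inside the admissible range even though $m-j-n/r=1$ is an integer) are both handled correctly.
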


Lemmas \ref{L:Sob_Sur} and \ref{L:Ipl_Sur} are shown by a localization argument with a partition of unity on $\Gamma$ and application of the same inequalities on flat domains in $\mathbb{R}^2$ given in e.g. \cite[Theorems 4.12 and 5.8]{AdaFou03}.
We omit details here.

\begin{lemma} \label{L:InA_Sur}
  Let $f\in[H^1(\Gamma)]'$ satisfy $\langle f,g\rangle_{H^1(\Gamma)}=0$.
  Then, there exists a unique function $L_gf\in H^1(\Gamma)$ such that
  \begin{align} \label{E:InA_Sur}
    (g\nabla_\Gamma L_gf,\nabla_\Gamma\eta)_{L^2(\Gamma)} = \langle f,g\eta\rangle_{H^1(\Gamma)} \quad\text{for all}\quad \eta\in H^1(\Gamma)
  \end{align}
  and $\int_\Gamma gL_gf\,d\mathcal{H}^2=0$.
  Moreover, $L_g$ is linear and
  \begin{align} \label{E:IAS_Bdd}
    \|L_gf\|_{H^1(\Gamma)} \leq c\|f\|_{[H^1(\Gamma)]'}.
  \end{align}
\end{lemma}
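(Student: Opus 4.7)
The plan is to apply the Lax--Milgram theorem on the closed subspace of $H^1(\Gamma)$ consisting of functions with zero weighted average, and then extend the resulting identity to all of $H^1(\Gamma)$ using the compatibility assumption on $f$. Set
\begin{align*}
  V := \{\eta\in H^1(\Gamma) \mid \textstyle\int_\Gamma g\eta\,d\mathcal{H}^2 = 0\},
\end{align*}
which is a closed subspace of $H^1(\Gamma)$ because $\eta\mapsto\int_\Gamma g\eta\,d\mathcal{H}^2$ is a bounded linear functional on $H^1(\Gamma)$ (using \eqref{E:G_Bdd}). On $V$, consider the bilinear form $a(u,\eta):=(g\nabla_\Gamma u,\nabla_\Gamma\eta)_{L^2(\Gamma)}$ and the linear functional $\Phi(\eta):=\langle f,g\eta\rangle_{H^1(\Gamma)}$.

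I would first verify the Lax--Milgram hypotheses on $V$. Continuity of $a$ follows from the uniform boundedness of $g$ on $\Gamma$. For coercivity, note that $u\in V$ satisfies $\int_\Gamma gu\,d\mathcal{H}^2=0$, so Lemma \ref{L:Poi_Sur} yields $\|u\|_{L^2(\Gamma)}\leq c\|\nabla_\Gamma u\|_{L^2(\Gamma)}$, and combined with the lower bound $g\geq c^{-1}$ from \eqref{E:G_Bdd} we obtain $a(u,u)\geq c^{-1}\|u\|_{H^1(\Gamma)}^2$. For boundedness of $\Phi$, observe that $g\in C^3(\Gamma)\subset W^{1,\infty}(\Gamma)$, so the product rule gives $\|g\eta\|_{H^1(\Gamma)}\leq c\|\eta\|_{H^1(\Gamma)}$, and therefore $|\Phi(\eta)|\leq c\|f\|_{[H^1(\Gamma)]'}\|\eta\|_{H^1(\Gamma)}$. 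Lax--Milgram then produces a unique $u\in V$ satisfying $a(u,\eta)=\Phi(\eta)$ for all $\eta\in V$; I would set $L_gf:=u$.

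Next I would extend the variational identity from $V$ to all of $H^1(\Gamma)$. Any $\eta\in H^1(\Gamma)$ admits the decomposition $\eta=\tilde\eta+c_\eta$, where $c_\eta:=(\int_\Gamma g\,d\mathcal{H}^2)^{-1}\int_\Gamma g\eta\,d\mathcal{H}^2$ is a constant and $\tilde\eta:=\eta-c_\eta\in V$. Since $\nabla_\Gamma c_\eta\equiv 0$ we have $a(u,c_\eta)=0$, while the hypothesis $\langle f,g\rangle_{H^1(\Gamma)}=0$ gives $\Phi(c_\eta)=c_\eta\langle f,g\rangle_{H^1(\Gamma)}=0$. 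Adding the contributions from $\tilde\eta$ and $c_\eta$ yields \eqref{E:InA_Sur} on all of $H^1(\Gamma)$. This extension step is the only part of the argument that is not purely formal, and it is precisely where the compatibility hypothesis plays its role.

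For the remaining assertions: uniqueness follows because any difference $w$ of two admissible solutions lies in $V$ and satisfies $a(w,w)=0$, forcing $\nabla_\Gamma w\equiv 0$ on $\Gamma$, hence $w$ constant, hence $w\equiv0$ by $\int_\Gamma gw\,d\mathcal{H}^2=0$ and $g>0$. Linearity of $L_g$ is immediate from uniqueness and linearity of the problem in $f$. Finally, \eqref{E:IAS_Bdd} is obtained by testing \eqref{E:InA_Sur} with $\eta=L_gf$: coercivity and boundedness of $\Phi$ give
\begin{align*}
  c^{-1}\|L_gf\|_{H^1(\Gamma)}^2 \leq a(L_gf,L_gf) = \Phi(L_gf) \leq c\|f\|_{[H^1(\Gamma)]'}\|L_gf\|_{H^1(\Gamma)},
\end{align*}
and cancellation of one factor completes the proof.
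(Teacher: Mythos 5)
Your proposal is correct and follows essentially the same route as the paper: Lax--Milgram applied to the bilinear form $(g\nabla_\Gamma\cdot,\nabla_\Gamma\cdot)_{L^2(\Gamma)}$, with coercivity obtained from the weighted Poincar\'e inequality (Lemma \ref{L:Poi_Sur}) and the bound \eqref{E:G_Bdd}, followed by testing with $\eta=L_gf$ to get \eqref{E:IAS_Bdd}. The only difference is that you spell out the step the paper leaves implicit — working first on the closed subspace $V$ of zero weighted-mean functions and then extending \eqref{E:InA_Sur} to all of $H^1(\Gamma)$ via the constant/mean-free decomposition and the compatibility condition $\langle f,g\rangle_{H^1(\Gamma)}=0$ — which is a correct and more transparent presentation of the same argument.
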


The operator $L_g$ is the inverse of $-A_g$ by \eqref{E:IbP_Ag}.
Note that the condition $\langle f,g\rangle_{H^1(\Gamma)}=0$ is required for the validity of \eqref{E:InA_Sur} with $\eta\equiv1$.

\begin{proof}
  If $v\in H^1(\Gamma)$ and $\int_\Gamma gv\,d\mathcal{H}^2=0$, then we see by \eqref{E:G_Bdd} and \eqref{E:Poi_Sur} that
  \begin{align*}
    \|v\|_{H^1(\Gamma)} \leq c\|\nabla_\Gamma v\|_{L^2(\Gamma)} \leq c\|\sqrt{g}\,\nabla_\Gamma v\|_{L^2(\Gamma)}.
  \end{align*}
  Hence, by the Lax--Milgram theorem, there exists a unique solution $L_gf\in H^1(\Gamma)$ to \eqref{E:InA_Sur} satisfying $\int_\Gamma gL_gf\,d\mathcal{H}^2=0$.
  Since $\langle f,g\eta\rangle_{H^1(\Gamma)}$ is linear in $f$, we see that $L_g$ is linear.
  Also, setting $\eta=L_gf$ in \eqref{E:InA_Sur}, we have
  \begin{align*}
    \|L_gf\|_{H^1(\Gamma)}^2 &\leq c\|\sqrt{g}\nabla_\Gamma L_gf\|_{L^2(\Gamma)}^2 = c\langle f,gL_g f\rangle_{H^1(\Gamma)} \leq c\|f\|_{[H^1(\Gamma)]'}\|gL_gf\|_{H^1(\Gamma)}.
  \end{align*}
  By this inequality and $\|gL_gf\|_{H^1(\Gamma)}\leq c\|L_gf\|_{H^1(\Gamma)}$, we get \eqref{E:IAS_Bdd}.
\end{proof}

Next, we consider the curved thin domain $\Omega_\varepsilon$.
Recall that we write $c$ for a general positive constant independent of $\varepsilon$.

\begin{lemma} \label{L:Sob_CTD}
  Let $u\in H^1(\Omega_\varepsilon)$ and $p\in[2,6]$.
  Then,
  \begin{align} \label{E:Sob_CTD}
    \varepsilon^{-1/p}\|u\|_{L^p(\Omega_\varepsilon)} \leq c\varepsilon^{-1/2}\|u\|_{H^1(\Omega_\varepsilon)}.
  \end{align}
\end{lemma}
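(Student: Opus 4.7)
The target \eqref{E:Sob_CTD} is the classical three-dimensional Sobolev embedding $H^1 \hookrightarrow L^p$ written with explicit $\varepsilon$-dependence as the curved slab $\Omega_\varepsilon$ collapses onto $\Gamma$. The natural strategy is to rescale the $O(\varepsilon)$ normal direction of $\Omega_\varepsilon$ to an $O(1)$ reference domain $D$, apply the usual Sobolev inequality on $D$ (whose constant is $\varepsilon$-independent), and undo the rescaling.

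Concretely, I would fix reference thickness parameters so that $D := \{y + s\bm{\nu}(y) : y \in \Gamma,\, g_0(y) < s < g_1(y)\}$ is a bounded Lipschitz open subset of $\mathcal{N}_\delta$ (if the given $g_0, g_1$ are too large one rescales them by a harmless constant, or localizes through a finite atlas of $\Gamma$ as in the proofs of Lemmas \ref{L:Sob_Sur} and \ref{L:Ipl_Sur}), and consider the dilation $\Phi_\varepsilon : D \to \Omega_\varepsilon$ defined by $\Phi_\varepsilon(y + s\bm{\nu}(y)) := y + \varepsilon s\bm{\nu}(y)$, a $C^3$ diffeomorphism for $\varepsilon \in (0, \varepsilon_\delta]$. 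Setting $\tilde u := u \circ \Phi_\varepsilon$, the change of variables \eqref{E:CoV_CTD} gives $\|u\|_{L^p(\Omega_\varepsilon)}^p \sim \varepsilon\|\tilde u\|_{L^p(D)}^p$ since the Jacobian of $\Phi_\varepsilon$ is $\varepsilon J(y,\varepsilon s)$ and $J$ is uniformly bounded above and below by \eqref{E:J_Bdd}. Differentiating with the chain rule and \eqref{E:CEGr_NB}-type identities, $\partial_s \tilde u = \varepsilon(\bar{\bm{\nu}} \cdot \nabla u) \circ \Phi_\varepsilon$ while the tangential part of $\nabla \tilde u$ is pointwise controlled by $|\nabla u| \circ \Phi_\varepsilon$. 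Therefore $\|\nabla \tilde u\|_{L^2(D)}^2 \leq c\varepsilon^{-1}(\|\nabla u\|_{L^2(\Omega_\varepsilon)}^2 + \varepsilon^2\|\partial_\nu u\|_{L^2(\Omega_\varepsilon)}^2) \leq c\varepsilon^{-1}\|\nabla u\|_{L^2(\Omega_\varepsilon)}^2$, and similarly $\|\tilde u\|_{L^2(D)}^2 \leq c\varepsilon^{-1}\|u\|_{L^2(\Omega_\varepsilon)}^2$, giving $\|\tilde u\|_{H^1(D)} \leq c\varepsilon^{-1/2}\|u\|_{H^1(\Omega_\varepsilon)}$.

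Applying the classical Sobolev inequality $\|\tilde u\|_{L^p(D)} \leq c\|\tilde u\|_{H^1(D)}$ on the fixed bounded Lipschitz domain $D$ for $p \in [2, 6]$ (cf.\ \cite[Theorem 4.12]{AdaFou03}) and undoing the volume scaling then yields $\varepsilon^{-1/p}\|u\|_{L^p(\Omega_\varepsilon)} \leq c\|\tilde u\|_{L^p(D)} \leq c\|\tilde u\|_{H^1(D)} \leq c\varepsilon^{-1/2}\|u\|_{H^1(\Omega_\varepsilon)}$, which is precisely \eqref{E:Sob_CTD}. The main point requiring care is the chain-rule computation for $\nabla \tilde u$ in tubular coordinates, where the asymmetric scaling of tangential versus normal derivatives is exactly what produces the crucial $\varepsilon^{-1/2}$ factor; everything else is a routine use of the change of variables formula together with bounds on $g_0$, $g_1$, $\mathbf{W}$, and $J$ already recorded in Section \ref{SS:Pr_CTD}.
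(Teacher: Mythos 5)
Your proposal is correct. The paper itself does not spell out a proof for this lemma—it simply refers to \cite[Lemma 3.6]{Miu24_SNS}—so a direct comparison is not possible; but the normal-rescaling strategy you describe (pull back to a fixed reference slab $D$ of unit thickness, track the anisotropic scaling of the Jacobian and of the normal versus tangential derivatives, apply the $\varepsilon$-independent Sobolev embedding $H^1(D)\hookrightarrow L^p(D)$, then undo the scaling) is sound and does yield \eqref{E:Sob_CTD}. The only point that needs tidying is the definition of $D$: as stated, $\{y+s\bm\nu(y): g_0(y)<s<g_1(y)\}$ may not sit inside the tubular neighborhood $\mathcal{N}_\delta$ and can fail to be a domain, but your caveat already covers this—take $D:=\Omega_{\varepsilon_\delta}$ and scale by $\varepsilon/\varepsilon_\delta$, or use the atlas/partition-of-unity localization. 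Note, for internal consistency, that the paper in fact uses precisely this localization-plus-rescaling device in its own written-out proof of the closely related interpolation estimate, Lemma \ref{L:Ipl_CTD} (Section \ref{SS:PA_28}): the map $\zeta_\varepsilon$ there plays the role of your $\Phi_\varepsilon$, the key Jacobian bound $\det\nabla_s\zeta_\varepsilon\sim\varepsilon$ is recorded in \eqref{Pf_28:det}, and the compression of normal to tangential derivative data into the single estimate \eqref{Pf_28:DU} is exactly the bound $|\nabla\tilde u|\le c|\nabla u|\circ\Phi_\varepsilon$ you invoke. So your argument is the same in spirit as the paper's method for its sibling lemma, just applied to $H^1\hookrightarrow L^p$ instead of the $H^2$-interpolation inequality.
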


\begin{proof}
  We refer to \cite[Lemma 3.6]{Miu24_SNS} (note that $(p-2)/2p=1/2-1/p$).
\end{proof}

\begin{lemma} \label{L:Ipl_CTD}
  Let $u\in H^2(\Omega_\varepsilon)$ and $p\in[2,\infty]$.
  Then,
  \begin{align} \label{E:Ipl_CTD}
    \varepsilon^{-1/p}\|u\|_{L^p(\Omega_\varepsilon)} \leq c\varepsilon^{-1/2}\|u\|_{L^2(\Omega_\varepsilon)}^{1-\sigma}\|u\|_{H^2(\Omega_\varepsilon)}^\sigma, \quad \sigma = \frac{3}{4}-\frac{3}{2p}.
  \end{align}
\end{lemma}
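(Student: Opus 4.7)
The plan is to reduce \eqref{E:Ipl_CTD} to the classical three-dimensional Gagliardo--Nirenberg interpolation inequality on a fixed ($\varepsilon$-independent) bounded Lipschitz reference domain, by an anisotropic rescaling that stretches the normal coordinate of $\Omega_\varepsilon$ by a factor $1/\varepsilon$.

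Let $\tilde\Omega$ denote the reference thin domain parametrized by $\{(y,s)\mid y\in\Gamma,\;g_0(y)<s<g_1(y)\}$, which may be realized as a fixed bounded Lipschitz set in $\mathbb{R}^3$ via local coordinate charts on $\Gamma$ and a partition of unity. For $u\in H^2(\Omega_\varepsilon)$ I would define $v(y,s):=u(y+\varepsilon s\bm{\nu}(y))$ for $(y,s)\in\tilde\Omega$. The map $\Phi_\varepsilon(y,s):=y+\varepsilon s\bm{\nu}(y)$ is a bi-Lipschitz bijection from $\tilde\Omega$ onto $\Omega_\varepsilon$ with Jacobian $\varepsilon J(y,\varepsilon s)$, which by \eqref{E:J_Bdd} is comparable to $\varepsilon$. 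Hence the change-of-variables formula \eqref{E:CoV_CTD} with $r=\varepsilon s$ gives
\[
\|u\|_{L^q(\Omega_\varepsilon)}\;\simeq\;\varepsilon^{1/q}\,\|v\|_{L^q(\tilde\Omega)},\qquad q\in[1,\infty],
\]
so that $\varepsilon^{-1/p}\|u\|_{L^p(\Omega_\varepsilon)}$ is uniformly comparable in $\varepsilon$ to $\|v\|_{L^p(\tilde\Omega)}$.

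Next I would compute how derivatives transform under this pullback. Since $\partial_s\Phi_\varepsilon=\varepsilon\bm{\nu}(y)$ and the tangential derivatives of $\Phi_\varepsilon$ equal $(\mathbf{I}_3-\varepsilon s\mathbf{W}(y))$ applied to tangent vectors of $\Gamma$ (using $\nabla_\Gamma\bm{\nu}=-\mathbf{W}$ and Lemma \ref{L:IdW_Inv}), the chain rule yields $\partial_sv(y,s)=\varepsilon(\bar\bm{\nu}\cdot\nabla u)\circ\Phi_\varepsilon$, while the tangential derivatives of $v$ equal the corresponding tangential derivatives of $u$ composed with $\Phi_\varepsilon$, multiplied by matrices uniformly bounded in $\varepsilon$. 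Iterating, each component of $\nabla^k v$ for $k=1,2$ is $\varepsilon^j$ times a component of $(\nabla^k u)\circ\Phi_\varepsilon$, where $j\in\{0,\dots,k\}$ equals the number of $s$-differentiations. Since $0<\varepsilon<1$, the extra positive $\varepsilon$-powers are harmless upper bounds, and combining with the above volume comparison yields
\[
\|v\|_{L^2(\tilde\Omega)}^2\;\simeq\;\varepsilon^{-1}\|u\|_{L^2(\Omega_\varepsilon)}^2,\qquad \|v\|_{H^2(\tilde\Omega)}^2\;\leq\;c\varepsilon^{-1}\|u\|_{H^2(\Omega_\varepsilon)}^2.
\]

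Finally, since $\tilde\Omega$ is a fixed bounded Lipschitz domain in $\mathbb{R}^3$, the classical three-dimensional Gagliardo--Nirenberg inequality gives, for $p\in[2,\infty]$ and $\sigma=3/4-3/(2p)$,
\[
\|v\|_{L^p(\tilde\Omega)}\;\leq\;c\,\|v\|_{L^2(\tilde\Omega)}^{1-\sigma}\|v\|_{H^2(\tilde\Omega)}^\sigma.
\]
Rewriting the left-hand side as $\varepsilon^{-1/p}\|u\|_{L^p(\Omega_\varepsilon)}$ via the first step and inserting the norm comparisons from the second step, the $\varepsilon$-exponent on the right collapses to $-\tfrac12(1-\sigma)-\tfrac12\sigma=-\tfrac12$, giving exactly \eqref{E:Ipl_CTD}. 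The main obstacle is the bookkeeping in the second step: the tangential, mixed tangential--normal, and pure normal components of $\nabla v$ and $\nabla^2 v$ scale with different powers of $\varepsilon$, and one has to verify that together with the $\varepsilon^{-1}$ volume factor they assemble into the $H^2$-bound displayed above uniformly in $\varepsilon$. Once this is established, a single application of the fixed-domain Gagliardo--Nirenberg inequality concludes the proof.
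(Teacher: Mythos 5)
Your proposal is correct and follows essentially the paper's own approach: an anisotropic rescaling by $\varepsilon^{-1}$ in the normal direction maps $\Omega_\varepsilon$ onto a fixed $\varepsilon$-independent reference domain, the $L^p$ and $H^2$ norms transform with exactly the $\varepsilon$-powers you compute, and the classical three-dimensional Gagliardo--Nirenberg inequality on the fixed domain then yields \eqref{E:Ipl_CTD} with the exponents collapsing to $\varepsilon^{-1/2}$. The paper implements this by localizing with a partition of unity on $\Gamma$ and pulling back chart by chart to the unit cube $(0,1)^3$ via the map $\zeta_\varepsilon$, whereas you speak of a single global reference domain $\tilde\Omega$; since you acknowledge that realizing $\tilde\Omega$ as a Lipschitz set in $\mathbb{R}^3$ requires precisely those charts and that partition of unity, the two formulations are the same argument.
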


We give the proof of Lemma \ref{L:Ipl_CTD} in Section \ref{S:Pf_Aux}.

\begin{lemma} \label{L:ND_CTD}
  Let $u\in H^2(\Omega_\varepsilon)$ satisfy $\partial_{\nu_\varepsilon}u=0$ on $\partial\Omega_\varepsilon$.
  Then,
  \begin{align} \label{E:ND_CTD}
    \|\partial_\nu u\|_{L^2(\Omega_\varepsilon)} \leq c\varepsilon\|u\|_{H^2(\Omega_\varepsilon)}.
  \end{align}
  Here, $\partial_\nu u:=\bar{\bm{\nu}}\cdot\nabla u$ is the derivative of $u$ in the normal direction of $\Gamma$.
\end{lemma}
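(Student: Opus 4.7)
The plan is to combine a pointwise boundary estimate $|\partial_\nu u|\leq c\varepsilon|\nabla u|$ on $\partial\Omega_\varepsilon$, capturing the $O(\varepsilon)$-deviation of $\bm{\nu}_\varepsilon$ from $\pm\bar{\bm{\nu}}$, with a one-dimensional integration along the normal direction that propagates this boundary smallness into $\Omega_\varepsilon$ at the cost of $\|\nabla^2u\|_{L^2(\Omega_\varepsilon)}$.

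For the boundary step, I would realize $\Gamma_\varepsilon^i$ as the zero set of $F_i:=d-\varepsilon\overline{g_i}$ for $i=0,1$. By \eqref{E:CEGr_NB} and Lemma \ref{L:IdW_Inv}, at a point $x=y+\varepsilon g_i(y)\bm{\nu}(y)\in\Gamma_\varepsilon^i$ one has
\[
\nabla F_i(x)=\bm{\nu}(y)-\varepsilon\mathbf{R}(x)\nabla_\Gamma g_i(y).
\]
Since $\mathbf{R}\overline{\mathbf{P}}=\overline{\mathbf{P}}\mathbf{R}$ and $\nabla_\Gamma g_i(y)$ is tangential at $y$, the vector $\tau_i(x):=\mathbf{R}(x)\nabla_\Gamma g_i(y)$ is tangential at $y$ and bounded uniformly in $\varepsilon$; in particular $\bm{\nu}(y)\cdot\tau_i(x)=0$ and $|\nabla F_i|^2=1+\varepsilon^2|\tau_i|^2$. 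The Neumann condition $\bm{\nu}_\varepsilon\cdot\nabla u=0$ on $\Gamma_\varepsilon^i$ therefore reduces to $\bm{\nu}(y)\cdot\nabla u(x)=\varepsilon\tau_i(x)\cdot\nabla u(x)$, and since $\bar{\bm{\nu}}(x)=\bm{\nu}(y)$ I obtain the pointwise bound $|\partial_\nu u|\leq c\varepsilon|\nabla u|$ on $\partial\Omega_\varepsilon$.

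To pass from the boundary into the interior, set $\varphi^\sharp(y,r):=\partial_\nu u(y+r\bm{\nu}(y))$, so that $\partial_r\varphi^\sharp(y,r)=\bm{\nu}(y)\cdot\nabla^2u(y+r\bm{\nu}(y))\bm{\nu}(y)$ is bounded by $|\nabla^2u|$ at the corresponding point. Writing $\varphi^\sharp(y,r)=\varphi^\sharp(y,\varepsilon g_0(y))+\int_{\varepsilon g_0(y)}^r\partial_s\varphi^\sharp(y,s)\,ds$, squaring, integrating in $r\in[\varepsilon g_0(y),\varepsilon g_1(y)]$, and then in $y\in\Gamma$ with the aid of \eqref{E:Lp_CTD} yields
\[
\|\partial_\nu u\|_{L^2(\Omega_\varepsilon)}^2\leq c\varepsilon\|\partial_\nu u\|_{L^2(\Gamma_\varepsilon^0)}^2+c\varepsilon^2\|\nabla^2u\|_{L^2(\Omega_\varepsilon)}^2.
\]
The very same one-dimensional argument, applied now with $w=\nabla u$ in place of $\partial_\nu u$, proves the thin-domain trace inequality $\|w\|_{L^2(\partial\Omega_\varepsilon)}^2\leq c\varepsilon^{-1}\|w\|_{L^2(\Omega_\varepsilon)}^2+c\varepsilon\|\nabla w\|_{L^2(\Omega_\varepsilon)}^2$. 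Combining this trace inequality with the boundary bound gives $\|\partial_\nu u\|_{L^2(\Gamma_\varepsilon^0)}^2\leq c\varepsilon^2\|\nabla u\|_{L^2(\Gamma_\varepsilon^0)}^2\leq c\varepsilon\|u\|_{H^2(\Omega_\varepsilon)}^2$, and substituting back yields $\|\partial_\nu u\|_{L^2(\Omega_\varepsilon)}^2\leq c\varepsilon^2\|u\|_{H^2(\Omega_\varepsilon)}^2$, which is \eqref{E:ND_CTD}.

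The main technical point is the boundary estimate: one must unwind the definition of $\bm{\nu}_\varepsilon$ carefully enough to verify that its tangential correction to $\pm\bar{\bm{\nu}}$ is genuinely of size $\varepsilon$, with a constant depending only on $g_0$, $g_1$, and the curvatures of $\Gamma$. Once this is in hand, the propagation into $\Omega_\varepsilon$ is a routine fundamental-theorem-of-calculus computation of the same type that gives the thin-domain trace inequality.
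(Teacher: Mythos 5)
Your proof is correct, and it is a genuinely different route from the paper in presentation: the paper dispatches \eqref{E:ND_CTD} in one line by invoking Lemma~4.5 of \cite{Miu21_02} for vector fields $\mathbf{v}\in H^1(\Omega_\varepsilon)^3$ with $\mathbf{v}\cdot\bm{\nu}_\varepsilon=0$ on $\partial\Omega_\varepsilon$ (applied to $\mathbf{v}=\nabla u$), whereas you reconstruct the argument from scratch. Your two ingredients are exactly the right ones. The boundary step is sound: writing $\Gamma_\varepsilon^i$ as $\{F_i=0\}$ with $F_i=d-\varepsilon\bar g_i$, using $\nabla d=\bar{\bm{\nu}}$ from \eqref{E:Fermi} and $\nabla\bar g_i=\mathbf{R}\,\overline{\nabla_\Gamma g_i}$ from \eqref{E:CEGr_NB}, and using $\mathbf{R}\overline{\mathbf{P}}=\overline{\mathbf{P}}\mathbf{R}$ to see that $\tau_i=\mathbf{R}\,\overline{\nabla_\Gamma g_i}$ is tangent at $\pi(x)$, you correctly deduce that $\bm{\nu}_\varepsilon$ is parallel to $\bar{\bm{\nu}}-\varepsilon\tau_i$, hence $\partial_\nu u=\varepsilon\tau_i\cdot\nabla u$ and $|\partial_\nu u|\le c\varepsilon|\nabla u|$ on $\partial\Omega_\varepsilon$ with a constant depending only on $g_0,g_1$ and $\Gamma$. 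The propagation step is also sound: since $\bar{\bm{\nu}}(y+r\bm{\nu}(y))=\bm{\nu}(y)$ is independent of $r$, one indeed has $\partial_r\varphi^\sharp(y,r)=\bm{\nu}(y)^{\mathrm T}\nabla^2u(y+r\bm{\nu}(y))\,\bm{\nu}(y)$, so the fundamental theorem of calculus plus Cauchy--Schwarz and \eqref{E:Lp_CTD} give the claimed interior estimate with a boundary trace term, and the companion trace inequality $\|w\|_{L^2(\partial\Omega_\varepsilon)}^2\le c\varepsilon^{-1}\|w\|_{L^2(\Omega_\varepsilon)}^2+c\varepsilon\|\nabla w\|_{L^2(\Omega_\varepsilon)}^2$ follows from the same one-dimensional argument. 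Combining gives $\|\partial_\nu u\|_{L^2(\Omega_\varepsilon)}^2\le c\varepsilon^2\|u\|_{H^2(\Omega_\varepsilon)}^2$. What your approach buys over the paper's is self-containedness: the reader does not need to chase the external reference, and the argument makes visible where the two powers of $\varepsilon$ come from (one from the near-tangency of $\bm{\nu}_\varepsilon$ to the true tangent plane, one from the short integration interval). The paper's approach buys brevity and a slightly more general formulation (arbitrary $H^1$ vector fields tangent to $\partial\Omega_\varepsilon$ rather than gradients), but the underlying technique in the cited lemma is in all likelihood the same as yours.
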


\begin{proof}
  It is shown in \cite[Lemma 4.5]{Miu21_02} that
  \begin{align*}
    \|\mathbf{v}\cdot\bar{\bm{\nu}}\|_{L^2(\Omega_\varepsilon)} \leq c\varepsilon\|\mathbf{v}\|_{H^1(\Omega_\varepsilon)} \quad\text{for all}\quad \mathbf{v}\in H^1(\Omega_\varepsilon)^3, \quad  \mathbf{v}\cdot\bm{\nu}_\varepsilon = 0 \quad\text{on}\quad \partial\Omega_\varepsilon.
  \end{align*}
  Applying this inequality to $\mathbf{v}=\nabla u$, we obtain \eqref{E:ND_CTD}.
\end{proof}

\begin{lemma} \label{L:UER_CTD}
  Let $k=0,1$ and $u\in H^{2+k}(\Omega_\varepsilon)$ satisfy $\partial_{\nu_\varepsilon}u=0$ on $\partial\Omega_\varepsilon$.
  Then,
  \begin{align} \label{E:UER_CTD}
    \|u\|_{H^{2+k}(\Omega_\varepsilon)} \leq c\Bigl(\|\Delta u\|_{H^k(\Omega_\varepsilon)}+\|u\|_{L^2(\Omega_\varepsilon)}\Bigr).
  \end{align}
\end{lemma}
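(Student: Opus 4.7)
The plan follows the classical scheme of localization, boundary straightening, and the difference-quotient method for elliptic regularity under Neumann data, but carried out with explicit tracking of the $\varepsilon$-dependence.

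\textbf{Step 1 (Localization and straightening).} I would fix a finite $C^4$ atlas $\{(V_j,\Psi_j)\}_{j=1}^N$ of $\Gamma$ with a subordinate $C^3$ partition of unity $\{\chi_j\}$; neither depends on $\varepsilon$. For each $j$, define the $C^3$ diffeomorphism
$$\Phi_\varepsilon^j(y',\tau) := \Psi_j(y')+\varepsilon\bigl[g_0(\Psi_j(y'))+g(\Psi_j(y'))\tau\bigr]\bm{\nu}(\Psi_j(y'))$$
from the $\varepsilon$-independent reference slab $\hat U_j := V_j\times(0,1)$ onto a piece of $\Omega_\varepsilon$ whose faces $\{\tau=0\}$, $\{\tau=1\}$ flatten $\Gamma^0_\varepsilon$, $\Gamma^1_\varepsilon$. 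Setting $\hat u_j := (\bar\chi_j u)\circ\Phi^j_\varepsilon$, using $\sum_j\bar\chi_j=1$ together with the $\varepsilon$-independent bound $\|\nabla^\ell\bar\chi_j\|_{L^\infty(\Omega_\varepsilon)}\le c$ from \eqref{E:CEGr_Bd}, and commuting $\bar\chi_j$ with $\Delta$ (the commutator is a first-order operator with bounded coefficients), it suffices to prove a localized, anisotropic $H^{2+k}$ estimate for each $\hat u_j$ on $\hat U_j$ and then pull back.

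\textbf{Step 2 (Uniform anisotropic estimate on the reference slab).} A direct computation of $\mathbf{G}_\varepsilon := (D\Phi_\varepsilon^j)^{\top}D\Phi_\varepsilon^j$ gives $\det\mathbf{G}_\varepsilon = \varepsilon^2 \hat J_\varepsilon$ with $c^{-1}\le\hat J_\varepsilon\le c$ uniformly in $\varepsilon$, and casts the pulled-back Laplacian in the divergence form
$$(\Delta u)\circ\Phi_\varepsilon^j = \hat J_\varepsilon^{-1}\,\partial_\alpha\bigl(\hat J_\varepsilon\, a^{\alpha\beta}_\varepsilon\,\partial_\beta\hat u_j\bigr),$$
whose coefficients satisfy the anisotropic ellipticity $\varepsilon^2 a^{\alpha\beta}_\varepsilon\xi_\alpha\xi_\beta \ge c(\varepsilon^2|\xi_{y'}|^2+|\xi_\tau|^2)$, while the Neumann condition $\partial_{\nu_\varepsilon}u=0$ transforms into the matching conormal condition $a^{\tau\beta}_\varepsilon\partial_\beta\hat u_j = 0$ on $\{\tau=0,1\}$. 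I would then multiply the equation by $\varepsilon^2$ and test the resulting weak form with the tangential difference quotient $D_{-h}^{y'}D_h^{y'}\hat u_j$ for small $h$: the matching interior/boundary coefficient structure makes the boundary contributions vanish, and the anisotropic coercivity yields $\|\partial_{y'}\nabla\hat u_j\|_{L^2(\hat U_j)}$ in terms of the rescaled right-hand side and $\|\hat u_j\|_{L^2(\hat U_j)}$. The remaining derivative $\partial_\tau^2\hat u_j$ is recovered algebraically from the equation, because its coefficient $\varepsilon^2 a^{\tau\tau}_\varepsilon$ is uniformly bounded below in $\varepsilon$. Pulling back via $\|\partial_r^\ell\partial_{y'}^m u\|_{L^2(U_j^\varepsilon)}^2 \le c\,\varepsilon^{1-2\ell}\|\partial_\tau^\ell\partial_{y'}^m\hat u_j\|_{L^2(\hat U_j)}^2$ and summing over $j$ then gives \eqref{E:UER_CTD} for $k=0$. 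For $k=1$, apply $\underline{D}_i$ to the equation, whose pullback produces a new divergence-form problem for $\partial_{y'_i}\hat u_j$ whose source is controlled by the $k=0$ bound plus a lower-order correction handled by Lemma \ref{L:Sob_CTD}; iterating Step 2 delivers the $H^3$ estimate.

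\textbf{Main obstacle.} The crux is the difference-quotient calculation in Step 2: despite the anisotropic degeneration as $\varepsilon\to 0$, one needs the boundary terms produced by integration by parts to cancel \emph{exactly}, with no $\varepsilon$-dependent remainder, and the algebraic inversion recovering $\partial_\tau^2\hat u_j$ to lose no power of $\varepsilon$. The saving feature is that one pulls back the Laplacian \emph{together with its natural Neumann trace}, which forces the conormal coefficient on $\{\tau=0,1\}$ to equal the interior coefficient of $\partial_\tau$; only then does multiplying the equation by $\varepsilon^2$ yield an identity whose interior and boundary terms match uniformly in $\varepsilon$. Verifying this cancellation, and its analogue after one further tangential differentiation for $k=1$, is where the $C^5$-regularity of $\Gamma$ and $C^3$-regularity of $g_0,g_1$ are fully exploited and where most of the technical work resides.
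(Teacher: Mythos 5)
Your proposal takes a genuinely different route from the paper on the key technical point. Both proofs share the same skeleton: localize via a partition of unity on $\Gamma$, flatten each piece by a fibered diffeomorphism, obtain tangential regularity by difference quotients in the surface directions, and recover the pure normal second derivative algebraically from the equation. But the flattening map differs, and this is exactly where the paper's Remark~\ref{R:LP_CTD} draws the line: the paper maps $\mathcal{O}\times[0,\varepsilon]$ onto $\Omega_\varepsilon$ by $\Psi_\varepsilon(s)=\psi(s')+r_\varepsilon(s)\bm{\nu}^\flat(s')$ with $r_\varepsilon$ \emph{not} rescaled, so that $\Psi_\varepsilon^{-1}$ and $\mathbf{A}_\varepsilon$ have $\varepsilon$-uniform $C^2$-bounds (Lemmas~\ref{L:LP_CTD}, \ref{L:LPC_We}) and one never sees a degenerating ellipticity constant; the price is that the reference domain $\mathcal{O}\times(0,\varepsilon)$ depends on $\varepsilon$, which is harmless since Poincar\'e and difference quotients are taken only in the $\varepsilon$-independent tangential variables. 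You instead rescale the thin direction to $[0,1]$, obtaining a fixed reference slab but an anisotropic operator with $a^{\tau\tau}_\varepsilon\sim\varepsilon^{-2}$, $a^{y'y'}_\varepsilon\sim 1$; you then multiply by $\varepsilon^2$ and carry the anisotropic coercivity $\varepsilon^2|\xi_{y'}|^2+|\xi_\tau|^2$ through every step. This does work if the $\varepsilon$-bookkeeping is done with care (the target inequality in scaled coordinates becomes $\|\partial_{y'}^2\hat u\|^2+\varepsilon^{-2}\|\partial_\tau\partial_{y'}\hat u\|^2+\varepsilon^{-4}\|\partial_\tau^2\hat u\|^2\le c(\|\hat f\|^2+\|\hat u\|^2)$, and the difference-quotient estimate delivers exactly $\varepsilon^2\|\partial_{y'}^2\hat u\|^2+\|\partial_\tau\partial_{y'}\hat u\|^2\le c\varepsilon^2(\|\hat f\|^2+\|\hat u\|^2)$ after the $H^1$ bound is fed back in). The trade-off is transparency: the paper's choice makes every coefficient and its derivatives $O(1)$ at the outset, so the difference-quotient argument is essentially the classical interior one, whereas in your version the uniform-in-$\varepsilon$ constants only emerge after balancing several opposing powers of $\varepsilon$ (for instance $\partial_\tau(\hat J_\varepsilon a^{\tau\tau}_\varepsilon)=O(\varepsilon^{-1})$ must be paired with $\|\partial_\tau\hat u\|=O(\varepsilon)$, and the pullback factor $\varepsilon^{1-2\ell}$ must be matched term by term). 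Two small caveats in your write-up: the ``exact cancellation of boundary terms'' you highlight as the crux is actually automatic — the Neumann condition is encoded in the fact that the weak form \eqref{E:LPW_Ori} holds for all test functions, not just compactly supported ones, and translations in $y'$ do not touch $\{\tau=0,1\}$, so no surprising boundary term appears in either approach; and after the anisotropic coercivity, the quantity you control is $\varepsilon\|\partial_{y'}^2\hat u\|$ plus $\|\partial_\tau\partial_{y'}\hat u\|$, not $\|\partial_{y'}\nabla\hat u\|$ uniformly, so the final assembly must explicitly divide through by $\varepsilon$ (and then by $\varepsilon^{1-2\ell}$ on pullback) — a step your outline omits but which is the entire content of Remark~\ref{R:LP_CTD}'s warning.
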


The proof of Lemma \ref{L:UER_CTD} requires long and careful discussions, since we need to confirm that the constant $c$ is independent of $\varepsilon$.
We give the proof in Section \ref{S:Uni_ER}.

\subsection{Bochner spaces} \label{SS:Pr_Boch}
Let $\mathcal{X}$ be a Banach space, $a<b$, and $u,v\in L^2(a,b;\mathcal{X})$.
If
\begin{align*}
  \int_a^b\zeta(t)v(t)\,dt = -\int_a^b\zeta'(t)u(t)\,dt \quad\text{in}\quad \mathcal{X}
\end{align*}
for all $\zeta\in C_c^1(a,b)$, then we write $v=\partial_tu$ and define
\begin{align*}
  H^1(a,b;\mathcal{X}) := \{u\in L^2(a,b;\mathcal{X}) \mid \partial_tu\in L^2(a,b;\mathcal{X})\} \subset C([a,b];\mathcal{X}).
\end{align*}
Let $S=\Gamma$ or $S=\Omega_\varepsilon$.
For $T>0$, we define
\begin{align} \label{E:Def_ET}
  \begin{aligned}
    \mathcal{E}_T(S) &:= \{u\in L^2(0,T;H^1(S)) \mid \partial_tu\in L^2(0,T;[H^1(S)]')\}, \\
    \|u\|_{\mathcal{E}_T(S)} &:= \|u\|_{L^2(0,T;H^1(S))}+\|\partial_tu\|_{L^2(0,T;[H^1(S)]')},
  \end{aligned}
\end{align}
which is a Hilbert space.
Let us give some basic results on Bochner spaces.

\begin{lemma} \label{L:ET_Equi}
  For $u\in L^2(0,T;H^1(S))$, the following conditions are equivalent:
  \begin{enumerate}
    \item $\partial_tu\in L^2(0,T;[H^1(S)]')$,
    \item there exists a function $v\in L^2(0;T;[H^1(S)]')$ such that
    \begin{align*}
      \int_0^T\langle v(t),\varphi(t)\rangle_{H^1(S)}\,dt = -\int_0^T(u(t),\partial_t\varphi(t))_{L^2(S)}\,dt
    \end{align*}
    for all $\varphi\in C_c^1(0,T;H^1(S))$,
    \item there exists a constant $c>0$ such that
    \begin{align*}
      \left|\int_0^T(u(t),\partial_t\varphi(t))_{L^2(S)}\,dt\right| \leq c\|\varphi\|_{L^2(0,T;H^1(S))}
    \end{align*}
    for all $\varphi\in C_c^1(0,T;H^1(S))$.
  \end{enumerate}
\end{lemma}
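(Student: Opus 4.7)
The plan is to prove the cyclic chain of implications (i) $\Rightarrow$ (ii) $\Rightarrow$ (iii) $\Rightarrow$ (i). For (i) $\Rightarrow$ (ii), I set $v:=\partial_tu$. By the definition of the weak time derivative given in Section \ref{SS:Pr_Boch}, for every $\zeta\in C_c^1(0,T)$ the equality $\int_0^T\zeta(t)v(t)\,dt=-\int_0^T\zeta'(t)u(t)\,dt$ holds in $[H^1(S)]'$. Pairing this identity with an arbitrary $\eta\in H^1(S)$ and using the embedding $L^2(S)\hookrightarrow[H^1(S)]'$ defined via $\langle f,\eta\rangle_{H^1(S)}=(f,\eta)_{L^2(S)}$, I obtain the integrated identity of (ii) for the special test functions $\varphi(t)=\zeta(t)\eta$. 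The general case $\varphi\in C_c^1(0,T;H^1(S))$ then follows by approximating $\varphi$ in the $C^1$-in-time, $H^1$-in-space norm by finite linear combinations of such tensor products (using separability of $H^1(S)$) and passing to the limit with dominated convergence on both sides.

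The implication (ii) $\Rightarrow$ (iii) is immediate: for $\varphi\in C_c^1(0,T;H^1(S))$, apply Cauchy--Schwarz in time to get
\begin{align*}
  \left|\int_0^T(u(t),\partial_t\varphi(t))_{L^2(S)}\,dt\right|
  = \left|\int_0^T\langle v(t),\varphi(t)\rangle_{H^1(S)}\,dt\right|
  \leq \|v\|_{L^2(0,T;[H^1(S)]')}\,\|\varphi\|_{L^2(0,T;H^1(S))},
\end{align*}
so that the constant $c:=\|v\|_{L^2(0,T;[H^1(S)]')}$ works.

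For (iii) $\Rightarrow$ (i), I would define the linear map $\Lambda\colon C_c^1(0,T;H^1(S))\to\mathbb{R}$ by $\Lambda(\varphi):=-\int_0^T(u(t),\partial_t\varphi(t))_{L^2(S)}\,dt$, which by assumption is bounded with respect to the $L^2(0,T;H^1(S))$ norm. Since $C_c^1(0,T;H^1(S))$ is dense in $L^2(0,T;H^1(S))$, the map $\Lambda$ extends uniquely to a bounded linear functional on $L^2(0,T;H^1(S))$. Invoking the canonical isometric isomorphism $[L^2(0,T;H^1(S))]'\cong L^2(0,T;[H^1(S)]')$, which is valid because $H^1(S)$ is separable and reflexive, I obtain a unique $v\in L^2(0,T;[H^1(S)]')$ representing $\Lambda$. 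Testing $\Lambda$ against $\varphi(t)=\zeta(t)\eta$ with $\zeta\in C_c^1(0,T)$ and $\eta\in H^1(S)$ gives $\int_0^T\zeta(t)\langle v(t),\eta\rangle_{H^1(S)}\,dt=-\int_0^T\zeta'(t)(u(t),\eta)_{L^2(S)}\,dt$, which, since $\eta$ is arbitrary, is precisely the identity defining $v=\partial_tu$ as an element of $L^2(0,T;[H^1(S)]')$.

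The main technical obstacles are functional-analytic rather than computational: one must carefully invoke separability/reflexivity of $H^1(S)$ to justify both the density of tensor-product test functions in (i) $\Rightarrow$ (ii) and the dual space identification in (iii) $\Rightarrow$ (i). These are standard but are the substance of the lemma, and I would record them as a brief preliminary remark before starting the three-step argument.
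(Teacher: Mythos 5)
Your proof is correct and takes essentially the same approach as the paper, differing only in the cyclic order of the implications (you do (i)$\Rightarrow$(ii)$\Rightarrow$(iii)$\Rightarrow$(i), the paper proves (ii)$\Leftrightarrow$(iii) and then (ii)$\Leftrightarrow$(i)). The one place where the paper does noticeably more work is your (i)$\Rightarrow$(ii): instead of appealing to density of tensor-product test functions in the $C^1([0,T];H^1(S))$ topology (which is true but not entirely obvious to justify), the paper constructs the approximating sequence explicitly via projections onto an orthonormal basis $\{\phi_k\}$ of $H^1(S)$, setting $\varphi_K(t)=\sum_{k=1}^K(\varphi(t),\phi_k)_{H^1(S)}\phi_k$ so that the coefficients $\zeta_k(t)=(\varphi(t),\phi_k)_{H^1(S)}$ automatically lie in $C_c^1(0,T)$ and $\varphi_K,\partial_t\varphi_K$ converge in $L^2(0,T;H^1(S))$ by dominated convergence; you would do well to be equally explicit there.
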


\begin{proof}
  If (ii) is valid, then we have (iii) with $c=\|v\|_{L^2(0,T;[H^1(S)]')}$ by H\"{o}lder's inequality.
  Conversely, (iii) yields (ii) since $C_c^1(0,T;H^1(S))$ is dense in $L^2(0,T;H^1(S))$ and
  \begin{align*}
    [L^2(0,T;H^1(S))]' = L^2(0,T;[H^1(S)]').
  \end{align*}
  Suppose that (ii) holds.
  Then, we set $\varphi=\zeta w$ for any $\zeta\in C_c^1(0,T)$ and $w\in H^1(S)$ to find that $\partial_tu=v$.
  Hence, (i) follows.
  Conversely, suppose that (i) is valid and let us show (ii).
  Let $\{\phi_k\}_{k=1}^\infty$ be an orthonormal basis of $H^1(S)$ (e.g. the normalized eigenfunctions of the Neumann Laplacian on $\Omega_\varepsilon$ or the Laplace--Beltrami operator on $\Gamma$).
  We set
  \begin{align*}
    \zeta_k(t) := (\varphi(t),\phi_k)_{H^1(S)}, \quad \varphi_K(t) := \sum_{k=1}^K\zeta_k(t)\phi_k, \quad k,K\in\mathbb{N}
  \end{align*}
  for $\varphi\in C_c^1(0,T;H^1(S))$ and $t\in[0,T]$.
  Then, since (i) holds and $\zeta_k\in C_c^1(0,T)$,
  \begin{align*}
    \int_0^T\zeta_k(t)v(t)\,dt = -\int_0^T\zeta_k'(t)u(t)\,dt \quad\text{in}\quad [H^1(S)]'
  \end{align*}
  with $v=\partial_tu$.
  We take $\langle\cdot,\phi_k\rangle_{H^1(S)}$ of both sides and sum up for $k=1,\dots,K$ to get
  \begin{align} \label{Pf_ETE:PhiK}
    \int_0^T\langle v(t),\varphi_K(t)\rangle_{H^1(S)}\,dt = -\int_0^T(u(t),\partial_t\varphi_K(t))_{L^2(S)}\,dt.
  \end{align}
  Moreover, since $\zeta_k'(t)=(\partial_t\varphi(t),\phi_k)_{H^1(S)}$ and $\varphi\in C_c^1(0,T;H^1(S))$,
  \begin{align*}
    \|\partial_t^\ell\varphi_K(t)-\partial_t^\ell\varphi(t)\|_{H^1(S)} \leq 2\|\partial_t^\ell\varphi(t)\|_{H^1(S)}, \quad \lim_{K\to\infty}\|\partial_t^\ell\varphi_K(t)-\partial_t^\ell\varphi(t)\|_{H^1(S)} = 0
  \end{align*}
  for $\ell=0,1$ and all $t\in[0,T]$.
  Thus, by the dominated convergence theorem,
  \begin{align*}
    \lim_{K\to\infty}\|\partial_t^\ell\varphi_K-\partial_t^\ell\varphi\|_{L^2(0,T;H^1(S))} = 0, \quad \ell=0,1,
  \end{align*}
  and we find that (ii) holds by letting $K\to\infty$ in \eqref{Pf_ETE:PhiK}.
\end{proof}

\begin{lemma} \label{L:ET_Emb}
  We have the continuous embeddings
  \begin{align*}
    \mathcal{E}_T(S) \hookrightarrow C([0,T];L^2(S)), \quad \mathcal{E}_T(S)\cap L^2(0,T;H^3(S)) \hookrightarrow C([0,T];H^1(S)).
  \end{align*}
\end{lemma}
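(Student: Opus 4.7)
Both embeddings are Lions--Magenes-type regularity statements that can be established by combining Bochner-space mollification with a Gelfand-triple structure. The plan is to prove the first embedding directly and to reduce the second one to an analogous argument after changing the pivot.

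For the first embedding, I would use the Gelfand triple $H^1(S) \hookrightarrow L^2(S) \hookrightarrow [H^1(S)]'$ with pivot $L^2(S)$. After extending $u$ by reflection across the endpoints of $[0,T]$ and mollifying in time with a smooth compactly supported kernel, one obtains $u^\delta \in C^\infty(\mathbb{R};H^1(S))$ with $u^\delta \to u$ in $L^2(0,T;H^1(S))$ and $\partial_tu^\delta \to \partial_tu$ in $L^2(0,T;[H^1(S)]')$ as $\delta\to 0$. For each $\delta$, the smoothness of $u^\delta$ in $t$ with values in $H^1(S)$ yields the pointwise identity
\[
\|u^\delta(t)\|_{L^2(S)}^2 - \|u^\delta(s)\|_{L^2(S)}^2 = 2\int_s^t \langle\partial_tu^\delta(\tau),u^\delta(\tau)\rangle_{H^1(S)}\,d\tau.
\]
Averaging in $s$ over a short interval and applying H\"older's inequality to the right-hand side produces Cauchy estimates that show $\{u^\delta\}$ converges in $C([0,T];L^2(S))$; its limit must coincide with $u$, giving $u \in C([0,T];L^2(S))$ together with the bound $\|u\|_{C([0,T];L^2(S))} \leq c\|u\|_{\mathcal{E}_T(S)}$.

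For the second embedding, the same strategy applies to the shifted triple $H^3(S) \hookrightarrow H^1(S) \hookrightarrow [H^3(S)]'$, where the last space is the dual of $H^3(S)$ computed with $H^1(S)$ as pivot. The crucial ingredient is the reinterpretation of the given $\partial_tu \in L^2(0,T;[H^1(S)]')$ as an element of $L^2(0,T;[H^3(S)]')$ under the new pivot, which amounts to exhibiting a continuous embedding $[H^1(S)]' \hookrightarrow [H^3(S)]'$ in the new pairing. On $\Gamma$ this is straightforward via the resolvent of the Laplace--Beltrami operator: for $f \in [H^1(\Gamma)]'$ and $\phi \in H^3(\Gamma)$, set $\langle f,\phi\rangle_{\mathrm{new}} := \langle f,(I-A_g)\phi\rangle_{\mathrm{old}}$, which is bounded since $(I-A_g)\colon H^3(\Gamma) \to H^1(\Gamma)$ is bounded. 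The case $S=\Omega_\varepsilon$ is analogous using the Neumann Laplacian, but test functions in $H^3(\Omega_\varepsilon)$ without Neumann data require an auxiliary lifting step.

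Once this identification is in place, I would repeat the mollification argument with approximants $u^\delta \to u$ in $L^2(0,T;H^3(S))$ and $\partial_t u^\delta \to \partial_t u$ in $L^2(0,T;[H^3(S)]')$, applying the pointwise identity
\[
\|u^\delta(t)\|_{H^1(S)}^2 - \|u^\delta(s)\|_{H^1(S)}^2 = 2\int_s^t \langle\partial_tu^\delta(\tau),u^\delta(\tau)\rangle_{H^3(S)}\,d\tau
\]
to obtain $u \in C([0,T];H^1(S))$ with a norm bound controlled by $\|u\|_{\mathcal{E}_T(S)} + \|u\|_{L^2(0,T;H^3(S))}$. The main obstacle is the rigorous implementation of the pivot change on $\Omega_\varepsilon$ with constants independent of $\varepsilon$, which requires the uniform elliptic regularity estimate of Lemma \ref{L:UER_CTD} to control the lifting of non-Neumann test functions in $H^3(\Omega_\varepsilon)$; all other parts of the argument are routine.
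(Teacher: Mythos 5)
Your approach is genuinely different from the paper's. The paper cites the Lions--Magenes interpolation theorem directly: if $X\hookrightarrow Y$ is a continuous dense embedding of Hilbert spaces, $u\in L^2(0,T;X)$, and $\partial_t u\in L^2(0,T;Y)$, then $u\in C([0,T];[X,Y]_{1/2})$; one then checks $[H^1,[H^1]']_{1/2}=L^2$ and $[H^3,[H^1]']_{1/2}=H^1$. That route makes no use of a Gelfand triple and requires no compatibility between the given time derivative and a chosen pivot inner product --- which is precisely the subtlety you are fighting in the second embedding.

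Your mollification argument for the first embedding is a fine elementary alternative. For the second one, the pivot change has a real gap. First, a small but genuine error: since the paper uses the unweighted $H^1(\Gamma)$ inner product, integration by parts on the closed surface $\Gamma$ gives $(u,\psi)_{H^1(\Gamma)}=(u,(I-\Delta_\Gamma)\psi)_{L^2(\Gamma)}$, so the relevant operator is $I-\Delta_\Gamma$ rather than $I-A_g$; the weighted operator $A_g$ would only appear if the $H^1$ inner product carried the weight $g$, which it does not. Second, and more substantively, on $\Omega_\varepsilon$ the identity $(u,\psi)_{H^1}=(u,(I-\Delta)\psi)_{L^2}$ picks up the boundary term $\int_{\partial\Omega_\varepsilon}u\,\partial_{\nu_\varepsilon}\psi\,dS$ for general $\psi\in H^3(\Omega_\varepsilon)$, so the given $\partial_t u\in L^2(0,T;[H^1(\Omega_\varepsilon)]')$ is not automatically a valid $[H^3(\Omega_\varepsilon)]'$-valued time derivative under the $H^1$-pivot. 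You flag this and propose a lifting, but do not carry it out, and it is not a routine step: the lifted, non-Neumann part of $\psi$ is tested against $\partial_t u$ in a way your hypotheses do not directly control. A cleaner fix within your framework is to take the dense subspace $V:=\{\psi\in H^3(\Omega_\varepsilon):\partial_{\nu_\varepsilon}\psi=0\}$ as the top of a triple $V\subset H^1\subset V'$, for which the pivot interpolation gives $[V,V']_{1/2}=H^1$ automatically; but this is essentially what the Lions--Magenes citation already packages. Finally, the worry about $\varepsilon$-uniform constants is misplaced here: Lemma \ref{L:ET_Emb} is invoked only at fixed $\varepsilon$, so there is no need to appeal to Lemma \ref{L:UER_CTD} or to track $\varepsilon$-dependence in this statement.
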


\begin{proof}
  The result follows from \cite[Chapter 1, Theorem 3.1]{LioMag72}, since
  \begin{align*}
    [H^1(S),[H^1(S)]']_{1/2} = L^2(S), \quad [H^3(S),[H^1(S)]']_{1/2} = H^1(S)
  \end{align*}
  for the complex interpolation (see \cite[Chapter 1, Theorems 7.7 and 12.5]{LioMag72}).
\end{proof}

\begin{lemma}[Weak dominated convergence theorem] \label{L:WeDo}
  Suppose that
  \begin{itemize}
    \item $f,f_k\in L^2(0,T;L^2(S))$ for all $k\in\mathbb{N}$,
    \item $f_k\to f$ a.e. in $S\times(0,T)$ as $k\to\infty$, and
    \item there exists a constant $c>0$ such that $\|f_k\|_{L^2(0,T;L^2(S))}\leq c$ for all $k\in\mathbb{N}$.
  \end{itemize}
  Then, $f_k\to f$ weakly in $L^2(0,T;L^2(S))$ as $k\to\infty$.
\end{lemma}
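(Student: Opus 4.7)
The plan is to exploit reflexivity of the Hilbert space $\mathcal{H}:=L^2(0,T;L^2(S))$ together with the pointwise a.e.\ convergence to pin down the weak limit. Since $\mathcal{H}$ is reflexive and the sequence $\{f_k\}$ is norm-bounded, every subsequence admits a further subsequence converging weakly in $\mathcal{H}$ to some $h\in\mathcal{H}$. By a standard Urysohn-type argument, to upgrade this to weak convergence of the full sequence to $f$ it suffices to show that every such weak limit $h$ must coincide with $f$ almost everywhere on $S\times(0,T)$.

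To identify the limit I would apply Mazur's lemma to any weakly convergent subsequence $f_{k_j}\rightharpoonup h$: some sequence of convex combinations of the $f_{k_j}$ converges to $h$ strongly in $\mathcal{H}$, hence, along a further subsequence, a.e.\ on $S\times(0,T)$. But those same convex combinations must also converge a.e.\ to $f$, since $f_{k_j}\to f$ a.e.\ and finite convex combinations preserve a.e.\ convergence. Uniqueness of a.e.\ limits then forces $h=f$ a.e., completing the identification.

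An alternative route, which avoids Mazur's lemma and works directly from the definition of weak convergence, is via Egorov's theorem. Note that $S\times(0,T)$ has finite Lebesgue/Hausdorff measure, because $\Gamma$ is compact, $\Omega_\varepsilon$ is bounded, and $T<\infty$. Fix a test function $\varphi\in\mathcal{H}$ and $\epsilon>0$. By absolute continuity of the integral of $|\varphi|^2$ there exists $\eta>0$ such that $\|\mathbf{1}_{E}\varphi\|_{\mathcal{H}}<\epsilon$ whenever $|E|<\eta$, and by Egorov there is a measurable set $E_\eta\subset S\times(0,T)$ with $|E_\eta^c|<\eta$ on which $f_k\to f$ uniformly. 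Splitting
\[
  \int_0^T(f_k-f,\varphi)_{L^2(S)}\,dt = \int_{E_\eta}(f_k-f)\varphi + \int_{E_\eta^c}(f_k-f)\varphi,
\]
the first integral tends to $0$ by uniform convergence and $|E_\eta|<\infty$, while the second is bounded by $\bigl(\sup_k\|f_k\|_{\mathcal{H}}+\|f\|_{\mathcal{H}}\bigr)\epsilon$ via Cauchy--Schwarz.

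The only real subtlety is identifying the weak limit, since weak convergence is tested against arbitrary $L^2$ functions that need not be bounded pointwise; this is precisely what the Mazur-plus-a.e.\ uniqueness argument, or equivalently the Egorov-plus-absolute-continuity split, resolves. The rest is soft Hilbert-space analysis requiring no further input.
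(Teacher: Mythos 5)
The paper does not prove Lemma \ref{L:WeDo} internally; it simply cites \cite[Lemma 8.3]{Rob01}. So there is no in-paper argument to compare against, and what you have supplied is a genuine self-contained proof where the paper gives only a reference. Your overall structure --- a norm-bounded sequence in the reflexive space $\mathcal{H}=L^2(0,T;L^2(S))$ has weakly convergent subsequences, every subsequential weak limit is identified with $f$ via the a.e.\ convergence, and the Urysohn subsequence principle then upgrades this to convergence of the full sequence --- is sound. Your Egorov route is in fact simpler still: since $S\times(0,T)$ has finite measure (as you correctly note, because $\Gamma$ is compact, $\Omega_\varepsilon$ is bounded, and $T<\infty$), the Egorov-plus-absolute-continuity split proves $\langle f_k-f,\varphi\rangle\to 0$ directly for each fixed $\varphi\in\mathcal{H}$, with no need for subsequences or Urysohn at all.

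One point in the Mazur route deserves sharpening. The justification ``finite convex combinations preserve a.e.\ convergence'' is not literally true: if the weights are allowed to sit anywhere, a sequence of convex combinations of a pointwise convergent sequence need not converge to the same limit (take $f_1\equiv1$, $f_k\equiv0$ for $k\ge2$, and put weight $1/2$ on $f_1$ in every combination). What makes your argument work is that Mazur's lemma produces convex combinations of \emph{tails}: for each $m$ it yields $g_m=\sum_{j\ge m}\lambda_j^{(m)}f_{k_j}$ (finitely many nonzero weights, all with index $\ge m$) with $\|g_m-h\|_{\mathcal{H}}\to0$. For a.e.\ fixed $x$, given $\epsilon>0$ there is $J$ with $|f_{k_j}(x)-f(x)|<\epsilon$ for $j\ge J$, so $|g_m(x)-f(x)|<\epsilon$ for all $m\ge J$; thus $g_m\to f$ a.e. Passing to an a.e.-convergent subsequence of $g_m$ (from the strong $L^2$ convergence to $h$) then forces $h=f$ a.e. With that clarification the Mazur route is airtight; the Egorov route already sidesteps the issue.
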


\begin{proof}
  We refer to \cite[Lemma 8.3]{Rob01} for the proof.
\end{proof}

Now, let $S=\Gamma$.
We give some results on the time derivatives of integrals over $\Gamma$.

\begin{lemma} \label{L:DtL2_gSu}
  Let $v_1,v_2\in\mathcal{E}_T(\Gamma)$.
  Then, for a.a. $t\in(0,T)$, we have
  \begin{align} \label{E:DtL2_gSu}
    \frac{d}{dt}(gv_1(t),v_2(t))_{L^2(\Gamma)} = \langle\partial_tv_1(t),gv_2(t)\rangle_{H^1(\Gamma)}+\langle\partial_tv_2(t),gv_1(t)\rangle_{H^1(\Gamma)}.
  \end{align}
\end{lemma}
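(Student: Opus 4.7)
The plan is to reduce \eqref{E:DtL2_gSu} to the classical Gelfand-triple product rule applied to the triple $H^1(\Gamma)\hookrightarrow L^2(\Gamma)\hookrightarrow[H^1(\Gamma)]'$, after noting that multiplication by the time-independent smooth weight $g$ preserves the class $\mathcal{E}_T(\Gamma)$.

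\textbf{Step 1: Multiplication by $g$ preserves $\mathcal{E}_T(\Gamma)$.} Since $g\in C^3(\Gamma)$ is bounded together with $\nabla_\Gamma g$, the pointwise product $\eta\mapsto g\eta$ is a bounded linear operator on $H^1(\Gamma)$ and also on $L^2(\Gamma)$, self-adjoint on the latter. Setting $\tilde v_1(t):=gv_1(t)$, one immediately has $\tilde v_1\in L^2(0,T;H^1(\Gamma))$. To identify $\partial_t\tilde v_1$, I would use Lemma \ref{L:ET_Equi}(ii)--(iii): for any $\varphi\in C_c^1(0,T;H^1(\Gamma))$, the function $g\varphi$ lies in the same space and time-independence of $g$ gives
\begin{align*}
  \int_0^T(gv_1(t),\partial_t\varphi(t))_{L^2(\Gamma)}\,dt = \int_0^T(v_1(t),\partial_t(g\varphi)(t))_{L^2(\Gamma)}\,dt = -\int_0^T\langle\partial_tv_1(t),g\varphi(t)\rangle_{H^1(\Gamma)}\,dt.
\end{align*}
Hence $\tilde v_1\in\mathcal{E}_T(\Gamma)$, and $\partial_t\tilde v_1\in L^2(0,T;[H^1(\Gamma)]')$ is characterized by
\begin{align*}
  \langle\partial_t\tilde v_1(t),\eta\rangle_{H^1(\Gamma)} = \langle\partial_tv_1(t),g\eta\rangle_{H^1(\Gamma)} \quad\text{for a.a. } t\in(0,T),\ \eta\in H^1(\Gamma).
\end{align*}

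\textbf{Step 2: Apply the classical product rule.} For the Gelfand triple $H^1(\Gamma)\hookrightarrow L^2(\Gamma)\hookrightarrow[H^1(\Gamma)]'$ (with dense continuous embeddings), the standard result from \cite[Chapter 1]{LioMag72} states that if $u,w\in\mathcal{E}_T(\Gamma)$, then $t\mapsto(u(t),w(t))_{L^2(\Gamma)}$ is absolutely continuous on $[0,T]$ and
\begin{align*}
  \frac{d}{dt}(u(t),w(t))_{L^2(\Gamma)} = \langle\partial_tu(t),w(t)\rangle_{H^1(\Gamma)}+\langle\partial_tw(t),u(t)\rangle_{H^1(\Gamma)}\quad\text{for a.a. }t\in(0,T).
\end{align*}
(If one prefers the symmetric form $\tfrac{d}{dt}\|u\|_{L^2}^2=2\langle\partial_tu,u\rangle$, polarization with $u\pm w$ recovers the bilinear identity.) Applying this to $u=\tilde v_1=gv_1$ and $w=v_2$ yields
\begin{align*}
  \frac{d}{dt}(gv_1(t),v_2(t))_{L^2(\Gamma)} = \langle\partial_t\tilde v_1(t),v_2(t)\rangle_{H^1(\Gamma)}+\langle\partial_tv_2(t),gv_1(t)\rangle_{H^1(\Gamma)}.
\end{align*}
Replacing the first term on the right using the identity from Step 1 (with $\eta=v_2(t)\in H^1(\Gamma)$) gives exactly \eqref{E:DtL2_gSu}.

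\textbf{Main obstacle.} The only nontrivial point is the commutation in Step 1, i.e.\ verifying that $gv_1\in\mathcal{E}_T(\Gamma)$ with the expected time derivative; once that is in place, Step 2 is a direct citation. This is mild because $g$ is time-independent and smooth, so the computation reduces to moving $g$ across the $L^2(\Gamma)$-inner product inside the defining integral for $\partial_t$, which is precisely what Lemma \ref{L:ET_Equi} is designed to handle.
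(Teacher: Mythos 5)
Your proposal is correct and follows essentially the same route as the paper: the paper's proof is a citation of the standard Gelfand-triple product rule (Boyer--Fabrie, Theorem II.5.12) together with the remark that $g$ is independent of time, and your Step 1 simply spells out in detail the reduction that this remark is shorthand for, namely that multiplication by the time-independent weight $g$ maps $\mathcal{E}_T(\Gamma)$ to itself with $\langle\partial_t(gv_1),\eta\rangle_{H^1(\Gamma)}=\langle\partial_tv_1,g\eta\rangle_{H^1(\Gamma)}$. Your use of Lemma \ref{L:ET_Equi} to justify this commutation is exactly the mechanism the paper has set up for such purposes, so the argument is both correct and in the intended spirit.
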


\begin{proof}
  We refer to \cite[Theorem II.5.12]{BoyFab13}.
  Note that $g$ is independent of time.
\end{proof}

\begin{lemma} \label{L:DtEn_gSu}
  For $v\in\mathcal{E}_T(\Gamma)\cap L^2(0,T;H^3(\Gamma))$, the following statements hold:
  \begin{enumerate}
    \item for $A_gv=g^{-1}\mathrm{div}_\Gamma(g\nabla_\Gamma v)$ on $\Gamma$, we have
    \begin{align} \label{E:DtGr_gSu}
      \frac{1}{2}\frac{d}{dt}\int_\Gamma g|\nabla_\Gamma v(t)|^2\,d\mathcal{H}^2 = \langle\partial_tv(t),-gA_gv(t)\rangle_{H^1(\Gamma)} \quad\text{for a.a.}\quad t\in(0,T),
    \end{align}
    \item if $F\in C^3(\mathbb{R})$ and $|F'''(z)|\leq c(|z|+1)$ for all $z\in\mathbb{R}$, then
    \begin{gather}
      F(v) \in C([0,T];L^1(\Gamma)), \quad F'(v) \in L^2(0,T;H^1(\Gamma)), \notag \\
      \frac{d}{dt}\int_\Gamma gF(v(t))\,d\mathcal{H}^2 = \langle\partial_tv(t),gF'(v(t))\rangle_{H^1(\Gamma)} \quad\text{for a.a.}\quad t\in(0,T). \label{E:DtFu_gSu}
    \end{gather}
  \end{enumerate}
\end{lemma}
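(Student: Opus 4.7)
The plan is to prove both identities by a time-mollification argument, which reduces each statement to the classical chain rule combined with spatial integration by parts. I would extend $v$ to a slightly larger time interval (e.g., by reflection across $t=0$ and $t=T$) and set $v_h := \rho_h \ast v$ for a symmetric mollifier $\rho_h$ in time. Standard properties of mollifiers give $v_h \in C^\infty([0,T];H^3(\Gamma))$, together with $v_h \to v$ in $L^2(0,T;H^3(\Gamma))$ and $\partial_t v_h \to \partial_t v$ in $L^2(0,T;[H^1(\Gamma)]')$ as $h \to 0$.

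For part (i), the time-smoothness of $v_h$ justifies
\[
\frac{1}{2}\frac{d}{dt}\int_\Gamma g|\nabla_\Gamma v_h(t)|^2\,d\mathcal{H}^2 = \int_\Gamma g\nabla_\Gamma v_h(t)\cdot\nabla_\Gamma \partial_t v_h(t)\,d\mathcal{H}^2,
\]
and \eqref{E:IbP_Ag} rewrites the right-hand side as $-\langle \partial_t v_h(t), gA_g v_h(t)\rangle_{H^1(\Gamma)}$, which is legitimate since $v_h(t)\in H^3(\Gamma)$. Integrating in $t$ from $s$ to $\tau$ and passing $h\to 0$---using $A_g v_h \to A_g v$ in $L^2(0,T;H^1(\Gamma))$ by the $C^3$ regularity of $g$---yields the integrated version of \eqref{E:DtGr_gSu}, from which the pointwise identity follows a.e. by the Lebesgue differentiation theorem.

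For part (ii), I would first verify the claimed regularities. Lemma \ref{L:ET_Emb} gives $v \in C([0,T];H^1(\Gamma))$, so Sobolev embedding \eqref{E:SoSu_H1} yields $v \in C([0,T];L^p(\Gamma))$ for every finite $p$, while $v\in L^2(0,T;H^2(\Gamma))$ combined with $H^2(\Gamma)\hookrightarrow W^{1,4}(\Gamma)$ (valid since $\dim\Gamma=2$) gives $\nabla_\Gamma v\in L^2(0,T;L^4(\Gamma))$. Coupling these with the polynomial bounds $|F(z)|\leq c(|z|^4+1)$, $|F'(z)|\leq c(|z|^3+1)$, $|F''(z)|\leq c(|z|^2+1)$ from \eqref{E:Int_F2nd}--\eqref{E:Int_Fzero}, continuity of the Nemytskii map $L^4\to L^1$ gives $F(v)\in C([0,T];L^1(\Gamma))$, and the chain rule $\nabla_\Gamma F'(v) = F''(v)\nabla_\Gamma v$ together with the H\"older estimate
\[
\|F''(v)\nabla_\Gamma v\|_{L^2(\Gamma)} \leq c(\|v\|_{L^8(\Gamma)}^2+1)\|\nabla_\Gamma v\|_{L^4(\Gamma)}
\]
gives $F'(v)\in L^2(0,T;H^1(\Gamma))$. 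With these regularities in hand, I would mollify again: the pointwise identity $\partial_t F(v_h) = F'(v_h)\partial_t v_h$ integrates against $g$ on $\Gamma$ and over $(s,\tau)$ to
\[
\int_\Gamma g F(v_h(\tau))\,d\mathcal{H}^2 - \int_\Gamma g F(v_h(s))\,d\mathcal{H}^2 = \int_s^\tau \langle \partial_t v_h, gF'(v_h)\rangle_{H^1(\Gamma)}\,dt,
\]
and the passage $h\to 0$ yields the integrated form of \eqref{E:DtFu_gSu}.

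The main obstacle is the nonlinear convergence $gF'(v_h)\to gF'(v)$ in $L^2(0,T;H^1(\Gamma))$, required to take the limit in the duality pairing above. The delicate piece is the gradient term $F''(v_h)\nabla_\Gamma v_h$, which couples two factors that both vary with $h$. I would handle this by extracting a subsequence of $v_h$ converging a.e. on $\Gamma\times(0,T)$ and invoking the dominated convergence theorem with the uniform-in-$h$ majorants provided by the Sobolev estimates above, exploiting the superlinear integrability $H^1(\Gamma)\hookrightarrow L^p(\Gamma)$ for all $p<\infty$ to absorb the quadratic growth of $F''$. Once this convergence is secured, the a.e. pointwise identity in \eqref{E:DtFu_gSu} again follows by Lebesgue differentiation.
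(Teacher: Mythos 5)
Your overall strategy---regularize in time, apply the classical chain rule and \eqref{E:IbP_Ag} to the smooth approximants, then pass to the limit---is exactly the paper's. The part that needs more care is the passage to the limit in the nonlinear gradient term $F''(v_h)\nabla_\Gamma v_h$. You write that you would use ``the dominated convergence theorem with the uniform-in-$h$ majorants provided by the Sobolev estimates,'' but a Sobolev inequality bounds a \emph{norm}, not the function pointwise, so it does not hand you a dominating function $G\in L^1(\Gamma\times(0,T))$ with $|F''(v_h)\nabla_\Gamma v_h|^2\le G$ a.e. A uniform bound in $L^2(0,T;L^s(\Gamma))$ for some $s>2$ (which the Sobolev embeddings do give) is also not the same as a bound in $L^{2+\delta}(\Gamma\times(0,T))$, so Vitali's theorem on the product space is not directly available either. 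As sketched, this step has a genuine gap.

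The paper closes it differently, and more directly: from \eqref{Pf_215:vk} and Lemma \ref{L:ET_Emb} one gets the strong convergence $v_k\to v$ in $C([0,T];H^1(\Gamma))$, and then one estimates the difference via the decomposition
\begin{align*}
  \nabla_\Gamma F'(v_k)-\nabla_\Gamma F'(v) = \{F''(v_k)-F''(v)\}\nabla_\Gamma v_k+F''(v)\,(\nabla_\Gamma v_k-\nabla_\Gamma v),
\end{align*}
applying the mean value theorem, H\"older with an $L^6\times L^6\times L^6$ split, and $H^1(\Gamma)\hookrightarrow L^6(\Gamma)$, so that $\|\nabla_\Gamma F'(v_k)-\nabla_\Gamma F'(v)\|_{L^2(0,T;L^2)}$ is controlled by $\|v_k-v\|_{C([0,T];H^1)}$ and $\|v_k-v\|_{L^2(0,T;H^3)}$ times bounded quantities. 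No extraction of subsequences or measure-theoretic convergence theorem is needed. If you want to keep your a.e.-convergence route, you would need to replace ``dominated convergence'' with a generalized dominated-convergence argument using the varying majorant $\|v_h(t)\|_{H^2(\Gamma)}$, which converges in $L^2(0,T)$ but is not a fixed $h$-independent function; explicitly invoking the $C([0,T];H^1(\Gamma))$ convergence of $v_h$ (which you also tacitly need to pass to the limit in the boundary terms $\int_\Gamma gF(v_h(\tau))\,d\mathcal{H}^2$) is the cleanest way to make either route airtight.
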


\begin{lemma} \label{L:DtIA_Sur}
  Let $L_g$ be the operator given in Lemma \ref{L:InA_Sur}, and let
  \begin{align*}
    f\in H^1(0,T;[H^1(\Gamma)]'), \quad \langle f(t),g\rangle_{H^1(\Gamma)} = 0 \quad\text{for all}\quad t\in[0,T].
  \end{align*}
  Then, $L_gf\in H^1(0,T;H^1(\Gamma))$ and $\partial_tL_gf=L_g(\partial_tf)$ a.e. on $\Gamma\times(0,T)$.
  Also, we have
  \begin{align} \label{E:DtIA_Sur}
    \frac{1}{2}\frac{d}{dt}\int_\Gamma g|\nabla_\Gamma L_gv(t)|^2\,d\mathcal{H}^2 = \langle\partial_tv(t),gL_gv(t)\rangle_{H^1(\Gamma)} \quad\text{for a.a.}\quad t\in(0,T),
  \end{align}
  if $v\in\mathcal{E}_T(\Gamma)$ and it satisfies $\int_\Gamma gv(t)\,d\mathcal{H}^2=0$ for all $t\in[0,T]$.
\end{lemma}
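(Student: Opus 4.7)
The plan is to establish the two claims in sequence. For the commutation statement, the key input is that $L_g$ is a bounded linear, time-independent operator from the closed subspace $\{f\in[H^1(\Gamma)]'\mid\langle f,g\rangle_{H^1(\Gamma)}=0\}$ into $H^1(\Gamma)$ by \eqref{E:IAS_Bdd}. Given $f\in H^1(0,T;[H^1(\Gamma)]')$ with $\langle f(t),g\rangle_{H^1(\Gamma)}=0$ for every $t$, testing against $\zeta\in C_c^1(0,T)$ shows that the constraint descends to the weak time derivative:
\begin{align*}
  \int_0^T\zeta(t)\langle\partial_tf(t),g\rangle_{H^1(\Gamma)}\,dt = -\int_0^T\zeta'(t)\langle f(t),g\rangle_{H^1(\Gamma)}\,dt = 0,
\end{align*}
so $\langle\partial_tf(t),g\rangle_{H^1(\Gamma)}=0$ a.e.\ and $L_g\partial_tf\in L^2(0,T;H^1(\Gamma))$ is well-defined. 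Since $L_g$ commutes with Bochner integrals,
\begin{align*}
  \int_0^T\zeta'(t)L_gf(t)\,dt = L_g\int_0^T\zeta'(t)f(t)\,dt = -L_g\int_0^T\zeta(t)\partial_tf(t)\,dt = -\int_0^T\zeta(t)L_g\partial_tf(t)\,dt
\end{align*}
for all $\zeta\in C_c^1(0,T)$, identifying $\partial_tL_gf=L_g\partial_tf$ and giving $L_gf\in H^1(0,T;H^1(\Gamma))$.

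For the energy identity, I would apply the commutation result to $f=v$. Here $v\in\mathcal{E}_T(\Gamma)\subset H^1(0,T;[H^1(\Gamma)]')$ via the embeddings $H^1(\Gamma)\hookrightarrow L^2(\Gamma)\hookrightarrow[H^1(\Gamma)]'$, and the assumption $\int_\Gamma gv(t)\,d\mathcal{H}^2=0$ is precisely $\langle v(t),g\rangle_{H^1(\Gamma)}=0$. Therefore $L_gv\in H^1(0,T;H^1(\Gamma))\subset\mathcal{E}_T(\Gamma)$ with $\partial_tL_gv=L_g\partial_tv$. Then Lemma~\ref{L:DtL2_gSu} applied to the pair $(v,L_gv)$ yields
\begin{align*}
  \frac{d}{dt}(gv(t),L_gv(t))_{L^2(\Gamma)} = \langle\partial_tv(t),gL_gv(t)\rangle_{H^1(\Gamma)}+\langle\partial_tL_gv(t),gv(t)\rangle_{H^1(\Gamma)}
\end{align*}
for a.a.\ $t\in(0,T)$. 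Choosing $f=v(t)$ and $\eta=L_gv(t)$ in \eqref{E:InA_Sur} rewrites the left-hand side as $\int_\Gamma g|\nabla_\Gamma L_gv(t)|^2\,d\mathcal{H}^2$, which is the quantity whose derivative we seek.

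It remains to show that the two pairings on the right coincide. Using $\partial_tL_gv=L_g\partial_tv\in H^1(\Gamma)$, the second pairing reduces to the $L^2$ inner product $(gv(t),L_g\partial_tv(t))_{L^2(\Gamma)}$. Applying \eqref{E:InA_Sur} twice---once with $(f,\eta)=(v(t),L_g\partial_tv(t))$ and once with $(f,\eta)=(\partial_tv(t),L_gv(t))$---both pairings are identified with the common symmetric bilinear form $(g\nabla_\Gamma L_gv(t),\nabla_\Gamma L_g\partial_tv(t))_{L^2(\Gamma)}$, hence they agree. The right-hand side therefore equals $2\langle\partial_tv(t),gL_gv(t)\rangle_{H^1(\Gamma)}$, and dividing by two gives \eqref{E:DtIA_Sur}.

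The main delicate point is the time-dependent bookkeeping: inheriting the zero-mean constraint by $\partial_tf$ so that $L_g$ can act pointwise in time, confirming that $L_g$ commutes with the weak Bochner derivative (which rests on its linearity and boundedness on the constrained subspace), and cleanly identifying the duality pairings as $L^2$ inner products so that the symmetry of the bilinear form in \eqref{E:InA_Sur} can be invoked. No new functional-analytic estimate beyond \eqref{E:IAS_Bdd} and \eqref{E:InA_Sur} together with Lemma~\ref{L:DtL2_gSu} should be required.
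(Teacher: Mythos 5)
Your proof is correct, and the energy-identity part tracks the paper's own argument essentially line for line: start from $(gv,L_gv)_{L^2(\Gamma)}=\int_\Gamma g|\nabla_\Gamma L_gv|^2\,d\mathcal{H}^2$ via \eqref{E:InA_Sur}, differentiate with Lemma~\ref{L:DtL2_gSu}, substitute $\partial_tL_gv=L_g\partial_tv$ to convert the duality pairing into an $L^2$ inner product, and then use \eqref{E:InA_Sur} twice together with the symmetry of the Dirichlet form to merge the two terms.

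The commutation statement $\partial_tL_gf=L_g(\partial_tf)$ is where you take a genuinely different route. You first pass the zero-mean constraint to $\partial_tf$ by pairing against $g$, then invoke the fact that a bounded linear operator between Banach spaces commutes with Bochner integrals (Hille's theorem) to convert the definition of weak derivative for $f$ directly into the same identity for $L_gf$. This is short and conceptual; it rests only on closedness of the constrained subspace $\{f:\langle f,g\rangle_{H^1(\Gamma)}=0\}$, on the bound \eqref{E:IAS_Bdd}, and on the standard commutation theorem. The paper instead proves the commutation by a difference-quotient argument: it establishes the uniform bound $\|D^hL_gf\|_{L^2(\delta,T-\delta;H^1(\Gamma))}\leq c\|\partial_tf\|_{L^2(0,T;[H^1(\Gamma)]')}$, extracts weakly convergent subsequences, identifies $D^hL_gf\to\partial_tL_gf$ and $L_g(D^hf)\to L_g(\partial_tf)$ separately, and then uses $D^hL_gf=L_g(D^hf)$ to equate the limits, finally exhausting $(0,T)$ by letting $\delta\to0$. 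The paper's approach is more self-contained in the sense that it only uses difference-quotient lemmas already quoted elsewhere; yours is more economical but relies on the abstract Bochner-integral commutation result, which you should either cite or note as standard. Both are fully rigorous.
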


We give the proofs of Lemmas \ref{L:DtEn_gSu} and \ref{L:DtIA_Sur} in Section \ref{S:Pf_Aux}.

Next, we consider the case $S=\Omega_\varepsilon$.

\begin{lemma} \label{L:DtL2_CTD}
  Let $u_1,u_2\in\mathcal{E}_T(\Omega_\varepsilon)$.
  Then, for a.a. $t\in(0,T)$, we have
  \begin{align} \label{E:DtL2_CTD}
    \frac{d}{dt}(u_1(t),u_2(t))_{L^2(\Omega_\varepsilon)} = \langle\partial_tu_1(t),u_2(t)\rangle_{H^1(\Omega_\varepsilon)}+\langle\partial_tu_2(t),u_1(t)\rangle_{H^1(\Omega_\varepsilon)}.
  \end{align}
\end{lemma}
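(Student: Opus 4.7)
The plan is to reduce this statement to the standard Gelfand-triple result already invoked for Lemma \ref{L:DtL2_gSu}. Observe that $H^1(\Omega_\varepsilon) \hookrightarrow L^2(\Omega_\varepsilon) \hookrightarrow [H^1(\Omega_\varepsilon)]'$ forms an evolution (Gelfand) triple, with $L^2(\Omega_\varepsilon)$ identified with its own dual via the pairing $\langle f,\cdot\rangle_{H^1(\Omega_\varepsilon)} = (f,\cdot)_{L^2(\Omega_\varepsilon)}$ introduced at the end of Section \ref{SS:Pr_CTD}. Moreover, $\Omega_\varepsilon$ is independent of $t$, so no extra terms arise from the geometry. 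Hence \cite[Theorem II.5.12]{BoyFab13} (i.e.\ the same reference used for Lemma \ref{L:DtL2_gSu}, now with the trivial weight $g\equiv 1$ and the underlying set $\Omega_\varepsilon$ in place of $\Gamma$) applies verbatim and yields \eqref{E:DtL2_CTD}.

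If a self-contained argument is preferred, the plan is the following. First I would use Lemma \ref{L:ET_Emb} to realize $u_1,u_2\in C([0,T];L^2(\Omega_\varepsilon))$, so that the inner product $(u_1(t),u_2(t))_{L^2(\Omega_\varepsilon)}$ makes sense for every $t\in[0,T]$. Next, by a standard mollification-in-time argument (e.g.\ Steklov averages), I would approximate each $u_i$ by sequences $u_i^n\in C^1([0,T];H^1(\Omega_\varepsilon))$ with
\[
u_i^n \to u_i \text{ in } L^2(0,T;H^1(\Omega_\varepsilon)), \qquad \partial_t u_i^n \to \partial_t u_i \text{ in } L^2(0,T;[H^1(\Omega_\varepsilon)]').
\]
For smooth approximants the identity
\[
\frac{d}{dt}(u_1^n(t),u_2^n(t))_{L^2(\Omega_\varepsilon)} = (\partial_t u_1^n(t),u_2^n(t))_{L^2(\Omega_\varepsilon)}+(\partial_t u_2^n(t),u_1^n(t))_{L^2(\Omega_\varepsilon)}
\]
is immediate. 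Integrating in time against an arbitrary $\zeta\in C_c^1(0,T)$ and passing to the limit $n\to\infty$, one recovers \eqref{E:DtL2_CTD} in the sense of distributions on $(0,T)$; since the right-hand side lies in $L^1(0,T)$, the classical characterization of absolutely continuous functions gives the pointwise a.e.\ statement.

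I do not expect a substantive obstacle here: the assertion is the bulk analogue of Lemma \ref{L:DtL2_gSu}, and the only thing to double-check is that the identification $L^2 \subset [H^1]'$ is compatible with the duality pairing, which is guaranteed by the convention fixed in Section \ref{SS:Pr_CTD}. The polarization identity $(u_1,u_2) = \tfrac{1}{4}(\|u_1+u_2\|^2 - \|u_1-u_2\|^2)$ also allows reduction to the single-function case $u_1=u_2$, which is the form usually stated in textbooks; either path leads to \eqref{E:DtL2_CTD}.
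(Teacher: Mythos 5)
Your first paragraph is exactly the paper's argument: the proof of Lemma \ref{L:DtL2_CTD} just cites \cite[Theorem II.5.12]{BoyFab13} for the Gelfand triple $H^1(\Omega_\varepsilon)\hookrightarrow L^2(\Omega_\varepsilon)\hookrightarrow[H^1(\Omega_\varepsilon)]'$, identical to the route used for Lemma \ref{L:DtL2_gSu}. The additional self-contained mollification argument you sketch is a correct and standard way to prove that cited theorem, but it goes beyond what the paper records.
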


\begin{proof}
  We refer to \cite[Theorem II.5.12]{BoyFab13} for the proof.
\end{proof}

\begin{lemma} \label{L:DtEn_CTD}
  For $u\in\mathcal{E}_T(\Omega_\varepsilon)\cap L^2(0,T;H^3(\Omega_\varepsilon))$, the following statements hold:
  \begin{enumerate}
    \item if $\partial_{\nu_\varepsilon}u(t)=0$ on $\partial\Omega_\varepsilon$ for a.a. $t\in(0,T)$, then
    \begin{align*}
      \frac{1}{2}\frac{d}{dt}\int_{\Omega_\varepsilon}|\nabla u(t)|^2\,dx = \langle\partial_tu(t),-\Delta u(t)\rangle_{H^1(\Omega_\varepsilon)} \quad\text{for a.a.}\quad t\in(0,T),
    \end{align*}
    \item if $F\in C^3(\mathbb{R})$ and $|F'''(z)|\leq c(|z|+1)$ for all $z\in\mathbb{R}$, then
    \begin{gather*}
      F(u) \in C([0,T];L^1(\Omega_\varepsilon)), \quad F'(u) \in L^2(0,T;H^1(\Omega_\varepsilon)), \\
      \frac{d}{dt}\int_{\Omega_\varepsilon} F(u(t))\,dx = \langle\partial_tu(t),F'(u(t))\rangle_{H^1(\Omega_\varepsilon)} \quad\text{for a.a.}\quad t\in(0,T).
    \end{gather*}
  \end{enumerate}
\end{lemma}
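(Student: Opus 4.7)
This is the $\Omega_\varepsilon$-counterpart of Lemma \ref{L:DtEn_gSu}, so the approach will closely follow the surface proof (deferred to Section \ref{S:Pf_Aux}): regularize $u$ in time by mollification and invoke the classical chain rule on the smooth approximants, with integration by parts on $\partial\Omega_\varepsilon$ and the Neumann condition replacing the closedness of $\Gamma$. Before starting I would record the regularity of $u$ that will be used throughout. By Lemma \ref{L:ET_Emb}, $u\in C([0,T];H^1(\Omega_\varepsilon))$; the three-dimensional Sobolev embeddings $H^1(\Omega_\varepsilon)\hookrightarrow L^6(\Omega_\varepsilon)$ and $H^2(\Omega_\varepsilon)\hookrightarrow L^\infty(\Omega_\varepsilon)$ then yield
\begin{align*}
  u\in C([0,T];L^6(\Omega_\varepsilon)), \quad u\in L^2(0,T;L^\infty(\Omega_\varepsilon)),
\end{align*}
which ensure that every quantity in the statement is well-defined.

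For part (i), since $u\in L^2(0,T;H^3(\Omega_\varepsilon))$ I have $-\Delta u\in L^2(0,T;H^1(\Omega_\varepsilon))$, so $\langle\partial_tu(t),-\Delta u(t)\rangle_{H^1(\Omega_\varepsilon)}$ is meaningful for a.a.\ $t$. Integration by parts on $\Omega_\varepsilon$ together with $\partial_{\nu_\varepsilon}u(t)=0$ gives
\begin{align*}
  (\nabla u(t),\nabla\eta)_{L^2(\Omega_\varepsilon)} = \langle-\Delta u(t),\eta\rangle_{H^1(\Omega_\varepsilon)}, \quad \eta\in H^1(\Omega_\varepsilon).
\end{align*}
I would then regularize in time, $u_h:=\rho_h*u$ (after reflecting $u$ past $t=0$ and $t=T$); since time mollification commutes with spatial operators and preserves the linear Neumann condition, each $u_h$ is smooth in $t$ with values in $H^3(\Omega_\varepsilon)$ and satisfies $\partial_{\nu_\varepsilon}u_h=0$. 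For these, the classical chain rule is immediate: $\tfrac{1}{2}\tfrac{d}{dt}\|\nabla u_h\|_{L^2}^2=(-\Delta u_h,\partial_tu_h)_{L^2}$. Testing against $\varphi\in C_c^\infty(0,T)$, integrating in $t$, and letting $h\to0$ using $u_h\to u$ in $L^2(0,T;H^3)$ and $\partial_tu_h\to\partial_tu$ in $L^2(0,T;[H^1]')$ delivers the identity in distributional form, hence a.e.

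For part (ii), Remark \ref{R:Po_Grow} furnishes the polynomial bounds $|F(z)|\le c(|z|^4+1)$, $|F'(z)|\le c(|z|^3+1)$, and $|F''(z)|\le c(|z|^2+1)$. Combined with $u\in C([0,T];L^6)$, the continuity of Nemytskii operators yields $F(u)\in C([0,T];L^1(\Omega_\varepsilon))$ and $F'(u)\in C([0,T];L^2(\Omega_\varepsilon))$. Using the weak chain rule $\nabla F'(u)=F''(u)\nabla u$, H\"older's inequality, and $\nabla u\in L^2(0,T;H^2)\hookrightarrow L^2(0,T;L^6)$, I would estimate
\begin{align*}
  \|\nabla F'(u(t))\|_{L^2(\Omega_\varepsilon)} \leq c\bigl(\|u(t)\|_{L^6}^2\|\nabla u(t)\|_{L^6}+\|\nabla u(t)\|_{L^2}\bigr),
\end{align*}
whose right-hand side lies in $L^2(0,T)$; hence $F'(u)\in L^2(0,T;H^1(\Omega_\varepsilon))$. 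The identity $\tfrac{d}{dt}\int_{\Omega_\varepsilon}F(u)\,dx=\langle\partial_tu,F'(u)\rangle_{H^1(\Omega_\varepsilon)}$ then follows from the mollified chain rule $\tfrac{d}{dt}\int F(u_h)\,dx=\int F'(u_h)\partial_tu_h\,dx$ by passing $h\to0$ via the Nemytskii continuity and the convergences of $u_h$ and $\partial_tu_h$.

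The main obstacle will be this limit passage in part (ii): one must verify $F(u_h)\to F(u)$ in $L^1_tL^1_x$ and $F'(u_h)\to F'(u)$ in $L^2_tH^1_x$ with enough uniformity to exchange the limit with the duality pairing against $\partial_tu\in[H^1(\Omega_\varepsilon)]'$. This is precisely where the 3D embeddings $H^1\hookrightarrow L^6$ and $H^2\hookrightarrow L^\infty$ together with the regularity $u\in L^2(0,T;H^3)$ are indispensable, and where the argument requires the most care.
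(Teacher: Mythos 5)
Your proof is correct and follows essentially the same route as the paper: the paper itself remarks that the proof is identical to that of Lemma \ref{L:DtEn_gSu} (regularization in time by cut-off, translation, and mollification as in \cite[Lemma~II.5.10]{BoyFab13}, chain rule on the smooth approximants, passage to the limit), supplemented by exactly the observation you make — that a purely temporal regularization does not touch the spatial variable and hence preserves the Neumann condition $\partial_{\nu_\varepsilon}u(t)=0$ needed for the integration by parts in (i). Your handling of (ii) via Remark~\ref{R:Po_Grow} and the three-dimensional embedding $H^1(\Omega_\varepsilon)\hookrightarrow L^6(\Omega_\varepsilon)$ mirrors the estimates \eqref{Pf_215:Fvt}--\eqref{Pf_215:FdGr} of the surface proof with only the expected change of Sobolev exponent.
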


The proof of Lemma \ref{L:DtEn_CTD} is the same as that of Lemma \ref{L:DtEn_gSu}.
Thus, we omit the proof, but we should give a remark on the boundary condition in (i).
To prove (i), we regularize $u$ in time by cut-off, translation, and mollification as in \cite[Lemma II.5.10]{BoyFab13}.
This procedure is carried out only with respect to the time variable, and we do nothing on the spatial variable.
Hence, if $\partial_{\nu_\varepsilon}u(t)=0$ on $\partial\Omega_\varepsilon$ for a.a. $t\in(0,T)$, then the regularization of $u$ in time also satisfies the same boundary condition for all $t\in(0,T)$.

\section{Weak solution to the thin-domain problem} \label{S:CHT_Weak}
Let us give a weak formulation of the thin-domain problem \eqref{E:CH_CTD} and show some properties of a weak solution to \eqref{E:CH_CTD}.
Recall that we make Assumption \ref{A:Poten} on $F$.

\subsection{Weak formulation} \label{SS:CHT_WF}
We multiply \eqref{E:CH_CTD} by a test function, carry out integration by parts, and use the boundary conditions to get the following weak formulation.

\begin{definition} \label{D:CHT_WeSo}
  For given $u_0^\varepsilon\in H^1(\Omega_\varepsilon)$ and $T>0$, we say that a pair $(u^\varepsilon,w^\varepsilon)$ is a weak solution to \eqref{E:CH_CTD} on $[0,T)$ if it satisfies
  \begin{align} \label{E:CHT_WeRe}
    u^\varepsilon\in\mathcal{E}_T(\Omega_\varepsilon)\cap L^\infty(0,T;H^1(\Omega_\varepsilon)), \quad w^\varepsilon\in L^2(0,T;H^1(\Omega_\varepsilon)),
  \end{align}
  and for all $\varphi\in L^2(0,T;H^1(\Omega_\varepsilon))$,
  \begin{align}
    &\int_0^T\langle\partial_tu^\varepsilon,\varphi\rangle_{H^1(\Omega_\varepsilon)}\,dt+\int_0^T(\nabla w^\varepsilon,\nabla\varphi)_{L^2(\Omega_\varepsilon)}\,dx = 0, \label{E:CHT_WF_u} \\
    &\int_0^T(w^\varepsilon,\varphi)_{L^2(\Omega_\varepsilon)}\,dt = \int_0^T(\nabla u^\varepsilon,\nabla\varphi)_{L^2(\Omega_\varepsilon)}\,dt+\int_0^T(F'(u^\varepsilon),\varphi)_{L^2(\Omega_\varepsilon)}\,dt, \label{E:CHT_WF_w}
  \end{align}
  and $u^\varepsilon(0)=u_0^\varepsilon$ a.e. in $\Omega_\varepsilon$.
\end{definition}

\begin{definition} \label{D:CHT_GlW}
  For a given $u_0^\varepsilon\in H^1(\Omega_\varepsilon)$, we say that a pair $(u^\varepsilon,w^\varepsilon)$ is a global weak solution to \eqref{E:CH_CTD} if it is a weak solution to \eqref{E:CH_CTD} on $[0,T)$ for all $T>0$.
\end{definition}

\begin{remark} \label{R:CHT_WeSo}
  When $u^\varepsilon\in L^\infty(0,T;H^1(\Omega_\varepsilon))$, we see by \eqref{E:Sob_CTD} with $p=6$ that
  \begin{align*}
    \|[(u^\varepsilon)^3](t)\|_{L^2(\Omega_\varepsilon)} = \|u^\varepsilon(t)\|_{L^6(\Omega_\varepsilon)}^3 \leq c\varepsilon^{-1}\|u^\varepsilon(t)\|_{H^1(\Omega_\varepsilon)}^3 \quad\text{for a.a.}\quad t\in(0,T).
  \end{align*}
  By this estimate and $|F'(z)|\leq c(|z|^3+1)$ for $z\in\mathbb{R}$ (see Remark \ref{R:Po_Grow}),
  \begin{align} \label{E:FL2_CTD}
    \begin{aligned}
      \|F'(u^\varepsilon)\|_{L^2(0,T;L^2(\Omega_\varepsilon))} \leq c\Bigl(\varepsilon^{-1}\|u^\varepsilon\|_{L^\infty(0,T;H^1(\Omega_\varepsilon))}^2\|u^\varepsilon\|_{L^2(0,T;H^1(\Omega_\varepsilon))}+T^{1/2}|\Omega_\varepsilon|^{1/2}\Bigr).
    \end{aligned}
  \end{align}
  Also, when $u^\varepsilon\in\mathcal{E}_T(\Omega_\varepsilon)$, the initial condition makes sense by Lemma \ref{L:ET_Emb}.
\end{remark}

For a weak solution to \eqref{E:CH_CTD}, the following uniqueness and existence results hold.

\begin{proposition} \label{P:CHT_Uni}
  Suppose that Assumption \ref{A:Poten} is satisfied.
  Then, for each $T>0$ and $u_0^\varepsilon\in H^1(\Omega_\varepsilon)$, there exists at most one weak solution to \eqref{E:CH_CTD} on $[0,T)$.
\end{proposition}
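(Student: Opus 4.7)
The plan is to apply the standard Cahn--Hilliard uniqueness argument adapted to the thin-domain setting, which is based on testing the difference of the two weak forms with the inverse Neumann Laplacian of the difference $U := u_1^\varepsilon - u_2^\varepsilon$. Let $(u_i^\varepsilon, w_i^\varepsilon)$, $i=1,2$, be two weak solutions to \eqref{E:CH_CTD} on $[0,T)$ with the same initial data, and set $U := u_1^\varepsilon - u_2^\varepsilon$ and $W := w_1^\varepsilon - w_2^\varepsilon$. Substituting $\varphi(x,t) = \zeta(t)$ for $\zeta \in C_c^1(0,T)$ into \eqref{E:CHT_WF_u} shows that $\int_{\Omega_\varepsilon} u_i^\varepsilon(t)\,dx$ is independent of $t$; since $U(0) = 0$, the function $U(t)$ is mean-zero for a.a.\ $t \in (0,T)$. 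We then define $\Phi(t) := L_\varepsilon U(t)$, which satisfies $-\Delta \Phi = U$, $\partial_{\nu_\varepsilon}\Phi = 0$, $\int_{\Omega_\varepsilon}\Phi\,dx = 0$, and by the uniform elliptic regularity (Lemma \ref{L:UER_CTD}) belongs to $L^2(0,T; H^2(\Omega_\varepsilon))$.

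Subtracting the weak equations \eqref{E:CHT_WF_u} and \eqref{E:CHT_WF_w} for the two solutions, testing the first with $\Phi$ and the second with $U$, and using integration by parts together with $\partial_{\nu_\varepsilon}\Phi = 0$ to rewrite $(\nabla W, \nabla \Phi)_{L^2(\Omega_\varepsilon)} = (W, U)_{L^2(\Omega_\varepsilon)}$, together with the chain rule
\begin{align*}
  \frac{1}{2}\frac{d}{dt}\|\nabla \Phi(t)\|_{L^2(\Omega_\varepsilon)}^2 = \langle \partial_t U(t), \Phi(t)\rangle_{H^1(\Omega_\varepsilon)},
\end{align*}
leads to
\begin{align*}
  \frac{1}{2}\|\nabla \Phi(t)\|_{L^2(\Omega_\varepsilon)}^2 + \int_0^t \|\nabla U\|_{L^2(\Omega_\varepsilon)}^2\,d\tau + \int_0^t (F'(u_1^\varepsilon) - F'(u_2^\varepsilon), U)_{L^2(\Omega_\varepsilon)}\,d\tau = 0.
\end{align*}
The one-sided bound $F'' \geq -C_2$ from Assumption \ref{A:Poten} yields the pointwise inequality $(F'(u_1^\varepsilon) - F'(u_2^\varepsilon)) U \geq -C_2 U^2$, and combining with the identity $\|U\|_{L^2(\Omega_\varepsilon)}^2 = (\nabla \Phi, \nabla U)_{L^2(\Omega_\varepsilon)}$ valid for mean-zero $U$ (obtained by integration by parts with $\partial_{\nu_\varepsilon}\Phi = 0$), then Young's inequality to absorb the gradient term, produces
\begin{align*}
  \|\nabla \Phi(t)\|_{L^2(\Omega_\varepsilon)}^2 \leq C \int_0^t \|\nabla \Phi\|_{L^2(\Omega_\varepsilon)}^2\,d\tau.
\end{align*}

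Gronwall's inequality and $\Phi(0) = L_\varepsilon(U(0)) = 0$ then give $\Phi \equiv 0$, hence $U = -\Delta \Phi \equiv 0$; uniqueness of $w^\varepsilon$ follows from the difference of \eqref{E:CHT_WF_w} tested with arbitrary $\varphi \in L^2(0,T;H^1(\Omega_\varepsilon))$. The main technical obstacle is the rigorous justification of the chain-rule identity for $\|\nabla L_\varepsilon U\|_{L^2(\Omega_\varepsilon)}^2$, which is the $\Omega_\varepsilon$-analog of Lemma \ref{L:DtIA_Sur} for the Neumann Laplacian; its proof follows the same time-mollification scheme and relies on Lemma \ref{L:UER_CTD} to ensure that $L_\varepsilon$ maps $L^2(\Omega_\varepsilon)$ with zero mean into $H^2(\Omega_\varepsilon)$, and on symmetry of $L_\varepsilon$. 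The admissibility of $\Phi$ as a test function in \eqref{E:CHT_WF_u} and \eqref{E:CHT_WF_w} is immediate once $\Phi \in L^2(0,T; H^1(\Omega_\varepsilon))$ is known, and all remaining steps are routine.
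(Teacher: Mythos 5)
Your proof is correct and follows the same strategy the paper takes: the paper reduces Proposition~\ref{P:CHT_Uni} to the argument of Proposition~\ref{P:CLi_Uni}, namely testing the subtracted weak forms against $L_\varepsilon U$ and $U$, invoking the chain rule of Lemma~\ref{L:DtIA_Sur} (in its $\Omega_\varepsilon$ form), using $F''\geq -C_2$, the duality $\|U\|_{L^2}^2=(\nabla L_\varepsilon U,\nabla U)_{L^2}$, Young, and Gronwall. One small refinement worth noting: the identities $(\nabla W,\nabla\Phi)=(W,U)$, $\|U\|^2=(\nabla\Phi,\nabla U)$, and the chain rule all follow directly from the weak Lax--Milgram definition of $L_\varepsilon$ (exactly as \eqref{E:InA_Sur} is used in the proof of Proposition~\ref{P:CLi_Uni} and Lemma~\ref{L:DtIA_Sur}), so the invocation of $H^2$ elliptic regularity via Lemma~\ref{L:UER_CTD} is unnecessary and can be dropped.
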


We omit the proof of Proposition \ref{P:CHT_Uni}, since it is the same as that of Proposition \ref{P:CLi_Uni} below on the uniqueness of a weak solution to the limit problem \eqref{E:CH_Lim}.
Here, we just note that the proof of Proposition \ref{P:CHT_Uni} uses the solution operator $L_\varepsilon$ of the problem
\begin{align*}
  L_\varepsilon f\in H^1(\Omega_\varepsilon), \quad \int_{\Omega_\varepsilon}L_\varepsilon f\,dx = 0, \quad (\nabla L_\varepsilon f,\nabla\varphi)_{L^2(\Omega_\varepsilon)} = \langle f,\varphi\rangle_{H^1(\Omega_\varepsilon)}
\end{align*}
for all $\varphi\in H^1(\Omega_\varepsilon)$, where $f\in[H^1(\Omega_\varepsilon)]'$ is given and satisfies $\langle f,1\rangle_{H^1(\Omega_\varepsilon)}=0$.

\begin{proposition} \label{P:CHT_GlEx}
  Let $u_0^\varepsilon\in H^1(\Omega_\varepsilon)$ and Assumption \ref{A:Poten} be satisfied.
  Then, there exists a unique global weak solution $(u^\varepsilon,w^\varepsilon)$ to \eqref{E:CH_CTD}.
\end{proposition}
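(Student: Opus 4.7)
Uniqueness is immediate from Proposition \ref{P:CHT_Uni}, so the task is to construct a global weak solution. Since weak solutions on different time intervals agree on their common domain by uniqueness, it suffices to produce, for an arbitrary $T>0$, a weak solution to \eqref{E:CH_CTD} on $[0,T)$. The plan is a standard Faedo--Galerkin scheme based on the eigenfunctions of the Neumann Laplacian on $\Omega_\varepsilon$.

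First I would fix an $L^2(\Omega_\varepsilon)$-orthonormal basis $\{\phi_k\}_{k=1}^\infty$ of eigenfunctions of the Neumann Laplacian with $\phi_1\equiv|\Omega_\varepsilon|^{-1/2}$; this system is also orthogonal in $H^1(\Omega_\varepsilon)$. For each $N$, project onto $V_N:=\mathrm{span}\{\phi_1,\dots,\phi_N\}$ and seek
\begin{align*}
u_N(t)=\sum_{k=1}^N a_k^N(t)\phi_k, \qquad w_N(t)=\sum_{k=1}^N b_k^N(t)\phi_k
\end{align*}
satisfying the projected analogues of \eqref{E:CHT_WF_u}--\eqref{E:CHT_WF_w} against each $\phi_k$, $k=1,\dots,N$. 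The equation for $w_N$ is an $N\times N$ linear system with identity mass matrix and determines $w_N$ as a locally Lipschitz function of $u_N$ (using $F'\in C^2$); substitution yields a locally Lipschitz ODE system for $(a_k^N)_{k=1}^N$ with initial datum $u_N(0)=P_N u_0^\varepsilon$, where $P_N$ is the $L^2$-orthogonal projection onto $V_N$. This gives a maximal solution on some interval $[0,T_N)$.

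Next I would derive the $N$-uniform a priori estimates. Testing the first projected equation with $w_N$ and the second with $\partial_t u_N$ (both lie in $V_N$) and subtracting produces the energy identity
\begin{align*}
\frac{d}{dt}E_\varepsilon(u_N(t))+\|\nabla w_N(t)\|_{L^2(\Omega_\varepsilon)}^2=0.
\end{align*}
Together with $F\geq -C_0$ and the $H^1$-boundedness of $u_N(0)$, this yields $u_N$ bounded in $L^\infty(0,T;H^1(\Omega_\varepsilon))$ and $\nabla w_N$ bounded in $L^2(0,T;L^2(\Omega_\varepsilon))$. The mean of $w_N$ is controlled by testing the $w_N$ equation against $\phi_1$, which gives $\int_{\Omega_\varepsilon}w_N\,dx=\int_{\Omega_\varepsilon}F'(u_N)\,dx$; by \eqref{E:FL2_CTD} this mean is bounded in $L^2(0,T)$, so Poincar\'e's inequality upgrades the bound on $\nabla w_N$ to a uniform bound on $w_N$ in $L^2(0,T;H^1(\Omega_\varepsilon))$. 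Finally, for $\varphi\in H^1(\Omega_\varepsilon)$ the first equation applied to $P_N\varphi$ yields
\begin{align*}
|\langle\partial_t u_N,\varphi\rangle_{H^1(\Omega_\varepsilon)}|=|(\nabla w_N,\nabla P_N\varphi)_{L^2(\Omega_\varepsilon)}|\leq\|\nabla w_N\|_{L^2(\Omega_\varepsilon)}\|\varphi\|_{H^1(\Omega_\varepsilon)},
\end{align*}
where I crucially use that $P_N$ is an orthogonal projection with respect to \emph{both} the $L^2$- and $H^1$-inner products (so $\|P_N\varphi\|_{H^1}\leq\|\varphi\|_{H^1}$). Hence $\partial_t u_N$ is bounded in $L^2(0,T;[H^1(\Omega_\varepsilon)]')$, which in turn precludes blow-up of the ODE and gives $T_N=T$.

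Finally I would pass to the limit $N\to\infty$. Weak/weak-$\ast$ compactness extracts a subsequence with $u_N\to u^\varepsilon$ weakly-$\ast$ in $L^\infty(0,T;H^1(\Omega_\varepsilon))$ and weakly in $\mathcal{E}_T(\Omega_\varepsilon)$, and $w_N\to w^\varepsilon$ weakly in $L^2(0,T;H^1(\Omega_\varepsilon))$. Aubin--Lions, with the compact embedding $H^1(\Omega_\varepsilon)\hookrightarrow L^2(\Omega_\varepsilon)$, upgrades the convergence of $u_N$ to strong in $L^2(0,T;L^2(\Omega_\varepsilon))$, hence a.e.\ pointwise along a further subsequence. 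Together with the uniform $L^2$-bound on $F'(u_N)$ from \eqref{E:FL2_CTD} and continuity of $F'$, the weak dominated convergence theorem (Lemma \ref{L:WeDo}) yields $F'(u_N)\to F'(u^\varepsilon)$ weakly in $L^2(0,T;L^2(\Omega_\varepsilon))$. Since $\bigcup_N V_N$ is dense in $H^1(\Omega_\varepsilon)$, passage to the limit in the projected identities recovers \eqref{E:CHT_WF_u}--\eqref{E:CHT_WF_w}, and the initial condition is preserved via Lemma \ref{L:ET_Emb}. The main obstacle, and the reason for choosing the Neumann Laplacian eigenbasis rather than an arbitrary $L^2$-orthonormal system, is precisely the $[H^1]'$-estimate on $\partial_t u_N$, which needs the simultaneous $L^2$- and $H^1$-orthogonality; handling the nonlinearity is comfortable because the cubic growth \eqref{E:Int_Fzero} combines well with the Sobolev embedding \eqref{E:Sob_CTD}.
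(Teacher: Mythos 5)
Your proposal is correct and follows essentially the same Faedo--Galerkin construction as Section \ref{S:Galer} of the paper: the same Neumann-Laplacian eigenbasis, the same energy identity, the same mean-value argument (mass conservation for $u_N$, the $\phi_1$ test for $w_N$ combined with Poincar\'e and the cubic-growth bound \eqref{E:FL2_CTD}), the same $[H^1]'$-estimate for $\partial_t u_N$ via simultaneous $L^2$/$H^1$ orthogonality of $\mathcal{P}_N$, and the same Aubin--Lions plus weak dominated convergence passage to the limit. The only place you are slightly terser than the paper is the recovery of the full $L^\infty(0,T;H^1)$ bound on $u_N$ (which needs the conserved mean of $u_N$ and Poincar\'e, not just $F\geq -C_0$) and the verification of $u^\varepsilon(0)=u_0^\varepsilon$ (which requires testing against $f(t)\varphi_0$ and comparing, not just invoking Lemma \ref{L:ET_Emb}); both gaps are routine and do not affect the correctness of the argument.
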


\begin{proof}
  We have the uniqueness by Proposition \ref{P:CHT_Uni}.
  The existence is shown by the Galerkin method.
  For the reader's convenience, we explain the outline in Section \ref{S:Galer}.
\end{proof}

Let us show that $u^\varepsilon$ has a higher regularity.

\begin{proposition} \label{P:CHT_Reg}
  Let $u_0^\varepsilon\in H^1(\Omega_\varepsilon)$ and Assumption \ref{A:Poten} be satisfied.
  Also, let $(u^\varepsilon,w^\varepsilon)$ be the unique global weak solution to \eqref{E:CH_CTD}.
  Then, for all $T>0$,
  \begin{enumerate}
    \item $u^\varepsilon\in L^2(0,T;H^2(\Omega_\varepsilon))$ and, for a.a. $t\in(0,T)$,
    \begin{align} \label{E:CHT_Poi}
      \left\{
      \begin{aligned}
        -\Delta u^\varepsilon(t) &= w^\varepsilon(t)-F'(u^\varepsilon(t)) \quad\text{a.e. in}\quad \Omega_\varepsilon, \\
        \partial_{\nu_\varepsilon}u^\varepsilon(t) &= 0 \quad\text{a.e. on}\quad \partial\Omega_\varepsilon,
      \end{aligned}
      \right.
    \end{align}
    \item $F'(u^\varepsilon)\in L^2(0,T;H^1(\Omega_\varepsilon))$ and
    \begin{align} \label{E:FH1_CTD}
      \|\nabla F'(u^\varepsilon)\|_{L^2(0,T;L^2(\Omega_\varepsilon))} \leq c\Bigl(\varepsilon^{-1}\|u^\varepsilon\|_{L^\infty(0,T;H^1(\Omega_\varepsilon))}^2+1\Bigr)\|u^\varepsilon\|_{L^2(0,T;H^2(\Omega_\varepsilon))},
    \end{align}
    \item $u^\varepsilon\in L^2(0,T;H^3(\Omega_\varepsilon))\cap C([0,T];H^1(\Omega_\varepsilon))$.
  \end{enumerate}
\end{proposition}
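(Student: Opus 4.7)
The plan is to treat each part in turn, viewing the elliptic equation $-\Delta u^\varepsilon = w^\varepsilon - F'(u^\varepsilon)$ with Neumann data as a Poisson problem at each time slice and bootstrapping through the uniform elliptic regularity Lemma \ref{L:UER_CTD}.

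For part (i), I would first observe that the weak formulation \eqref{E:CHT_WF_w} asserts, for a.a.\ $t\in(0,T)$, that
\begin{align*}
  (\nabla u^\varepsilon(t),\nabla\varphi)_{L^2(\Omega_\varepsilon)} = (w^\varepsilon(t)-F'(u^\varepsilon(t)),\varphi)_{L^2(\Omega_\varepsilon)} \quad\text{for all}\quad \varphi\in H^1(\Omega_\varepsilon).
\end{align*}
Since $w^\varepsilon\in L^2(0,T;H^1(\Omega_\varepsilon))\subset L^2(0,T;L^2(\Omega_\varepsilon))$ and $F'(u^\varepsilon)\in L^2(0,T;L^2(\Omega_\varepsilon))$ by the estimate \eqref{E:FL2_CTD} (valid since $u^\varepsilon\in L^\infty(0,T;H^1(\Omega_\varepsilon))\cap L^2(0,T;H^1(\Omega_\varepsilon))$), standard weak-strong elliptic regularity (applied slicewise) yields $u^\varepsilon(t)\in H^2(\Omega_\varepsilon)$ with $\partial_{\nu_\varepsilon}u^\varepsilon(t)=0$ on $\partial\Omega_\varepsilon$ and the pointwise identity \eqref{E:CHT_Poi}. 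Then Lemma \ref{L:UER_CTD} with $k=0$ gives the quantitative bound
\begin{align*}
  \|u^\varepsilon(t)\|_{H^2(\Omega_\varepsilon)} \leq c\Bigl(\|w^\varepsilon(t)\|_{L^2(\Omega_\varepsilon)}+\|F'(u^\varepsilon(t))\|_{L^2(\Omega_\varepsilon)}+\|u^\varepsilon(t)\|_{L^2(\Omega_\varepsilon)}\Bigr),
\end{align*}
and squaring, integrating in time, and appealing again to \eqref{E:FL2_CTD} shows $u^\varepsilon\in L^2(0,T;H^2(\Omega_\varepsilon))$.

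For part (ii), by the chain rule for Sobolev functions (applicable since $F'\in C^2(\mathbb{R})$ with polynomial growth and $u^\varepsilon(t)\in H^2(\Omega_\varepsilon)$ embeds into $L^p$ for all finite $p$), we have $\nabla F'(u^\varepsilon)=F''(u^\varepsilon)\nabla u^\varepsilon$ a.e. Using \eqref{E:Int_F2nd} and H\"{o}lder's inequality,
\begin{align*}
  \|\nabla F'(u^\varepsilon)\|_{L^2(\Omega_\varepsilon)} \leq c\|u^\varepsilon\|_{L^6(\Omega_\varepsilon)}^2\|\nabla u^\varepsilon\|_{L^6(\Omega_\varepsilon)}+c\|\nabla u^\varepsilon\|_{L^2(\Omega_\varepsilon)}.
\end{align*}
Applying the Sobolev inequality \eqref{E:Sob_CTD} with $p=6$ to both factors yields $\|u^\varepsilon\|_{L^6}\leq c\varepsilon^{-1/3}\|u^\varepsilon\|_{H^1}$ and $\|\nabla u^\varepsilon\|_{L^6}\leq c\varepsilon^{-1/3}\|u^\varepsilon\|_{H^2}$, whose product produces exactly the $\varepsilon^{-1}\|u^\varepsilon\|_{H^1}^2\|u^\varepsilon\|_{H^2}$ factor on the right of \eqref{E:FH1_CTD}. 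Integrating in time against $\|u^\varepsilon(t)\|_{L^\infty(0,T;H^1(\Omega_\varepsilon))}^2$ and $\|u^\varepsilon\|_{L^2(0,T;H^2(\Omega_\varepsilon))}$ delivers \eqref{E:FH1_CTD} and shows $F'(u^\varepsilon)\in L^2(0,T;H^1(\Omega_\varepsilon))$.

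For part (iii), combining (i) and (ii), the right-hand side $w^\varepsilon-F'(u^\varepsilon)=-\Delta u^\varepsilon$ belongs to $L^2(0,T;H^1(\Omega_\varepsilon))$. Since $u^\varepsilon(t)\in H^2(\Omega_\varepsilon)$ still satisfies the homogeneous Neumann condition, Lemma \ref{L:UER_CTD} with $k=1$ gives
\begin{align*}
  \|u^\varepsilon(t)\|_{H^3(\Omega_\varepsilon)} \leq c\Bigl(\|\Delta u^\varepsilon(t)\|_{H^1(\Omega_\varepsilon)}+\|u^\varepsilon(t)\|_{L^2(\Omega_\varepsilon)}\Bigr),
\end{align*}
and integrating in time yields $u^\varepsilon\in L^2(0,T;H^3(\Omega_\varepsilon))$. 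The continuity $u^\varepsilon\in C([0,T];H^1(\Omega_\varepsilon))$ then follows from $u^\varepsilon\in\mathcal{E}_T(\Omega_\varepsilon)\cap L^2(0,T;H^3(\Omega_\varepsilon))$ via Lemma \ref{L:ET_Emb}. The main obstacle is the first step: justifying the slicewise elliptic regularity and ensuring that the constant in \eqref{E:UER_CTD} is genuinely independent of $\varepsilon$, but the latter is exactly the content of Lemma \ref{L:UER_CTD}, so once that is invoked the remaining work reduces to routine Sobolev and H\"{o}lder bookkeeping with the explicit $\varepsilon$-dependent embedding constants from \eqref{E:Sob_CTD}.
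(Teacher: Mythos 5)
Your proposal is correct and follows essentially the same route as the paper: extract the slicewise elliptic equation from \eqref{E:CHT_WF_w} (the paper makes this precise with test functions $\varphi(x,t)=f(t)\zeta(x)$), apply interior elliptic regularity and then Lemma \ref{L:UER_CTD} with $k=0$ for the $H^2$ bound, estimate $\nabla F'(u^\varepsilon)=F''(u^\varepsilon)\nabla u^\varepsilon$ via the growth bound on $F''$, H\"older, and \eqref{E:Sob_CTD}, and finally bootstrap to $H^3$ with Lemma \ref{L:UER_CTD} ($k=1$) and conclude continuity in $H^1$ from Lemma \ref{L:ET_Emb}. No meaningful differences.
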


\begin{proof}
  Fix any $T>0$.

  Let us prove (i).
  In \eqref{E:CHT_WF_w}, we take a test function of the form
  \begin{align*}
    \varphi(x,t) = f(t)\zeta(x) \quad\text{with any}\quad f\in C_c^\infty(0,T), \, \zeta\in H^1(\Omega_\varepsilon).
  \end{align*}
  Then, we find that, for a.a. $t\in(0,T)$ and all $\zeta\in H^1(\Omega_\varepsilon)$,
  \begin{align*}
    (\nabla u^\varepsilon(t),\nabla\zeta)_{L^2(\Omega_\varepsilon)} = (w^\varepsilon(t)-F'(u^\varepsilon(t)),\zeta)_{L^2(\Omega_\varepsilon)}.
  \end{align*}
  Hence, $u^\varepsilon(t)\in H^2(\Omega_\varepsilon)$ and \eqref{E:CHT_Poi} holds by the elliptic regularity theorem (see e.g. \cite{GilTru01}).
  Moreover, since we can apply \eqref{E:UER_CTD} with $k=0$ to $u^\varepsilon(t)$, and since
  \begin{align*}
    -\Delta u^\varepsilon = w^\varepsilon-F'(u^\varepsilon) \in L^2(0,T;L^2(\Omega_\varepsilon))
  \end{align*}
  by \eqref{E:CHT_WeRe} and \eqref{E:FL2_CTD}, it follows that $u^\varepsilon\in L^2(0,T;H^2(\Omega_\varepsilon))$.

  Next, we prove (ii).
  Since $|F''(z)|\leq c(|z|^2+1)$ for $z\in\mathbb{R}$ by Remark \ref{R:Po_Grow},
  \begin{align*}
    |\nabla F'(u^\varepsilon)| = |F''(u^\varepsilon)\nabla u^\varepsilon| \leq c(|u^\varepsilon|^2+1)|\nabla u^\varepsilon| \quad\text{a.e. in}\quad \Omega_\varepsilon\times(0,T).
  \end{align*}
  Moreover, for a.a. $t\in(0,T)$, we see by H\"{o}lder's inequality and \eqref{E:Sob_CTD} that
  \begin{align*}
    \|[(u^\varepsilon)^2\nabla u^\varepsilon](t)\|_{L^2(\Omega_\varepsilon)} &\leq \|u^\varepsilon(t)\|_{L^6(\Omega_\varepsilon)}^2\|\nabla u^\varepsilon(t)\|_{L^6(\Omega_\varepsilon)} \\
    &\leq c\varepsilon^{-1}\|u^\varepsilon(t)\|_{H^1(\Omega_\varepsilon)}^2\|\nabla u^\varepsilon(t)\|_{H^1(\Omega_\varepsilon)} \\
    &\leq c\varepsilon^{-1}\|u^\varepsilon\|_{L^\infty(0,T;H^1(\Omega_\varepsilon))}^2\|u^\varepsilon(t)\|_{H^2(\Omega_\varepsilon)}.
  \end{align*}
  By these inequalities and
  \begin{align*}
    \|\nabla u^\varepsilon\|_{L^2(0,T;L^2(\Omega_\varepsilon))} \leq \|u^\varepsilon\|_{L^2(0,T;H^1(\Omega_\varepsilon))} \leq \|u^\varepsilon\|_{L^2(0,T;H^2(\Omega_\varepsilon))},
  \end{align*}
  we get \eqref{E:FH1_CTD}.
  Also, $F'(u^\varepsilon)\in L^2(0,T;H^1(\Omega_\varepsilon))$ by \eqref{E:FL2_CTD} and \eqref{E:FH1_CTD}.

  Lastly, let us show (iii).
  By (i), (ii), and \eqref{E:CHT_WeRe}, we see that
  \begin{align*}
    -\Delta u^\varepsilon(t) = w^\varepsilon(t)-F'(u^\varepsilon(t)) \in H^1(\Omega_\varepsilon) \quad\text{for a.a.}\quad t\in(0,T).
  \end{align*}
  Hence, $u^\varepsilon(t)\in H^3(\Omega_\varepsilon)$ by the elliptic regularity theorem.
  Moreover, since $\partial_{\nu_\varepsilon}u^\varepsilon(t)=0$ on $\partial\Omega_\varepsilon$, we can apply \eqref{E:UER_CTD} with $k=1$ to $u^\varepsilon(t)$.
  By this fact and
  \begin{align*}
    -\Delta u^\varepsilon = w^\varepsilon-F'(u^\varepsilon) \in L^2(0,T;H^1(\Omega_\varepsilon)),
  \end{align*}
  we get $u^\varepsilon\in L^2(0,T;H^3(\Omega_\varepsilon))$.
  We also observe that
  \begin{align*}
    u^\varepsilon \in \mathcal{E}_T(\Omega_\varepsilon)\cap L^2(0,T;H^3(\Omega_\varepsilon)) \subset C([0,T];H^1(\Omega_\varepsilon))
  \end{align*}
  by \eqref{E:CHT_WeRe} and Lemma \ref{L:ET_Emb}.
\end{proof}

\subsection{Uniform energy estimates} \label{SS:CHT_Ena}
Let $u_0^\varepsilon\in H^1(\Omega_\varepsilon)$ and Assumption \ref{A:Poten} be satisfied.
Also, let $(u^\varepsilon,w^\varepsilon)$ be the unique weak solution to \eqref{E:CH_CTD}.

The purpose of this subsection is to derive energy estimates for $u^\varepsilon$ and $w^\varepsilon$ with constants independent of $\varepsilon$.
Such uniform energy estimates are crucial in the study of the thin-film limit of \eqref{E:CH_CTD} carried out in Section \ref{S:TFL}.

First, we see that an energy equality holds for the Ginzburg--Landau free energy
\begin{align*}
  E_\varepsilon(u^\varepsilon) = \int_{\Omega_\varepsilon}\left(\frac{|\nabla u^\varepsilon|^2}{2}+F(u^\varepsilon)\right)\,dx.
\end{align*}
Note that $E_\varepsilon(u^\varepsilon)$ may take a negative value under Assumption \ref{A:Poten}.
Also, recall that the symbol $c$ denotes a general positive constant independent of $\varepsilon$.

\begin{proposition} \label{P:CHT_GLE}
  For all $t\geq0$, we have
  \begin{align}
    E_\varepsilon(u^\varepsilon(t))+\int_0^t\|\nabla w^\varepsilon(s)\|_{L^2(\Omega_\varepsilon)}^2\,ds &= E_\varepsilon(u_0^\varepsilon), \label{E:CHT_GLE} \\
    \|\nabla u^\varepsilon(t)\|_{L^2(\Omega_\varepsilon)}^2+\int_0^t\|\nabla w^\varepsilon(s)\|_{L^2(\Omega_\varepsilon)}^2\,ds &\leq c\{|E(u_0^\varepsilon)|+\varepsilon\}. \label{E:CHT_Gr2}
  \end{align}
\end{proposition}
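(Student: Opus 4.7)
The plan is to derive the energy equality \eqref{E:CHT_GLE} as a genuine chain-rule identity, using the improved regularity of $u^\varepsilon$ established in Proposition \ref{P:CHT_Reg}, and then to deduce \eqref{E:CHT_Gr2} by combining the lower bound on $F$ with $|\Omega_\varepsilon|\leq c\varepsilon$. Since $w^\varepsilon\in L^2(0,T;H^1(\Omega_\varepsilon))$ by \eqref{E:CHT_WeRe}, it is an admissible test function in \eqref{E:CHT_WF_u}, and this is the natural starting point.

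First, I would test \eqref{E:CHT_WF_u} with $\varphi=w^\varepsilon\mathbf{1}_{(0,t)}$ (formally) to get
\begin{align*}
  \int_0^t\langle\partial_su^\varepsilon(s),w^\varepsilon(s)\rangle_{H^1(\Omega_\varepsilon)}\,ds+\int_0^t\|\nabla w^\varepsilon(s)\|_{L^2(\Omega_\varepsilon)}^2\,ds = 0.
\end{align*}
To rewrite the duality product on the left, I would substitute the strong relation $w^\varepsilon(s)=-\Delta u^\varepsilon(s)+F'(u^\varepsilon(s))$, which holds in $L^2(\Omega_\varepsilon)$ for a.a. $s$ by Proposition \ref{P:CHT_Reg}(i), together with $\partial_{\nu_\varepsilon}u^\varepsilon(s)=0$ on $\partial\Omega_\varepsilon$. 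By Proposition \ref{P:CHT_Reg}(iii), $u^\varepsilon\in\mathcal{E}_T(\Omega_\varepsilon)\cap L^2(0,T;H^3(\Omega_\varepsilon))$, so Lemma \ref{L:DtEn_CTD}(i) applies and gives
\begin{align*}
  \langle\partial_su^\varepsilon(s),-\Delta u^\varepsilon(s)\rangle_{H^1(\Omega_\varepsilon)} = \frac{1}{2}\frac{d}{ds}\|\nabla u^\varepsilon(s)\|_{L^2(\Omega_\varepsilon)}^2 \quad\text{for a.a. }s.
\end{align*}
Similarly, since Assumption \ref{A:Poten} ensures the hypothesis $|F'''(z)|\leq c(|z|+1)$ needed in Lemma \ref{L:DtEn_CTD}(ii), we get
\begin{align*}
  \langle\partial_su^\varepsilon(s),F'(u^\varepsilon(s))\rangle_{H^1(\Omega_\varepsilon)} = \frac{d}{ds}\int_{\Omega_\varepsilon}F(u^\varepsilon(s))\,dx \quad\text{for a.a. }s,
\end{align*}
and $F(u^\varepsilon)\in C([0,T];L^1(\Omega_\varepsilon))$ so the integral of $F$ is continuous in $t$. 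Adding the two identities, integrating from $0$ to $t$, and using $u^\varepsilon(0)=u_0^\varepsilon$ together with the continuity of $t\mapsto\|\nabla u^\varepsilon(t)\|_{L^2(\Omega_\varepsilon)}^2$ furnished by Proposition \ref{P:CHT_Reg}(iii) yields \eqref{E:CHT_GLE} for every $t\geq0$.

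For \eqref{E:CHT_Gr2}, I would use $F(z)\geq-C_0$ from \eqref{E:Poten} to write
\begin{align*}
  \frac{1}{2}\|\nabla u^\varepsilon(t)\|_{L^2(\Omega_\varepsilon)}^2 \leq E_\varepsilon(u^\varepsilon(t))+C_0|\Omega_\varepsilon|,
\end{align*}
insert the equality \eqref{E:CHT_GLE}, and invoke $|\Omega_\varepsilon|\leq c\varepsilon$ (established in Section \ref{SS:Pr_CTD}) to bound the right-hand side by $c\{|E_\varepsilon(u_0^\varepsilon)|+\varepsilon\}$.

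The step needing the most care is the justification of testing with $w^\varepsilon$ in \eqref{E:CHT_WF_u} on the interval $(0,t)$: the weak form only allows test functions on $(0,T)$, but since $w^\varepsilon\mathbf{1}_{(0,t)}\in L^2(0,T;H^1(\Omega_\varepsilon))$ this is standard, and the resulting identity is rigorously a chain rule rather than a formal computation because the strong equation for $w^\varepsilon$ in Proposition \ref{P:CHT_Reg}(i) gives us enough regularity to apply the two parts of Lemma \ref{L:DtEn_CTD}. Beyond this, the argument is linear bookkeeping.
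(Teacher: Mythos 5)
Your proof is correct and takes essentially the same approach as the paper: both invoke Lemma \ref{L:DtEn_CTD} to justify the chain rule identifying $\langle\partial_tu^\varepsilon,-\Delta u^\varepsilon+F'(u^\varepsilon)\rangle_{H^1(\Omega_\varepsilon)}$ with $\frac{d}{dt}E_\varepsilon(u^\varepsilon)$, test \eqref{E:CHT_WF_u} with $\varphi=w^\varepsilon$, and then derive \eqref{E:CHT_Gr2} from $F\geq-C_0$ and $|\Omega_\varepsilon|\leq c\varepsilon$. The only cosmetic difference is the order in which the weak form and the chain rule are combined, and your added remark about continuity in $t$ of the two summands in $E_\varepsilon$ is a correct and slightly more careful formulation of the same argument.
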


\begin{proof}
  By Assumption \ref{A:Poten} and Proposition \ref{P:CHT_Reg}, we can use Lemma \ref{L:DtEn_CTD} to get
  \begin{align*}
    \frac{d}{dt}E_\varepsilon(u^\varepsilon) = \langle\partial_tu^\varepsilon,-\Delta u^\varepsilon+F'(u^\varepsilon)\rangle_{H^1(\Omega_\varepsilon)} = \langle\partial_tu^\varepsilon,w^\varepsilon\rangle_{H^1(\Omega_\varepsilon)} \quad\text{a.e. on}\quad (0,\infty),
  \end{align*}
  where the last equality follows from \eqref{E:CHT_Poi}.
  Integrating both sides over $(0,t)$ and applying \eqref{E:CHT_WF_u} with $\varphi=w^\varepsilon$, we obtain \eqref{E:CHT_GLE}.
  For each $t\geq0$, we see by \eqref{E:Poten} that
  \begin{align*}
    E_\varepsilon(u^\varepsilon(t))+C_0|\Omega_\varepsilon| = \int_{\Omega_\varepsilon}\left(\frac{|\nabla u^\varepsilon(t)|^2}{2}+\{F(u^\varepsilon(t))+C_0\}\right)\,dx \geq \frac{1}{2}\|\nabla u^\varepsilon(t)\|_{L^2(\Omega_\varepsilon)}^2.
  \end{align*}
  From this inequality and \eqref{E:CHT_GLE}, we deduce that
  \begin{align*}
    \frac{1}{2}\|\nabla u^\varepsilon(t)\|_{L^2(\Omega_\varepsilon)}^2+\int_0^t\|\nabla w^\varepsilon(s)\|_{L^2(\Omega_\varepsilon)}^2\,ds \leq E_\varepsilon(u_0^\varepsilon)+C_0|\Omega_\varepsilon|.
  \end{align*}
  Hence, \eqref{E:CHT_Gr2} follows from $E_\varepsilon(u_0^\varepsilon)\leq|E_\varepsilon(u_0^\varepsilon)|$ and $|\Omega_\varepsilon|\leq c\varepsilon$.
\end{proof}

Next, we derive a uniform $L^2$-energy estimate.

\begin{proposition} \label{P:CHT_EL2}
  For all $t\geq0$, we have
  \begin{align}
    &\|u^\varepsilon(t)\|_{L^2(\Omega_\varepsilon)}^2+\int_0^t\|\Delta u^\varepsilon(s)\|_{L^2(\Omega_\varepsilon)}^2\,ds \leq c\Bigl(\|u_0^\varepsilon\|_{L^2(\Omega_\varepsilon)}^2+t\{|E_\varepsilon(u_0^\varepsilon)|+\varepsilon\}\Bigr), \label{E:CHT_EL2} \\
    &\int_0^t\|u^\varepsilon(s)\|_{H^2(\Omega_\varepsilon)}^2\,ds \leq c(1+t)^2\Bigl(\|u_0^\varepsilon\|_{L^2(\Omega_\varepsilon)}^2+|E_\varepsilon(u_0^\varepsilon)|+\varepsilon\Bigr). \label{E:CHT_H2E}
  \end{align}
\end{proposition}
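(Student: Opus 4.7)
The plan is to test the weak form of $\partial_tu^\varepsilon=\Delta w^\varepsilon$ pointwise in time against $u^\varepsilon$ itself, integrate by parts twice (using both Neumann boundary conditions) to convert the right-hand side into $-\|\Delta u^\varepsilon\|_{L^2(\Omega_\varepsilon)}^2$ plus a term of the form $-\int F''(u^\varepsilon)|\nabla u^\varepsilon|^2\,dx$ that is controllable through Assumption \ref{A:Poten}, and finally invoke the $\varepsilon$-uniform elliptic regularity estimate \eqref{E:UER_CTD} to upgrade $\Delta u^\varepsilon$-control to the full $H^2$-estimate.

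Concretely, by Proposition \ref{P:CHT_Reg} we have $u^\varepsilon\in L^2(0,T;H^3(\Omega_\varepsilon))\cap\mathcal{E}_T(\Omega_\varepsilon)$ with $\partial_{\nu_\varepsilon}u^\varepsilon(t)=0$ on $\partial\Omega_\varepsilon$ and $F'(u^\varepsilon)\in L^2(0,T;H^1(\Omega_\varepsilon))$. A standard localization of \eqref{E:CHT_WF_u} in time (taking test functions of the form $\phi(t)\zeta$ with $\phi\in C_c^\infty(0,T)$ and $\zeta\in H^1(\Omega_\varepsilon)$) yields the pointwise identity
$$
\langle\partial_tu^\varepsilon(t),\zeta\rangle_{H^1(\Omega_\varepsilon)}+(\nabla w^\varepsilon(t),\nabla\zeta)_{L^2(\Omega_\varepsilon)}=0\quad\text{for all }\zeta\in H^1(\Omega_\varepsilon)
$$
for a.a.\ $t\in(0,T)$. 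Choosing $\zeta=u^\varepsilon(t)$ and combining with Lemma \ref{L:DtL2_CTD} (with $u_1=u_2=u^\varepsilon$) gives
$$
\frac{1}{2}\frac{d}{dt}\|u^\varepsilon(t)\|_{L^2(\Omega_\varepsilon)}^2=-\bigl(\nabla w^\varepsilon(t),\nabla u^\varepsilon(t)\bigr)_{L^2(\Omega_\varepsilon)}.
$$
Since $\partial_{\nu_\varepsilon}u^\varepsilon=0$ and $w^\varepsilon=-\Delta u^\varepsilon+F'(u^\varepsilon)$, one integration by parts rewrites the right-hand side as $(w^\varepsilon,\Delta u^\varepsilon)_{L^2(\Omega_\varepsilon)}=-\|\Delta u^\varepsilon\|_{L^2(\Omega_\varepsilon)}^2+(F'(u^\varepsilon),\Delta u^\varepsilon)_{L^2(\Omega_\varepsilon)}$, and a second one (using $F'(u^\varepsilon)\in H^1(\Omega_\varepsilon)$ together with $\partial_{\nu_\varepsilon}u^\varepsilon=0$) gives
$$
(F'(u^\varepsilon),\Delta u^\varepsilon)_{L^2(\Omega_\varepsilon)}=-\int_{\Omega_\varepsilon}F''(u^\varepsilon)|\nabla u^\varepsilon|^2\,dx\leq C_2\|\nabla u^\varepsilon(t)\|_{L^2(\Omega_\varepsilon)}^2,
$$
where the bound uses $F''\geq -C_2$ from \eqref{E:Poten}. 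Integrating the resulting inequality over $(0,t)$ and substituting the pointwise estimate $\|\nabla u^\varepsilon(s)\|_{L^2(\Omega_\varepsilon)}^2\leq c\{|E_\varepsilon(u_0^\varepsilon)|+\varepsilon\}$ from \eqref{E:CHT_Gr2} produces \eqref{E:CHT_EL2}.

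For \eqref{E:CHT_H2E}, Lemma \ref{L:UER_CTD} with $k=0$ (applicable since $\partial_{\nu_\varepsilon}u^\varepsilon(s)=0$) gives the $\varepsilon$-independent bound $\|u^\varepsilon(s)\|_{H^2(\Omega_\varepsilon)}^2\leq c(\|\Delta u^\varepsilon(s)\|_{L^2(\Omega_\varepsilon)}^2+\|u^\varepsilon(s)\|_{L^2(\Omega_\varepsilon)}^2)$. Integrating over $(0,t)$ and inserting \eqref{E:CHT_EL2} controls both terms; the time-integral of the linearly-in-$s$ growing bound on $\|u^\varepsilon(s)\|_{L^2(\Omega_\varepsilon)}^2$ is what produces the extra factor of $t$ and hence the $(1+t)^2$ prefactor. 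The heavy lifting has already been done in Proposition \ref{P:CHT_Reg} and Lemma \ref{L:UER_CTD}, so no new difficulties arise here; the only conceptually important point — that the quartic nonlinearity contributes the \emph{favourable} sign via the one-sided lower bound $F''\geq -C_2$ rather than the full polynomial growth that would break any $L^2$-energy estimate — is precisely what motivates the formulation of Assumption \ref{A:Poten}.
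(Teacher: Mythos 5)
Your proof is correct and follows essentially the same route as the paper: test the weak form against $u^\varepsilon$, use $\partial_{\nu_\varepsilon}u^\varepsilon=0$ and $w^\varepsilon=-\Delta u^\varepsilon+F'(u^\varepsilon)$ plus $F''\geq -C_2$ to get the Grönwall-type $L^2$/$\Delta$-control, feed in \eqref{E:CHT_Gr2}, then upgrade via Lemma~\ref{L:UER_CTD}. The only cosmetic difference is that you integrate by parts starting from $(\nabla w^\varepsilon,\nabla u^\varepsilon)=-(w^\varepsilon,\Delta u^\varepsilon)$ and then substitute for $w^\varepsilon$, whereas the paper substitutes first and then integrates by parts on $(\nabla\Delta u^\varepsilon,\nabla u^\varepsilon)$; both yield the identical middle expression $\|\Delta u^\varepsilon\|_{L^2}^2+\int F''(u^\varepsilon)|\nabla u^\varepsilon|^2$, and your route arguably needs one notch less spatial regularity.
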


\begin{proof}
  For the sake of simplicity, we sometimes omit the time variable.

  Let $\varphi=u^\varepsilon$ in \eqref{E:CHT_WF_u} with $T$ replaced by $t\geq0$.
  Then, by \eqref{E:DtL2_CTD},
  \begin{align*}
    \frac{1}{2}\|u^\varepsilon(t)\|_{L^2(\Omega_\varepsilon)}^2+\int_0^t(\nabla w^\varepsilon,\nabla u^\varepsilon)_{L^2(\Omega_\varepsilon)}\,ds = \frac{1}{2}\|u_0^\varepsilon\|_{L^2(\Omega_\varepsilon)}^2.
  \end{align*}
  Moreover, we see by \eqref{E:CHT_Poi}, integration by parts, and \eqref{E:Poten} that
  \begin{align*}
    (\nabla w^\varepsilon,\nabla u^\varepsilon)_{L^2(\Omega_\varepsilon)} &= (\nabla[-\Delta u^\varepsilon+F'(u^\varepsilon)],\nabla u^\varepsilon)_{L^2(\Omega_\varepsilon)} \\
    &= \|\Delta u^\varepsilon\|_{L^2(\Omega_\varepsilon)}^2+\int_{\Omega_\varepsilon}F''(u^\varepsilon)|\nabla u^\varepsilon|^2\,dx \\
    &\geq \|\Delta u^\varepsilon\|_{L^2(\Omega_\varepsilon)}^2-C_2\|\nabla u^\varepsilon\|_{L^2(\Omega_\varepsilon)}^2.
  \end{align*}
  Hence, we find that
  \begin{align*}
    \frac{1}{2}\|u^\varepsilon(t)\|_{L^2(\Omega_\varepsilon)}^2+\int_0^t\|\Delta u^\varepsilon\|_{L^2(\Omega_\varepsilon)}^2\,ds \leq \frac{1}{2}\|u_0^\varepsilon\|_{L^2(\Omega_\varepsilon)}^2+C_2\int_0^t\|\nabla u^\varepsilon\|_{L^2(\Omega_\varepsilon)}^2\,ds,
  \end{align*}
  and we get \eqref{E:CHT_EL2} by using \eqref{E:CHT_Gr2} to the last term.
  Also, for a.a. $s>0$, we can apply \eqref{E:UER_CTD} with $k=0$ to $u^\varepsilon(s)$ by Proposition \ref{P:CHT_Reg}.
  Hence, \eqref{E:CHT_H2E} follows from \eqref{E:CHT_EL2}.
\end{proof}

\section{Weighted average operator} \label{S:Ave}
In this section, we study the weighted average of functions on $\Omega_\varepsilon$ in the thin direction.
We write $c$ for a general positive constant independent of $\varepsilon$.
Also, for a function $\eta$ on $\Gamma$, let $\bar{\eta}$ be the constant extension of $\eta$ in the normal direction of $\Gamma$.

\subsection{Definition and basic properties} \label{Ss:Ave_Def}
For a function $u$ in $\Omega_\varepsilon$, we define the weighted average of $u$ in the thin direction by
\begin{align*}
  \mathcal{M}_\varepsilon u(y) := \frac{1}{\varepsilon g(y)}\int_{\varepsilon g_0(y)}^{\varepsilon g_1(y)}u(y+r\bm{\nu}(y,t))J(y,r)\,dr, \quad y\in\Gamma.
\end{align*}
Here, $J$ is the Jacobian given by \eqref{E:Def_J}.
The change of variables formula \eqref{E:CoV_CTD} gives
\begin{align} \label{E:Ave_Pair}
  \int_{\Omega_\varepsilon}u(x)\bar{\eta}(x)\,dx = \varepsilon\int_{\Gamma}g(y)\mathcal{M}_\varepsilon u(y)\eta(y)\,d\mathcal{H}^{n-1}(y)
\end{align}
for functions $u$ on $\Omega_\varepsilon$ and $\eta$ on $\Gamma$.
Note that the formula \eqref{E:Ave_Pair} does not involve any error.
This is the main advantage of taking the weighted average in our analysis.

Let us give basic properties of $\mathcal{M}_\varepsilon$.
We set $\partial_\nu u=\bar{\bm{\nu}}\cdot\nabla u$ for a function $u$ on $\Omega_\varepsilon$, which is the derivative of $u$ in the normal direction of $\Gamma$.
Also, we use the notation \eqref{E:Pull_CTD} and sometimes suppress the argument $y\in\Gamma$.
For example, we write
\begin{align*}
  \mathcal{M}_\varepsilon u = \frac{1}{\varepsilon g}\int_{\varepsilon g_0}^{\varepsilon g_1}u^\sharp(r)J(r)\,dr \quad\text{on}\quad \Gamma.
\end{align*}
Note that $(\partial_\nu u)^\sharp(r)=\partial_ru^\sharp(r)$.
Also, if $\eta$ is a function on $\Gamma$, then $\partial_\nu\bar{\eta}=0$ in $\Omega_\varepsilon$.

\begin{lemma} \label{L:Ave_Lp}
  Let $p\in[1,\infty]$ and $u\in L^p(\Omega_\varepsilon)$.
  Then, $\mathcal{M}_\varepsilon u\in L^p(\Gamma)$ and
  \begin{align}
    \|\mathcal{M}_\varepsilon u\|_{L^p(\Gamma)} &\leq c\varepsilon^{-1/p}\|u\|_{L^p(\Omega_\varepsilon)}, \label{E:ALp_Sur} \\
    \left\|\overline{\mathcal{M}_\varepsilon u}\right\|_{L^p(\Omega_\varepsilon)} &\leq c\|u\|_{L^p(\Omega_\varepsilon)}. \label{E:ALp_TD}
  \end{align}
\end{lemma}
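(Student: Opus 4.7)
The plan is to view $\mathcal{M}_\varepsilon u(y)$ as a weighted average of the one-dimensional slice $r \mapsto u^\sharp(y,r)$ against the measure $J(y,r)\,dr/(\varepsilon g(y))$, whose total mass is bounded above and below by positive constants independent of $\varepsilon$ and $y$ thanks to \eqref{E:G_Bdd} and \eqref{E:J_Bdd}. So Jensen's inequality, or equivalently a slicewise application of H\"older, should yield a pointwise bound on $|\mathcal{M}_\varepsilon u(y)|^p$ in terms of $\int |u^\sharp|^p J\,dr$, and then \eqref{E:Lp_CTD} takes care of the integration over $\Gamma$.

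Concretely, for $p \in [1,\infty)$ with conjugate exponent $p'$, I would decompose the integrand as $u^\sharp(y,r) J(y,r) = \bigl[u^\sharp(y,r) J(y,r)^{1/p}\bigr]\cdot J(y,r)^{1/p'}$ and apply H\"older to obtain
\begin{equation*}
  |\mathcal{M}_\varepsilon u(y)|^p \leq \frac{1}{(\varepsilon g(y))^p}\left(\int_{\varepsilon g_0(y)}^{\varepsilon g_1(y)}|u^\sharp(y,r)|^p J(y,r)\,dr\right)\left(\int_{\varepsilon g_0(y)}^{\varepsilon g_1(y)}J(y,r)\,dr\right)^{p-1}.
\end{equation*}
The second factor is at most $(c\varepsilon)^{p-1}$ by \eqref{E:G_Bdd} and \eqref{E:J_Bdd}, and $g(y)^{-p} \leq c$ by \eqref{E:G_Bdd}, so the prefactor collapses to $c\varepsilon^{-1}$. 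Integrating over $\Gamma$ and invoking the lower bound in \eqref{E:Lp_CTD} then gives $\|\mathcal{M}_\varepsilon u\|_{L^p(\Gamma)}^p \leq c\varepsilon^{-1}\|u\|_{L^p(\Omega_\varepsilon)}^p$, which is \eqref{E:ALp_Sur}. The endpoint $p=\infty$ can be handled directly from
\begin{equation*}
  |\mathcal{M}_\varepsilon u(y)| \leq \frac{\|u\|_{L^\infty(\Omega_\varepsilon)}}{\varepsilon g(y)}\int_{\varepsilon g_0(y)}^{\varepsilon g_1(y)}J(y,r)\,dr \leq c\|u\|_{L^\infty(\Omega_\varepsilon)}.
\end{equation*}

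For \eqref{E:ALp_TD}, I would simply apply the upper bound in \eqref{E:Lp_CE} with $\eta = \mathcal{M}_\varepsilon u$, namely $\|\overline{\mathcal{M}_\varepsilon u}\|_{L^p(\Omega_\varepsilon)} \leq c\varepsilon^{1/p}\|\mathcal{M}_\varepsilon u\|_{L^p(\Gamma)}$, and plug in \eqref{E:ALp_Sur}; the two powers $\varepsilon^{\pm1/p}$ cancel. The case $p=\infty$ is immediate since constant extension preserves the $L^\infty$-norm.

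I do not anticipate any real obstacle here; this is a routine Jensen/H\"older estimate, entirely parallel to the way one proves boundedness of averaging operators on slab domains. The only minor care needed is treating $p=1$ (where the second H\"older factor becomes $\|J\|_{L^\infty}$ rather than an integral power) and $p=\infty$ separately from the generic $p\in(1,\infty)$ case, but both endpoints follow from the same pointwise reasoning.
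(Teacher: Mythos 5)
Your proof is correct and follows essentially the same route as the paper: a slicewise H\"older bound giving $|\mathcal{M}_\varepsilon u(y)|^p\leq c\varepsilon^{-1}\int_{\varepsilon g_0}^{\varepsilon g_1}|u^\sharp|^pJ\,dr$ via \eqref{E:G_Bdd} and \eqref{E:J_Bdd}, followed by the change of variables \eqref{E:CoV_CTD} for \eqref{E:ALp_Sur}, and then \eqref{E:Lp_CE} for \eqref{E:ALp_TD}. (One small slip: the inequality you need to close \eqref{E:ALp_Sur} is the change of variables identity \eqref{E:Int_CoV}/\eqref{E:CoV_CTD} applied to $|u|^p$, not the lower bound in \eqref{E:Lp_CTD}.)
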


\begin{proof}
  The case $p=\infty$ just follows from \eqref{E:J_Bdd}.
  Let $p\neq\infty$.
  Then,
  \begin{align*}
    |\mathcal{M}_\varepsilon u| \leq \frac{1}{\varepsilon g}\int_{\varepsilon g_0}^{\varepsilon g_1}|u^\sharp(r)|J(r)\,dr \leq c\varepsilon^{-1/p}\left(\int_{\varepsilon g_0}^{\varepsilon g_1}|u^\sharp(r)|^pJ(r)\,dr\right)^{1/p}
  \end{align*}
  by H\"{o}lder's inequality, \eqref{E:G_Bdd}, and \eqref{E:J_Bdd}.
  We integrate the $p$-th power of both sides over $\Gamma$ and use \eqref{E:CoV_CTD} to get \eqref{E:ALp_Sur}.
  Also, \eqref{E:ALp_TD} follows from \eqref{E:Lp_CE} and \eqref{E:ALp_Sur}.
\end{proof}

\begin{lemma} \label{L:ADif_Lp}
  Let $p\in[1,\infty]$ and $u\in W^{1,p}(\Omega_\varepsilon)$.
  Then,
  \begin{align}
    \left|u-\overline{\mathcal{M}_\varepsilon u}\right| &\leq c\varepsilon\,\overline{\mathcal{M}_\varepsilon(|u|+|\partial_\nu u|)} \quad\text{in}\quad \Omega_\varepsilon, \label{E:ADif_PW} \\
    \left\|u-\overline{\mathcal{M}_\varepsilon u}\right\|_{L^p(\Omega_\varepsilon)} &\leq c\varepsilon\|u\|_{W^{1,p}(\Omega_\varepsilon)}. \label{E:ADif_Lp}
  \end{align}
\end{lemma}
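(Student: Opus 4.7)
The plan is to prove the pointwise bound \eqref{E:ADif_PW} first by rewriting $u - \overline{\mathcal{M}_\varepsilon u}$ in Fermi coordinates as an integral over the fiber and splitting into a ``vertical variation'' piece and a ``Jacobian error'' piece; then the $L^p$ bound \eqref{E:ADif_Lp} will follow immediately from \eqref{E:ADif_PW} together with \eqref{E:ALp_TD}. I may first assume $u\in C^1(\overline{\Omega_\varepsilon})$ so that the fundamental theorem of calculus applies along normal fibers, and then invoke the density of $C^\infty(\overline{\Omega_\varepsilon})$ in $W^{1,p}(\Omega_\varepsilon)$ for $p<\infty$ to extend \eqref{E:ADif_PW} to the a.e. sense, with the $p=\infty$ case handled directly since $W^{1,\infty}$ representatives are Lipschitz along normals.

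For the core calculation, I would fix $x=y+r\bm{\nu}(y)\in\Omega_\varepsilon$ with $y=\pi(x)\in\Gamma$ and $r=d(x)\in[\varepsilon g_0(y),\varepsilon g_1(y)]$, and use $\varepsilon g(y)=\int_{\varepsilon g_0(y)}^{\varepsilon g_1(y)}ds$ to write
\begin{align*}
u(x)-\overline{\mathcal{M}_\varepsilon u}(x) = \frac{1}{\varepsilon g(y)}\int_{\varepsilon g_0(y)}^{\varepsilon g_1(y)}\bigl\{u^\sharp(y,r)-u^\sharp(y,s)J(y,s)\bigr\}\,ds.
\end{align*}
Splitting the integrand as $[u^\sharp(y,r)-u^\sharp(y,s)]+u^\sharp(y,s)[1-J(y,s)]$, I handle the first summand via $(\partial_\nu u)^\sharp(y,\tau)=\partial_\tau u^\sharp(y,\tau)$ and the fundamental theorem of calculus, so that
\begin{align*}
|u^\sharp(y,r)-u^\sharp(y,s)|\leq \int_{\varepsilon g_0(y)}^{\varepsilon g_1(y)}|(\partial_\nu u)^\sharp(y,\tau)|\,d\tau;
\end{align*}
then the lower bound $J\geq c^{-1}$ from \eqref{E:J_Bdd} converts the unweighted fiber integral into a weighted one and contributes exactly $c\varepsilon\,\mathcal{M}_\varepsilon(|\partial_\nu u|)(y)$. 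For the second summand, the pointwise bound $|J-1|\leq c\varepsilon$ from \eqref{E:J_Diff} followed by the same $J\geq c^{-1}$ trick produces $c\varepsilon\,\mathcal{M}_\varepsilon(|u|)(y)$. Adding the two gives \eqref{E:ADif_PW}.

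For \eqref{E:ADif_Lp}, I take the $L^p(\Omega_\varepsilon)$-norm of \eqref{E:ADif_PW} and apply \eqref{E:ALp_TD} of Lemma \ref{L:Ave_Lp} to the right-hand side:
\begin{align*}
\|u-\overline{\mathcal{M}_\varepsilon u}\|_{L^p(\Omega_\varepsilon)}\leq c\varepsilon\bigl\|\overline{\mathcal{M}_\varepsilon(|u|+|\partial_\nu u|)}\bigr\|_{L^p(\Omega_\varepsilon)}\leq c\varepsilon\bigl\||u|+|\partial_\nu u|\bigr\|_{L^p(\Omega_\varepsilon)}\leq c\varepsilon\|u\|_{W^{1,p}(\Omega_\varepsilon)},
\end{align*}
where in the last step I use $|\partial_\nu u|\leq|\nabla u|$.

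The only mildly subtle point is the density/approximation step: for $p=\infty$ the quantities $\mathcal{M}_\varepsilon(|u|+|\partial_\nu u|)$ and the FTC identity make sense directly, while for $p<\infty$ one approximates $u$ by $C^1$ functions, applies \eqref{E:ADif_PW} along normal fibers almost everywhere, and passes to the limit using \eqref{E:ALp_Sur}. No substantial obstacle is expected beyond bookkeeping; the argument is essentially a one-dimensional Poincaré estimate on the fiber, made uniform in $y\in\Gamma$ by the bounds \eqref{E:G_Bdd}--\eqref{E:J_Diff}.
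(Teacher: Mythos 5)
Your proposal is correct and follows essentially the same route as the paper's proof: the same split of $u^\sharp(y,r)-u^\sharp(y,s)J(y,s)$ into a vertical-variation term (handled by the fundamental theorem of calculus along the normal fiber) and a Jacobian-error term (handled by $|J-1|\le c\varepsilon$), with the bounds $J\ge c^{-1}$ and $g\le c$ converting unweighted fiber integrals to the weighted average, and then \eqref{E:ALp_TD} together with $|\partial_\nu u|\le|\nabla u|$ to pass to the $L^p$ estimate. The only difference is that you make the density approximation explicit; the paper leaves this implicit, and a small phrasing slip (``contributes exactly'' should be ``is bounded by'') does not affect the argument.
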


\begin{proof}
  Let $r\in(\varepsilon g_0,\varepsilon g_1)$ (recall that we suppress $y\in\Gamma$).
  We write
  \begin{align*}
    &u^\sharp(r)-\mathcal{M}_\varepsilon u = u^\sharp(r)-\frac{1}{\varepsilon g}\int_{\varepsilon g_0}^{\varepsilon g_1}u^\sharp(r_1)J(r_1)\,dr_1 = K_1+K_2, \\
    &K_1 := \frac{1}{\varepsilon g}\int_{\varepsilon g_0}^{\varepsilon g_1}\{u^\sharp(r)-u^\sharp(r_1)\}\,dr_1, \quad K_2 := \frac{1}{\varepsilon g}\int_{\varepsilon g_0}^{\varepsilon g_1}u^\sharp(r_1)\{1-J(r_1)\}\,dr_1.
  \end{align*}
  For $K_1$, we see by $\partial_ru^\sharp(r)=(\partial_\nu u)^\sharp(r)$ that
  \begin{align*}
    |u^\sharp(r)-u^\sharp(r_1)| = \left|\int_{r_1}^r(\partial_\nu u)^\sharp(r_2)\,dr_2\right| \leq \int_{\varepsilon g_0}^{\varepsilon g_1}|(\partial_\nu u)^\sharp(r_2)|\,dr_2.
  \end{align*}
  Here, the last term is independent of $r$ and $r_1$.
  Hence,
  \begin{align*}
    |K_1| \leq \frac{1}{\varepsilon g}\cdot(\varepsilon g)\int_{\varepsilon g_0}^{\varepsilon g_1}|(\partial_\nu u)^\sharp(r_2)|\,dr_2 = \varepsilon g\mathcal{M}_\varepsilon(|\partial_\nu u|) \leq c\varepsilon\mathcal{M}_\varepsilon(|\partial_\nu u|)
  \end{align*}
  by \eqref{E:G_Bdd}.
  Also, it follows from \eqref{E:J_Diff} that
  \begin{align*}
    |K_2| \leq c\varepsilon\cdot\frac{1}{\varepsilon g}\int_{\varepsilon g_0}^{\varepsilon g_1}|u^\sharp(r_1)|\,dr_1 = c\varepsilon\mathcal{M}_\varepsilon(|u|).
  \end{align*}
  By these estimates and $\mathcal{M}_\varepsilon(|u|)+\mathcal{M}_\varepsilon(|\partial_\nu u|)=\mathcal{M}_\varepsilon(|u|+|\partial_\nu u|)$, we get
  \begin{align*}
    |u^\sharp(r)-\mathcal{M}_\varepsilon u| \leq |K_1|+|K_2| \leq c\varepsilon\mathcal{M}_\varepsilon(|u|+|\partial_\nu u|),
  \end{align*}
  which is \eqref{E:ADif_PW} under the notation \eqref{E:Pull_CTD}.
  Also, we have \eqref{E:ADif_Lp} by using \eqref{E:ADif_PW} and applying \eqref{E:ALp_TD} and $|\partial_\nu u|=|\bm{\nu}\cdot\nabla u|\leq|\nabla u|$ in $\Omega_\varepsilon$.
\end{proof}

\subsection{Tangential derivatives of the average} \label{SS:Ave_TGr}
We can compute the tangential derivatives of $\mathcal{M}_\varepsilon u$ explicitly as follows.

\begin{lemma} \label{L:Ave_TGr}
  Let $u\in C(\overline{\Omega_\varepsilon})\cap C^1(\Omega_\varepsilon)$.
  Then,
  \begin{align} \label{E:Ave_TGr}
    \nabla_\Gamma\mathcal{M}_\varepsilon u = \mathcal{M}_\varepsilon(\mathbf{B}\nabla u)+\mathcal{M}_\varepsilon((\partial_\nu u+uf_J)\mathbf{b}_\varepsilon)+\mathcal{M}_\varepsilon(u\mathbf{b}_J) \quad\text{on}\quad \Gamma.
  \end{align}
  Here, $\mathbf{B}\colon\overline{\mathcal{N}_\delta}\to\mathbb{R}^{3\times 3}$, $f_J\colon\overline{\mathcal{N}_\delta}\to\mathbb{R}$, and $\mathbf{b}_\varepsilon,\mathbf{b}_J\colon\overline{\mathcal{N}_\delta}\to\mathbb{R}^3$ are given by
  \begin{align} \label{E:Def_Bfb}
    \begin{aligned}
      \mathbf{B}(x) &:= \overline{\mathbf{P}}(x)-d(x)\overline{\mathbf{W}}(x), \quad f_J(x) := \frac{-\overline{H}(x)+2d(x)\overline{K}(x)}{J_{\pi,d}(x)}, \\
      \mathbf{b}_\varepsilon(x) &:= \frac{1}{\bar{g}(x)}\Bigl[\{d(x)-\varepsilon\bar{g}_0(x)\}\overline{\nabla_\Gamma g_1}(x)+\{\varepsilon\bar{g}_1(x)-d(x)\}\overline{\nabla_\Gamma g_0}(x)\Bigr], \\
      \mathbf{b}_J(x) &:= \frac{d(x)}{J_{\pi,d}(x)}\Bigl\{-\overline{\nabla_\Gamma H}(x)+d(x)\overline{\nabla_\Gamma K}(x)\Bigr\}
    \end{aligned}
  \end{align}
  for $x\in\overline{\mathcal{N}_\delta}$, where $J_{\pi,d}(x):=J(\pi(x),d(x))$ with $\pi(x)\in\Gamma$ given in \eqref{E:Fermi}.
  Moreover,
  \begin{align} \label{E:ATG_Diff}
    \left|\nabla_\Gamma\mathcal{M}_\varepsilon u-\mathcal{M}_\varepsilon\Bigl(\overline{\mathbf{P}}\nabla u\Bigr)\right| \leq c\varepsilon\mathcal{M}_\varepsilon(|u|+|\nabla u|) \quad\text{on}\quad \Gamma.
  \end{align}
\end{lemma}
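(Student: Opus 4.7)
The plan is to compute $\nabla_\Gamma \mathcal{M}_\varepsilon u(y)$ by direct application of the Leibniz rule for differentiating under the integral sign with variable endpoints, and then to recognize each resulting contribution as one of the three averages on the right-hand side of \eqref{E:Ave_TGr}. Writing $h(y) := \int_{\varepsilon g_0(y)}^{\varepsilon g_1(y)} u^\sharp(y,r) J(y,r) \, dr$ so that $\mathcal{M}_\varepsilon u = h/(\varepsilon g)$, the quotient rule gives
\begin{equation*}
  \nabla_\Gamma \mathcal{M}_\varepsilon u = \frac{1}{\varepsilon g}\nabla_\Gamma h - \frac{\mathcal{M}_\varepsilon u}{g}\,\nabla_\Gamma g,
\end{equation*}
and Leibniz in $y$ splits $\nabla_\Gamma h$ into the interior contribution $\int_{\varepsilon g_0}^{\varepsilon g_1}\nabla_\Gamma^{y}[u^\sharp J]\,dr$ (with $r$ held fixed) plus the boundary contribution $\varepsilon\,u^\sharp J \nabla_\Gamma g_1\bigr|_{r=\varepsilon g_1} - \varepsilon\,u^\sharp J \nabla_\Gamma g_0\bigr|_{r=\varepsilon g_0}$.

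For the interior integrand I would establish two identities. First, $\nabla_\Gamma^{y}[u^\sharp(y,r)] = (\mathbf{B}\nabla u)^\sharp(y,r)$: extending the map $y \mapsto u(y+r\bm{\nu}(y))$ to $\mathcal{N}_\delta$ by $x \mapsto u(x+r\bar{\bm{\nu}}(x))$, its Euclidean gradient at $y \in \Gamma$ equals $(\mathbf{I}_3 - r\mathbf{W}(y))\nabla u(y+r\bm{\nu}(y))$ since $\nabla\bar{\bm{\nu}}|_\Gamma = -\mathbf{W}$; projecting by $\mathbf{P}$ and using $\mathbf{P}\mathbf{W} = \mathbf{W}$ from \eqref{E:WP_PW} collapses the prefactor to $\mathbf{P}(y) - r\mathbf{W}(y) = \mathbf{B}(y+r\bm{\nu}(y))$. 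Second, differentiating $J(y,r) = 1 - rH(y) + r^2K(y)$ in $y$ and comparing with \eqref{E:Def_Bfb} gives $\nabla_\Gamma^{y}J(y,r) = J(y,r)\,\mathbf{b}_J^\sharp(y,r)$. Combining these via the product rule and integrating yields the $\mathcal{M}_\varepsilon(\mathbf{B}\nabla u)$ and $\mathcal{M}_\varepsilon(u\mathbf{b}_J)$ contributions.

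For the boundary terms I would exploit the fact that $\mathbf{b}_\varepsilon^\sharp(y,r)$ is linear in $r$ and, by direct evaluation from \eqref{E:Def_Bfb}, satisfies $\mathbf{b}_\varepsilon^\sharp(y,\varepsilon g_i(y)) = \varepsilon\,\nabla_\Gamma g_i(y)$ at each endpoint and $\partial_r \mathbf{b}_\varepsilon^\sharp = \nabla_\Gamma g / g$. Hence the boundary contribution equals $[u^\sharp J\mathbf{b}_\varepsilon^\sharp]_{r=\varepsilon g_0}^{r=\varepsilon g_1}$ and, by the fundamental theorem of calculus in $r$ together with $(\partial_\nu u)^\sharp = \partial_r u^\sharp$ and $\partial_r J = J f_J^\sharp$ (the latter from $\partial_r J = -H+2rK$ matched with the definition of $f_J$), rewrites as
\begin{equation*}
  \int_{\varepsilon g_0}^{\varepsilon g_1} J\bigl[(\partial_\nu u)^\sharp + u^\sharp f_J^\sharp\bigr]\mathbf{b}_\varepsilon^\sharp\,dr + \frac{\nabla_\Gamma g}{g}\int_{\varepsilon g_0}^{\varepsilon g_1} u^\sharp J\,dr = \varepsilon g\,\mathcal{M}_\varepsilon\bigl((\partial_\nu u + u f_J)\mathbf{b}_\varepsilon\bigr) + \varepsilon\,(\nabla_\Gamma g)\,\mathcal{M}_\varepsilon u.
\end{equation*}
Assembling everything, dividing by $\varepsilon g$, the term $(\nabla_\Gamma g/g)\mathcal{M}_\varepsilon u$ cancels the second term in the quotient rule, and \eqref{E:Ave_TGr} follows.

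For \eqref{E:ATG_Diff} I would use the identity just proved together with the pointwise bounds $|\mathbf{B} - \overline{\mathbf{P}}| = |d|\,|\overline{\mathbf{W}}| \leq c\varepsilon$, $|\mathbf{b}_\varepsilon| \leq c\varepsilon$, $|\mathbf{b}_J| \leq c\varepsilon$, and $|f_J| \leq c$ in $\Omega_\varepsilon$, all of which follow from $|d| \leq c\varepsilon$ on $\Omega_\varepsilon$, the bounds \eqref{E:G_Bdd} and \eqref{E:Cur_Bd}, the $C^3$ regularity of $\Gamma$, $g_0$, $g_1$, and positivity of $J$ from \eqref{E:J_Bdd}. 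Combining with $|\partial_\nu u| \leq |\nabla u|$ and the monotonicity $|\mathcal{M}_\varepsilon \varphi| \leq \mathcal{M}_\varepsilon |\varphi|$, the three error contributions are each bounded by $c\varepsilon\,\mathcal{M}_\varepsilon(|u|+|\nabla u|)$, giving \eqref{E:ATG_Diff}. The main obstacle is the bookkeeping of the boundary contribution: the role of $\mathbf{b}_\varepsilon$ is precisely to absorb the variable endpoints in a way that, after the fundamental theorem in $r$, produces a $(\nabla_\Gamma g/g)\mathcal{M}_\varepsilon u$ term that exactly cancels the quotient-rule term, and this is what makes the weighted (rather than unweighted) average the natural object here.
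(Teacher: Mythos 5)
Your derivation of \eqref{E:Ave_TGr} is correct: the quotient rule, Leibniz with variable endpoints, the identification $\nabla_\Gamma^y u^\sharp=(\mathbf{B}\nabla u)^\sharp$ via the extension $x\mapsto u(x+r\bar{\bm{\nu}}(x))$, the identities $\nabla_\Gamma^y J=J\,\mathbf{b}_J^\sharp$ and $\partial_r J=J f_J^\sharp$, and the fundamental-theorem rewriting of the boundary term $[u^\sharp J\mathbf{b}_\varepsilon^\sharp]_{r=\varepsilon g_0}^{\varepsilon g_1}$ (with $\partial_r\mathbf{b}_\varepsilon^\sharp=\nabla_\Gamma g/g$ cancelling the quotient-rule remainder) all check out, as does the bound \eqref{E:ATG_Diff} from \eqref{Pf_ATG:coef}. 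The paper itself proves \eqref{E:Ave_TGr} only by citing \cite[Lemma 5.6]{Miu24_GL}, where $f_J$ and $\mathbf{b}_J$ are characterized exactly as $\partial_rJ/J$ and $\nabla_\Gamma J/J$, so your Leibniz computation is the intended argument and supplies precisely the details the paper outsources.
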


\begin{proof}
  We refer to \cite[Lemma 5.6]{Miu24_GL} for the proof of \eqref{E:Ave_TGr}.
  Note that $\mathbf{b}_\varepsilon$ and $\mathbf{b}_J$ are denoted by $\Psi_\varepsilon$ and $\Psi_J$ in \cite{Miu24_GL}, respectively.
  Also, in \cite{Miu24_GL}, $f_J$ and $\mathbf{b}_J$ are written as
  \begin{align*}
    f_J(y+r\bm{\nu}(y)) = \frac{\partial_rJ(y,r)}{J(y,r)}, \quad \mathbf{b}_J(y+r\bm{\nu}(y)) = \frac{\nabla_\Gamma J(y,r)}{J(y,r)}
  \end{align*}
  for $x=y+r\bm{\nu}(y)\in\overline{\mathcal{N}_\delta}$ (see also \eqref{E:Def_J} for the definition of $J$).

  In \eqref{E:Ave_TGr}, we see that $|\partial_\nu u|=|\bar{\bm{\nu}}\cdot\nabla u|\leq|\nabla u|$ in $\Omega_\varepsilon$ and
  \begin{align} \label{Pf_ATG:coef}
    |d| \leq c\varepsilon, \quad |\mathbf{b}_\varepsilon| \leq c\varepsilon, \quad |\mathbf{b}_J| \leq c\varepsilon, \quad |f_J| \leq c \quad\text{in}\quad \Omega_\varepsilon, \quad |\mathbf{W}| \leq c \quad\text{on}\quad \Gamma
  \end{align}
  by \eqref{E:G_Bdd}, \eqref{E:J_Bdd}, and the regularity of $\Gamma$, $g_0$, and $g_1$.
  Thus, \eqref{E:ATG_Diff} follows.
\end{proof}

\begin{lemma} \label{L:Ave_Hk}
  Let $k=1,2,3$ and $u\in H^k(\Omega_\varepsilon)$.
  Then, $\mathcal{M}_\varepsilon u\in H^k(\Gamma)$ and
  \begin{align}
    \|\mathcal{M}_\varepsilon u\|_{H^k(\Gamma)} &\leq c\varepsilon^{-1/2}\|u\|_{H^k(\Omega_\varepsilon)}, \label{E:AHk_Sur} \\
    \left\|\overline{\mathcal{M}_\varepsilon u}\right\|_{H^k(\Omega_\varepsilon)} &\leq c\|u\|_{H^k(\Omega_\varepsilon)}. \label{E:AHk_TD}
  \end{align}
\end{lemma}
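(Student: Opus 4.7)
The plan is to establish \eqref{E:AHk_Sur} by iterating the tangential gradient formula \eqref{E:Ave_TGr} from Lemma \ref{L:Ave_TGr}, and then to deduce \eqref{E:AHk_TD} from \eqref{E:AHk_Sur} via the extension inequalities \eqref{E:CEGr_Bd} and \eqref{E:Lp_CE}. Since \eqref{E:Ave_TGr} is stated only for $u\in C(\overline{\Omega_\varepsilon})\cap C^1(\Omega_\varepsilon)$, I would first work with smooth $u$ and then extend to general $u\in H^k(\Omega_\varepsilon)$ by a density argument: for an approximating sequence $u_n\to u$ in $H^k(\Omega_\varepsilon)$ with $u_n\in C^\infty(\overline{\Omega_\varepsilon})$, Lemma \ref{L:Ave_Lp} gives $\mathcal{M}_\varepsilon u_n\to\mathcal{M}_\varepsilon u$ in $L^2(\Gamma)$, while the smooth-case estimate forces $\{\mathcal{M}_\varepsilon u_n\}$ to be bounded in $H^k(\Gamma)$, so weak compactness and lower semicontinuity of the norm yield $\mathcal{M}_\varepsilon u\in H^k(\Gamma)$ together with the desired bound.

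For smooth $u$ and $k=1$, I would use the coefficient bounds $|\mathbf{B}|+|f_J|\le c$ and $|\mathbf{b}_\varepsilon|+|\mathbf{b}_J|\le c\varepsilon$ from \eqref{Pf_ATG:coef}, together with $|\partial_\nu u|\le|\nabla u|$, to turn \eqref{E:Ave_TGr} into the pointwise estimate
\[
|\nabla_\Gamma\mathcal{M}_\varepsilon u| \le c\,\mathcal{M}_\varepsilon(|\nabla u|+|u|) \quad\text{on}\quad \Gamma,
\]
and then apply \eqref{E:ALp_Sur} with $p=2$, together with the companion bound $\|\mathcal{M}_\varepsilon u\|_{L^2(\Gamma)}\le c\varepsilon^{-1/2}\|u\|_{L^2(\Omega_\varepsilon)}$, to obtain \eqref{E:AHk_Sur} for $k=1$. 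For $k=2,3$ I would proceed inductively by applying \eqref{E:Ave_TGr} componentwise to each of the three weighted averages on the right-hand side of that identity, expressing higher tangential derivatives of $\mathcal{M}_\varepsilon u$ as weighted averages of products of derivatives of $u$ with derivatives of the geometric factors $\mathbf{B}$, $\mathbf{b}_\varepsilon$, $\mathbf{b}_J$, and $f_J$. These factors are $C^3$ on $\overline{\mathcal{N}_\delta}$ thanks to the regularity assumptions on $\Gamma$, $g_0$, and $g_1$, so their derivatives up to second order remain bounded uniformly in $\varepsilon$, yielding
\[
|\nabla_\Gamma^k\mathcal{M}_\varepsilon u| \le c\sum_{\ell=0}^k\mathcal{M}_\varepsilon(|\nabla^\ell u|) \quad\text{on}\quad \Gamma;
\]
applying \eqref{E:ALp_Sur} with $p=2$ termwise then delivers \eqref{E:AHk_Sur} for $k=2,3$.

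Once \eqref{E:AHk_Sur} is in hand, \eqref{E:AHk_TD} will follow quickly. By \eqref{E:CEGr_Bd} applied to $\eta=\mathcal{M}_\varepsilon u$, we have $|\nabla^m\overline{\mathcal{M}_\varepsilon u}|\le c\sum_{\ell=1}^m|\overline{\nabla_\Gamma^\ell\mathcal{M}_\varepsilon u}|$ on $\overline{\mathcal{N}_\delta}$ for $m=1,\dots,k$, and combining this with the extension scaling \eqref{E:Lp_CE} at $p=2$ gives
\[
\left\|\overline{\mathcal{M}_\varepsilon u}\right\|_{H^k(\Omega_\varepsilon)} \le c\varepsilon^{1/2}\|\mathcal{M}_\varepsilon u\|_{H^k(\Gamma)} \le c\|u\|_{H^k(\Omega_\varepsilon)}
\]
after inserting \eqref{E:AHk_Sur}. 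The main technical obstacle lies in the inductive step for $k=2,3$: although no single iteration is conceptually difficult, one must carefully track how many derivatives land on the geometric factors versus on $u$ and verify that every resulting coefficient is bounded uniformly in $\varepsilon$ (several terms in fact gain an extra factor of $\varepsilon$, which is harmless). It is precisely because we iterate at most three times that the $C^3$ regularity of $\Gamma$, $g_0$, and $g_1$ suffices to preserve the correct $\varepsilon^{-1/2}$ scaling in the final estimate.
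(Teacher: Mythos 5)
Your proposal is correct and follows essentially the same route as the paper: iterate the explicit formula \eqref{E:Ave_TGr} to obtain the pointwise bound $|\nabla_\Gamma^k\mathcal{M}_\varepsilon u|\leq c\sum_{\ell=0}^k\mathcal{M}_\varepsilon(|\nabla^\ell u|)$, apply \eqref{E:ALp_Sur} termwise for \eqref{E:AHk_Sur}, and combine \eqref{E:CEGr_Bd} with \eqref{E:Lp_CE} for \eqref{E:AHk_TD}. The only addition is that you make explicit the density argument needed to pass from the $C^1$ hypothesis of Lemma \ref{L:Ave_TGr} to general $u\in H^k(\Omega_\varepsilon)$, which the paper leaves implicit; this is a reasonable and correct filling-in rather than a genuine divergence.
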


\begin{proof}
  By \eqref{E:CEGr_Bd}, \eqref{E:J_Bdd}, and the regularity of $\Gamma$, $g_0$, and $g_1$, we have
  \begin{align} \label{Pf_AHk:Coef}
    |\varphi|+|\nabla\varphi|+|\nabla^2\varphi| \leq c \quad\text{in}\quad \overline{\mathcal{N}_\delta}, \quad \varphi = \bar{\bm{\nu}}, \mathbf{B}, f_J, \mathbf{b}_\varepsilon, \mathbf{b}_J.
  \end{align}
  We use \eqref{E:Ave_TGr} repeatedly (and componentwisely) to compute the tangential derivatives of $\mathcal{M}_\varepsilon u$ up to the order three, and then apply \eqref{Pf_AHk:Coef}.
  Then, we find that
  \begin{align} \label{Pf_AHk:PW}
    |\nabla_\Gamma^k\mathcal{M}_\varepsilon u| \leq c\sum_{\ell=0}^k\mathcal{M}_\varepsilon(|\nabla^\ell u|) \quad\text{on}\quad \Gamma, \quad k=1,2,3.
  \end{align}
  For $k=1,2,3$, we observe by \eqref{E:ALp_Sur} and \eqref{Pf_AHk:PW} that
  \begin{align} \label{Pf_AHk:L2}
    \|\nabla_\Gamma^k\mathcal{M}_\varepsilon u\|_{L^2(\Gamma)} \leq c\sum_{\ell=0}^k\|\mathcal{M}_\varepsilon(|\nabla^\ell u|)\|_{L^2(\Gamma)} \leq c\varepsilon^{-1/2}\sum_{\ell=0}^k\|\nabla^\ell u\|_{L^2(\Omega_\varepsilon)}.
  \end{align}
  Hence, \eqref{E:AHk_Sur} follows.
  Also, \eqref{E:AHk_TD} holds by \eqref{E:CEGr_Bd}, \eqref{E:Lp_CE}, and \eqref{Pf_AHk:L2}.
\end{proof}

\begin{lemma} \label{L:ATG_L2D}
  Let $u\in H^2(\Omega_\varepsilon)$.
  Then,
  \begin{align} \label{E:ATG_L2D}
    \left\|\overline{\mathbf{P}}\nabla u-\overline{\nabla_\Gamma\mathcal{M}_\varepsilon u}\right\|_{L^2(\Omega_\varepsilon)} \leq c\varepsilon\|u\|_{H^2(\Omega_\varepsilon)}.
  \end{align}
\end{lemma}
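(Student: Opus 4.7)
The plan is to split the error into two pieces, each of which is handled by exactly one of the two previous lemmas in this subsection. Writing
\begin{align*}
  \overline{\mathbf{P}}\nabla u-\overline{\nabla_\Gamma\mathcal{M}_\varepsilon u} = \Bigl[\overline{\mathbf{P}}\nabla u-\overline{\mathcal{M}_\varepsilon\bigl(\overline{\mathbf{P}}\nabla u\bigr)}\Bigr]+\Bigl[\overline{\mathcal{M}_\varepsilon\bigl(\overline{\mathbf{P}}\nabla u\bigr)}-\overline{\nabla_\Gamma\mathcal{M}_\varepsilon u}\Bigr],
\end{align*}
I would bound the first bracket by Lemma \ref{L:ADif_Lp} and the second by the pointwise estimate \eqref{E:ATG_Diff} from Lemma \ref{L:Ave_TGr}.

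For the first bracket, apply Lemma \ref{L:ADif_Lp} componentwise to the vector field $\overline{\mathbf{P}}\nabla u\in H^1(\Omega_\varepsilon)^3$ (note that $\overline{\mathbf{P}}$ is $C^4$ on $\overline{\mathcal{N}_\delta}$ with all relevant derivatives bounded there, since $\Gamma$ is $C^5$, so $\|\overline{\mathbf{P}}\nabla u\|_{H^1(\Omega_\varepsilon)}\leq c\|u\|_{H^2(\Omega_\varepsilon)}$). This yields
\begin{align*}
  \left\|\overline{\mathbf{P}}\nabla u-\overline{\mathcal{M}_\varepsilon\bigl(\overline{\mathbf{P}}\nabla u\bigr)}\right\|_{L^2(\Omega_\varepsilon)} \leq c\varepsilon\left\|\overline{\mathbf{P}}\nabla u\right\|_{H^1(\Omega_\varepsilon)} \leq c\varepsilon\|u\|_{H^2(\Omega_\varepsilon)}.
\end{align*}

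For the second bracket, the estimate \eqref{E:ATG_Diff} applied on $\Gamma$ gives
\begin{align*}
  \left|\mathcal{M}_\varepsilon\bigl(\overline{\mathbf{P}}\nabla u\bigr)-\nabla_\Gamma\mathcal{M}_\varepsilon u\right| \leq c\varepsilon\mathcal{M}_\varepsilon(|u|+|\nabla u|) \quad\text{on}\quad \Gamma.
\end{align*}
Taking the constant extension to $\Omega_\varepsilon$ and applying the $L^2$-bound \eqref{E:ALp_TD} of Lemma \ref{L:Ave_Lp} to the right-hand side, we obtain
\begin{align*}
  \left\|\overline{\mathcal{M}_\varepsilon\bigl(\overline{\mathbf{P}}\nabla u\bigr)}-\overline{\nabla_\Gamma\mathcal{M}_\varepsilon u}\right\|_{L^2(\Omega_\varepsilon)} \leq c\varepsilon\bigl\||u|+|\nabla u|\bigr\|_{L^2(\Omega_\varepsilon)} \leq c\varepsilon\|u\|_{H^2(\Omega_\varepsilon)}.
\end{align*}
Adding the two bounds yields \eqref{E:ATG_L2D}.

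I do not anticipate any real obstacle; the proof is a direct consequence of the two preceding lemmas once the correct decomposition is chosen. The only minor subtlety is that \eqref{E:ATG_Diff} was stated for $u\in C(\overline{\Omega_\varepsilon})\cap C^1(\Omega_\varepsilon)$, but it extends to $u\in H^2(\Omega_\varepsilon)$ by a standard density argument (both sides depend continuously on $u$ in the $H^2$ topology via Lemma \ref{L:Ave_Hk} and Lemma \ref{L:Ave_Lp}), so the estimate is available in the generality needed here.
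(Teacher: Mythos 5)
Your proof is correct and follows essentially the same route as the paper: the same decomposition via the triangle inequality (with $\mathbf{v}=\overline{\mathbf{P}}\nabla u$), the first bracket handled by Lemma \ref{L:ADif_Lp} and the second by \eqref{E:ATG_Diff} together with \eqref{E:ALp_TD}. Your closing remark about extending \eqref{E:ATG_Diff} from the smooth class to $H^2(\Omega_\varepsilon)$ by density is a reasonable observation; the paper uses the estimate in the same generality without comment.
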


\begin{proof}
  Let $\mathbf{v}:=\overline{\mathbf{P}}\nabla u$ on $\Omega_\varepsilon$.
  Then, we see by \eqref{E:ADif_Lp} and \eqref{E:ATG_Diff} that
  \begin{align*}
    \left\|\mathbf{v}-\overline{\nabla_\Gamma\mathcal{M}_\varepsilon u}\right\|_{L^2(\Omega_\varepsilon)} &\leq \left\|\mathbf{v}-\overline{\mathcal{M}_\varepsilon\mathbf{v}}\right\|_{L^2(\Omega_\varepsilon)}+\left\|\overline{\mathcal{M}_\varepsilon\mathbf{v}}-\overline{\nabla_\Gamma\mathcal{M}_\varepsilon u}\right\|_{L^2(\Omega_\varepsilon)} \\
    &\leq c\varepsilon\left(\|\mathbf{v}\|_{H^1(\Omega_\varepsilon)}+\left\|\overline{\mathcal{M}_\varepsilon(|u|+|\nabla u|)}\right\|_{L^2(\Omega_\varepsilon)}\right).
  \end{align*}
  By this inequality, \eqref{E:ALp_TD}, and the regularity of $\mathbf{P}$, we obtain \eqref{E:ATG_L2D}.
\end{proof}

Using \eqref{E:Ave_TGr}, we approximate the Dirichlet form on $\Omega_\varepsilon$ by the weighted one on $\Gamma$.

\begin{lemma} \label{L:ADF_Diff}
  Let $u\in H^1(\Omega_\varepsilon)$ and $\eta\in H^1(\Gamma)$.
  Then,
  \begin{align} \label{E:ADF_Diff}
    \left|\int_{\Omega_\varepsilon}\nabla u\cdot\nabla\bar{\eta}\,dx-\varepsilon\int_\Gamma g\nabla_\Gamma\mathcal{M}_\varepsilon u\cdot\nabla_\Gamma\eta\,d\mathcal{H}^2\right| \leq c\varepsilon^{3/2}\|u\|_{H^1(\Omega_\varepsilon)}\|\nabla_\Gamma\eta\|_{L^2(\Gamma)}.
  \end{align}
\end{lemma}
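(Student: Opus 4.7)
The plan is to decompose the bulk integral $\int_{\Omega_\varepsilon}\nabla u\cdot\nabla\bar{\eta}\,dx$ into a leading term that, after the pairing identity \eqref{E:Ave_Pair}, matches the right-hand side of \eqref{E:ADF_Diff} exactly, plus two error terms which are each individually of order $\varepsilon^{3/2}$. The two sources of error are (a) the difference between $\nabla\bar{\eta}$ and the normal extension $\overline{\nabla_\Gamma\eta}$, controlled by \eqref{E:CEGr_Df}, and (b) the difference between $\mathcal{M}_\varepsilon(\overline{\mathbf{P}}\nabla u)$ and $\nabla_\Gamma\mathcal{M}_\varepsilon u$, controlled by \eqref{E:ATG_Diff}.

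First I would use $\bar{\bm{\nu}}\cdot\nabla\bar{\eta}=0$ in $\overline{\mathcal{N}_\delta}$ (Lemma \ref{L:CEGr_NB}) together with the symmetry of $\overline{\mathbf{P}}$ to write $\nabla u\cdot\nabla\bar{\eta}=(\overline{\mathbf{P}}\nabla u)\cdot\nabla\bar{\eta}$ in $\Omega_\varepsilon$, and then split
\begin{align*}
  \int_{\Omega_\varepsilon}(\overline{\mathbf{P}}\nabla u)\cdot\nabla\bar{\eta}\,dx = \int_{\Omega_\varepsilon}(\overline{\mathbf{P}}\nabla u)\cdot\overline{\nabla_\Gamma\eta}\,dx+\int_{\Omega_\varepsilon}(\overline{\mathbf{P}}\nabla u)\cdot(\nabla\bar{\eta}-\overline{\nabla_\Gamma\eta})\,dx.
\end{align*}
The second integral is estimated directly via \eqref{E:CEGr_Df}, $|d|\leq c\varepsilon$ on $\Omega_\varepsilon$, and $\|\overline{\nabla_\Gamma\eta}\|_{L^2(\Omega_\varepsilon)}\leq c\varepsilon^{1/2}\|\nabla_\Gamma\eta\|_{L^2(\Gamma)}$ by \eqref{E:Lp_CE}, giving a bound of $c\varepsilon^{3/2}\|u\|_{H^1(\Omega_\varepsilon)}\|\nabla_\Gamma\eta\|_{L^2(\Gamma)}$.

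For the remaining integral, I would apply \eqref{E:Ave_Pair} componentwise (noting that $\overline{\nabla_\Gamma\eta}$ is precisely the normal extension of $\nabla_\Gamma\eta$) to obtain
\begin{align*}
  \int_{\Omega_\varepsilon}(\overline{\mathbf{P}}\nabla u)\cdot\overline{\nabla_\Gamma\eta}\,dx = \varepsilon\int_\Gamma g\,\mathcal{M}_\varepsilon(\overline{\mathbf{P}}\nabla u)\cdot\nabla_\Gamma\eta\,d\mathcal{H}^2,
\end{align*}
and then use \eqref{E:ATG_Diff} to replace $\mathcal{M}_\varepsilon(\overline{\mathbf{P}}\nabla u)$ by $\nabla_\Gamma\mathcal{M}_\varepsilon u$ at the cost of a pointwise error of size $c\varepsilon\mathcal{M}_\varepsilon(|u|+|\nabla u|)$. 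Applying Cauchy--Schwarz on $\Gamma$ together with \eqref{E:ALp_Sur} at $p=2$ gives
\begin{align*}
  \varepsilon\cdot c\varepsilon\,\|\mathcal{M}_\varepsilon(|u|+|\nabla u|)\|_{L^2(\Gamma)}\|\nabla_\Gamma\eta\|_{L^2(\Gamma)} \leq c\varepsilon^{3/2}\|u\|_{H^1(\Omega_\varepsilon)}\|\nabla_\Gamma\eta\|_{L^2(\Gamma)},
\end{align*}
and \eqref{E:ADF_Diff} follows by combining the two error bounds with the exact identity.

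The main subtlety is tracking the $\varepsilon$-powers: both pointwise errors are $O(\varepsilon)$, and each effectively picks up an additional $\varepsilon^{1/2}$, once through the thin-domain scaling \eqref{E:Lp_CE} for $\overline{\nabla_\Gamma\eta}$ and once through the averaging estimate \eqref{E:ALp_Sur}, producing the stated exponent $\varepsilon^{3/2}$. A minor technical point is that \eqref{E:ATG_Diff} is derived under the smoothness assumption $u\in C(\overline{\Omega_\varepsilon})\cap C^1(\Omega_\varepsilon)$; to apply it for $u\in H^1(\Omega_\varepsilon)$ one would approximate $u$ by smooth functions and pass to the limit, using that both sides of \eqref{E:ATG_Diff} are continuous in $u$ with respect to the $H^1(\Omega_\varepsilon)$-topology thanks to \eqref{E:ALp_Sur} and \eqref{E:AHk_Sur}.
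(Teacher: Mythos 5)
Your proof is correct and follows essentially the same route as the paper's: the paper also splits into the error from replacing $\nabla\bar{\eta}$ by $\overline{\nabla_\Gamma\eta}$ (handled via \eqref{E:CEGr_Df} and \eqref{E:Lp_CE}) and the error from replacing $\mathcal{M}_\varepsilon(\overline{\mathbf{P}}\nabla u)$ by $\nabla_\Gamma\mathcal{M}_\varepsilon u$ (handled via \eqref{E:Ave_Pair}, \eqref{E:ATG_Diff}, and \eqref{E:ALp_Sur}). The only cosmetic difference is that you insert $\overline{\mathbf{P}}$ in front of $\nabla u$ at the outset using $\bar{\bm{\nu}}\cdot\nabla\bar{\eta}=0$, whereas the paper keeps $\nabla u$ in the first error term and inserts $\mathbf{P}$ only when treating the second term via $\mathbf{P}\nabla_\Gamma\eta=\nabla_\Gamma\eta$; both lead to the same bounds, and your closing remark about the density argument for \eqref{E:ATG_Diff} is a point the paper leaves implicit.
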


\begin{proof}
  We split the difference on the left-hand side of \eqref{E:ADF_Diff} into the sum of
  \begin{align*}
    K_1 &:= \int_{\Omega_\varepsilon}\nabla u\cdot\nabla\bar{\eta}\,dx-\int_{\Omega_\varepsilon}\nabla u\cdot\overline{\nabla_\Gamma\eta}\,dx, \\
    K_2 &:= \int_{\Omega_\varepsilon}\nabla u\cdot\overline{\nabla_\Gamma\eta}\,dx-\varepsilon\int_\Gamma g\nabla_\Gamma\mathcal{M}_\varepsilon u\cdot\nabla_\Gamma\eta\,d\mathcal{H}^2.
  \end{align*}
  By \eqref{E:CEGr_Df}, $|d|\leq c\varepsilon$ in $\Omega_\varepsilon$, H\"{o}lder's inequality, and \eqref{E:Lp_CE}, we have
  \begin{align*}
    |K_1| \leq c\varepsilon\|\nabla u\|_{L^2(\Omega_\varepsilon)}\left\|\overline{\nabla_\Gamma\eta}\right\|_{L^2(\Omega_\varepsilon)} \leq c\varepsilon^{3/2}\|\nabla u\|_{L^2(\Omega_\varepsilon)}\|\nabla_\Gamma\eta\|_{L^2(\Gamma)}.
  \end{align*}
  Also, noting that $\nabla_\Gamma\eta=\mathbf{P}\nabla_\Gamma\eta$ and $\mathbf{P}^T=\mathbf{P}$ on $\Gamma$, we see by \eqref{E:Ave_Pair} that
  \begin{align*}
    K_2 &= \int_{\Omega_\varepsilon}\Bigl(\overline{\mathbf{P}}\nabla u\Bigr)\cdot\overline{\nabla_\Gamma\eta}\,dx-\varepsilon\int_\Gamma g\nabla_\Gamma\mathcal{M}_\varepsilon u\cdot\nabla_\Gamma\eta\,d\mathcal{H}^2 \\
    &= \varepsilon\int_\Gamma g\left\{\mathcal{M}_\varepsilon\Bigl(\overline{\mathbf{P}}\nabla u\Bigr)-\nabla_\Gamma\mathcal{M}_\varepsilon u\right\}\cdot\nabla_\Gamma\eta\,d\mathcal{H}^2.
  \end{align*}
  Hence, it follows from \eqref{E:G_Bdd}, \eqref{E:ATG_Diff}, H\"{o}lder's inequality, and \eqref{E:ALp_Sur} that
  \begin{align*}
    |K_2| \leq c\varepsilon^2\|\mathcal{M}_\varepsilon(|u|+|\nabla u|)\|_{L^2(\Gamma)}\|\nabla_\Gamma\eta\|_{L^2(\Gamma)} \leq c\varepsilon^{3/2}\|u\|_{H^1(\Omega_\varepsilon)}\|\nabla_\Gamma\eta\|_{L^2(\Gamma)}.
  \end{align*}
  The inequality \eqref{E:ADF_Diff} follows from the above estimates for $K_1$ and $K_2$.
\end{proof}

Let us express $\mathcal{M}_\varepsilon(\Delta u)$ in terms of $A_g\mathcal{M}_\varepsilon u=g^{-1}\mathrm{div}_\Gamma(g\nabla_\Gamma\mathcal{M}_\varepsilon u)$.

\begin{lemma} \label{L:Ave_Lap}
  Let $u\in H^2(\Omega_\varepsilon)$ satisfy $\partial_{\nu_\varepsilon}u=0$ on $\partial\Omega_\varepsilon$.
  Then,
  \begin{align} \label{E:Ave_Lap}
    \mathcal{M}_\varepsilon(\Delta u) = A_g\mathcal{M}_\varepsilon u+\zeta_\Delta^\varepsilon \quad\text{on}\quad \Gamma,
  \end{align}
  where $\zeta_\Delta^\varepsilon$ is a residual term.
  Moreover, if $u\in H^{k+2}(\Omega_\varepsilon)$ with $k=0,1$, then
  \begin{align} \label{E:ALa_Hk}
    \|\zeta_\Delta^\varepsilon\|_{H^k(\Gamma)} \leq c\varepsilon^{1/2}\|u\|_{H^{2+k}(\Omega_\varepsilon)}.
  \end{align}
\end{lemma}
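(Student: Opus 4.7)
The plan is to take the identity \eqref{E:Ave_Lap} as the \emph{definition}
\[
\zeta_\Delta^\varepsilon := \mathcal{M}_\varepsilon(\Delta u) - A_g\mathcal{M}_\varepsilon u,
\]
which lies in $L^2(\Gamma)$ because $\mathcal{M}_\varepsilon(\Delta u)\in L^2(\Gamma)$ by \eqref{E:ALp_Sur} and $\mathcal{M}_\varepsilon u\in H^2(\Gamma)$ by \eqref{E:AHk_Sur}, so that $A_g\mathcal{M}_\varepsilon u\in L^2(\Gamma)$ as well. The task is then purely to estimate $\zeta_\Delta^\varepsilon$ in $H^k(\Gamma)$ by duality. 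For the $k=0$ bound I test against an arbitrary $\eta\in H^1(\Gamma)$: the pairing identity \eqref{E:Ave_Pair} converts $\varepsilon(g\mathcal{M}_\varepsilon(\Delta u),\eta)_{L^2(\Gamma)}$ into $\int_{\Omega_\varepsilon}(\Delta u)\bar\eta\,dx$, while the weak formulation \eqref{E:IbP_Ag} of $A_g$ combined with the explicit expansion \eqref{E:Ave_TGr} of $\nabla_\Gamma\mathcal{M}_\varepsilon u$ and a second application of \eqref{E:Ave_Pair} converts $-\varepsilon(gA_g\mathcal{M}_\varepsilon u,\eta)_{L^2(\Gamma)}$ into $\int_{\Omega_\varepsilon}\mathbf{F}\cdot\overline{\nabla_\Gamma\eta}\,dx$ with
\[
\mathbf{F} := \mathbf{B}\nabla u + (\partial_\nu u + u f_J)\mathbf{b}_\varepsilon + u\mathbf{b}_J.
\]
Writing $\overline{\nabla_\Gamma\eta} = (\mathbf{I}-d\overline{\mathbf{W}})\nabla\bar\eta$ (from Lemma \ref{L:CEGr_NB}) and using the factorization $\mathbf{B} = \overline{\mathbf{P}}(\mathbf{I}-d\overline{\mathbf{W}})$ together with the tangentiality of $\nabla\bar\eta$ reduces the $\mathbf{B}\nabla u$ contribution to $\int\nabla u\cdot\nabla\bar\eta\,dx$ plus terms carrying an explicit factor $d$. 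The Neumann condition $\partial_{\nu_\varepsilon}u=0$ turns $\int\nabla u\cdot\nabla\bar\eta\,dx$ into $-\int(\Delta u)\bar\eta\,dx$, exactly cancelling the first term and leaving a remainder each of whose summands carries a prefactor of order $\varepsilon$ (inherited from $d$, $\mathbf{b}_\varepsilon$, or $\mathbf{b}_J$).

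The main obstacle is upgrading the natural dual-space estimate — obtained by pairing the remaining $\nabla\bar\eta$/$\overline{\nabla_\Gamma\eta}$ against $\|\nabla_\Gamma\eta\|_{L^2(\Gamma)}$ via $\|\nabla\bar\eta\|_{L^2(\Omega_\varepsilon)}\leq c\varepsilon^{1/2}\|\nabla_\Gamma\eta\|_{L^2(\Gamma)}$ — to a bound in terms of $\|\eta\|_{L^2(\Gamma)}$, which is what the $L^2$-norm of $\zeta_\Delta^\varepsilon$ demands. I will do this by integrating by parts once more in $\Omega_\varepsilon$ to move $\nabla$ off $\bar\eta$. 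The key algebraic cancellations are $\overline{\mathbf{W}}\bar{\bm{\nu}}=0$ (inherited from $\mathbf{W}\bm{\nu}=0$ on $\Gamma$) and the tangentiality of $\overline{\nabla_\Gamma H}$, $\overline{\nabla_\Gamma K}$, $\overline{\nabla_\Gamma g_i}$: these force $\mathrm{div}(d\overline{\mathbf{W}}\nabla u)$, $\mathrm{div}\,\mathbf{b}_\varepsilon$, and $\mathrm{div}\,\mathbf{b}_J$ all to be of size $\varepsilon\bigl(|u|+|\nabla u|+|\nabla^2 u|\bigr)$ in $\Omega_\varepsilon$, which is exactly where $H^2$-regularity of $u$ is consumed. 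The boundary contributions from IBP are controlled using $\partial_{\nu_\varepsilon}u=0$, the pointwise bound $|\bm{\nu}_\varepsilon-\bar{\bm{\nu}}|\leq c\varepsilon$ on $\partial\Omega_\varepsilon$, and a trace inequality on $\Omega_\varepsilon$ with explicit $\varepsilon$-dependence, together with $\|\bar\eta\|_{L^2(\partial\Omega_\varepsilon)}\leq c\|\eta\|_{L^2(\Gamma)}$. Combining these bounds with the scaling $\|\bar\eta\|_{L^2(\Omega_\varepsilon)}\leq c\varepsilon^{1/2}\|\eta\|_{L^2(\Gamma)}$ from \eqref{E:Lp_CE} gives
\[
\Bigl|\int_\Gamma g\,\zeta_\Delta^\varepsilon\,\eta\,d\mathcal{H}^2\Bigr| \leq c\varepsilon^{1/2}\|u\|_{H^2(\Omega_\varepsilon)}\|\eta\|_{L^2(\Gamma)},
\]
and density of $H^1(\Gamma)$ in $L^2(\Gamma)$ together with $g\geq c$ on $\Gamma$ yields the $k=0$ conclusion.

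For $k=1$ I would tangentially differentiate the defining identity of $\zeta_\Delta^\varepsilon$: the piece $\underline{D}_i\mathcal{M}_\varepsilon(\Delta u)$ is expanded by \eqref{E:Ave_TGr} applied to $\Delta u\in H^1(\Omega_\varepsilon)$, which requires $u\in H^3(\Omega_\varepsilon)$, while $\underline{D}_i A_g\mathcal{M}_\varepsilon u$ involves at most third tangential derivatives of $\mathcal{M}_\varepsilon u$, controlled through Lemma \ref{L:Ave_Hk} by $\|u\|_{H^3(\Omega_\varepsilon)}$. Rerunning the duality/IBP argument with one additional tangential derivative on the test function, handled by the same cancellation mechanism as above, then yields $\|\nabla_\Gamma\zeta_\Delta^\varepsilon\|_{L^2(\Gamma)}\leq c\varepsilon^{1/2}\|u\|_{H^3(\Omega_\varepsilon)}$.
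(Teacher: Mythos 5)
Your route is genuinely different from the paper's at the decisive step, and while it could be made to work, it incurs unnecessary cost and leaves one real hole. The first half — expanding $-\int_\Gamma g\nabla_\Gamma\mathcal{M}_\varepsilon u\cdot\nabla_\Gamma\eta\,d\mathcal{H}^2$ via \eqref{E:Ave_TGr} and \eqref{E:Ave_Pair}, trading $\overline{\nabla_\Gamma\eta}$ for $\nabla\bar\eta$ through $\mathbf{R}=\mathbf{I}_3+d\mathbf{R}\overline{\mathbf{W}}$, and cancelling $\int_{\Omega_\varepsilon}\nabla u\cdot\nabla\bar\eta\,dx=-\int_{\Omega_\varepsilon}(\Delta u)\bar\eta\,dx$ via the Neumann condition — coincides with the paper. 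You then correctly identify that you are left with a remainder paired against $\nabla_\Gamma\eta$ (or $\nabla\bar\eta$), and that a further integration by parts is needed to extract a bound against $\|\eta\|_{L^2(\Gamma)}$ rather than $\|\nabla_\Gamma\eta\|_{L^2(\Gamma)}$. At this point you integrate by parts on $\Omega_\varepsilon$, which produces boundary integrals over $\partial\Omega_\varepsilon$ and therefore requires (i) the estimate $|\bm{\nu}_\varepsilon-\bar{\bm\nu}|\leq c\varepsilon$, and (ii) a trace inequality on $\Omega_\varepsilon$ with a constant that is explicit in $\varepsilon$, of the form $\|\varphi\|_{L^2(\partial\Omega_\varepsilon)}\leq c\varepsilon^{-1/2}\|\varphi\|_{H^1(\Omega_\varepsilon)}$ or a refinement thereof. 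Neither of these is established in this paper, and the trace inequality in particular is a nontrivial lemma that you would have to prove separately; as stressed throughout the paper, verifying $\varepsilon$-uniformity of such constants is precisely the technical burden one wants to minimize. Your proposal names this ingredient but does not supply it, so the argument is not yet closed.

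The paper sidesteps this entirely with an observation you did not use: once the remainder is written (after converting back to $\Gamma$ by \eqref{E:Ave_Pair}) as
\begin{align*}
\int_\Gamma g\,\mathcal{M}_\varepsilon(d\mathbf{v}_1+\varepsilon\mathbf{v}_2)\cdot\nabla_\Gamma\eta\,d\mathcal{H}^2,
\end{align*}
the vector fields $\mathbf{v}_1,\mathbf{v}_2$ built from $\overline{\mathbf{W}}$, $\overline{\nabla_\Gamma g_i}$, $\overline{\nabla_\Gamma H}$, $\overline{\nabla_\Gamma K}$ are \emph{tangential} to $\Gamma$ (because $\mathbf{W}\bm{\nu}=0$ and $\nabla_\Gamma\zeta\cdot\bm{\nu}=0$), and $\Gamma$ is a closed surface. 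So one integrates by parts \emph{on $\Gamma$} with \eqref{E:IbP_Sur}: no boundary terms arise at all, and one obtains a closed-form strong expression $\zeta_\Delta^\varepsilon=g^{-1}\mathrm{div}_\Gamma[g\mathcal{M}_\varepsilon(d\mathbf{v}_1)]+\varepsilon g^{-1}\mathrm{div}_\Gamma(g\mathcal{M}_\varepsilon\mathbf{v}_2)$. Both the $k=0$ and $k=1$ bounds in \eqref{E:ALa_Hk} then drop out directly from pointwise estimates on $\nabla_\Gamma^k\mathcal{M}_\varepsilon(d\mathbf{v}_1)$ and $\nabla_\Gamma^k\mathcal{M}_\varepsilon\mathbf{v}_2$ (Lemma \ref{L:Ave_dphi} and \eqref{Pf_AHk:PW}) combined with \eqref{E:ALp_Sur} — no duality, no trace theorem, no rerunning the argument for $k=1$. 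If you want to retain your $\Omega_\varepsilon$-based IBP, you would need to state and prove the uniform trace inequality and then carefully verify the $\varepsilon$-order of every boundary contribution (your algebraic cancellations $\overline{\mathbf{W}}\bar{\bm\nu}=0$ and tangentiality of $\overline{\nabla_\Gamma(\cdot)}$ are the right ones and do give $O(\varepsilon^2)$ boundary integrands, which balances out correctly), but the cleaner path is to notice that all the hard boundary work can be avoided by doing the second integration by parts on $\Gamma$ rather than on $\Omega_\varepsilon$.
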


\begin{proof}
  For $\eta\in H^1(\Gamma)$, we see by \eqref{E:IbP_Ag} and \eqref{E:Ave_TGr} that
  \begin{align*}
    &\int_\Gamma g(A_g\mathcal{M}_\varepsilon u)\eta\,d\mathcal{H}^2 = -\int_\Gamma g\nabla_\Gamma\mathcal{M}_\varepsilon u\cdot\nabla_\Gamma\eta\,d\mathcal{H}^2 = K_1+K_2, \\
    &K_1 := -\int_\Gamma g\mathcal{M}_\varepsilon\Bigl(\overline{\mathbf{P}}\nabla u\Bigr)\cdot\nabla_\Gamma\eta\,d\mathcal{H}^2, \\
    &K_2 := -\int_\Gamma g\mathcal{M}_\varepsilon\Bigl(-d\overline{\mathbf{W}}\nabla u+(\partial_\nu u+uf_J)\mathbf{b}_\varepsilon+u\mathbf{b}_J\Bigr)\cdot\nabla_\Gamma\eta\,d\mathcal{H}^2.
  \end{align*}
  Moreover, by \eqref{E:Ave_Pair}, and by $\mathbf{P}^{\mathrm{T}}=\mathbf{P}$ and $\mathbf{P}\nabla_\Gamma\eta=\nabla_\Gamma\eta$ on $\Gamma$,
  \begin{align*}
    K_1 = -\frac{1}{\varepsilon}\int_{\Omega_\varepsilon}\Bigl(\overline{\mathbf{P}}\nabla u\Bigr)\cdot\overline{\nabla_\Gamma\eta}\,dx = -\frac{1}{\varepsilon}\int_{\Omega_\varepsilon}\nabla u\cdot\overline{\nabla_\Gamma\eta}\,dx.
  \end{align*}
  To carry out integration by parts, we see by \eqref{E:CEGr_NB} that
  \begin{align*}
    \nabla\bar{\eta} = \mathbf{R}\overline{\nabla_\Gamma\eta} = \overline{\nabla_\Gamma\eta}+d\mathbf{R}\overline{\mathbf{W}\nabla_\Gamma\eta} \quad\text{in}\quad \overline{\mathcal{N}_\delta},
  \end{align*}
  since $\mathbf{R}=\mathbf{I}_3+d\mathbf{R}\overline{\mathbf{W}}$ by $\mathbf{R}(\mathbf{I}_3-d\overline{\mathbf{W}})=\mathbf{I}_3$.
  Using this relation, we have
  \begin{align*}
    K_1 &= -\frac{1}{\varepsilon}\int_{\Omega_\varepsilon}\nabla u\cdot\nabla\bar{\eta}\,dx+\int_{\Omega_\varepsilon}\nabla u\cdot\Bigl(d\mathbf{R}\overline{\mathbf{W}\nabla_\Gamma\eta}\Bigr)\,dx \\
    &= \frac{1}{\varepsilon}\int_{\Omega_\varepsilon}(\Delta u)\bar{\eta}\,dx+\frac{1}{\varepsilon}\int_{\Omega_\varepsilon}\Bigl(d\overline{\mathbf{W}}\mathbf{R}\nabla u\Bigr)\cdot\overline{\nabla_\Gamma\eta}\,dx \\
    &= \int_\Gamma g\{\mathcal{M}_\varepsilon(\Delta u)\}\eta\,d\mathcal{H}^2+\int_\Gamma g\mathcal{M}_\varepsilon\Bigl(d\overline{\mathbf{W}}\mathbf{R}\nabla u\Bigr)\cdot\nabla_\Gamma\eta\,d\mathcal{H}^2
  \end{align*}
  by integration by parts, $\partial_{\nu_\varepsilon}u=0$ on $\partial\Omega_\varepsilon$, $\mathbf{R}^{\mathrm{T}}=\mathbf{R}$, $\mathbf{W}^{\mathrm{T}}=\mathbf{W}$, and \eqref{E:Ave_Pair}.
  Thus,
  \begin{align} \label{Pf_ALa:AgM}
      \int_\Gamma g(A_g\mathcal{M}_\varepsilon u)\eta\,d\mathcal{H}^2 = \int_\Gamma g\{\mathcal{M}_\varepsilon(\Delta u)\}\eta\,d\mathcal{H}^2+\int_\Gamma g\mathcal{M}_\varepsilon(d\mathbf{v}_1+\varepsilon\mathbf{v}_2)\cdot\nabla_\Gamma\eta\,d\mathcal{H}^2
  \end{align}
  for all $\eta\in H^1(\Gamma)$.
  Here, we take vector fields $\mathbf{v}_1,\mathbf{v}_2\colon\Omega_\varepsilon\to\mathbb{R}^3$ such that
  \begin{align*}
    d\overline{\mathbf{W}}\mathbf{R}\nabla u+d\overline{\mathbf{W}}\nabla u-(\partial_\nu u+uf_J)\mathbf{b}_\varepsilon-u\mathbf{b}_J = d\mathbf{v}_1+\varepsilon\mathbf{v}_2 \quad\text{in}\quad \Omega_\varepsilon.
  \end{align*}
  More precisely, using \eqref{E:Def_Bfb}, we define
  \begin{align} \label{Pf_ALa:v12}
    \begin{aligned}
      \mathbf{v}_1 &:= \overline{\mathbf{W}}(\mathbf{R}+\mathbf{I}_3)\nabla u-\frac{\partial_\nu u+uf_J}{\bar{g}}\,\overline{\nabla_\Gamma g}+\frac{u}{J_{\pi,d}}\Bigl(\overline{\nabla_\Gamma H}-d\overline{\nabla_\Gamma K}\Bigr), \\
      \mathbf{v}_2 &:= \frac{\partial_\nu u+uf_J}{\bar{g}}\Bigl(\bar{g}_0\overline{\nabla_\Gamma g_1}-\bar{g}_1\overline{\nabla_\Gamma g_0}\Bigr).
    \end{aligned}
  \end{align}
  We see that $\mathbf{v}_i\cdot\bar{\bm{\nu}}=0$ in $\Omega_\varepsilon$ for $i=1,2$, since $\mathbf{W}^{\mathrm{T}}=\mathbf{W}$, $\mathbf{W}\bm{\nu}=0$, and $\nabla_\Gamma\zeta\cdot\bm{\nu}=0$ on $\Gamma$ for a function $\zeta$ on $\Gamma$.
  Hence, we carry out integration by parts \eqref{E:IbP_Sur} to get
  \begin{align} \label{Pf_ALa:IbP}
    \int_\Gamma g\mathcal{M}_\varepsilon(d\mathbf{v}_1+\varepsilon\mathbf{v}_2)\cdot\nabla_\Gamma\eta\,d\mathcal{H}^2 = -\int_\Gamma\{\mathrm{div}_\Gamma[g\mathcal{M}_\varepsilon(d\mathbf{v}_1+\varepsilon\mathbf{v}_2)]\}\eta\,d\mathcal{H}^2.
  \end{align}
  Since $H^1(\Gamma)$ is dense in $L^2(\Gamma)$, we deduce from \eqref{Pf_ALa:AgM} and \eqref{Pf_ALa:IbP} that
  \begin{align*}
    gA_g\mathcal{M}_\varepsilon u = g\mathcal{M}_\varepsilon(\Delta u)-\mathrm{div}_\Gamma[g\mathcal{M}_\varepsilon(d\mathbf{v}_1+\varepsilon\mathbf{v}_2)] \quad\text{on}\quad \Gamma.
  \end{align*}
  Hence, we find that \eqref{E:Ave_Lap} is valid if we set
  \begin{align*}
    \zeta_\Delta^\varepsilon := \frac{1}{g}\,\mathrm{div}_\Gamma[g\mathcal{M}_\varepsilon(d\mathbf{v}_1)]+\frac{\varepsilon}{g}\,\mathrm{div}_\Gamma(g\mathcal{M}_\varepsilon\mathbf{v}_2) \quad\text{on}\quad \Gamma.
  \end{align*}
  To estimate $\zeta_\Delta^\varepsilon$, we apply \eqref{E:Ave_dphi} given below to $\mathbf{v}_1$ and \eqref{Pf_AHk:PW} to $\mathbf{v}_2$, use \eqref{E:G_Bdd} and \eqref{E:J_Bdd}, and note that the functions in \eqref{Pf_ALa:v12} except for $u$ are independent of $\varepsilon$ and of class $C^2$ on $\Gamma$ or on $\overline{\mathcal{N}_\delta}$.
  Then, for $k=0,1,2$, we find that
  \begin{align*}
    |\nabla_\Gamma^k\mathcal{M}_\varepsilon(d\mathbf{v}_1)| &\leq c\varepsilon\sum_{\ell=0}^k\mathcal{M}_\varepsilon(|\nabla^\ell\mathbf{v}_1|) \leq c\varepsilon\sum_{\ell=0}^{k+1}\mathcal{M}_\varepsilon(|\nabla^\ell u|), \\
    |\nabla_\Gamma^k\mathcal{M}_\varepsilon\mathbf{v}_2| &\leq c\sum_{\ell=0}^k\mathcal{M}_\varepsilon(|\nabla^\ell\mathbf{v}_2|) \leq c\sum_{\ell=0}^{k+1}\mathcal{M}_\varepsilon(|\nabla^\ell u|)
  \end{align*}
  on $\Gamma$.
  Applying these estimates to $\zeta_\Delta^\varepsilon$ and $\nabla_\Gamma\zeta_\Delta^\varepsilon$, we get
  \begin{align*}
    |\nabla_\Gamma^k\zeta_\Delta^\varepsilon| \leq c\varepsilon\sum_{\ell=0}^{k+2}\mathcal{M}_\varepsilon(|\nabla^\ell u|) \quad\text{on}\quad \Gamma, \quad k=0,1.
  \end{align*}
  By this estimate and \eqref{E:ALp_Sur}, we obtain \eqref{E:ALa_Hk}.
\end{proof}

\begin{lemma} \label{L:Ave_dphi}
  Let $u\in H^k(\Omega_\varepsilon)$ with $k=0,1,2$.
  Then,
  \begin{align} \label{E:Ave_dphi}
    |\nabla_\Gamma^k\mathcal{M}_\varepsilon(du)| \leq c\varepsilon\sum_{\ell=0}^k\mathcal{M}_\varepsilon(|\nabla^\ell u|) \quad\text{on}\quad \Gamma.
  \end{align}
\end{lemma}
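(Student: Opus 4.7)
My plan is to handle $k \in \{0, 1, 2\}$ case-by-case, pivoting on the algebraic identity $\mathbf{B}\bar{\bm{\nu}} = 0$ on $\overline{\mathcal{N}_\delta}$ (an immediate consequence of $\mathbf{P}\bm{\nu} = 0$ and $\mathbf{W}\bm{\nu} = 0$ on $\Gamma$, applied to $\mathbf{B} = \overline{\mathbf{P}} - d\overline{\mathbf{W}}$) combined with $\nabla d = \bar{\bm{\nu}}$ on $\overline{\mathcal{N}_\delta}$. The role of this pair of facts is to ensure that whenever $\mathbf{B}\nabla$ acts on a product involving $d$, the piece coming from $\nabla d = \bar{\bm{\nu}}$ — which is of unit size, hence not small — is annihilated by $\mathbf{B}$, so that every surviving term retains a factor of $d$ (or of $\mathbf{b}_\varepsilon$, $\mathbf{b}_J$), each of size $O(\varepsilon)$ by \eqref{Pf_ATG:coef}. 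It is worth pointing out that a naive application of the already-established pointwise bound \eqref{Pf_AHk:PW} to $\varphi = du$ would fail, since $\nabla(du) = u\bar{\bm{\nu}} + d\nabla u$ and the $u\bar{\bm{\nu}}$ term carries no $\varepsilon$ smallness.

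The case $k = 0$ is immediate from $|d(x)| \leq c\varepsilon$ on $\Omega_\varepsilon$ and the definition of $\mathcal{M}_\varepsilon$. For $k = 1$, I substitute $u \mapsto du$ in \eqref{E:Ave_TGr}. Writing $\nabla(du) = u\bar{\bm{\nu}} + d\nabla u$ and $\partial_\nu(du) = u + d\partial_\nu u$, the identity $\mathbf{B}\bar{\bm{\nu}} = 0$ collapses $\mathbf{B}\nabla(du)$ to $d\mathbf{B}\nabla u$, so that
\[
\nabla_\Gamma\mathcal{M}_\varepsilon(du) = \mathcal{M}_\varepsilon(d\mathbf{B}\nabla u) + \mathcal{M}_\varepsilon\bigl((u + d\partial_\nu u + duf_J)\mathbf{b}_\varepsilon\bigr) + \mathcal{M}_\varepsilon(du\mathbf{b}_J).
\]
Each argument of $\mathcal{M}_\varepsilon$ on the right carries at least one factor from $\{d, \mathbf{b}_\varepsilon, \mathbf{b}_J\}$, all bounded by $c\varepsilon$, while $\mathbf{B}$ and $f_J$ are uniformly bounded. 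Taking absolute values and pulling out $\varepsilon$ gives \eqref{E:Ave_dphi} for $k = 1$.

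For $k = 2$, I apply \eqref{E:Ave_TGr} again, componentwise, to the identity $\nabla_\Gamma\mathcal{M}_\varepsilon(du) = \mathcal{M}_\varepsilon(\mathbf{G})$ just obtained, where each component $G_i$ of $\mathbf{G}$ is a smooth-coefficient linear combination of $u$ and $\nabla u$ with coefficients of size $O(\varepsilon)$. Expanding $\mathbf{B}\nabla G_i$, $\partial_\nu G_i$, and the explicit $\mathbf{b}_\varepsilon$- and $\mathbf{b}_J$-multiplications term by term, three observations drive the proof: (a) contributions where $\nabla$ falls on $u$ or $\nabla u$ inherit the $O(\varepsilon)$ coefficient; (b) contributions where $\nabla$ falls on a coefficient may produce a non-small $\nabla d = \bar{\bm{\nu}}$ piece, but this is annihilated by left multiplication by $\mathbf{B}$ inside $\mathbf{B}\nabla G_i$, with the remaining tangential derivatives of the coefficients themselves being $O(\varepsilon)$ (a brief check using $\overline{\mathbf{P}}\nabla d = \overline{\mathbf{P}}\bar{\bm{\nu}} = 0$ and the explicit forms in \eqref{E:Def_Bfb}); and (c) the $\mathbf{b}_\varepsilon$, $\mathbf{b}_J$ prefactors appearing outside $\mathbf{B}\nabla$ in \eqref{E:Ave_TGr} supply their own $O(\varepsilon)$, which absorbs even the potentially $O(1)$ piece of $\partial_\nu G_i$. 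Summing the bounds and inserting into $\mathcal{M}_\varepsilon$ yields $|\nabla_\Gamma^2\mathcal{M}_\varepsilon(du)| \leq c\varepsilon\sum_{\ell=0}^2\mathcal{M}_\varepsilon(|\nabla^\ell u|)$.

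The main obstacle, as signalled above, is the $k = 2$ bookkeeping: one has to verify carefully that the cancellation $\mathbf{B}\bar{\bm{\nu}} = 0$ is threaded correctly through the second application of \eqref{E:Ave_TGr}, and to confirm that the coefficients $\mathbf{b}_\varepsilon$, $\mathbf{b}_J$ appearing in $\mathbf{G}$ also satisfy $|\mathbf{B}\nabla\mathbf{b}_\varepsilon|, |\mathbf{B}\nabla\mathbf{b}_J| \leq c\varepsilon$ for the same structural reason. Everything else is standard product-rule computation together with the pointwise control \eqref{Pf_ATG:coef}.
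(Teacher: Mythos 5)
Your proof is correct and follows essentially the same route as the paper's: both substitute $u\mapsto du$ in \eqref{E:Ave_TGr} and exploit the annihilation of the $O(1)$ piece $\nabla d=\bar{\bm{\nu}}$ by the tangential-projection structure (you invoke $\mathbf{B}\bar{\bm{\nu}}=0$; the paper phrases it as $\overline{\mathbf{P}}\nabla d=0$ and keeps the $-d\overline{\mathbf{W}}\nabla(du)$ term intact since it already carries a visible factor of $d$), together with $|\mathbf{b}_\varepsilon|,|\mathbf{b}_J|\leq c\varepsilon$. For $k=2$ the paper iterates the explicitly factored identity $\nabla_\Gamma\mathcal{M}_\varepsilon(du)=\mathcal{M}_\varepsilon(d\mathbf{u}_1)+\varepsilon\mathcal{M}_\varepsilon\mathbf{u}_2$ rather than re-applying \eqref{E:Ave_TGr} to the un-factored right-hand side and re-verifying $|\mathbf{B}\nabla\mathbf{b}_\varepsilon|,|\mathbf{B}\nabla\mathbf{b}_J|\leq c\varepsilon$ as you do, but this is purely a bookkeeping difference.
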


\begin{proof}
  The inequality \eqref{E:Ave_dphi} with $k=0$ just follows from $|d|\leq c\varepsilon$ in $\Omega_\varepsilon$.

  Let $k=1$.
  Since $\nabla d=\bar{\bm{\nu}}$ in $\overline{\mathcal{N}_\delta}$ by \eqref{E:Fermi}, we have $\overline{\mathbf{P}}\nabla d=0$ in $\overline{\mathcal{N}_\delta}$.
  Hence, setting
  \begin{align*}
    \mathbf{u}_1 &:= \overline{\mathbf{P}}\nabla u-\overline{\mathbf{W}}\nabla(du)+\frac{\partial_\nu(du)+duf_J}{\bar{g}}\,\overline{\nabla_\Gamma g}+\frac{du}{J^\flat}\Bigl(-\overline{\nabla_\Gamma H}+d\overline{\nabla_\Gamma K}\Bigr), \\
    \mathbf{u}_2 &:= \frac{\partial_\nu(du)+duf_J}{\bar{g}}\Bigl(-\bar{g}_0\overline{\nabla_\Gamma g_1}+\bar{g}_1\overline{\nabla_\Gamma g_0}\Bigr),
  \end{align*}
  and using \eqref{E:Ave_TGr} with $u$ replaced by $du$, we find that
  \begin{align} \label{Pf_Adp:TGr}
    \nabla_\Gamma\mathcal{M}_\varepsilon(du) = \mathcal{M}_\varepsilon(d\mathbf{u}_1)+\varepsilon\mathcal{M}_\varepsilon\mathbf{u}_2 \quad\text{on}\quad \Gamma,
  \end{align}
  and \eqref{E:Ave_dphi} holds when $k=1$ by $|d|\leq c\varepsilon$ and $|\mathbf{u}_i|\leq c(|u|+|\nabla u|)$ in $\Omega_\varepsilon$ for $i=1,2$.

  Let $k=2$.
  We take $\nabla_\Gamma$ of \eqref{Pf_Adp:TGr}, and apply \eqref{Pf_Adp:TGr} to $\mathbf{u}_1$ and \eqref{E:Ave_TGr} to $\mathbf{u}_2$.
  Then,
  \begin{align*}
    \nabla_\Gamma^2\mathcal{M}_\varepsilon(du) = \mathcal{M}_\varepsilon(d\mathbf{A}_1)+\varepsilon\mathcal{M}_\varepsilon\mathbf{A}_2 \quad\text{on}\quad \Gamma,
  \end{align*}
  where $\mathbf{A}_1$ and $\mathbf{A}_2$ are $3\times 3$ matrix-valued functions on $\Omega_\varepsilon$ that satisfy
  \begin{align*}
    |\mathbf{A}_i| \leq c(|u|+|\nabla u|+|\nabla^2u|) \quad\text{in}\quad \Omega_\varepsilon, \quad i=1,2.
  \end{align*}
  By these relations and $|d|\leq c\varepsilon$ in $\Omega_\varepsilon$, we obtain \eqref{E:Ave_dphi} with $k=2$.
\end{proof}

\subsection{Average of the nonlinear term} \label{SS:Ave_NL}
For $u\colon\Omega_\varepsilon\to\mathbb{R}$ and $G\colon\mathbb{R}\to\mathbb{R}$, let
\begin{align} \label{E:Def_ZG}
  \zeta_G^\varepsilon := \mathcal{M}_\varepsilon(G(u))-G(\mathcal{M}_\varepsilon u) \quad\text{on}\quad \Gamma.
\end{align}
This kind of commutator appears when we take the average of the thin-domain problem \eqref{E:CH_CTD}, where $G=F'$.
Let us estimate $\zeta_G^\varepsilon$.
For the sake of simplicity, we write
\begin{align*}
  v^\varepsilon := \mathcal{M}_\varepsilon u \quad\text{on}\quad \Gamma, \quad U_1 := |u|+|\bar{v}^\varepsilon|, \quad U_2 := |u|+|\nabla u| \quad\text{in}\quad \Omega_\varepsilon.
\end{align*}

\begin{lemma} \label{L:ANL_Diff}
  Suppose that
  \begin{align} \label{E:ANL_G1}
    G\in C^1(\mathbb{R}), \quad |G'(z)| \leq c(|z|^2+1) \quad\text{for all}\quad z\in\mathbb{R}.
  \end{align}
  Let $u\in W^{1,p}(\Omega_\varepsilon)$ with $p\in[1,\infty]$.
  Then,
  \begin{align} \label{E:ANL_Diff}
    |G(u)-G(\bar{v}^\varepsilon)| \leq c\varepsilon(U_1^2+1)\overline{\mathcal{M}_\varepsilon U_2} \quad\text{in}\quad \Omega_\varepsilon.
  \end{align}
\end{lemma}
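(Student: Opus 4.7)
The plan is to combine the mean value theorem with the pointwise average-difference estimate from Lemma \ref{L:ADif_Lp}. Since $\bar{v}^\varepsilon = \overline{\mathcal{M}_\varepsilon u}$ is exactly the quantity that Lemma \ref{L:ADif_Lp} compares to $u$, and $G \in C^1(\mathbb{R})$, the factorization $G(u) - G(\bar{v}^\varepsilon) = G'(\xi)(u - \bar{v}^\varepsilon)$ for some $\xi$ between $u$ and $\bar{v}^\varepsilon$ should immediately produce the stated bound, once $|G'(\xi)|$ is controlled by $U_1$.

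Concretely, at a point $x \in \Omega_\varepsilon$, I would write
\begin{align*}
  G(u(x)) - G(\bar{v}^\varepsilon(x)) = \int_0^1 G'\bigl(\theta u(x) + (1-\theta)\bar{v}^\varepsilon(x)\bigr)\,d\theta \cdot \bigl(u(x) - \bar{v}^\varepsilon(x)\bigr).
\end{align*}
Since $|\theta u + (1-\theta)\bar{v}^\varepsilon| \leq |u| + |\bar{v}^\varepsilon| = U_1$ for $\theta \in [0,1]$, the growth hypothesis \eqref{E:ANL_G1} gives
\begin{align*}
  \left|G'\bigl(\theta u(x) + (1-\theta)\bar{v}^\varepsilon(x)\bigr)\right| \leq c(U_1(x)^2 + 1).
\end{align*}

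For the remaining factor, Lemma \ref{L:ADif_Lp} provides the pointwise estimate
\begin{align*}
  |u - \bar{v}^\varepsilon| = \left|u - \overline{\mathcal{M}_\varepsilon u}\right| \leq c\varepsilon\,\overline{\mathcal{M}_\varepsilon(|u| + |\partial_\nu u|)} \quad\text{in}\quad \Omega_\varepsilon,
\end{align*}
and since $|\partial_\nu u| = |\bar{\bm{\nu}}\cdot\nabla u| \leq |\nabla u|$, monotonicity of $\mathcal{M}_\varepsilon$ gives $\overline{\mathcal{M}_\varepsilon(|u|+|\partial_\nu u|)} \leq \overline{\mathcal{M}_\varepsilon U_2}$. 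Multiplying the two bounds yields \eqref{E:ANL_Diff}.

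There is no real obstacle here: the lemma is purely a pointwise consequence of the mean value theorem and the $L^\infty$-type version of Lemma \ref{L:ADif_Lp}. The only point requiring a little care is the interval of integration in the MVT representation — one must note that the convex combination $\theta u + (1-\theta)\bar{v}^\varepsilon$ is trivially bounded by $U_1$ in absolute value, so that the quadratic growth of $G'$ translates directly into the factor $U_1^2 + 1$ in the final estimate. The hypothesis $u \in W^{1,p}(\Omega_\varepsilon)$ is used only to guarantee that Lemma \ref{L:ADif_Lp} applies and that $\partial_\nu u$ makes sense almost everywhere.
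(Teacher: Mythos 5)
Your proof is correct and follows essentially the same route as the paper: mean value theorem for $G$ (you use the integral form, the paper the pointwise form, but these are interchangeable here), the growth bound on $G'$ combined with $|\theta u+(1-\theta)\bar{v}^\varepsilon|\leq U_1$, and the pointwise estimate \eqref{E:ADif_PW} together with $|\partial_\nu u|\leq|\nabla u|$.
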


\begin{proof}
  By the mean value theorem for $G(z)$, we can write
  \begin{align*}
    |G(u)-G(\bar{v}^\varepsilon)| = |G'(\lambda u+(1-\lambda)\bar{v}^\varepsilon)|\cdot|u-\bar{v}^\varepsilon|
  \end{align*}
  with some $\lambda\in(0,1)$.
  We apply \eqref{E:ADif_PW} and \eqref{E:ANL_G1} to the right-hand side and use
  \begin{align*}
    |\lambda u+(1-\lambda)\bar{v}^\varepsilon| \leq |u|+|\bar{v}^\varepsilon| = U_1, \quad |\partial_\nu u| = |\bar{\bm{\nu}}\cdot\nabla u| \leq |\nabla u| \quad\text{in}\quad \Omega_\varepsilon
  \end{align*}
  to obtain \eqref{E:ANL_Diff}.
\end{proof}

\begin{lemma} \label{L:Gu_L2}
  Suppose that \eqref{E:ANL_G1} holds.
  Let $u\in H^1(\Omega_\varepsilon)$.
  Then,
  \begin{align} \label{E:Gu_L2}
    \|G(u)\|_{L^2(\Omega_\varepsilon)} &\leq c\Bigl(\varepsilon^{-1}\|u\|_{H^1(\Omega_\varepsilon)}^3+\varepsilon^{1/2}\Bigr).
  \end{align}
  Moreover, if $u\in H^2(\Omega_\varepsilon)$, then
  \begin{align} \label{E:NaGu_L2}
    \|\nabla G(u)\|_{L^2(\Omega_\varepsilon)} &\leq c\Bigl(\varepsilon^{-1}\|u\|_{H^1(\Omega_\varepsilon)}^2\|u\|_{H^2(\Omega_\varepsilon)}+\|u\|_{H^1(\Omega_\varepsilon)}\Bigr).
  \end{align}
\end{lemma}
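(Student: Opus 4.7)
\smallskip

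The plan is to handle the two estimates in parallel, using only the pointwise bound \eqref{E:ANL_G1} on $G'$, together with the Sobolev inequality on the curved thin domain \eqref{E:Sob_CTD} and the volume bound $|\Omega_\varepsilon|\leq c\varepsilon$.

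For \eqref{E:Gu_L2}, I would first integrate the assumption $|G'(z)|\leq c(|z|^2+1)$ from $0$ to $z$ to obtain the pointwise bound $|G(z)|\leq c(|z|^3+1)$, so that by the triangle inequality
\[
  \|G(u)\|_{L^2(\Omega_\varepsilon)} \leq c\Bigl(\|u\|_{L^6(\Omega_\varepsilon)}^3+|\Omega_\varepsilon|^{1/2}\Bigr).
\]
Since $|\Omega_\varepsilon|\leq c\varepsilon$ gives $|\Omega_\varepsilon|^{1/2}\leq c\varepsilon^{1/2}$, the only remaining step is to control $\|u\|_{L^6(\Omega_\varepsilon)}^3$. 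Applying \eqref{E:Sob_CTD} with $p=6$ yields $\|u\|_{L^6(\Omega_\varepsilon)}\leq c\varepsilon^{-1/3}\|u\|_{H^1(\Omega_\varepsilon)}$, and cubing this gives the desired $\varepsilon^{-1}\|u\|_{H^1(\Omega_\varepsilon)}^3$ term, completing \eqref{E:Gu_L2}.

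For \eqref{E:NaGu_L2}, since $u\in H^2(\Omega_\varepsilon)$ and $G\in C^1(\mathbb{R})$, the chain rule gives $\nabla G(u)=G'(u)\nabla u$ almost everywhere, so \eqref{E:ANL_G1} yields
\[
  |\nabla G(u)| \leq c(|u|^2+1)|\nabla u| \quad\text{a.e. in}\quad \Omega_\varepsilon.
\]
I would then split into the two terms $\|u^2\nabla u\|_{L^2(\Omega_\varepsilon)}$ and $\|\nabla u\|_{L^2(\Omega_\varepsilon)}$. The second is trivially bounded by $\|u\|_{H^1(\Omega_\varepsilon)}$. For the first, H\"older's inequality gives
\[
  \|u^2\nabla u\|_{L^2(\Omega_\varepsilon)} \leq \|u\|_{L^6(\Omega_\varepsilon)}^2\|\nabla u\|_{L^6(\Omega_\varepsilon)},
\]
and two applications of \eqref{E:Sob_CTD} with $p=6$ (once to $u$, once to each component of $\nabla u\in H^1(\Omega_\varepsilon)$) yield $\|u\|_{L^6(\Omega_\varepsilon)}^2\leq c\varepsilon^{-2/3}\|u\|_{H^1(\Omega_\varepsilon)}^2$ and $\|\nabla u\|_{L^6(\Omega_\varepsilon)}\leq c\varepsilon^{-1/3}\|u\|_{H^2(\Omega_\varepsilon)}$. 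Multiplying and combining with the trivial estimate proves \eqref{E:NaGu_L2}.

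There is no real obstacle here; the proof amounts to carefully tracking the $\varepsilon$-dependence coming from the thin-domain Sobolev inequality so that the powers of $\varepsilon^{-1/3}$ combine correctly to produce the factor $\varepsilon^{-1}$ in both estimates. The only mild subtlety is justifying the chain rule $\nabla G(u)=G'(u)\nabla u$ at the $H^2$-regularity level, which is standard since $G\in C^1$ with polynomially growing derivative and $u\in H^1(\Omega_\varepsilon)\cap L^\infty_{\mathrm{loc}}$-type considerations are avoided by the $L^6$ control already used.
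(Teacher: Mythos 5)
Your proof is correct and follows essentially the same route as the paper: derive the cubic bound $|G(z)|\leq c(|z|^3+1)$ from \eqref{E:ANL_G1}, reduce to $\|u\|_{L^6(\Omega_\varepsilon)}^3$ and $|\Omega_\varepsilon|^{1/2}$, and apply \eqref{E:Sob_CTD} with $p=6$; for the gradient estimate, split via H\"{o}lder into $\|u\|_{L^6}^2\|\nabla u\|_{L^6}$ and $\|\nabla u\|_{L^2}$ and apply \eqref{E:Sob_CTD} again. The $\varepsilon$-bookkeeping matches the paper's exactly.
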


\begin{proof}
  As in Remark \ref{R:Po_Grow}, we have $|G(z)|\leq c(|z|^3+1)$ for all $z\in\mathbb{R}$ by \eqref{E:ANL_G1}.
  Hence,
  \begin{align*}
    \|G(u)\|_{L^2(\Omega_\varepsilon)} &\leq c\Bigl(\|u^3\|_{L^2(\Omega_\varepsilon)}+|\Omega_\varepsilon|^{1/2}\Bigr) = c\Bigl(\|u\|_{L^6(\Omega_\varepsilon)}^3+|\Omega_\varepsilon|^{1/2}\Bigr),
  \end{align*}
  and we apply \eqref{E:Sob_CTD} and $|\Omega_\varepsilon|\leq c\varepsilon$ to the right-hand side to get \eqref{E:Gu_L2}.
  Also, since
  \begin{align*}
    |\nabla G(u)| = |G'(u)\nabla u| \leq c(|u|^2+1)|\nabla u| \quad\text{in}\quad \Omega_\varepsilon
  \end{align*}
  by \eqref{E:ANL_G1}, we observe by H\"{o}lder's inequality that
  \begin{align*}
    \|\nabla G(u)\|_{L^2(\Omega_\varepsilon)} &\leq c\Bigl(\|u^2\nabla u\|_{L^2(\Omega_\varepsilon)}+\|\nabla u\|_{L^2(\Omega_\varepsilon)}\Bigr) \\
    &\leq c\Bigl(\|u\|_{L^6(\Omega_\varepsilon)}^2\|\nabla u\|_{L^6(\Omega_\varepsilon)}+\|\nabla u\|_{L^2(\Omega_\varepsilon)}\Bigr).
  \end{align*}
  Hence, we get \eqref{E:NaGu_L2} by applying \eqref{E:Sob_CTD} to the second line.
\end{proof}

\begin{lemma} \label{L:ANL_L2}
  Suppose that \eqref{E:ANL_G1} holds.
  Let $u\in H^2(\Omega_\varepsilon)$.
  Then,
  \begin{align} \label{E:ANL_L2}
    \|\zeta_G^\varepsilon\|_{L^2(\Gamma)} \leq c\varepsilon\Bigl(\varepsilon^{-3/2}\|u\|_{H^1(\Omega_\varepsilon)}^2\|u\|_{H^2(\Omega_\varepsilon)}+1\Bigr)
  \end{align}
  for $\zeta_G^\varepsilon=\mathcal{M}_\varepsilon(G(u))-G(v^\varepsilon)$ with $v^\varepsilon=\mathcal{M}_\varepsilon u$.
\end{lemma}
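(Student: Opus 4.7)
Following the strategy outlined in Section \ref{SS:Int_Pro}, the plan is to write $\zeta_G^\varepsilon = K_1+K_2$ with
\begin{align*}
  K_1(y) &:= \frac{1}{\varepsilon g(y)}\int_{\varepsilon g_0(y)}^{\varepsilon g_1(y)} G(u^\sharp(y,r))\{J(y,r)-1\}\,dr, \\
  K_2(y) &:= \frac{1}{\varepsilon g(y)}\int_{\varepsilon g_0(y)}^{\varepsilon g_1(y)} \{G(u^\sharp(y,r))-G(v^\varepsilon(y))\}\,dr.
\end{align*}
The identity $\frac{1}{\varepsilon g}\int_{\varepsilon g_0}^{\varepsilon g_1}\,dr=1$ yields $K_1+K_2=\zeta_G^\varepsilon$, and the two pieces will be analyzed separately.

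For $K_1$, I would exploit $|J-1|\leq c\varepsilon$ from \eqref{E:J_Diff} together with the lower bound of $J$ to extract the factor $\varepsilon$: pointwise we get $|K_1|\leq c\varepsilon\mathcal{M}_\varepsilon(|G(u)|)$. Taking the $L^2(\Gamma)$-norm and applying \eqref{E:ALp_Sur} followed by \eqref{E:Gu_L2} produces $\|K_1\|_{L^2(\Gamma)}\leq c\varepsilon^{1/2}(\varepsilon^{-1}\|u\|_{H^1(\Omega_\varepsilon)}^3+\varepsilon^{1/2})$, and the elementary inequality $\|u\|_{H^1}^3\leq\|u\|_{H^1}^2\|u\|_{H^2}$ then yields the desired form for this piece.

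For $K_2$, the mean value theorem together with $|G'(z)|\leq c(|z|^2+1)$ gives
\[
|G(u^\sharp(y,r))-G(v^\varepsilon(y))|\leq c((u^\sharp(y,r))^2+(v^\varepsilon(y))^2+1)|u^\sharp(y,r)-v^\varepsilon(y)|.
\]
The pointwise bound \eqref{E:ADif_PW}, which is valid uniformly in $r$, reads $|u^\sharp(y,r)-v^\varepsilon(y)|\leq c\varepsilon\mathcal{M}_\varepsilon(|u|+|\partial_\nu u|)(y)$, and a Cauchy--Schwarz argument in the $r$-variable (using that $J$ is bounded above and below) yields $(v^\varepsilon)^2\leq c\mathcal{M}_\varepsilon(u^2)$. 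Substituting these estimates into the defining integral of $K_2$ gives
\[
|K_2(y)|\leq c\varepsilon\mathcal{M}_\varepsilon(|u|+|\partial_\nu u|)(y)\cdot(\mathcal{M}_\varepsilon(u^2)(y)+1).
\]
For the principal quadratic product I would apply H\"{o}lder on $\Gamma$ with exponents $1/6+1/3=1/2$. The factor $\|\mathcal{M}_\varepsilon(|u|+|\partial_\nu u|)\|_{L^6(\Gamma)}$ is bounded by $c\varepsilon^{-1/2}\|u\|_{H^2(\Omega_\varepsilon)}$ via \eqref{E:ALp_Sur} with $p=6$ and \eqref{E:Sob_CTD} with $p=6$ (noting $|\partial_\nu u|\leq|\nabla u|$, and that placing $|\nabla u|$ in $H^1(\Omega_\varepsilon)$ uses $u\in H^2$); the factor $\|\mathcal{M}_\varepsilon(u^2)\|_{L^3(\Gamma)}$ is bounded by $c\varepsilon^{-1}\|u\|_{H^1(\Omega_\varepsilon)}^2$ via \eqref{E:ALp_Sur} with $p=3$ together with \eqref{E:Sob_CTD} with $p=6$ applied to $u$. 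The external factor $\varepsilon$ then produces exactly the leading term $c\varepsilon^{-1/2}\|u\|_{H^1}^2\|u\|_{H^2}$ in the target.

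The main obstacle is the lower-order remainder from the ``$+1$'' in the mean-value estimate, which contributes $c\varepsilon\cdot\|\mathcal{M}_\varepsilon(|u|+|\partial_\nu u|)\|_{L^2(\Gamma)}\leq c\varepsilon^{1/2}\|u\|_{H^1(\Omega_\varepsilon)}$ to $\|K_2\|_{L^2(\Gamma)}$ and does not obviously fit the target $c\varepsilon^{-1/2}\|u\|_{H^1}^2\|u\|_{H^2}+c\varepsilon$. I would dispose of it by a short dichotomy: if $\|u\|_{H^1}^2\leq\varepsilon$ then $\varepsilon^{1/2}\|u\|_{H^1}\leq\varepsilon$, while if $\|u\|_{H^1}^2>\varepsilon$ then $\varepsilon^{-1}\|u\|_{H^1}^2>1$ and the elementary inequality $\|u\|_{H^1}\leq\|u\|_{H^2}$ yields $\varepsilon^{1/2}\|u\|_{H^1}\leq\varepsilon^{-1/2}\|u\|_{H^1}^2\|u\|_{H^2}$. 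Combining this disposal with the bounds for $K_1$ and the principal part of $K_2$ produces \eqref{E:ANL_L2}.
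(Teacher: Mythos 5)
Your argument is correct and follows the same route as the paper's proof: the identical $K_1+K_2$ splitting, the $|J-1|\leq c\varepsilon$ bound for $K_1$, the mean-value-theorem estimate combined with \eqref{E:ADif_PW} for $K_2$, and the same H\"older exponents $1/3+1/6=1/2$ on $\Gamma$ followed by \eqref{E:ALp_Sur} and \eqref{E:Sob_CTD} for the principal product. The only cosmetic deviations are that you bound $(v^\varepsilon)^2\leq c\mathcal{M}_\varepsilon(u^2)$ up front rather than carrying $U_1=|u|+|\bar v^\varepsilon|$ through (the paper's Lemma \ref{L:ANL_Diff} packages the mean-value step), and you dispose of the lower-order $\varepsilon^{1/2}\|u\|_{H^1(\Omega_\varepsilon)}$ remainder by a dichotomy rather than the paper's Young-inequality trick $\|u\|_{H^1(\Omega_\varepsilon)}\leq c(\varepsilon^{-1}\|u\|_{H^1(\Omega_\varepsilon)}^3+\varepsilon^{1/2})$ in \eqref{Pf_AN2:Yo}; both are equivalent.
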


\begin{proof}
  We use the notation \eqref{E:Pull_CTD} and suppress the variable $y\in\Gamma$.
  Let
  \begin{align*}
    K_1 &:= \frac{1}{\varepsilon g}\int_{\varepsilon g_0}^{\varepsilon g_1}[G(u)]^\sharp(r)\{J(r)-1\}\,dr, \\
    K_2 &:= \frac{1}{\varepsilon g}\int_{\varepsilon g_0}^{\varepsilon g_1}\{[G(u)]^\sharp(r)-G(v^\varepsilon)\}\,dr.
  \end{align*}
  Then, $\zeta_G^\varepsilon=K_1+K_2$ on $\Gamma$ since $G(v^\varepsilon)$ is independent of the variable $r$.
  Moreover,
  \begin{align*}
    |K_1| &\leq c\varepsilon\cdot\frac{1}{\varepsilon g}\int_{\varepsilon g_0}^{\varepsilon g_1}|[G(u)]^\sharp(r)|\,dr, \\
    |K_2| &\leq \frac{1}{\varepsilon g}\int_{\varepsilon g_0}^{\varepsilon g_1}c\varepsilon([U_1^2]^\sharp(r)+1)\mathcal{M}_\varepsilon U_2\,dr = c\varepsilon\left(\frac{1}{\varepsilon g}\int_{\varepsilon g_0}^{\varepsilon g_1}[U_1^2]^\sharp(r)\,dr+1\right)\mathcal{M}_\varepsilon U_2
  \end{align*}
  by \eqref{E:J_Diff} and \eqref{E:ANL_Diff}.
  Noting that $|G(u)|,U_1^2,U_2\geq0$, we further use \eqref{E:J_Bdd} to get
  \begin{align*}
    |K_1| \leq c\varepsilon\mathcal{M}_\varepsilon(|G(u)|), \quad |K_2| \leq c\varepsilon(\mathcal{M}_\varepsilon(U_1^2)+1)\mathcal{M}_\varepsilon U_2 \quad\text{on}\quad \Gamma.
  \end{align*}
  Hence, it follows from $\zeta_G^\varepsilon=K_1+K_2$ on $\Gamma$ that
  \begin{align} \label{Pf_AN2:L2}
    \|\zeta_G^\varepsilon\|_{L^2(\Gamma)} \leq c\varepsilon\Bigl(\|\mathcal{M}_\varepsilon(|G(u)|)\|_{L^2(\Gamma)}+\|\mathcal{M}_\varepsilon(U_1^2)\mathcal{M}_\varepsilon U_2\|_{L^2(\Gamma)}+\|\mathcal{M}_\varepsilon U_2\|_{L^2(\Gamma)}\Bigr).
  \end{align}
  Let us estimate the right-hand side.
  By \eqref{E:ALp_Sur} and \eqref{E:Gu_L2}, we have
  \begin{align} \label{Pf_AN2:AGu}
    \|\mathcal{M}_\varepsilon(|G(u)|)\|_{L^2(\Gamma)} \leq c\varepsilon^{-1/2}\|G(u)\|_{L^2(\Omega_\varepsilon)} \leq c\Bigl(\varepsilon^{-3/2}\|u\|_{H^1(\Omega_\varepsilon)}^3+1\Bigr).
  \end{align}
  Since $U_2=|u|+|\nabla u|$ in $\Omega_\varepsilon$, we see by \eqref{E:ALp_Sur} that
  \begin{align} \label{Pf_AN2:AU2}
    \|\mathcal{M}_\varepsilon U_2\|_{L^2(\Gamma)} \leq c\varepsilon^{-1/2}\|U_2\|_{L^2(\Omega_\varepsilon)} \leq c\varepsilon^{-1/2}\|u\|_{H^1(\Omega_\varepsilon)}.
  \end{align}
  We use H\"{o}lder's inequality, \eqref{E:ALp_Sur}, and $\|U_1^2\|_{L^3(\Omega_\varepsilon)}=\|U_1\|_{L^6(\Omega_\varepsilon)}^2$ to get
  \begin{align} \label{Pf_AN2:Tri}
    \begin{aligned}
      \|\mathcal{M}_\varepsilon(U_1^2)\mathcal{M}_\varepsilon U_2\|_{L^2(\Gamma)} &\leq \|\mathcal{M}_\varepsilon(U_1^2)\|_{L^3(\Gamma)}\|\mathcal{M}_\varepsilon U_2\|_{L^6(\Gamma)} \\
      &\leq c\varepsilon^{-1/2}\|U_1\|_{L^6(\Omega_\varepsilon)}^2\|U_2\|_{L^6(\Omega_\varepsilon)}.
    \end{aligned}
  \end{align}
  Moreover, since $U_1=|u|+|\bar{v}^\varepsilon|$ in $\Omega_\varepsilon$ with $v^\varepsilon=\mathcal{M}_\varepsilon u$, we see that
  \begin{align} \label{Pf_AN2:U16}
    \|U_1\|_{L^6(\Omega_\varepsilon)} \leq \|u\|_{L^6(\Omega_\varepsilon)}+\|\bar{v}^\varepsilon\|_{L^6(\Omega_\varepsilon)} \leq c\|u\|_{L^6(\Omega_\varepsilon)} \leq c\varepsilon^{-1/3}\|u\|_{H^1(\Omega_\varepsilon)}
  \end{align}
  by \eqref{E:ALp_TD} and then by \eqref{E:Sob_CTD}.
  Also, by $U_2=|u|+|\nabla u|$ in $\Omega_\varepsilon$ and \eqref{E:Sob_CTD},
  \begin{align*}
    \|U_2\|_{L^6(\Omega_\varepsilon)} \leq \|u\|_{L^6(\Omega_\varepsilon)}+\|\nabla u\|_{L^6(\Omega_\varepsilon)} \leq c\varepsilon^{-1/3}\|u\|_{H^2(\Omega_\varepsilon)}.
  \end{align*}
  Combining the above estimates, we find that
  \begin{align} \label{Pf_AN2:cu}
    \|\mathcal{M}_\varepsilon(U_1^2)\mathcal{M}_\varepsilon U_2\|_{L^2(\Gamma)} \leq c\varepsilon^{-3/2}\|u\|_{H^1(\Omega_\varepsilon)}^2\|u\|_{H^2(\Omega_\varepsilon)}.
  \end{align}
  Thus, we get \eqref{E:ANL_L2} by applying \eqref{Pf_AN2:AGu}, \eqref{Pf_AN2:AU2}, and \eqref{Pf_AN2:cu} to \eqref{Pf_AN2:L2} and using
  \begin{align} \label{Pf_AN2:Yo}
    \|u\|_{H^1(\Omega_\varepsilon)} = \varepsilon^{-1/3}\|u\|_{H^1(\Omega_\varepsilon)}\cdot\varepsilon^{1/3} \leq c\Bigl(\varepsilon^{-1}\|u\|_{H^1(\Omega_\varepsilon)}^3+\varepsilon^{1/2}\Bigr)
  \end{align}
  and  $\|u\|_{H^1(\Omega_\varepsilon)}\leq\|u\|_{H^2(\Omega_\varepsilon)}$, where we used Young's inequality in \eqref{Pf_AN2:Yo}.
\end{proof}

Next, we estimate $\nabla_\Gamma\zeta_G^\varepsilon$.
To this end, we prepare an auxiliary result.

\begin{lemma} \label{L:Prod_L3}
  Let $\eta\in H^2(\Gamma)$ and $u\in H^2(\Omega_\varepsilon)$.
  Then,
  \begin{align} \label{E:Prod_L3}
    \|\bar{\eta}u\|_{L^3(\Omega_\varepsilon)} \leq c\varepsilon^{-1/6}\|\eta\|_{L^2(\Gamma)}^{2/3}\|\eta\|_{H^2(\Gamma)}^{1/3}\|u\|_{L^2(\Omega_\varepsilon)}^{5/12}\|u\|_{H^1(\Omega_\varepsilon)}^{1/3}\|u\|_{H^2(\Omega_\varepsilon)}^{1/4}.
  \end{align}
\end{lemma}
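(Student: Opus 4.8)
The plan is to transfer the whole estimate to the surface $\Gamma$ via the weighted average of $|u|^3$, and then to use that on the two–dimensional $\Gamma$ the embedding $W^{1,1}(\Gamma)\hookrightarrow L^2(\Gamma)$ (Lemma~\ref{L:Sob_Sur}) is available \emph{without} the exponent restriction $p\le 6$ that constrains the Sobolev inequality on the three–dimensional thin domain $\Omega_\varepsilon$. First I would use that $\bar\eta$ is constant along the normal fibres: by the change of variables identity \eqref{E:Ave_Pair} applied to $|u|^3$ together with \eqref{E:G_Bdd},
\[
  \|\bar\eta u\|_{L^3(\Omega_\varepsilon)}^3=\int_{\Omega_\varepsilon}|\bar\eta|^3|u|^3\,dx=\varepsilon\int_\Gamma g\,|\eta|^3\,\mathcal{M}_\varepsilon(|u|^3)\,d\mathcal{H}^2\le c\varepsilon\int_\Gamma|\eta|^3\,\mathcal{M}_\varepsilon(|u|^3)\,d\mathcal{H}^2,
\]
and then H\"older's inequality on $\Gamma$ (exponents $2$ and $2$, writing $|\eta|^3=\eta^3$) gives $\|\bar\eta u\|_{L^3(\Omega_\varepsilon)}^3\le c\varepsilon\,\|\eta\|_{L^6(\Gamma)}^3\,\|\mathcal{M}_\varepsilon(|u|^3)\|_{L^2(\Gamma)}$.

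The core step is to bound $\|\mathcal{M}_\varepsilon(|u|^3)\|_{L^2(\Gamma)}$. Since $u\in H^2(\Omega_\varepsilon)$ we have $|u|^3\in H^1(\Omega_\varepsilon)$ with $\bigl|\nabla|u|^3\bigr|\le 3|u|^2|\nabla u|$, so by Lemma~\ref{L:Sob_Sur} it suffices to estimate $\|\mathcal{M}_\varepsilon(|u|^3)\|_{W^{1,1}(\Gamma)}$. For the $L^1(\Gamma)$–part I would use \eqref{E:ALp_Sur} with $p=1$ to get $c\varepsilon^{-1}\|u\|_{L^3(\Omega_\varepsilon)}^3$; for the tangential gradient I would use the estimate $|\nabla_\Gamma\mathcal{M}_\varepsilon\varphi|\le c\,\mathcal{M}_\varepsilon(|\varphi|+|\nabla\varphi|)$ that follows from \eqref{E:ATG_Diff} (valid for $\varphi=|u|^3\in H^1(\Omega_\varepsilon)$ by approximation), again \eqref{E:ALp_Sur} with $p=1$, and H\"older's inequality on $\Omega_\varepsilon$ in the form $\||u|^2|\nabla u|\|_{L^1(\Omega_\varepsilon)}\le\|u\|_{L^4(\Omega_\varepsilon)}^2\|\nabla u\|_{L^2(\Omega_\varepsilon)}$. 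This yields
\[
  \|\mathcal{M}_\varepsilon(|u|^3)\|_{L^2(\Gamma)}\le c\varepsilon^{-1}\Bigl(\|u\|_{L^3(\Omega_\varepsilon)}^3+\|u\|_{L^4(\Omega_\varepsilon)}^2\|\nabla u\|_{L^2(\Omega_\varepsilon)}\Bigr),
\]
so the powers $\varepsilon$ and $\varepsilon^{-1}$ cancel and, taking cube roots,
\[
  \|\bar\eta u\|_{L^3(\Omega_\varepsilon)}\le c\,\|\eta\|_{L^6(\Gamma)}\Bigl(\|u\|_{L^3(\Omega_\varepsilon)}+\|u\|_{L^4(\Omega_\varepsilon)}^{2/3}\|\nabla u\|_{L^2(\Omega_\varepsilon)}^{1/3}\Bigr).
\]

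Finally I would feed in the interpolation inequalities. For the surface factor, Lemma~\ref{L:Ipl_Sur} with $p=6$ (so $\sigma=1/3$) gives $\|\eta\|_{L^6(\Gamma)}\le c\|\eta\|_{L^2(\Gamma)}^{2/3}\|\eta\|_{H^2(\Gamma)}^{1/3}$. For the first thin-domain term, Lemma~\ref{L:Ipl_CTD} with $p=3$ ($\sigma=1/4$) gives $\|u\|_{L^3(\Omega_\varepsilon)}\le c\varepsilon^{-1/6}\|u\|_{L^2(\Omega_\varepsilon)}^{3/4}\|u\|_{H^2(\Omega_\varepsilon)}^{1/4}$, and $\|u\|_{L^2(\Omega_\varepsilon)}^{3/4}\le\|u\|_{L^2(\Omega_\varepsilon)}^{5/12}\|u\|_{H^1(\Omega_\varepsilon)}^{1/3}$. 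For the second, Lemma~\ref{L:Ipl_CTD} with $p=4$ ($\sigma=3/8$) gives $\|u\|_{L^4(\Omega_\varepsilon)}^{2/3}\le c\varepsilon^{-1/6}\|u\|_{L^2(\Omega_\varepsilon)}^{5/12}\|u\|_{H^2(\Omega_\varepsilon)}^{1/4}$, while $\|\nabla u\|_{L^2(\Omega_\varepsilon)}^{1/3}\le\|u\|_{H^1(\Omega_\varepsilon)}^{1/3}$; hence both terms are bounded by $c\varepsilon^{-1/6}\|u\|_{L^2(\Omega_\varepsilon)}^{5/12}\|u\|_{H^1(\Omega_\varepsilon)}^{1/3}\|u\|_{H^2(\Omega_\varepsilon)}^{1/4}$, which is \eqref{E:Prod_L3}.

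The subtle point — the reason this detour through $\Gamma$ is needed at all — is that the factor $\|u\|_{H^1(\Omega_\varepsilon)}^{1/3}$ cannot be obtained by applying the Sobolev inequality on $\Omega_\varepsilon$ (Lemma~\ref{L:Sob_CTD}) to a power of $u$: the restriction $p\le 6$ there makes the exponent bookkeeping inconsistent. Routing the thin-direction integral through a tangential gradient of $\mathcal{M}_\varepsilon(|u|^3)$ and the two-dimensional embedding $W^{1,1}(\Gamma)\hookrightarrow L^2(\Gamma)$ automatically produces the product $|u|^2|\nabla u|$, and its $L^1$-norm splits as $\|u\|_{L^4}^2\|\nabla u\|_{L^2}$ into precisely the combination of norms appearing in \eqref{E:Prod_L3}; the rest is just verifying that the exponents in the three applications of Lemmas~\ref{L:Ipl_Sur} and \ref{L:Ipl_CTD} assemble into $\varepsilon^{-1/6}$ and the stated powers, together with the harmless weakenings $\|u\|_{L^2(\Omega_\varepsilon)}\le\|u\|_{H^1(\Omega_\varepsilon)}$ and $\|\nabla u\|_{L^2(\Omega_\varepsilon)}\le\|u\|_{H^1(\Omega_\varepsilon)}$.
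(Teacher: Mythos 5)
Your proposal is correct and follows essentially the same route as the paper: the identity \eqref{E:Ave_Pair} applied to $|u|^3$, H\"older on $\Gamma$, the embedding \eqref{E:SoSu_W11} to pass to $\|\mathcal{M}_\varepsilon(|u|^3)\|_{W^{1,1}(\Gamma)}$, the tangential-gradient bound for the average together with \eqref{E:ALp_Sur} and H\"older on $\Omega_\varepsilon$, and finally \eqref{E:Ipl_Sur} and \eqref{E:Ipl_CTD}. The only cosmetic difference is that you keep the $\|u\|_{L^3(\Omega_\varepsilon)}^3$ term and interpolate it with $p=3$, whereas the paper absorbs it via $\||u|^3\|_{L^1}\leq\|u\|_{L^4}^2\|u\|_{L^2}$ and interpolates only $\|u\|_{L^4}^2$; both yield the stated exponents.
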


\begin{proof}
  We see by \eqref{E:Ave_Pair} that
  \begin{align*}
    \|\bar{\eta}u\|_{L^3(\Omega_\varepsilon)} = \left(\int_{\Omega_\varepsilon}|\bar{\eta}|^3|u|^3\,dx\right)^{1/3} = \left(\varepsilon\int_\Gamma g|\eta|^3\mathcal{M}_\varepsilon(|u|^3)\,d\mathcal{H}^2\right)^{1/3}.
  \end{align*}
  Then, we use \eqref{E:G_Bdd}, H\"{o}lder's inequality, \eqref{E:SoSu_W11}, and \eqref{E:Ipl_Sur} with $p=6$ to get
  \begin{align} \label{Pf_P3:L3}
    \begin{aligned}
      \|\bar{\eta}u\|_{L^3(\Omega_\varepsilon)} &\leq c\varepsilon^{1/3}\|\eta\|_{L^6(\Gamma)}\|\mathcal{M}_\varepsilon(|u|^3)\|_{L^2(\Gamma)}^{1/3} \\
      &\leq c\varepsilon^{1/3}\|\eta\|_{L^2(\Gamma)}^{2/3}\|\eta\|_{H^2(\Gamma)}^{1/3}\|\mathcal{M}_\varepsilon(|u|^3)\|_{W^{1,1}(\Gamma)}^{1/3}.
    \end{aligned}
  \end{align}
  Since $|\nabla_\Gamma\mathcal{M}_\varepsilon(|u|^3)|\leq c\{\mathcal{M}_\varepsilon(|u|^2|\nabla u|)+\mathcal{M}_\varepsilon(|u|^3)\}$ on $\Gamma$ by \eqref{E:Ave_TGr} and \eqref{Pf_ATG:coef}, we have
  \begin{align*}
    \|\mathcal{M}_\varepsilon(|u|^3)\|_{W^{1,1}(\Gamma)} &\leq c\Bigl(\|\mathcal{M}_\varepsilon(|u|^2|\nabla u|)\|_{L^1(\Gamma)}+\|\mathcal{M}_\varepsilon(|u|^3)\|_{L^1(\Gamma)}\Bigr) \\
    &\leq c\varepsilon^{-1}\Bigl(\bigl\|\,|u|^2|\nabla u|\,\bigr\|_{L^1(\Omega_\varepsilon)}+\bigl\|\,|u|^3\,\bigr\|_{L^1(\Omega_\varepsilon)}\Bigr) \\
    &\leq c\varepsilon^{-1}\bigl\|\,|u|^2\,\bigr\|_{L^2(\Omega_\varepsilon)}\Bigl(\|\nabla u\|_{L^2(\Omega_\varepsilon)}+\|u\|_{L^2(\Omega_\varepsilon)}\Bigr)
  \end{align*}
  by \eqref{E:ALp_Sur} and H\"{o}lder's inequality.
  Moreover, by \eqref{E:Ipl_CTD} with $p=4$,
  \begin{align*}
    \bigl\|\,|u|^2\,\bigr\|_{L^2(\Omega_\varepsilon)} = \|u\|_{L^4(\Omega_\varepsilon)}^2 \leq c\varepsilon^{-1/2}\|u\|_{L^2(\Omega_\varepsilon)}^{5/4}\|u\|_{H^2(\Omega_\varepsilon)}^{3/4}.
  \end{align*}
  By this inequality and $\|\nabla u\|_{L^2(\Omega_\varepsilon)}+\|u\|_{L^2(\Omega_\varepsilon)}\leq 2\|u\|_{H^1(\Omega_\varepsilon)}$, we find that
  \begin{align*}
    \|\mathcal{M}_\varepsilon(|u|^3)\|_{W^{1,1}(\Gamma)} \leq c\varepsilon^{-3/2}\|u\|_{L^2(\Omega_\varepsilon)}^{5/4}\|u\|_{H^1(\Omega_\varepsilon)}\|u\|_{H^2(\Omega_\varepsilon)}^{3/4}.
  \end{align*}
  Applying this inequality to \eqref{Pf_P3:L3}, we obtain \eqref{E:Prod_L3}.
\end{proof}

\begin{lemma} \label{L:ANL_TGr}
  Suppose that
  \begin{align} \label{E:ANL_G2}
    G\in C^2(\mathbb{R}), \quad |G''(z)| \leq c(|z|+1) \quad\text{for all}\quad z\in\mathbb{R}.
  \end{align}
  Let $u\in H^3(\Omega_\varepsilon)$ and $\zeta_G^\varepsilon=\mathcal{M}_\varepsilon(G(u))-G(v^\varepsilon)$ on $\Gamma$ with $v^\varepsilon=\mathcal{M}_\varepsilon u$.
  Then,
  \begin{align} \label{E:ANL_TGr}
    \begin{aligned}
      &\|\nabla_\Gamma\zeta_G^\varepsilon\|_{L^2(\Gamma)} \leq c\varepsilon\{\varepsilon^{-3/2}\sigma_\varepsilon(u)+1\}, \\
      &\sigma_\varepsilon(u) := \Bigl(\|u\|_{H^1(\Omega_\varepsilon)}+\varepsilon^{1/2}\Bigr)\|u\|_{H^1(\Omega_\varepsilon)}\|u\|_{H^2(\Omega_\varepsilon)}^{5/12}\|u\|_{H^3(\Omega_\varepsilon)}^{7/12}.
    \end{aligned}
  \end{align}
\end{lemma}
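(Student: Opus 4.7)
The plan is to follow the scheme used for $\|\zeta_G^\varepsilon\|_{L^2(\Gamma)}$ in Lemma \ref{L:ANL_L2}, but now compute the tangential gradient explicitly. First I would apply formula \eqref{E:Ave_TGr} to $\mathcal{M}_\varepsilon(G(u))$, using $\nabla G(u)=G'(u)\nabla u$ and $\partial_\nu G(u)=G'(u)\partial_\nu u$, and to $\mathcal{M}_\varepsilon u=v^\varepsilon$, and then subtract $G'(v^\varepsilon)\nabla_\Gamma v^\varepsilon$. Since $G'(v^\varepsilon)$ is constant along fibres, its extension $G'(\bar v^\varepsilon)$ can be pulled inside $\mathcal{M}_\varepsilon$. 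This produces a decomposition
\begin{align*}
  \nabla_\Gamma\zeta_G^\varepsilon = T_1 + T_2 + T_3,
\end{align*}
in which $T_1 = \mathcal{M}_\varepsilon\bigl(\mathbf{B}[G'(u)-G'(\bar v^\varepsilon)]\nabla u\bigr)$ is the principal term, while $T_2$ and $T_3$ carry the coefficients $\mathbf{b}_\varepsilon$ and $\mathbf{b}_J$ of pointwise size $\le c\varepsilon$ in $\Omega_\varepsilon$.

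The remainders $T_2$ and $T_3$ are treated essentially as in Lemma \ref{L:ANL_L2}: the extra $\varepsilon$ from $\mathbf{b}_\varepsilon,\mathbf{b}_J$ compensates the $\varepsilon^{-1/2}$ from \eqref{E:ALp_Sur}, and combining the pointwise bounds $|G'(z)|\le c(|z|^2+1)$ and $|G(z)|\le c(|z|^3+1)$ (consequences of \eqref{E:ANL_G2}) with H\"older, \eqref{E:Sob_CTD}, and the interpolation \eqref{E:Ipl_CTD} fits the outcome into $c\varepsilon\{\varepsilon^{-3/2}\sigma_\varepsilon(u)+1\}$; the additive $1$ absorbs contributions of constants through $|\Omega_\varepsilon|^{1/p}\sim\varepsilon^{1/p}$.

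The main effort will go into $T_1$. Using the mean value theorem with $|G''(z)|\le c(|z|+1)$, one gets
\begin{align*}
  |[G'(u)-G'(\bar v^\varepsilon)]\nabla u| \le c(U_1+1)|u-\bar v^\varepsilon||\nabla u| \quad\text{in}\quad \Omega_\varepsilon,
\end{align*}
so by \eqref{E:ALp_Sur} it suffices to bound $\|(U_1+1)|u-\bar v^\varepsilon||\nabla u|\|_{L^2(\Omega_\varepsilon)}$ by $c\sigma_\varepsilon(u)$. Splitting $U_1+1\le |\bar v^\varepsilon|+|u|+1$ separates three contributions. The two involving $|u|$ and $1$ are handled by H\"older in $L^6\times L^6\times L^6$, the Sobolev inequality \eqref{E:Sob_CTD} applied to $u$ and $\nabla u$, and the bound $\|u-\bar v^\varepsilon\|_{L^6(\Omega_\varepsilon)}\le c\varepsilon^{2/3}\|u\|_{H^2(\Omega_\varepsilon)}$ coming from \eqref{E:ADif_Lp} combined with \eqref{E:Sob_CTD}; the constant part then yields the factor $\|u\|_{H^1}+\varepsilon^{1/2}$ via $|\Omega_\varepsilon|^{1/6}\sim\varepsilon^{1/6}$.

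The hard part will be the $|\bar v^\varepsilon|$-contribution. A direct H\"older/Sobolev estimate gives only $c\|u\|_{H^1}\|u\|_{H^2}^2$, which is strictly weaker than the target $\|u\|_{H^1}\|u\|_{H^2}^{5/12}\|u\|_{H^3}^{7/12}$ and cannot be repaired by simple interpolation between $H^1$, $H^2$, and $H^3$. To reach the sharp exponents $5/12$ and $7/12$ I would invoke Lemma \ref{L:Prod_L3} on the product $\bar v^\varepsilon\cdot f$, with $f$ built from $u-\bar v^\varepsilon$ and $\nabla u$, using that $\bar v^\varepsilon$ is the constant normal extension of the surface function $v^\varepsilon=\mathcal{M}_\varepsilon u$. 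The two-dimensional Sobolev interpolation on $\Gamma$ built into Lemma \ref{L:Prod_L3} trades three-dimensional regularity of the $\Omega_\varepsilon$-factor for two-dimensional regularity of $v^\varepsilon$ at precisely the right rate; the norms $\|v^\varepsilon\|_{L^2(\Gamma)},\|v^\varepsilon\|_{H^2(\Gamma)}$ are controlled by \eqref{E:AHk_Sur}, and $\|u-\bar v^\varepsilon\|_{H^k(\Omega_\varepsilon)}$ by \eqref{E:ADif_Lp} together with \eqref{E:AHk_TD}. This sharp interpolation step is the main obstacle; once it is in place, assembling the bounds for $T_1$, $T_2$, $T_3$ yields \eqref{E:ANL_TGr}.
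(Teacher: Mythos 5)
Your decomposition $\nabla_\Gamma\zeta_G^\varepsilon = T_1+T_2+T_3$ is close in spirit to the paper's $L_1+L_2+\mathcal M_\varepsilon\mathbf v_G$ (the paper keeps $\overline{\mathbf P}$ in the leading term and pushes the $d\overline{\mathbf W}$ part into $\mathbf v_G$, but the difference is cosmetic), and your treatment of the lower-order pieces via the pointwise $\mathcal O(\varepsilon)$ size of $\mathbf b_\varepsilon,\mathbf b_J$ is fine. The gap is in the principal term.

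For $T_1$ you apply the mean value theorem and then split $U_1+1\le|\bar v^\varepsilon|+|u|+1$, estimating the $|u|$ and $1$ pieces by H\"older in $L^6\times L^6\times L^6$. This yields, after $\|u-\bar v^\varepsilon\|_{L^6}\le c\varepsilon^{2/3}\|u\|_{H^2}$ and two applications of \eqref{E:Sob_CTD},
\begin{align*}
  \bigl\||u|\,|u-\bar v^\varepsilon|\,|\nabla u|\bigr\|_{L^2(\Omega_\varepsilon)} \le c\|u\|_{H^1(\Omega_\varepsilon)}\|u\|_{H^2(\Omega_\varepsilon)}^2,
\end{align*}
which is \emph{not} controlled by $\sigma_\varepsilon(u)$: after cancelling $\|u\|_{H^1}$ you would need $\|u\|_{H^2}^2\le c(\|u\|_{H^1}+\varepsilon^{1/2})\|u\|_{H^2}^{5/12}\|u\|_{H^3}^{7/12}$, which fails already when $\|u\|_{H^1}$ is small and $\|u\|_{H^2}\approx\|u\|_{H^3}$ is large. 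So the claim that only the $|\bar v^\varepsilon|$ piece is hard is incorrect; the $|u|$ and $1$ pieces also require the sharper machinery. Similarly, your proposed application of Lemma~\ref{L:Prod_L3} with $\eta=v^\varepsilon$ and $f$ ``built from $u-\bar v^\varepsilon$ and $\nabla u$'' is not the right way round: $f$ as a product is not naturally in $H^2(\Omega_\varepsilon)$, and using $\|v^\varepsilon\|_{H^2(\Gamma)}\le c\varepsilon^{-1/2}\|u\|_{H^2(\Omega_\varepsilon)}$ cannot produce the $\|u\|_{H^3}$ factor that the target requires.

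The step you are missing, and which the paper makes the very first move, is to apply the pointwise estimate \eqref{E:ADif_PW} \emph{before} taking any norms. This converts $|u-\bar v^\varepsilon|\le c\varepsilon\,\overline{\mathcal M_\varepsilon U_2}$, so the integrand becomes $(U_1+1)\bigl(\overline{\mathcal M_\varepsilon U_2}\bigr)U_2$. Two things come out of this. First, the factor $\varepsilon$ is extracted at the pointwise level. Second, the middle factor is now a constant normal extension of a \emph{surface} function, so Lemma~\ref{L:Prod_L3} becomes applicable in the form the paper actually uses: $U_1+1$ is kept intact in $L^6$ (with $\|U_1+1\|_{L^6}\le c(\varepsilon^{-1/3}\|u\|_{H^1}+\varepsilon^{1/6})$), and the remaining block $\bigl(\overline{\mathcal M_\varepsilon U_2}\bigr)U_2$ is placed in $L^3$, where \eqref{E:Prod_L3} is applied with $\eta=\mathcal M_\varepsilon U_2$ and the $H^2(\Omega_\varepsilon)$-argument equal to $U_2=|u|+|\nabla u|$. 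Since $\|U_2\|_{H^2(\Omega_\varepsilon)}\le c\|u\|_{H^3(\Omega_\varepsilon)}$, this is exactly where the factor $\|u\|_{H^3}^{7/12}$ arises, and combining with \eqref{E:ALp_Sur}, \eqref{E:AHk_Sur} gives $\varepsilon^{-2/3}\|u\|_{H^1}^{13/12}\|u\|_{H^2}^{1/3}\|u\|_{H^3}^{7/12}$, which after absorbing $\|u\|_{H^1}^{1/12}\le\|u\|_{H^2}^{1/12}$ produces the stated $\sigma_\varepsilon(u)$. Without that initial use of \eqref{E:ADif_PW} and the regrouping $L^6\times L^3$ around $\mathcal M_\varepsilon U_2$ rather than $v^\varepsilon$, the exponents cannot be made to match.
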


\begin{proof}
  As in Remark \ref{R:Po_Grow}, we can derive \eqref{E:ANL_G1} from \eqref{E:ANL_G2}.
  Thus, we can apply Lemmas \ref{L:ANL_Diff} and \ref{L:Gu_L2}.
  We use the functions given in \eqref{E:Def_Bfb} and define
  \begin{align*}
    \mathbf{v}_G := -d\overline{\mathbf{W}}\nabla G(u)+\{\partial_\nu G(u)+G(u)f_J\}\mathbf{b}_\varepsilon+G(u)\mathbf{b}_J \quad \text{on}\quad \Omega_\varepsilon.
  \end{align*}
  Then, it follows from \eqref{E:Ave_TGr} that
  \begin{align*}
    \nabla_\Gamma\mathcal{M}_\varepsilon(G(u)) = \mathcal{M}_\varepsilon\Bigl(\overline{\mathbf{P}}\nabla G(u)\Bigr)+\mathcal{M}_\varepsilon\mathbf{v}_G = \mathcal{M}_\varepsilon\Bigl(G'(u)\overline{\mathbf{P}}\nabla u\Bigr)+\mathcal{M}_\varepsilon\mathbf{v}_G \quad\text{on}\quad \Gamma.
  \end{align*}
  Also, $\nabla_\Gamma G(v^\varepsilon)=G'(v^\varepsilon)\nabla_\Gamma v^\varepsilon$ on $\Gamma$.
  Hence,
  \begin{align*}
    \nabla_\Gamma\zeta_G^\varepsilon = \mathcal{M}_\varepsilon\Bigl(G'(u)\overline{\mathbf{P}}\nabla u\Bigr)+\mathcal{M}_\varepsilon\mathbf{v}_G-G'(v^\varepsilon)\nabla_\Gamma v^\varepsilon \quad\text{on}\quad \Gamma.
  \end{align*}
  Moreover, $G'(v^\varepsilon)\mathcal{M}_\varepsilon\varphi=\mathcal{M}_\varepsilon(G'(\bar{v}^\varepsilon)\varphi)$ for a function $\varphi$ on $\Omega_\varepsilon$, since $G'(v^\varepsilon)$ is independent of the variable $r$.
  Thus, we can write
  \begin{align} \label{Pf_ANG:TGr}
    \nabla_\Gamma\zeta_G^\varepsilon = L_1+L_2+\mathcal{M}_\varepsilon\mathbf{v}_G \quad\text{on}\quad \Gamma,
  \end{align}
  where
  \begin{align*}
    L_1 := \mathcal{M}_\varepsilon\Bigl(\{G'(u)-G'(\bar{v}^\varepsilon)\}\overline{\mathbf{P}}\nabla u\Bigr), \quad L_2 := G'(v^\varepsilon)\left\{\mathcal{M}_\varepsilon\Bigl(\overline{\mathbf{P}}\nabla u\Bigr)-\nabla_\Gamma v^\varepsilon\right\}.
  \end{align*}
  Let us estimate $\mathcal{M}_\varepsilon\mathbf{v}_G$.
  Since $|\mathbf{v}_G|\leq c\varepsilon(|G(u)|+|\nabla G(u)|)$ in $\Omega_\varepsilon$ by \eqref{Pf_ATG:coef}, we have
  \begin{align} \label{Pf_ANG:MV}
    \begin{aligned}
      \|\mathcal{M}_\varepsilon\mathbf{v}_G\|_{L^2(\Gamma)} &\leq c\varepsilon^{-1/2}\|\mathbf{v}_G\|_{L^2(\Omega_\varepsilon)} \leq c\varepsilon^{1/2}\|G(u)\|_{H^1(\Omega_\varepsilon)} \\
      &\leq c\varepsilon\Bigl(\varepsilon^{-3/2}\|u\|_{H^1(\Omega_\varepsilon)}^2\|u\|_{H^2(\Omega_\varepsilon)}+1\Bigr)
    \end{aligned}
  \end{align}
  by \eqref{E:ALp_Sur}, \eqref{E:Gu_L2}, \eqref{E:NaGu_L2}, \eqref{Pf_AN2:Yo}, and $\|u\|_{H^1(\Omega_\varepsilon)}\leq\|u\|_{H^2(\Omega_\varepsilon)}$.

  Next, we estimate $L_2$.
  We see by $v^\varepsilon=\mathcal{M}_\varepsilon u$, \eqref{E:ATG_Diff}, and \eqref{E:ANL_G1} that
  \begin{align*}
    |L_2|\leq c\varepsilon(|v^\varepsilon|^2+1)\mathcal{M}_\varepsilon(|u|+|\nabla u|) = c\varepsilon(|v^\varepsilon|^2+1)\mathcal{M}_\varepsilon U_2 \quad\text{on}\quad \Gamma.
  \end{align*}
  From this inequality and H\"{o}lder's inequality, it follows that
  \begin{align*}
    \|L_2\|_{L^2(\Gamma)} &\leq c\varepsilon\Bigl(\|(v^\varepsilon)^2\mathcal{M}_\varepsilon U_2\|_{L^2(\Gamma)}+\|\mathcal{M}_\varepsilon U_2\|_{L^2(\Gamma)}\Bigr) \\
    &\leq c\varepsilon\Bigl(\|v^\varepsilon\|_{L^6(\Gamma)}^2\|\mathcal{M}_\varepsilon U_2\|_{L^6(\Gamma)}+\|\mathcal{M}_\varepsilon U_2\|_{L^2(\Gamma)}\Bigr).
  \end{align*}
  Moreover, by $v^\varepsilon=\mathcal{M}_\varepsilon u$ on $\Gamma$, $U_2=|u|+|\nabla u|$ in $\Omega_\varepsilon$, \eqref{E:Sob_CTD}, and \eqref{E:ALp_Sur}, we have
  \begin{align*}
    \|v^\varepsilon\|_{L^6(\Gamma)} &\leq c\varepsilon^{-1/6}\|u\|_{L^6(\Omega_\varepsilon)} \leq c\varepsilon^{-1/2}\|u\|_{H^1(\Omega_\varepsilon)}, \\
    \|\mathcal{M}_\varepsilon U_2\|_{L^6(\Gamma)} &\leq c\varepsilon^{-1/6}\|U_2\|_{L^6(\Omega_\varepsilon)} \leq c\varepsilon^{-1/2}\|u\|_{H^2(\Omega_\varepsilon)}.
  \end{align*}
  We see by these estimates, \eqref{Pf_AN2:AU2}, \eqref{Pf_AN2:Yo}, and $\|u\|_{H^1(\Omega_\varepsilon)}\leq\|u\|_{H^2(\Omega_\varepsilon)}$ that
  \begin{align} \label{Pf_ANG:L2}
    \|L_2\|_{L^2(\Gamma)} \leq c\varepsilon\Bigl(\varepsilon^{-3/2}\|u\|_{H^1(\Omega_\varepsilon)}^2\|u\|_{H^2(\Omega_\varepsilon)}+1\Bigr).
  \end{align}
  Lastly, let us estimate $L_1$.
  As in the proof of \eqref{E:ANL_Diff}, we have
  \begin{align*}
    |G'(u)-G'(\bar{v}^\varepsilon)| \leq c\varepsilon(U_1+1)\overline{\mathcal{M}_\varepsilon U_2} \quad\text{in}\quad \Omega_\varepsilon
  \end{align*}
  by the mean value theorem for $G'(z)$, \eqref{E:ADif_PW}, and \eqref{E:ANL_G2}.
  Also,
  \begin{align*}
    \left|\overline{\mathbf{P}}\nabla u\right| \leq |\nabla u| \leq |u|+|\nabla u| = U_2 \quad\text{in}\quad \Omega_\varepsilon.
  \end{align*}
  By the above inequalities, we find that
  \begin{align} \label{Pf_ANG:L1P}
    |L_1| \leq c\varepsilon\mathcal{M}_\varepsilon\Bigl[(U_1+1)\Bigl(\overline{\mathcal{M}_\varepsilon U_2}\Bigr)U_2\Bigr] \quad\text{on}\quad \Gamma.
  \end{align}
  on $\Gamma$.
  To estimate the $L^2(\Gamma)$-norm of the right-hand side, we see that
  \begin{align} \label{Pf_ANG:122}
    \begin{aligned}
      \left\|\mathcal{M}_\varepsilon\Bigl[(U_1+1)\Bigl(\overline{\mathcal{M}_\varepsilon U_2}\Bigr)U_2\Bigr]\right\|_{L^2(\Gamma)} &\leq c\varepsilon^{-1/2}\left\|(U_1+1)\Bigl(\overline{\mathcal{M}_\varepsilon U_2}\Bigr)U_2\right\|_{L^2(\Omega_\varepsilon)} \\
      &\leq c\varepsilon^{-1/2}\|U_1+1\|_{L^6(\Omega_\varepsilon)}\left\|\Bigl(\overline{\mathcal{M}_\varepsilon U_2}\Bigr)U_2\right\|_{L^3(\Omega_\varepsilon)}
    \end{aligned}
  \end{align}
  by \eqref{E:ALp_Sur} and H\"{o}lder's inequality.
  Moreover, we see by \eqref{Pf_AN2:U16} and $|\Omega_\varepsilon|\leq c\varepsilon$ that
  \begin{align} \label{Pf_ANG:1pl}
    \|U_1+1\|_{L^6(\Omega_\varepsilon)} &\leq \|U_1\|_{L^6(\Omega_\varepsilon)}+|\Omega_\varepsilon|^{1/6} \leq c\Bigl(\varepsilon^{-1/3}\|u\|_{H^1(\Omega_\varepsilon)}+\varepsilon^{1/6}\Bigr).
  \end{align}
  Also, we use \eqref{E:Prod_L3} and then \eqref{E:ALp_Sur}, \eqref{E:AHk_Sur}, and $U_2=|u|+|\nabla u|$ in $\Omega_\varepsilon$ to get
  \begin{align*}
    \left\|\Bigl(\overline{\mathcal{M}_\varepsilon U_2}\Bigr)U_2\right\|_{L^3(\Omega_\varepsilon)} &\leq c\varepsilon^{-2/3}\|U_2\|_{L^2(\Omega_\varepsilon)}^{13/12}\|U_2\|_{H^1(\Omega_\varepsilon)}^{1/3}\|U_2\|_{H^2(\Omega_\varepsilon)}^{7/12} \\
    &\leq c\varepsilon^{-2/3}\|u\|_{H^1(\Omega_\varepsilon)}^{13/12}\|u\|_{H^2(\Omega_\varepsilon)}^{1/3}\|u\|_{H^3(\Omega_\varepsilon)}^{7/12}.
  \end{align*}
  Applying the above estimates to \eqref{Pf_ANG:122}, and recalling \eqref{Pf_ANG:L1P}, we find that
  \begin{align} \label{Pf_ANG:L1}
    \begin{aligned}
      \|L_1\|_{L^2(\Gamma)} &\leq c\varepsilon\left\|\mathcal{M}_\varepsilon\Bigl[(U_1+1)\Bigl(\overline{\mathcal{M}_\varepsilon U_2}\Bigr)U_2\Bigr]\right\|_{L^2(\Gamma)} \\
      &\leq c\varepsilon^{-1/2}\Bigl(\|u\|_{H^1(\Omega_\varepsilon)}+\varepsilon^{1/2}\Bigr)\|u\|_{H^1(\Omega_\varepsilon)}^{13/12}\|u\|_{H^2(\Omega_\varepsilon)}^{1/3}\|u\|_{H^3(\Omega_\varepsilon)}^{7/12}.
    \end{aligned}
  \end{align}
  Noting that $\varepsilon^{-1/2}=\varepsilon\cdot\varepsilon^{-3/2}$ and
  \begin{align*}
    \|u\|_{H^1(\Omega_\varepsilon)}^{13/12}\|u\|_{H^2(\Omega_\varepsilon)}^{1/3} \leq \|u\|_{H^1(\Omega_\varepsilon)}\|u\|_{H^2(\Omega_\varepsilon)}^{5/12}, \quad \|u\|_{H^2(\Omega_\varepsilon)} \leq \|u\|_{H^2(\Omega_\varepsilon)}^{5/12}\|u\|_{H^3(\Omega_\varepsilon)}^{7/12},
  \end{align*}
  we obtain \eqref{E:ANL_TGr} by \eqref{Pf_ANG:TGr}, \eqref{Pf_ANG:MV}, \eqref{Pf_ANG:L2}, and \eqref{Pf_ANG:L1}.
\end{proof}

\begin{remark} \label{R:ANL_TGr}
  For the estimate of $L_1$, we may also consider in \eqref{Pf_ANG:L1P} that
  \begin{align*}
    |L_1| \leq c\varepsilon\mathcal{M}_\varepsilon\Bigl[(U_1+1)\Bigl(\overline{\mathcal{M}_\varepsilon U_2}\Bigr)U_2\Bigr] = c\varepsilon\mathcal{M}_\varepsilon[(U_1+1)U_2]\mathcal{M}_\varepsilon U_2 \quad\text{on}\quad \Gamma.
  \end{align*}
  Then, by H\"{o}lder's inequality on $\Gamma$, \eqref{E:ALp_Sur}, and H\"{o}lder's inequality in $\Omega_\varepsilon$,
  \begin{align*}
    \|L_1\|_{L^2(\Gamma)} &\leq c\varepsilon\|\mathcal{M}_\varepsilon[(U_1+1)U_2]\|_{L^3(\Gamma)}\|\mathcal{M}_\varepsilon U_2\|_{L^6(\Gamma)} \\
    &\leq c\varepsilon^{1/2}\|U_1+1\|_{L^6(\Omega_\varepsilon)}\|U_2\|_{L^6(\Omega_\varepsilon)}^2.
  \end{align*}
  Moreover, we see by \eqref{E:Ipl_CTD} with $p=6$ and $U_2=|u|+|\nabla u|$ in $\Omega_\varepsilon$ that
  \begin{align*}
    \|U_2\|_{L^6(\Omega_\varepsilon)} \leq c\varepsilon^{-1/3}\|U_2\|_{L^2(\Omega_\varepsilon)}^{1/2}\|U_2\|_{H^2(\Omega_\varepsilon)}^{1/2} \leq c\varepsilon^{-1/3}\|u\|_{H^1(\Omega_\varepsilon)}^{1/2}\|u\|_{H^3(\Omega_\varepsilon)}^{1/2}.
  \end{align*}
  By these estimates and \eqref{Pf_ANG:1pl}, we find that
  \begin{align*}
    \|L_1\|_{L^2(\Gamma)} \leq c\varepsilon^{-1/2}\Bigl(\|u\|_{H^1(\Omega_\varepsilon)}+\varepsilon^{1/2}\Bigr)\|u\|_{H^1(\Omega_\varepsilon)}\|u\|_{H^3(\Omega_\varepsilon)}.
  \end{align*}
  This idea is rather simple and natural, but the resulting estimate becomes
  \begin{align} \label{E:ANL_GrW}
    \|\nabla_\Gamma\zeta_G^\varepsilon\|_{L^2(\Gamma)} \leq c\varepsilon\left\{\varepsilon^{-3/2}\Bigl(\|u\|_{H^1(\Omega_\varepsilon)}+\varepsilon^{1/2}\Bigr)\|u\|_{H^1(\Omega_\varepsilon)}\|u\|_{H^3(\Omega_\varepsilon)}+1\right\},
  \end{align}
  which is a slightly worse than \eqref{E:ANL_TGr}.
\end{remark}

\subsection{Weak time derivative of the average} \label{SS:Ave_WDt}
We have the following relation on the weak time derivative of $\mathcal{M}_\varepsilon u$.
Recall the function space $\mathcal{E}_T(S)$ given in \eqref{E:Def_ET}.

\begin{lemma} \label{L:Ave_WDt}
  Let $T>0$ and $u\in\mathcal{E}_T(\Omega_\varepsilon)$.
  Then, $\mathcal{M}_\varepsilon u\in\mathcal{E}_T(\Gamma)$ and
  \begin{align} \label{E:Ave_WDt}
    \int_0^T\langle\partial_tu,\bar{\eta}\rangle_{H^1(\Omega_\varepsilon)}\,dt = \varepsilon\int_0^T\langle\partial_t\mathcal{M}_\varepsilon u,g\eta\rangle_{H^1(\Gamma)}\,dt
  \end{align}
  for all $\eta\in L^2(0,T;H^1(\Gamma))$.
  Here, $\bar{\eta}(\cdot,t)$ is the constant extension of $\eta(\cdot,t)$ in the normal direction of $\Gamma$ for each $t\in(0,T)$.
\end{lemma}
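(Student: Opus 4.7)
The plan is to first verify that $\mathcal{M}_\varepsilon u \in \mathcal{E}_T(\Gamma)$ via criterion (iii) of Lemma \ref{L:ET_Equi}, and then read off the identity \eqref{E:Ave_WDt} by matching the definitions of the two weak time derivatives through the pairing formula \eqref{E:Ave_Pair}. The $L^2(0,T;H^1(\Gamma))$-regularity of $\mathcal{M}_\varepsilon u$ is immediate from Lemma \ref{L:Ave_Hk} applied pointwise in time, so the real content lies in producing $\partial_t\mathcal{M}_\varepsilon u \in L^2(0,T;[H^1(\Gamma)]')$ and identifying its correct pairing with the weighted test function $g\eta$.

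For any $\eta \in C_c^1(0,T;H^1(\Gamma))$, the constant extension $\bar\eta$ lies in $C_c^1(0,T;H^1(\Omega_\varepsilon))$ by \eqref{E:Lp_CE} and \eqref{E:CEGr_Bd}, and satisfies $\partial_t\bar\eta = \overline{\partial_t\eta}$ because the map $\phi\mapsto \phi\circ\pi$ is spatial only. Testing the definition of $\partial_t u$ against $\bar\eta$ and applying \eqref{E:Ave_Pair} to the resulting inner product would give
\begin{align*}
\int_0^T\langle\partial_t u,\bar\eta\rangle_{H^1(\Omega_\varepsilon)}\,dt = -\int_0^T(u,\overline{\partial_t\eta})_{L^2(\Omega_\varepsilon)}\,dt = -\varepsilon\int_0^T(\mathcal{M}_\varepsilon u,\partial_t(g\eta))_{L^2(\Gamma)}\,dt,
\end{align*}
where the final step uses that $g$ is time-independent and real-valued. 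To produce $\partial_t\mathcal{M}_\varepsilon u$ I would then verify criterion (iii) of Lemma \ref{L:ET_Equi} as follows: for arbitrary $\psi \in C_c^1(0,T;H^1(\Gamma))$, set $\eta := \psi/g$, which still lies in $C_c^1(0,T;H^1(\Gamma))$ since $g \in C^3(\Gamma)$ and $g \geq c>0$ by \eqref{E:G_Bdd}. Then $g\eta = \psi$, so combining the displayed identity with H\"older's inequality and the extension bound $\bigl\|\overline{\psi/g}\bigr\|_{H^1(\Omega_\varepsilon)} \leq c\varepsilon^{1/2}\|\psi\|_{H^1(\Gamma)}$ (from \eqref{E:Lp_CE} and \eqref{E:CEGr_Bd}) yields
\begin{align*}
\left|\int_0^T(\mathcal{M}_\varepsilon u,\partial_t\psi)_{L^2(\Gamma)}\,dt\right| \leq c\varepsilon^{-1/2}\|\partial_t u\|_{L^2(0,T;[H^1(\Omega_\varepsilon)]')}\|\psi\|_{L^2(0,T;H^1(\Gamma))},
\end{align*}
which is exactly criterion (iii), so $\mathcal{M}_\varepsilon u \in \mathcal{E}_T(\Gamma)$.

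With $\partial_t\mathcal{M}_\varepsilon u$ in hand, criterion (ii) of Lemma \ref{L:ET_Equi} applied to the admissible test function $g\eta \in C_c^1(0,T;H^1(\Gamma))$ gives
\begin{align*}
\int_0^T\langle\partial_t\mathcal{M}_\varepsilon u,g\eta\rangle_{H^1(\Gamma)}\,dt = -\int_0^T(\mathcal{M}_\varepsilon u,\partial_t(g\eta))_{L^2(\Gamma)}\,dt,
\end{align*}
and matching this with the identity from the previous paragraph establishes \eqref{E:Ave_WDt} on the dense subspace $C_c^1(0,T;H^1(\Gamma))$ of $L^2(0,T;H^1(\Gamma))$. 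Since both sides of \eqref{E:Ave_WDt} are continuous linear functionals of $\eta$ in $L^2(0,T;H^1(\Gamma))$, a density argument completes the proof. The only technical point that requires care is the interchange $\partial_t\bar\eta = \overline{\partial_t\eta}$, but this reduces to the observation that the constant extension is a bounded linear operator $H^1(\Gamma)\to H^1(\Omega_\varepsilon)$ independent of $t$, so it commutes with difference quotients in time; all other ingredients are already packaged in the lemmas of Section \ref{SS:Pr_Sur}--\ref{SS:Pr_Boch} and in the identity \eqref{E:Ave_Pair}.
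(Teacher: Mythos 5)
Your argument is correct and follows essentially the same route as the paper: constant-extend a smooth-in-time test function, invoke criterion (ii) of Lemma \ref{L:ET_Equi} for $u$, use the exact pairing identity \eqref{E:Ave_Pair}, substitute $\eta\mapsto\psi/g$ to verify criterion (iii) of Lemma \ref{L:ET_Equi} for $\mathcal{M}_\varepsilon u$, then read off \eqref{E:Ave_WDt} on $C_c^1(0,T;H^1(\Gamma))$ and close by density. The only cosmetic difference is that you phrase the conclusion via criterion (ii) applied to $g\eta$ rather than noting directly that the established identity already is \eqref{E:Ave_WDt}; the substance is identical.
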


\begin{proof}
  The proof is the same as that of \cite[Lemma 5.9]{Miu24_GL}, where $H^1$ is replaced by $H^1\cap L^4$.
  Here, we give the proof for the reader's convenience.

  First, $\mathcal{M}_\varepsilon u\in L^2(0,T;H^1(\Omega_\varepsilon))$ by \eqref{E:ALp_Sur} and \eqref{E:AHk_Sur}.

  Next, let $\eta\in C_c^1(0,T;H^1(\Gamma))$.
  Then, by \eqref{E:CEGr_Bd} and \eqref{E:Lp_CE}, we have
  \begin{align*}
    \bar{\eta} \in C_c^1(0,T;H^1(\Omega_\varepsilon)), \quad \|\bar{\eta}\|_{L^2(0,T;H^1(\Omega_\varepsilon))} \leq c\varepsilon^{1/2}\|\eta\|_{L^2(0,T;H^1(\Gamma))}.
  \end{align*}
  We apply Lemma \ref{L:ET_Equi}, (ii) to $u$ and use $\partial_t\bar{\eta}=\overline{\partial_t\eta}$ and \eqref{E:Ave_Pair} to get
  \begin{align} \label{Pf_ADt:IbP}
    \int_0^T\langle\partial_tu,\bar{\eta}\rangle_{H^1(\Omega_\varepsilon)}\,dt = -\int_0^T(u,\partial_t\bar{\eta})_{L^2(\Omega_\varepsilon)}\,dt = -\varepsilon\int_0^T(g\mathcal{M}_\varepsilon u,\partial_t\eta)_{L^2(\Gamma)}\,dt.
  \end{align}
  In \eqref{Pf_ADt:IbP}, we replace $\eta$ by $g^{-1}\eta$.
  Then, since $g$ is independent of time,
  \begin{align*}
    \int_0^T(\mathcal{M}_\varepsilon u,\partial_t\eta)_{L^2(\Gamma)}\,dt = -\varepsilon^{-1}\int_0^T\langle\partial_tu,\bar{g}^{-1}\bar{\eta}\rangle_{H^1(\Omega_\varepsilon)}\,dt.
  \end{align*}
  Hence, by H\"{o}lder's inequality, the regularity of $g$, and \eqref{E:G_Bdd}, we find that
  \begin{align*}
    \left|\int_0^T(\mathcal{M}_\varepsilon u,\partial_t\eta)_{L^2(\Gamma)}\,dt\right| &\leq c\varepsilon^{-1}\|\partial_tu\|_{L^2(0,T;[H^1(\Omega_\varepsilon)]')}\|\bar{\eta}\|_{L^2(0,T;H^1(\Omega_\varepsilon))} \\
    &\leq c\varepsilon^{-1/2}\|\partial_tu\|_{L^2(0,T;[H^1(\Omega_\varepsilon)]')}\|\eta\|_{L^2(0,T;H^1(\Gamma))}
  \end{align*}
  for all $\eta\in C_c^1(0,T;H^1(\Gamma))$.
  Thus, by Lemma \ref{L:ET_Equi}, we obtain
  \begin{align*}
    \partial_t\mathcal{M}_\varepsilon u\in L^2(0,T;[H^1(\Gamma)]'), \quad \mathcal{M}_\varepsilon u\in\mathcal{E}_T(\Gamma).
  \end{align*}
  When $\eta\in C_c^1(0,T;H^1(\Gamma))$, we have \eqref{E:Ave_WDt} by \eqref{Pf_ADt:IbP}, since $g$ is independent of time.
  By a density argument, \eqref{E:Ave_WDt} is also valid for $\eta\in L^2(0,T;H^1(\Gamma))$.
\end{proof}

\section{Thin-film limit problem} \label{S:TFL}
The goal of this section is to prove Theorems \ref{T:TFL_Weak}, \ref{T:DiE_Sur}, and \ref{T:DiE_CTD}.
Let $\mathcal{M}_\varepsilon$ be the weighted average operator given in Section \ref{S:Ave}.
We write $c$ for a general positive constant independent of $\varepsilon$, $t$, and $T$.
For function $\eta$ on $\Gamma$, let $\bar{\eta}$ be the constant extension of $\eta$ in the normal direction of $\Gamma$.
In what follows, we frequently use the inequalities
\begin{align*}
  1 \leq \varepsilon^{-\beta} \leq \varepsilon^{-\gamma}, \quad 1 \leq (1+t)^\beta \leq (1+t)^\gamma
\end{align*}
for $0<\varepsilon<1$, $t\geq0$, and $0\leq\beta\leq\gamma$ without mention.

\subsection{Estimates for a solution to the thin-domain problem} \label{SS:TFL_Est}
Let $u_0^\varepsilon\in H^1(\Omega_\varepsilon)$ and Assumption \ref{A:Poten} be satisfied.
Also, let $(u^\varepsilon,w^\varepsilon)$ be the unique weak solution to \eqref{E:CH_CTD}.
We give some estimates for $u^\varepsilon$ and $w^\varepsilon$ explicitly in terms of $\varepsilon$.
Recall that
\begin{align*}
  E_\varepsilon(u^\varepsilon) = \int_{\Omega_\varepsilon}\left(\frac{|\nabla u^\varepsilon|^2}{2}+F(u^\varepsilon)\right)\,dx
\end{align*}
is the Ginzburg--Landau free energy.

\begin{proposition} \label{P:TFL_Est}
  Let $c>0$, $\alpha\geq0$, and $\varepsilon_0\in(0,1)$ be constants.
  Also, let $\varepsilon\in(0,\varepsilon_0)$.
  Suppose that the initial data $u_0^\varepsilon$ satisfies
  \begin{align} \label{E:TFLE_u0}
    \|u_0^\varepsilon\|_{L^2(\Omega_\varepsilon)}^2+|E_\varepsilon(u_0^\varepsilon)| \leq c\varepsilon^{1-2\alpha}.
  \end{align}
  Then, for all $t\geq0$, we have
  \begin{align}
    &\|u^\varepsilon(t)\|_{H^1(\Omega_\varepsilon)}^2+\int_0^t\Bigl(\|u^\varepsilon(s)\|_{H^2(\Omega_\varepsilon)}^2+\|\nabla w^\varepsilon(s)\|_{L^2(\Omega_\varepsilon)}^2\Bigr)\,ds \leq c\varepsilon^{1-2\alpha}(1+t)^2, \label{E:TFLE_H1} \\
    &\int_0^t\Bigl(\|u^\varepsilon(s)\|_{H^3(\Omega_\varepsilon)}^2+\|w^\varepsilon(s)\|_{H^1(\Omega_\varepsilon)}^2\Bigr)\,ds \leq c\varepsilon^{1-6\alpha}(1+t)^6. \label{E:TFLE_H3}
  \end{align}
\end{proposition}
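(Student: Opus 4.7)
The plan is to derive \eqref{E:TFLE_H1} by directly combining the energy estimates already proved in Section \ref{S:CHT_Weak}, and then to obtain \eqref{E:TFLE_H3} by bootstrapping via the equation $w^\varepsilon = -\Delta u^\varepsilon + F'(u^\varepsilon)$, the $\varepsilon$-explicit Sobolev inequality \eqref{E:Sob_CTD}, and the uniform elliptic regularity estimate \eqref{E:UER_CTD} with $k=1$. Since $\alpha\geq 0$ and $\varepsilon<1$, one has $\varepsilon \leq \varepsilon^{1-2\alpha}$, so under \eqref{E:TFLE_u0} every right-hand side in \eqref{E:CHT_Gr2}, \eqref{E:CHT_EL2}, and \eqref{E:CHT_H2E} is bounded by $c\varepsilon^{1-2\alpha}(1+t)^\beta$ for some $\beta\in\{0,1,2\}$. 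Adding the resulting bounds for $\|\nabla u^\varepsilon(t)\|_{L^2}^2$, $\|u^\varepsilon(t)\|_{L^2}^2$, $\int_0^t\|\nabla w^\varepsilon\|_{L^2}^2\,ds$, and $\int_0^t\|u^\varepsilon\|_{H^2}^2\,ds$ immediately yields \eqref{E:TFLE_H1}.

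For \eqref{E:TFLE_H3}, I would first control $\|w^\varepsilon\|_{H^1}$. Pointwise in $t$, $\|w^\varepsilon(t)\|_{L^2}\leq\|\Delta u^\varepsilon(t)\|_{L^2}+\|F'(u^\varepsilon(t))\|_{L^2}$, and the nonlinear term is handled by the consequence $|F'(z)|\leq c(|z|^3+1)$ of \eqref{E:Int_Fzero} together with the $L^6$-Sobolev inequality $\|u\|_{L^6}\leq c\varepsilon^{-1/3}\|u\|_{H^1}$ from \eqref{E:Sob_CTD} and the $H^1$-estimate just obtained; tracking the $\varepsilon$-powers gives
\begin{align*}
  \|F'(u^\varepsilon(s))\|_{L^2}^2 \leq c\bigl(\varepsilon^{-2}\|u^\varepsilon(s)\|_{H^1}^6+\varepsilon\bigr) \leq c\varepsilon^{1-6\alpha}(1+s)^3.
\end{align*}
Time-integration, combined with $\int_0^t\|\Delta u^\varepsilon\|_{L^2}^2\,ds \leq c\varepsilon^{1-2\alpha}(1+t)$ from \eqref{E:CHT_EL2} and the bound on $\int_0^t\|\nabla w^\varepsilon\|_{L^2}^2\,ds$ from \eqref{E:TFLE_H1}, yields the required estimate on $\int_0^t\|w^\varepsilon\|_{H^1}^2\,ds$.

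Second, for the $H^3$-bound on $u^\varepsilon$, I apply \eqref{E:UER_CTD} with $k=1$ (valid by Proposition \ref{P:CHT_Reg}, (iii)) to obtain
\begin{align*}
  \|u^\varepsilon\|_{H^3} \leq c\bigl(\|\nabla w^\varepsilon\|_{L^2}+\|\nabla F'(u^\varepsilon)\|_{L^2}+\|\Delta u^\varepsilon\|_{L^2}+\|u^\varepsilon\|_{L^2}\bigr).
\end{align*}
The nonlinear gradient term is controlled via \eqref{E:FH1_CTD}, which together with the $H^1$-bound gives $\|\nabla F'(u^\varepsilon(s))\|_{L^2}^2 \leq c\varepsilon^{-4\alpha}(1+s)^2\|u^\varepsilon(s)\|_{H^2}^2+c\|u^\varepsilon(s)\|_{H^1}^2$; integrating in time and using \eqref{E:CHT_H2E} yields $\int_0^t\|\nabla F'(u^\varepsilon)\|_{L^2}^2\,ds \leq c\varepsilon^{1-6\alpha}(1+t)^4$. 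Combining all contributions produces \eqref{E:TFLE_H3}. The main technical care lies in tracking the $\varepsilon$-dependence through the Sobolev embedding: each appearance of $\|u^\varepsilon\|_{L^6}$ costs a factor $\varepsilon^{-1/3}$, and the cubic growth of $F'$ is precisely what upgrades the exponent from $1-2\alpha$ to $1-6\alpha$ in the final estimate.
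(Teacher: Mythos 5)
Your approach is structurally the same as the paper's: \eqref{E:TFLE_H1} is a direct combination of \eqref{E:CHT_Gr2}--\eqref{E:CHT_H2E}, and \eqref{E:TFLE_H3} follows by bootstrapping through the strong form $w^\varepsilon=-\Delta u^\varepsilon+F'(u^\varepsilon)$ and the uniform elliptic regularity estimate \eqref{E:UER_CTD} with $k=1$, tracking the $\varepsilon$-losses from the Sobolev embedding. The part on \eqref{E:TFLE_H1} is fine. For \eqref{E:TFLE_H3}, however, the bookkeeping of the powers of $(1+s)$ contains errors that make the argument internally inconsistent.

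First, since $\|u^\varepsilon(s)\|_{H^1(\Omega_\varepsilon)}^2\leq c\varepsilon^{1-2\alpha}(1+s)^2$, cubing gives $\|u^\varepsilon(s)\|_{H^1(\Omega_\varepsilon)}^6\leq c\varepsilon^{3-6\alpha}(1+s)^6$, so your pointwise bound should read $\|F'(u^\varepsilon(s))\|_{L^2}^2\leq c\varepsilon^{1-6\alpha}(1+s)^6$, not $(1+s)^3$. Similarly, the factor $\varepsilon^{-1}\|u^\varepsilon(s)\|_{H^1}^2\leq c\varepsilon^{-2\alpha}(1+s)^2$ appearing in front of $\|u^\varepsilon(s)\|_{H^2}$ in \eqref{E:FH1_CTD} squares to $\varepsilon^{-4\alpha}(1+s)^4$, not $(1+s)^2$, so the gradient bound should be $\|\nabla F'(u^\varepsilon(s))\|_{L^2}^2\leq c\varepsilon^{-4\alpha}(1+s)^4\|u^\varepsilon(s)\|_{H^2}^2+c\|u^\varepsilon(s)\|_{H^1}^2$. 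With the corrected exponents, the gradient term integrates to $(1+t)^4\cdot\varepsilon^{1-2\alpha}(1+t)^2\cdot\varepsilon^{-4\alpha}=\varepsilon^{1-6\alpha}(1+t)^6$, not your stated $(1+t)^4$ — and since \eqref{E:TFLE_H3} requires $(1+t)^6$, your claimed $(1+t)^4$ bound could not yield the statement anyway.

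Second, there is a genuine structural subtlety your pointwise route misses: once you fix the exponent of the $\|F'\|_{L^2}^2$ bound to $(1+s)^6$, naive integration produces $\int_0^t(1+s)^6\,ds\sim(1+t)^7$, one power of $t$ worse than \eqref{E:TFLE_H3}. To recover $(1+t)^6$ you must not pull all six $H^1$-factors out of the time integral; instead write $\|u^\varepsilon(s)\|_{H^1}^6=\|u^\varepsilon(s)\|_{H^1}^4\cdot\|u^\varepsilon(s)\|_{H^1}^2$, bound the first factor by $\|u^\varepsilon\|_{L^\infty(0,t;H^1)}^4\leq c\varepsilon^{2-4\alpha}(1+t)^4$, and absorb the second into $\int_0^t\|u^\varepsilon(s)\|_{H^2}^2\,ds\leq c\varepsilon^{1-2\alpha}(1+t)^2$. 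This is exactly the $L^\infty(0,t;H^1)\times L^2(0,t;H^1)$ splitting built into \eqref{E:FL2_CTD} and \eqref{E:FH1_CTD}, which the paper applies directly rather than passing through a pointwise-in-time intermediate. Your proposal would be correct after fixing these two exponents and replacing the pure pointwise integration of the $\|F'\|_{L^2}^2$ term by the splitting just described.
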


\begin{proof}
  The estimate \eqref{E:TFLE_H1} follows from \eqref{E:CHT_Gr2}--\eqref{E:CHT_H2E} and \eqref{E:TFLE_u0}.

  Let us show \eqref{E:TFLE_H3}.
  To this end, we first observe by \eqref{E:FL2_CTD} and \eqref{E:FH1_CTD} that
  \begin{align*}
    \|F'(u^\varepsilon)\|_{L^2(0,t;L^2(\Omega_\varepsilon))}^2 &\leq c\Bigl(\varepsilon^{-2}\|u^\varepsilon\|_{L^\infty(0,t;H^1(\Omega_\varepsilon))}^4\|u^\varepsilon\|_{L^2(0,t;H^1(\Omega_\varepsilon))}^2+t|\Omega_\varepsilon|\Bigr), \\
    \|\nabla F'(u^\varepsilon)\|_{L^2(0,t;L^2(\Omega_\varepsilon))}^2 &\leq c\Bigl(\varepsilon^{-2}\|u^\varepsilon\|_{L^\infty(0,t;H^1(\Omega_\varepsilon))}^4+1\Bigr)\|u^\varepsilon\|_{L^2(0,t;H^2(\Omega_\varepsilon))}^2.
  \end{align*}
  Hence, by \eqref{E:TFLE_H1} and $|\Omega_\varepsilon|\leq c\varepsilon$, we have
  \begin{align} \label{Pf_TEs:FH1}
    \int_0^t\|F'(u^\varepsilon(s))\|_{H^1(\Omega_\varepsilon)}^2\,ds \leq c\varepsilon^{1-6\alpha}(1+t)^6.
  \end{align}
  For a.a. $s>0$, since $u^\varepsilon(s)$ satisfies \eqref{E:CHT_Poi}, we can use \eqref{E:UER_CTD} with $k=1$ to get
  \begin{align*}
    \|u^\varepsilon(s)\|_{H^3(\Omega_\varepsilon)}^2 &\leq c\Bigl(\|\Delta u^\varepsilon(s)\|_{H^1(\Omega_\varepsilon)}^2+\|u^\varepsilon(s)\|_{L^2(\Omega_\varepsilon)}^2\Bigr) \\
    &\leq c\Bigl(\|\nabla\Delta u^\varepsilon(s)\|_{L^2(\Omega_\varepsilon)}^2+\|u^\varepsilon(s)\|_{H^2(\Omega_\varepsilon)}^2\Bigr) \\
    &\leq c\Bigl(\|\nabla w^\varepsilon(s)\|_{L^2(\Omega_\varepsilon)}^2+\|\nabla F'(u^\varepsilon(s))\|_{L^2(\Omega_\varepsilon)}^2+\|u^\varepsilon(s)\|_{H^2(\Omega_\varepsilon)}^2\Bigr).
  \end{align*}
  Also, since $w^\varepsilon(s)=-\Delta u^\varepsilon(s)+F'(u^\varepsilon(s))$ a.e. in $\Omega_\varepsilon$,
  \begin{align*}
    \|w^\varepsilon(s)\|_{H^1(\Omega_\varepsilon)}^2 &= \|w^\varepsilon(s)\|_{L^2(\Omega_\varepsilon)}^2+\|\nabla w^\varepsilon(s)\|_{L^2(\Omega_\varepsilon)}^2 \\
    &\leq c\Bigl(\|u^\varepsilon(s)\|_{H^2(\Omega_\varepsilon)}^2+\|F'(u^\varepsilon(s))\|_{L^2(\Omega_\varepsilon)}^2+\|\nabla w^\varepsilon(s)\|_{L^2(\Omega_\varepsilon)}^2\Bigr).
  \end{align*}
  By these inequalities, \eqref{E:TFLE_H1}, and \eqref{Pf_TEs:FH1}, we get \eqref{E:TFLE_H3}.
\end{proof}

\subsection{Weighed average of a solution to the thin-domain problem} \label{SS:TFL_Ave}
From now on, we suppose that the assumptions in Proposition \ref{P:TFL_Est} are satisfied.

\begin{proposition} \label{P:ACW_Reg}
  Let $v^\varepsilon:=\mathcal{M}_\varepsilon u^\varepsilon$ and $\mu^\varepsilon:=\mathcal{M}_\varepsilon w^\varepsilon$ on $\Gamma\times(0,\infty)$.
  Then,
  \begin{align*}
    v^\varepsilon\in \mathcal{E}_T(\Gamma)\cap L^2(0,T;H^3(\Gamma))\cap C([0,T];H^1(\Gamma)), \quad \mu^\varepsilon\in L^2(0,T;H^1(\Gamma)),
  \end{align*}
  and $v^\varepsilon(0)=v_0^\varepsilon:=\mathcal{M}_\varepsilon u_0^\varepsilon$ a.e. on $\Gamma$.
\end{proposition}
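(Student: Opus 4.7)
The plan is to show each regularity assertion by straightforward application of results from Section \ref{S:Ave} combined with the regularity of $(u^\varepsilon, w^\varepsilon)$ established in Proposition \ref{P:CHT_Reg}. All the hard work has already been done; the proposition is essentially a transport of regularity from $\Omega_\varepsilon$ to $\Gamma$ via $\mathcal{M}_\varepsilon$.

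First I would handle the spatial regularity. By Proposition \ref{P:CHT_Reg}, we know $u^\varepsilon \in L^2(0,T;H^3(\Omega_\varepsilon))$ and $w^\varepsilon \in L^2(0,T;H^1(\Omega_\varepsilon))$ (the latter is already part of \eqref{E:CHT_WeRe}). Applying \eqref{E:AHk_Sur} pointwise in $t$ and integrating with respect to $t$ then gives
\begin{align*}
  \|v^\varepsilon\|_{L^2(0,T;H^3(\Gamma))} &\leq c\varepsilon^{-1/2}\|u^\varepsilon\|_{L^2(0,T;H^3(\Omega_\varepsilon))}, \\
  \|\mu^\varepsilon\|_{L^2(0,T;H^1(\Gamma))} &\leq c\varepsilon^{-1/2}\|w^\varepsilon\|_{L^2(0,T;H^1(\Omega_\varepsilon))},
\end{align*}
which in particular yields $\mu^\varepsilon \in L^2(0,T;H^1(\Gamma))$ and the $L^2(0,T;H^3(\Gamma))$ part of the $v^\varepsilon$ statement.

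Next I would handle the time derivative. Since $u^\varepsilon \in \mathcal{E}_T(\Omega_\varepsilon)$ by \eqref{E:CHT_WeRe}, Lemma \ref{L:Ave_WDt} directly yields $v^\varepsilon = \mathcal{M}_\varepsilon u^\varepsilon \in \mathcal{E}_T(\Gamma)$, i.e.\ $\partial_t v^\varepsilon \in L^2(0,T;[H^1(\Gamma)]')$. Combined with the previous step, we conclude $v^\varepsilon \in \mathcal{E}_T(\Gamma) \cap L^2(0,T;H^3(\Gamma))$, so the continuous embedding in Lemma \ref{L:ET_Emb} yields $v^\varepsilon \in C([0,T];H^1(\Gamma))$.

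Finally, for the initial condition I would argue as follows. By Proposition \ref{P:CHT_Reg}, $u^\varepsilon \in C([0,T];H^1(\Omega_\varepsilon))$ with $u^\varepsilon(0) = u_0^\varepsilon$, and by \eqref{E:AHk_Sur} the operator $\mathcal{M}_\varepsilon$ is bounded from $H^1(\Omega_\varepsilon)$ into $H^1(\Gamma)$, so $t \mapsto \mathcal{M}_\varepsilon u^\varepsilon(t)$ is continuous from $[0,T]$ into $H^1(\Gamma)$ and takes the value $\mathcal{M}_\varepsilon u_0^\varepsilon = v_0^\varepsilon$ at $t=0$. Since this continuous representative must coincide with $v^\varepsilon \in C([0,T];H^1(\Gamma)) \subset C([0,T];L^2(\Gamma))$ almost everywhere and both are continuous in $L^2(\Gamma)$, they agree everywhere, giving $v^\varepsilon(0) = v_0^\varepsilon$ a.e. on $\Gamma$. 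There is no real obstacle here; every ingredient is directly available and the proposition will occupy just a few lines.
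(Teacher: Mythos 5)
Your argument is correct and follows essentially the same route as the paper: spatial regularity of $v^\varepsilon$ and $\mu^\varepsilon$ from Lemma \ref{L:Ave_Hk} together with Proposition \ref{P:CHT_Reg} and \eqref{E:CHT_WeRe}, the weak time derivative from Lemma \ref{L:Ave_WDt}, and the initial condition by taking the weighted average of $u^\varepsilon(0)=u_0^\varepsilon$. The only cosmetic difference is that you obtain $v^\varepsilon\in C([0,T];H^1(\Gamma))$ via the embedding in Lemma \ref{L:ET_Emb} rather than by pushing $u^\varepsilon\in C([0,T];H^1(\Omega_\varepsilon))$ through the bounded map $\mathcal{M}_\varepsilon\colon H^1(\Omega_\varepsilon)\to H^1(\Gamma)$, but both paths are available and you in fact also invoke the latter when verifying the initial condition.
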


\begin{proof}
  The regularity follows from \eqref{E:CHT_WeRe}, Proposition \ref{P:CHT_Reg}, and Lemmas \ref{L:Ave_Hk} and \ref{L:Ave_WDt}.
  Also, since $u^\varepsilon(0)=u_0^\varepsilon$ a.e. in $\Omega_\varepsilon$, we take the weighted average to get $v^\varepsilon(0)=v_0^\varepsilon$ a.e. on $\Gamma$.
\end{proof}

Let us derive a weak form of $v^\varepsilon$ from the weak form \eqref{E:CHT_WF_u} of $u^\varepsilon$.

\begin{proposition} \label{P:ACW_WFv}
  For all $T>0$ and $\eta\in L^2(0,T;H^1(\Gamma))$, we have
  \begin{align} \label{E:ACW_WFv}
    \int_0^T\langle\partial_tv^\varepsilon,g\eta\rangle_{H^1(\Gamma)}\,dt+\int_0^T(g\nabla_\Gamma\mu^\varepsilon,\nabla_\Gamma\eta)_{L^2(\Gamma)}\,dt = R_\varepsilon(\eta;T).
  \end{align}
  Here, $R_\varepsilon(\eta;T)$ is a residual term which is linear in $\eta$ and satisfies
  \begin{align} \label{E:AWv_Res}
    |R_\varepsilon(\eta;T)| \leq c\varepsilon^{1-3\alpha}(1+T)^3\|\nabla_\Gamma\eta\|_{L^2(0,T;L^2(\Gamma))}.
  \end{align}
\end{proposition}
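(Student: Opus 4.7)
The plan is to derive \eqref{E:ACW_WFv} by substituting the constant normal extension $\bar{\eta}$ as a test function in the weak form \eqref{E:CHT_WF_u} of $u^\varepsilon$ and then rewriting each term on $\Gamma$. For any $\eta\in L^2(0,T;H^1(\Gamma))$, the inequalities \eqref{E:Lp_CE} and \eqref{E:CEGr_Bd} yield $\bar{\eta}\in L^2(0,T;H^1(\Omega_\varepsilon))$, so $\varphi=\bar{\eta}$ is admissible in \eqref{E:CHT_WF_u}, giving
\[
\int_0^T\langle\partial_tu^\varepsilon,\bar{\eta}\rangle_{H^1(\Omega_\varepsilon)}\,dt + \int_0^T(\nabla w^\varepsilon,\nabla\bar{\eta})_{L^2(\Omega_\varepsilon)}\,dt = 0.
\]

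The time-derivative term transfers exactly to $\Gamma$ via Lemma \ref{L:Ave_WDt} applied to $u^\varepsilon$, producing $\varepsilon\int_0^T\langle\partial_tv^\varepsilon,g\eta\rangle_{H^1(\Gamma)}\,dt$ with no error. For the Dirichlet form I would apply Lemma \ref{L:ADF_Diff} pointwise in time with $u=w^\varepsilon(t)$ and test function $\eta(t)$, then integrate in time using H\"{o}lder's inequality; this yields the identity
\[
\int_0^T(\nabla w^\varepsilon,\nabla\bar{\eta})_{L^2(\Omega_\varepsilon)}\,dt = \varepsilon\int_0^T(g\nabla_\Gamma\mu^\varepsilon,\nabla_\Gamma\eta)_{L^2(\Gamma)}\,dt + E_\varepsilon(\eta;T),
\]
where the residual satisfies
\[
|E_\varepsilon(\eta;T)| \leq c\varepsilon^{3/2}\|w^\varepsilon\|_{L^2(0,T;H^1(\Omega_\varepsilon))}\|\nabla_\Gamma\eta\|_{L^2(0,T;L^2(\Gamma))}.
\]
Combining these two identities and dividing by $\varepsilon$ gives \eqref{E:ACW_WFv} with the definition $R_\varepsilon(\eta;T):=-\varepsilon^{-1}E_\varepsilon(\eta;T)$, which is linear in $\eta$ by construction.

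The residual bound \eqref{E:AWv_Res} then follows by inserting the uniform estimate on $w^\varepsilon$ from Proposition \ref{P:TFL_Est}. Indeed, \eqref{E:TFLE_H3} gives $\|w^\varepsilon\|_{L^2(0,T;H^1(\Omega_\varepsilon))}\leq c\varepsilon^{(1-6\alpha)/2}(1+T)^3$, so
\[
|R_\varepsilon(\eta;T)| \leq c\varepsilon^{-1}\cdot\varepsilon^{3/2}\cdot\varepsilon^{(1-6\alpha)/2}(1+T)^3\|\nabla_\Gamma\eta\|_{L^2(0,T;L^2(\Gamma))} = c\varepsilon^{1-3\alpha}(1+T)^3\|\nabla_\Gamma\eta\|_{L^2(0,T;L^2(\Gamma))},
\]
as claimed. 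No step is genuinely difficult: the substantive work is already contained in Lemmas \ref{L:Ave_WDt} and \ref{L:ADF_Diff} and in the uniform bound \eqref{E:TFLE_H3}. The only accounting to verify is that the $\varepsilon^{3/2}$ gain from Lemma \ref{L:ADF_Diff} exactly compensates the factor $\varepsilon^{-1}$ arising when we divide the weak form by $\varepsilon$, leaving a net $\varepsilon^{1/2}$ to be combined with the $\varepsilon^{(1-6\alpha)/2}$ from the $L^2(0,T;H^1(\Omega_\varepsilon))$-bound on $w^\varepsilon$; this is precisely where the exponent $1-3\alpha$ in \eqref{E:AWv_Res} originates.
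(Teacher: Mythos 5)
Your proof is correct and follows essentially the same route as the paper: substitute $\bar{\eta}$ into \eqref{E:CHT_WF_u}, pass the time-derivative term to $\Gamma$ exactly via Lemma \ref{L:Ave_WDt}, bound the Dirichlet-form error with Lemma \ref{L:ADF_Diff}, divide by $\varepsilon$, and invoke \eqref{E:TFLE_H3}. The bookkeeping of exponents $(-1)+3/2+(1-6\alpha)/2=1-3\alpha$ is also the paper's.
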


\begin{proof}
  We set $\varphi=\bar{\eta}$ in \eqref{E:CHT_WF_u}, divide both sides by $\varepsilon$, use \eqref{E:Ave_WDt}, and define
  \begin{align*}
    R_\varepsilon(\eta;T) := \int_0^T\Bigl\{-\varepsilon^{-1}(\nabla w^\varepsilon,\nabla\bar{\eta})_{L^2(\Omega_\varepsilon)}+(g\nabla_\Gamma\mu^\varepsilon,\nabla_\Gamma\eta)_{L^2(\Gamma)}\Bigr\}\,dt.
  \end{align*}
  Then, we get \eqref{E:ACW_WFv}.
  By the definition, $R_\varepsilon(\eta;T)$ is linear in $\eta$.
  Moreover,
  \begin{align*}
    |R_\varepsilon(\eta;T)| \leq c\varepsilon^{1/2}\|w^\varepsilon\|_{L^2(0,T;H^1(\Omega_\varepsilon))}\|\nabla_\Gamma\eta\|_{L^2(0,T;L^2(\Gamma))}
  \end{align*}
  by \eqref{E:ADF_Diff} and H\"{o}lder's inequality,
  Hence, \eqref{E:AWv_Res} follows from \eqref{E:TFLE_H3}.
\end{proof}

Also, we have a relation between $v^\varepsilon$ and $\mu^\varepsilon$ in a strong form.

\begin{proposition} \label{P:ACW_wSt}
  Let $A_gv^\varepsilon=g^{-1}\mathrm{div}_\Gamma(g\nabla_\Gamma v^\varepsilon)$ on $\Gamma\times(0,\infty)$.
  Then,
  \begin{align} \label{E:ACW_wSt}
    \mu^\varepsilon = -A_gv^\varepsilon+F'(v^\varepsilon)+\zeta^\varepsilon \quad\text{a.e. on}\quad \Gamma\times(0,\infty),
  \end{align}
  where $\zeta^\varepsilon$ is a residual term.
  Moreover, for all $T>0$, we have
  \begin{align} \label{E:AwS_Res}
    \begin{aligned}
      \|\zeta^\varepsilon\|_{L^2(0,T;L^2(\Gamma))} &\leq c\varepsilon^{1-3\alpha}(1+T)^3, \\
      \|\nabla_\Gamma\zeta^\varepsilon\|_{L^2(0,T;L^2(\Gamma))} &\leq c\varepsilon^{1-25\alpha/6}(1+T)^{25/6}.
    \end{aligned}
  \end{align}
\end{proposition}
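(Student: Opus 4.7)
My strategy is to take the weighted average of the strong-form identity for the chemical potential and track the resulting error terms. Proposition \ref{P:CHT_Reg} ensures that \eqref{E:CHT_Poi} holds pointwise in $x$ for a.a.\ $t$, so applying $\mathcal{M}_\varepsilon$ to $w^\varepsilon = -\Delta u^\varepsilon + F'(u^\varepsilon)$ gives
\[
\mu^\varepsilon = -\mathcal{M}_\varepsilon(\Delta u^\varepsilon) + \mathcal{M}_\varepsilon(F'(u^\varepsilon)) \quad \text{a.e.\ on}\ \Gamma\times(0,\infty).
\]
Lemma \ref{L:Ave_Lap} rewrites the first summand as $A_g v^\varepsilon + \zeta_\Delta^\varepsilon$, while the definition \eqref{E:Def_ZG} with $G=F'$ rewrites the second as $F'(v^\varepsilon) + \zeta_{F'}^\varepsilon$. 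Setting $\zeta^\varepsilon := -\zeta_\Delta^\varepsilon + \zeta_{F'}^\varepsilon$ then yields the identity \eqref{E:ACW_wSt}. Before invoking Lemmas \ref{L:ANL_L2} and \ref{L:ANL_TGr}, I would note that Assumption \ref{A:Poten} combined with the bound \eqref{E:Int_F2nd} from Remark \ref{R:Po_Grow} ensures that $G=F'$ satisfies the hypotheses \eqref{E:ANL_G1} and \eqref{E:ANL_G2}.

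For the $L^2(0,T;L^2(\Gamma))$ bound on $\zeta^\varepsilon$, I would integrate \eqref{E:ALa_Hk} with $k=0$ in time and use the $L^2(0,T;H^2(\Omega_\varepsilon))$ estimate from \eqref{E:TFLE_H1} to bound $\zeta_\Delta^\varepsilon$ by $c\varepsilon^{1-\alpha}(1+T)$. For $\zeta_{F'}^\varepsilon$, Lemma \ref{L:ANL_L2} applied pointwise in time gives a factor $\varepsilon^{-3/2}\|u^\varepsilon\|_{H^1}^2\|u^\varepsilon\|_{H^2}+1$, which after integration and use of the $L^\infty(0,T;H^1)$ and $L^2(0,T;H^2)$ bounds from \eqref{E:TFLE_H1} yields $c\varepsilon^{1-3\alpha}(1+T)^3$. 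Since $\varepsilon^{1-3\alpha} \ge \varepsilon^{1-\alpha}$ for $\alpha \ge 0$ and $\varepsilon<1$, the triangle inequality produces the first line of \eqref{E:AwS_Res}.

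The gradient estimate is the main obstacle. For $\nabla_\Gamma\zeta_\Delta^\varepsilon$, combining \eqref{E:ALa_Hk} with $k=1$ and the $L^2(0,T;H^3(\Omega_\varepsilon))$ bound from \eqref{E:TFLE_H3} directly gives $c\varepsilon^{1-3\alpha}(1+T)^3$. For $\nabla_\Gamma \zeta_{F'}^\varepsilon$, Lemma \ref{L:ANL_TGr} reduces the task to bounding $\int_0^T \sigma_\varepsilon(u^\varepsilon)^2\,dt$. I would pull out the first three factors of $\sigma_\varepsilon$, namely $(\|u^\varepsilon\|_{H^1}+\varepsilon^{1/2})^2\|u^\varepsilon\|_{H^1}^2$, in $L^\infty(0,T)$ via \eqref{E:TFLE_H1} (using $\varepsilon^{1/2}\le\varepsilon^{(1-2\alpha)/2}$ when $\alpha\ge 0$) as $c\varepsilon^{2-4\alpha}(1+T)^4$. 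The remaining time integral of $\|u^\varepsilon\|_{H^2}^{5/6}\|u^\varepsilon\|_{H^3}^{7/6}$ is handled by Hölder with conjugate exponents $12/5$ and $12/7$, turning it into $\|u^\varepsilon\|_{L^2(0,T;H^2)}^{5/6}\|u^\varepsilon\|_{L^2(0,T;H^3)}^{7/6} \le c\varepsilon^{1-13\alpha/3}(1+T)^{13/3}$ by \eqref{E:TFLE_H1} and \eqref{E:TFLE_H3}. Collecting all factors together with the $\varepsilon^{-3}$ and $\varepsilon^{2}$ prefactors of Lemma \ref{L:ANL_TGr} leads to $c\varepsilon^{2-25\alpha/3}(1+T)^{25/3}$ for the squared norm, whose square root matches the second line of \eqref{E:AwS_Res}. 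The delicate point is that the sharper Lemma \ref{L:ANL_TGr} must be used rather than the cruder \eqref{E:ANL_GrW}; otherwise the exponent would exceed $25\alpha/6$ and fail to yield the scaling needed for the thin-film limit argument of Section \ref{S:TFL}.
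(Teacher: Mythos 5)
Your proposal is correct and follows essentially the same route as the paper: average the strong form $w^\varepsilon=-\Delta u^\varepsilon+F'(u^\varepsilon)$, split $\zeta^\varepsilon=-\zeta_\Delta^\varepsilon+\zeta_{F'}^\varepsilon$, control $\zeta_\Delta^\varepsilon$ via Lemma \ref{L:Ave_Lap} together with \eqref{E:TFLE_H1}--\eqref{E:TFLE_H3}, and control $\zeta_{F'}^\varepsilon$ via Lemmas \ref{L:ANL_L2} and \ref{L:ANL_TGr} combined with a H\"older-in-time argument and the same energy estimates. Your exponent bookkeeping ($\varepsilon^{2-4\alpha}(1+T)^4$ from the $H^1$ factors, $\varepsilon^{1-13\alpha/3}(1+T)^{13/3}$ from the $H^2$--$H^3$ H\"older step, combined with the $\varepsilon^{-1}$ prefactor to yield $\varepsilon^{2-25\alpha/3}(1+T)^{25/3}$ for the square) matches the paper's result, where the H\"older step is packaged into the quantity $\sigma_\varepsilon(u^\varepsilon;T)$.
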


\begin{proof}
  Let $\zeta^\varepsilon:=-\zeta_\Delta^\varepsilon+\zeta_{F'}^\varepsilon$ on $\Gamma\times(0,\infty)$, where
  \begin{align*}
    \zeta_\Delta^\varepsilon := \mathcal{M}_\varepsilon(\Delta u^\varepsilon)-A_gv^\varepsilon, \quad \zeta_{F'}^\varepsilon := \mathcal{M}_\varepsilon(F'(u^\varepsilon))-F'(v^\varepsilon).
  \end{align*}
  Then, we have \eqref{E:ACW_wSt} by taking the weighted average of
  \begin{align*}
    w^\varepsilon = -\Delta u^\varepsilon+F'(u^\varepsilon) \quad\text{a.e. in}\quad \Omega_\varepsilon\times(0,\infty).
  \end{align*}
  Let us show \eqref{E:AwS_Res}.
  We see by \eqref{E:ALa_Hk}, \eqref{E:TFLE_H1}, and \eqref{E:TFLE_H3} that
  \begin{align} \label{Pf_AwS:ZD}
    \begin{aligned}
      \|\zeta_\Delta^\varepsilon\|_{L^2(0,T;L^2(\Gamma))} &\leq c\varepsilon^{1/2}\|u^\varepsilon\|_{L^2(0,T;H^2(\Omega_\varepsilon))} \leq c\varepsilon^{1-\alpha}(1+T), \\
      \|\nabla_\Gamma\zeta_\Delta^\varepsilon\|_{L^2(0,T;L^2(\Gamma))} &\leq c\varepsilon^{1/2}\|u^\varepsilon\|_{L^2(0,T;H^3(\Omega_\varepsilon))} \leq c\varepsilon^{1-3\alpha}(1+T)^3.
    \end{aligned}
  \end{align}
  Next, since $G=F'$ satisfies \eqref{E:ANL_G1} and \eqref{E:ANL_G2} by Assumption \ref{A:Poten} and Remark \ref{R:Po_Grow}, we can use \eqref{E:ANL_L2} and \eqref{E:ANL_TGr}.
  By these estimates and H\"{o}lder's inequality, we get
  \begin{align*}
    \|\zeta_{F'}^\varepsilon\|_{L^2(0,T;L^2(\Gamma))} &\leq c\varepsilon\Bigl(\varepsilon^{-3/2}\|u^\varepsilon\|_{L^\infty(0,T;H^1(\Omega_\varepsilon))}^2\|u^\varepsilon\|_{L^2(0,T;H^2(\Omega_\varepsilon))}+T^{1/2}\Bigr), \\
    \|\nabla_\Gamma\zeta_{F'}^\varepsilon\|_{L^2(0,T;L^2(\Gamma))} &\leq c\varepsilon\{\varepsilon^{-3/2}\sigma_\varepsilon(u^\varepsilon;T)+T^{1/2}\}.
  \end{align*}
  Here, we set $\mathcal{X}_{\varepsilon,T}:=L^\infty(0,T;H^1(\Omega_\varepsilon))$, $\mathcal{Y}_{\varepsilon,T}^k:=L^2(0,T;H^k(\Omega_\varepsilon))$, and
  \begin{align*}
    \sigma_\varepsilon(u^\varepsilon;T) := \Bigl(\|u^\varepsilon\|_{\mathcal{X}_{\varepsilon,T}}+\varepsilon^{1/2}\Bigr)\|u^\varepsilon\|_{\mathcal{X}_{\varepsilon,T}}\|u^\varepsilon\|_{\mathcal{Y}_{\varepsilon,T}^2}^{5/12}\|u^\varepsilon\|_{\mathcal{Y}_{\varepsilon,T}^3}^{7/12}.
  \end{align*}
  We further apply \eqref{E:TFLE_H1} and \eqref{E:TFLE_H3} to the above inequality to get
  \begin{align} \label{Pf_AwS:ZF}
    \begin{aligned}
      \|\zeta_{F'}^\varepsilon\|_{L^2(0,T;L^2(\Gamma))} &\leq c\varepsilon^{1-3\alpha}(1+T)^3, \\
      \|\nabla_\Gamma\zeta_{F'}^\varepsilon\|_{L^2(0,T;L^2(\Gamma))} &\leq c\varepsilon^{1-25\alpha/6}(1+T)^{25/6}.
    \end{aligned}
  \end{align}
  Thus, the estimate \eqref{E:AwS_Res} follows from \eqref{Pf_AwS:ZD} and \eqref{Pf_AwS:ZF}.
\end{proof}

\begin{remark} \label{R:ACW_wSt}
  If we use \eqref{E:ANL_GrW} instead of \eqref{E:ANL_TGr} to estimate $\nabla_\Gamma\zeta_{F'}^\varepsilon$, then the second line of \eqref{Pf_AwS:ZF} is slightly changed, and the estimate for $\nabla_\Gamma\zeta^\varepsilon$ becomes
  \begin{align} \label{E:AwR_Wor}
    \|\nabla_\Gamma\zeta^\varepsilon\|_{L^2(0,T;L^2(\Gamma))} \leq c\varepsilon^{1-5\alpha}(1+T)^5.
  \end{align}
  This is worse than \eqref{E:AwS_Res} by $\varepsilon^{-5\alpha/6}$.
\end{remark}

For a later use, we rewrite \eqref{E:ACW_wSt} in a weak form.

\begin{proposition} \label{P:ACW_wWe}
  For all $T>0$ and $\eta\in L^2(0,T;H^1(\Gamma))$, we have
  \begin{align} \label{E:ACW_wWe}
    \int_0^T(g\mu^\varepsilon,\eta)_{L^2(\Gamma)}\,dt = \int_0^T(g\nabla_\Gamma v^\varepsilon,\nabla_\Gamma\eta)_{L^2(\Gamma)}\,dt+\int_0^T(g\{F'(v^\varepsilon)+\zeta^\varepsilon\},\eta)_{L^2(\Gamma)}\,dt.
  \end{align}
\end{proposition}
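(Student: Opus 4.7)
The plan is to derive \eqref{E:ACW_wWe} directly from the strong form \eqref{E:ACW_wSt} by testing against $g\eta$ and invoking the integration by parts formula \eqref{E:IbP_Ag}. Since \eqref{E:ACW_wWe} is an integral identity involving only first-order quantities on $\Gamma$, the entire argument is routine once the regularity of each term is checked.

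First, I would fix $T>0$ and an arbitrary $\eta\in L^2(0,T;H^1(\Gamma))$, and verify that all four terms in \eqref{E:ACW_wSt}, multiplied by $g\eta$, are in $L^1(\Gamma\times(0,T))$. By Proposition \ref{P:ACW_Reg}, $v^\varepsilon\in L^2(0,T;H^3(\Gamma))$, hence $A_gv^\varepsilon=g^{-1}\mathrm{div}_\Gamma(g\nabla_\Gamma v^\varepsilon)\in L^2(0,T;H^1(\Gamma))$, which together with the bound \eqref{E:G_Bdd} on $g$ and H\"older's inequality handles the Laplacian term. The term $\mu^\varepsilon$ lies in $L^2(0,T;H^1(\Gamma))$ and $\zeta^\varepsilon$ in $L^2(0,T;H^1(\Gamma))$ by Proposition \ref{P:ACW_wSt}, so both are square integrable against $g\eta$. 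For the nonlinear term, $v^\varepsilon\in C([0,T];H^1(\Gamma))$ by Lemma \ref{L:ET_Emb}, and the bound $|F'(z)|\leq c(|z|^3+1)$ from \eqref{E:Int_Fzero}, combined with the Sobolev embedding \eqref{E:SoSu_H1} with $p=6$, yields $F'(v^\varepsilon)\in L^\infty(0,T;L^2(\Gamma))$, which is more than enough.

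Next, I would multiply \eqref{E:ACW_wSt} by $g\eta(\cdot,t)$ and integrate over $\Gamma$ for almost every $t\in(0,T)$. Since $v^\varepsilon(t)\in H^2(\Gamma)$ for a.a.\ $t$ and $\eta(t)\in H^1(\Gamma)$, formula \eqref{E:IbP_Ag} gives
\begin{equation*}
  (gA_gv^\varepsilon(t),\eta(t))_{L^2(\Gamma)} = -(g\nabla_\Gamma v^\varepsilon(t),\nabla_\Gamma\eta(t))_{L^2(\Gamma)}
\end{equation*}
pointwise in $t$. Rearranging the resulting identity yields
\begin{equation*}
  (g\mu^\varepsilon(t),\eta(t))_{L^2(\Gamma)} = (g\nabla_\Gamma v^\varepsilon(t),\nabla_\Gamma\eta(t))_{L^2(\Gamma)}+(g\{F'(v^\varepsilon(t))+\zeta^\varepsilon(t)\},\eta(t))_{L^2(\Gamma)}
\end{equation*}
for a.a.\ $t\in(0,T)$. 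Finally, I would integrate this identity over $(0,T)$; the integrability checks above justify Fubini, and the result is exactly \eqref{E:ACW_wWe}.

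There is no real obstacle here: the content of Proposition \ref{P:ACW_wWe} is just the $L^2$-in-time weak formulation of the already-established pointwise identity \eqref{E:ACW_wSt}, and the only genuine input is \eqref{E:IbP_Ag}, which applies because $v^\varepsilon$ has $H^3$-regularity in the spatial variable. The mildly delicate point, if any, is confirming that $F'(v^\varepsilon)\in L^2(0,T;L^2(\Gamma))$ so that the nonlinear pairing is well-defined, but this is immediate from the growth estimate \eqref{E:Int_Fzero} and the surface Sobolev inequality \eqref{E:SoSu_H1}.
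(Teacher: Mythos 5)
Your proof is correct and follows essentially the same route as the paper: multiply \eqref{E:ACW_wSt} by $g\eta$, apply the integration by parts formula \eqref{E:IbP_Ag} (using the $H^3(\Gamma)$-regularity of $v^\varepsilon$ from Proposition \ref{P:ACW_Reg}), and integrate over $\Gamma\times(0,T)$. Your additional regularity checks on $F'(v^\varepsilon)$ and $\zeta^\varepsilon$ are correct and make explicit what the paper leaves implicit.
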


\begin{proof}
  We multiply \eqref{E:ACW_wSt} by $g\eta$, integrate over $\Gamma\times(0,T)$, and use \eqref{E:IbP_Ag} to get \eqref{E:ACW_wWe}.
\end{proof}

\subsection{Estimates for the averaged solution} \label{SS:TFL_AEn}
Let us estimate $v^\varepsilon$ and $\mu^\varepsilon$ in order to show that they are uniformly bounded in suitable function sapces.

In what follows, we write $c_t=c(1+t)^\gamma\geq1$ for $t\geq0$ with some general constants $c\geq1$ and $\gamma>0$ independent of $t$ in order to keep expressions simple.

First, we consider an estimate for the weighted Ginzburg--Landau free energy
\begin{align*}
  E_g(v^\varepsilon) := \int_\Gamma g\left(\frac{|\nabla_\Gamma v^\varepsilon|^2}{2}+F(v^\varepsilon)\right)\,d\mathcal{H}^2.
\end{align*}

\begin{proposition} \label{P:AEna_GL}
  Let $\tilde{\mu}^\varepsilon:=-A_gv^\varepsilon+F'(v^\varepsilon)$ on $\Gamma\times(0,\infty)$.
  Then, for all $t\geq0$,
  \begin{align} \label{E:AEna_GL}
    E_g(v^\varepsilon(t))+\frac{1}{2}\int_0^t\|\sqrt{g}\,\nabla_\Gamma\tilde{\mu}^\varepsilon(s)\|_{L^2(\Gamma)}^2\,ds \leq E_g(v_0^\varepsilon)+c_t\varepsilon^{2-25\alpha/3}.
  \end{align}
\end{proposition}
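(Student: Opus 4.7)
The plan is to differentiate $E_g(v^\varepsilon)$ in time, use the weak form \eqref{E:ACW_WFv} with $\tilde\mu^\varepsilon$ as a test function, and then exploit the strong relation \eqref{E:ACW_wSt} to expose the dissipation $\|\sqrt g\,\nabla_\Gamma\tilde\mu^\varepsilon\|_{L^2(\Gamma)}^2$ and absorb the errors into it.

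First, the regularity from Proposition \ref{P:ACW_Reg} together with Assumption \ref{A:Poten} allows me to apply Lemma \ref{L:DtEn_gSu} to both pieces of $E_g(v^\varepsilon)$. This gives, a.e. in $t$,
\begin{align*}
  \frac{d}{dt}E_g(v^\varepsilon) = \langle\partial_tv^\varepsilon,-gA_gv^\varepsilon\rangle_{H^1(\Gamma)}+\langle\partial_tv^\varepsilon,gF'(v^\varepsilon)\rangle_{H^1(\Gamma)} = \langle\partial_tv^\varepsilon,g\tilde{\mu}^\varepsilon\rangle_{H^1(\Gamma)}.
\end{align*}
Here I should briefly verify that $\tilde{\mu}^\varepsilon=-A_gv^\varepsilon+F'(v^\varepsilon)$ does lie in $L^2(0,T;H^1(\Gamma))$ so that it is an admissible test function: $A_gv^\varepsilon\in L^2(0,T;H^1(\Gamma))$ follows from $v^\varepsilon\in L^2(0,T;H^3(\Gamma))$, and $F'(v^\varepsilon)\in L^2(0,T;H^1(\Gamma))$ follows from Lemma \ref{L:DtEn_gSu}(ii).

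Next, integrating the above identity on $(0,t)$ and combining with \eqref{E:ACW_WFv} (with $T$ replaced by $t$ and $\eta=\tilde{\mu}^\varepsilon$), I get
\begin{align*}
  E_g(v^\varepsilon(t))-E_g(v_0^\varepsilon)+\int_0^t(g\nabla_\Gamma\mu^\varepsilon,\nabla_\Gamma\tilde{\mu}^\varepsilon)_{L^2(\Gamma)}\,ds = R_\varepsilon(\tilde{\mu}^\varepsilon;t).
\end{align*}
Now I use the strong form \eqref{E:ACW_wSt} to write $\nabla_\Gamma\mu^\varepsilon=\nabla_\Gamma\tilde{\mu}^\varepsilon+\nabla_\Gamma\zeta^\varepsilon$, so
\begin{align*}
  E_g(v^\varepsilon(t))-E_g(v_0^\varepsilon)+\int_0^t\|\sqrt{g}\,\nabla_\Gamma\tilde{\mu}^\varepsilon\|_{L^2(\Gamma)}^2\,ds = R_\varepsilon(\tilde{\mu}^\varepsilon;t)-\int_0^t(g\nabla_\Gamma\zeta^\varepsilon,\nabla_\Gamma\tilde{\mu}^\varepsilon)_{L^2(\Gamma)}\,ds.
\end{align*}

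The final step is to absorb the right-hand side into the dissipation by Young's inequality. Using \eqref{E:AWv_Res} together with $\|\nabla_\Gamma\tilde{\mu}^\varepsilon\|_{L^2(0,t;L^2(\Gamma))}\leq c\|\sqrt g\,\nabla_\Gamma\tilde{\mu}^\varepsilon\|_{L^2(0,t;L^2(\Gamma))}$ (by \eqref{E:G_Bdd}),
\begin{align*}
  |R_\varepsilon(\tilde{\mu}^\varepsilon;t)| \leq \tfrac14\int_0^t\|\sqrt g\,\nabla_\Gamma\tilde{\mu}^\varepsilon\|_{L^2(\Gamma)}^2\,ds+c_t\varepsilon^{2-6\alpha},
\end{align*}
while Cauchy--Schwarz and the second bound in \eqref{E:AwS_Res} give
\begin{align*}
  \left|\int_0^t(g\nabla_\Gamma\zeta^\varepsilon,\nabla_\Gamma\tilde{\mu}^\varepsilon)_{L^2(\Gamma)}\,ds\right| \leq \tfrac14\int_0^t\|\sqrt g\,\nabla_\Gamma\tilde{\mu}^\varepsilon\|_{L^2(\Gamma)}^2\,ds+c_t\varepsilon^{2-25\alpha/3}.
\end{align*}
Since $25\alpha/3\geq 6\alpha$, the second exponent dominates, and rearranging yields \eqref{E:AEna_GL}.

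I do not expect any essential obstacle; the only delicate point is the test-function check for $\tilde{\mu}^\varepsilon$, and the key technical input — the smallness of $\nabla_\Gamma\zeta^\varepsilon$ in $L^2$ with the sharp rate $\varepsilon^{1-25\alpha/6}(1+T)^{25/6}$ — has already been supplied by Proposition \ref{P:ACW_wSt} (this is precisely where the refined estimate \eqref{E:ANL_TGr}, rather than \eqref{E:ANL_GrW}, enters and gives the exponent $2-25\alpha/3$).
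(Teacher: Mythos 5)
Your proof is correct and follows essentially the same route as the paper's: differentiate $E_g(v^\varepsilon)$ via Lemma \ref{L:DtEn_gSu}, test \eqref{E:ACW_WFv} with $\tilde{\mu}^\varepsilon$, substitute $\nabla_\Gamma\mu^\varepsilon=\nabla_\Gamma\tilde{\mu}^\varepsilon+\nabla_\Gamma\zeta^\varepsilon$ from \eqref{E:ACW_wSt}, and absorb the error terms into the dissipation by Young's inequality using \eqref{E:AWv_Res} and \eqref{E:AwS_Res}. The only cosmetic difference is that you apply Young twice with coefficient $\tfrac14$ whereas the paper first bounds $\varepsilon^{1-3\alpha}\leq\varepsilon^{1-25\alpha/6}$ and applies Young once with coefficient $\tfrac12$; both give the stated $c_t\varepsilon^{2-25\alpha/3}$.
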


\begin{proof}
  As in the proof of Proposition \ref{P:CHT_EL2}, we sometimes omit the time variable.

  By Assumption \ref{A:Poten} and Proposition \ref{P:ACW_Reg}, we can use Lemma \ref{L:DtEn_gSu} to get
  \begin{align*}
    \frac{d}{dt}E_g(v^\varepsilon) = \langle\partial_tv^\varepsilon,g\{-A_gv^\varepsilon+F'(v^\varepsilon)\}\rangle_{H^1(\Gamma)} = \langle\partial_tv^\varepsilon,g\tilde{\mu}^\varepsilon\rangle_{H^1(\Gamma)} \quad\text{a.e. on}\quad (0,\infty).
  \end{align*}
  We integrate both sides over $(0,t)$ and use \eqref{E:ACW_WFv} with $\eta=\tilde{\mu}^\varepsilon$.
  Then,
  \begin{align*}
    E_g(v^\varepsilon(t))-E_g(v_0^\varepsilon) = -\int_0^t(g\nabla_\Gamma\mu^\varepsilon,\nabla_\Gamma\tilde{\mu}^\varepsilon)_{L^2(\Gamma)}\,ds+R_\varepsilon(\tilde{\mu}^\varepsilon;t).
  \end{align*}
  Moreover, since $\mu^\varepsilon=\tilde{\mu}^\varepsilon+\zeta^\varepsilon$ a.e. on $\Gamma\times(0,\infty)$ by \eqref{E:ACW_wSt},
  \begin{multline} \label{Pf_AGL:Ine}
    E_g(v^\varepsilon(t))+\int_0^t\|\sqrt{g}\,\nabla_\Gamma\tilde{\mu}^\varepsilon\|_{L^2(\Gamma)}^2\,ds \\
    = E_g(v_0^\varepsilon)+\int_0^t(g\nabla_\Gamma\zeta^\varepsilon,\nabla_\Gamma\tilde{\mu}^\varepsilon)_{L^2(\Gamma)}\,ds+R(\tilde{\mu}^\varepsilon;t).
  \end{multline}
  We further observe by H\"{o}lder's inequality, \eqref{E:G_Bdd}, and \eqref{E:AwS_Res} that
  \begin{align*}
    \left|\int_0^t(g\nabla_\Gamma\zeta^\varepsilon,\nabla_\Gamma\tilde{\mu}^\varepsilon)_{L^2(\Gamma)}\,ds\right| &\leq c\|\sqrt{g}\,\nabla_\Gamma\zeta^\varepsilon\|_{L^2(0,t;L^2(\Gamma))}\|\sqrt{g}\,\nabla_\Gamma\tilde{\mu}^\varepsilon\|_{L^2(0,t:L^2(\Gamma))} \\
    &\leq c_t\varepsilon^{1-25\alpha/6}\|\sqrt{g}\,\nabla_\Gamma\tilde{\mu}^\varepsilon\|_{L^2(0,t;L^2(\Gamma))}.
  \end{align*}
  Also, we use \eqref{E:AWv_Res} and then \eqref{E:G_Bdd} (and $3\alpha\leq 25\alpha/6$) to get
  \begin{align*}
    |R_\varepsilon(\tilde{\mu}^\varepsilon;t)| \leq c_t\varepsilon^{1-3\alpha}\|\nabla_\Gamma\tilde{\mu}^\varepsilon\|_{L^2(0,t;L^2(\Gamma))} \leq c_t\varepsilon^{1-25\alpha/6}\|\sqrt{g}\,\nabla_\Gamma\tilde{\mu}^\varepsilon\|_{L^2(0,t;L^2(\Gamma))}.
  \end{align*}
  We apply the above estimates and \eqref{E:G_Bdd} to \eqref{Pf_AGL:Ine}.
  Then, we find that
  \begin{align*}
    E_g(v^\varepsilon(t))+\int_0^t\|\sqrt{g}\,\nabla_\Gamma\tilde{\mu}^\varepsilon\|_{L^2(\Gamma)}^2\,ds &\leq E_g(v_0^\varepsilon)+c_t\varepsilon^{1-25\alpha/6}\|\sqrt{g}\,\nabla_\Gamma\tilde{\mu}^\varepsilon\|_{L^2(0,t;L^2(\Gamma))} \\
    &\leq E_g(v_0^\varepsilon)+c_t\varepsilon^{2-25\alpha/3}+\frac{1}{2}\int_0^t\|\sqrt{g}\,\nabla_\Gamma\tilde{\mu}^\varepsilon\|_{L^2(\Gamma)}^2\,ds
  \end{align*}
  by Young's inequality.
  Therefore, \eqref{E:AEna_GL} follows.
\end{proof}

Using Proposition \ref{P:AEna_GL}, we can derive several estimates for $v^\varepsilon$ and $\mu^\varepsilon$.

\begin{proposition} \label{P:AEna_Gr2}
  For all $t\geq0$, we have
  \begin{align} \label{E:AEna_Gr2}
    \|\nabla_\Gamma v^\varepsilon(t)\|_{L^2(\Gamma)}^2+\int_0^t\|\nabla_\Gamma\mu^\varepsilon(s)\|_{L^2(\Gamma)}^2\,ds \leq c_t(|E_g(v_0^\varepsilon)|+\varepsilon^{2-25\alpha/3}+1).
  \end{align}
\end{proposition}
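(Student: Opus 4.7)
The plan is to deduce both bounds directly from the energy inequality \eqref{E:AEna_GL} of Proposition \ref{P:AEna_GL} combined with the strong relation $\mu^\varepsilon=\tilde{\mu}^\varepsilon+\zeta^\varepsilon$ from \eqref{E:ACW_wSt} and the residual estimate \eqref{E:AwS_Res}; the lower bound $F\geq -C_0$ from Assumption \ref{A:Poten} together with \eqref{E:G_Bdd} supplies a two-sided control of $E_g$ that makes the whole argument essentially algebraic. I do not expect any serious obstacle; the main thing is to track the $\varepsilon$- and $t$-exponents carefully so that everything is absorbed into $c_t=c(1+t)^\gamma$.

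For the pointwise-in-time bound on $\nabla_\Gamma v^\varepsilon$, I would first observe that
\begin{align*}
  \tfrac{1}{2}\|\sqrt{g}\,\nabla_\Gamma v^\varepsilon(t)\|_{L^2(\Gamma)}^2 = E_g(v^\varepsilon(t))-\int_\Gamma gF(v^\varepsilon(t))\,d\mathcal{H}^2 \leq E_g(v^\varepsilon(t))+C_0\int_\Gamma g\,d\mathcal{H}^2
\end{align*}
by \eqref{E:Poten}, and then use \eqref{E:G_Bdd} to replace $\|\sqrt{g}\,\nabla_\Gamma v^\varepsilon\|^2$ by $\|\nabla_\Gamma v^\varepsilon\|^2$ up to a constant. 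Dropping the nonnegative integral term in \eqref{E:AEna_GL} gives $E_g(v^\varepsilon(t))\leq E_g(v_0^\varepsilon)+c_t\varepsilon^{2-25\alpha/3}\leq |E_g(v_0^\varepsilon)|+c_t\varepsilon^{2-25\alpha/3}$, and combining these yields the desired bound on $\|\nabla_\Gamma v^\varepsilon(t)\|_{L^2(\Gamma)}^2$.

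For the time integral of $\|\nabla_\Gamma\mu^\varepsilon\|_{L^2(\Gamma)}^2$, I would take $\nabla_\Gamma$ of the identity $\mu^\varepsilon=\tilde{\mu}^\varepsilon+\zeta^\varepsilon$ to get
\begin{align*}
  \|\nabla_\Gamma\mu^\varepsilon\|_{L^2(\Gamma)}^2 \leq 2\|\nabla_\Gamma\tilde{\mu}^\varepsilon\|_{L^2(\Gamma)}^2+2\|\nabla_\Gamma\zeta^\varepsilon\|_{L^2(\Gamma)}^2,
\end{align*}
integrate over $(0,t)$, and bound each term separately. The second term is handled directly by the second line of \eqref{E:AwS_Res} (squared), which gives $c_t\varepsilon^{2-25\alpha/3}$. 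For the first term I would use \eqref{E:G_Bdd} to pass to $\|\sqrt{g}\,\nabla_\Gamma\tilde{\mu}^\varepsilon\|^2$ and then apply \eqref{E:AEna_GL}, using that $E_g(v^\varepsilon(t))\geq -C_0\int_\Gamma g\,d\mathcal{H}^2\geq -C$ (again from $F\geq -C_0$) to turn $-E_g(v^\varepsilon(t))$ into an $O(1)$ upper bound. Collecting everything and absorbing constants into $c_t$ produces exactly the right-hand side of \eqref{E:AEna_Gr2}.
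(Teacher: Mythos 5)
Your proposal is correct and takes essentially the same route as the paper: both use the lower bound $F\geq-C_0$ and \eqref{E:G_Bdd} to convert $E_g$-control into $L^2$-control of $\nabla_\Gamma v^\varepsilon$, deduce both pieces from \eqref{E:AEna_GL}, and pass from $\tilde{\mu}^\varepsilon$ to $\mu^\varepsilon$ via \eqref{E:ACW_wSt} and the $\nabla_\Gamma\zeta^\varepsilon$ estimate in \eqref{E:AwS_Res}. The only cosmetic difference is that you bound the two left-hand terms independently (dropping the other term of \eqref{E:AEna_GL} each time), whereas the paper treats them simultaneously in one chain of inequalities.
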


\begin{proof}
  Since $g$ is positive on $\Gamma$, for all $t\geq0$ we see by \eqref{E:Poten} that
  \begin{align*}
    E_g(v^\varepsilon(t))+C_0\int_\Gamma g\,d\mathcal{H}^2 &= \int_\Gamma g\left(\frac{|\nabla_\Gamma v^\varepsilon(t)|^2}{2}+\{F(v^\varepsilon(t))+C_0\}\right)\,d\mathcal{H}^2 \\
    &\geq \frac{1}{2}\|\sqrt{g}\,\nabla_\Gamma v^\varepsilon(t)\|_{L^2(\Gamma)}^2.
  \end{align*}
  Hence, it follows from \eqref{E:AEna_GL} that
  \begin{align*}
    \frac{1}{2}\|\sqrt{g}\,\nabla_\Gamma v^\varepsilon(t)\|_{L^2(\Gamma)}^2+\frac{1}{2}\int_0^t\|\sqrt{g}\,\nabla_\Gamma\tilde{\mu}^\varepsilon\|_{L^2(\Gamma)}^2\,ds \leq E_g(v_0^\varepsilon)+c_t\varepsilon^{2-25\alpha/3}+C_0\int_\Gamma g\,d\mathcal{H}^2.
  \end{align*}
  Applying \eqref{E:G_Bdd} and $E_g(v_0^\varepsilon)\leq|E_g(v_0^\varepsilon)|$, we further get
  \begin{align*}
    \|\nabla_\Gamma v^\varepsilon(t)\|_{L^2(\Gamma)}^2+\int_0^t\|\nabla_\Gamma\tilde{\mu}^\varepsilon\|_{L^2(\Gamma)}^2\,ds \leq c_t(|E_g(v_0^\varepsilon)|+\varepsilon^{2-25\alpha/3}+1).
  \end{align*}
  Also, since $\mu^\varepsilon=\tilde{\mu}^\varepsilon+\zeta^\varepsilon$ a.e. on $\Gamma\times(0,\infty)$ by \eqref{E:ACW_wSt}, we have
  \begin{align*}
    \int_0^t\|\nabla_\Gamma\mu^\varepsilon\|_{L^2(\Gamma)}^2\,ds &\leq c\left(\int_0^t\|\nabla_\Gamma\tilde{\mu}^\varepsilon\|_{L^2(\Gamma)}^2\,ds+\int_0^t\|\nabla_\Gamma\zeta^\varepsilon\|_{L^2(\Gamma)}^2\,ds\right) \\
    &\leq c\left(\int_0^t\|\nabla_\Gamma\tilde{\mu}^\varepsilon\|_{L^2(\Gamma)}^2\,ds+c_t\varepsilon^{2-25\alpha/3}\right)
  \end{align*}
  by \eqref{E:AwS_Res}.
  By the above two estimates, we obtain \eqref{E:AEna_Gr2}.
\end{proof}

\begin{proposition} \label{P:AEna_L2}
  We define
  \begin{align*}
    \mathcal{I}_{\varepsilon,\alpha} := \|v_0^\varepsilon\|_{L^2(\Gamma)}^2+|E_g(v_0^\varepsilon)|+\varepsilon^{2-25\alpha/3}+1.
  \end{align*}
  Then, for all $t\geq0$, we have
  \begin{align}
    &\|v^\varepsilon(t)\|_{L^2(\Gamma)}^2 \leq c_t\mathcal{I}_{\varepsilon,\alpha}, \label{E:AEna_v2} \\
    &\int_0^t\Bigl(\|F'(v^\varepsilon(s))\|_{L^2(\Gamma)}^2+\|\mu^\varepsilon(s)\|_{L^2(\Gamma)}^2\Bigr)\,ds \leq c_t\mathcal{I}_{\varepsilon,\alpha}^3. \label{E:AEna_mu2}
  \end{align}
\end{proposition}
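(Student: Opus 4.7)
The plan is to test the weak form \eqref{E:ACW_WFv} against $\eta=v^\varepsilon$ and, in parallel, exploit the strong-form identity \eqref{E:ACW_wSt}. The required regularity $v^\varepsilon\in\mathcal{E}_T(\Gamma)\cap L^2(0,T;H^3(\Gamma))$ is guaranteed by Proposition \ref{P:ACW_Reg}. Lemma \ref{L:DtL2_gSu} turns the time-derivative pairing into $\tfrac{1}{2}\tfrac{d}{dt}\|\sqrt{g}\,v^\varepsilon\|_{L^2(\Gamma)}^2$. For the spatial term, since $v^\varepsilon(t)\in H^3(\Gamma)$, integration by parts \eqref{E:IbP_Ag} gives $(g\nabla_\Gamma\mu^\varepsilon,\nabla_\Gamma v^\varepsilon)_{L^2(\Gamma)}=-(gA_gv^\varepsilon,\mu^\varepsilon)_{L^2(\Gamma)}$. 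Substituting \eqref{E:ACW_wSt} then produces $\|\sqrt{g}\,A_gv^\varepsilon\|_{L^2(\Gamma)}^2$, a cross term $-(gA_gv^\varepsilon,F'(v^\varepsilon))_{L^2(\Gamma)}$, and an error $-(gA_gv^\varepsilon,\zeta^\varepsilon)_{L^2(\Gamma)}$. A second application of \eqref{E:IbP_Ag} together with the chain rule $\nabla_\Gamma F'(v^\varepsilon)=F''(v^\varepsilon)\nabla_\Gamma v^\varepsilon$ (legitimate since $F'(v^\varepsilon)\in L^2(0,T;H^1(\Gamma))$ by Lemma \ref{L:DtEn_gSu}) converts the cross term into $\int_\Gamma gF''(v^\varepsilon)|\nabla_\Gamma v^\varepsilon|^2\,d\mathcal{H}^2$, which is bounded below by $-C_2\|\sqrt{g}\,\nabla_\Gamma v^\varepsilon\|_{L^2(\Gamma)}^2$ by \eqref{E:Poten}.

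Combining these manipulations yields
\begin{align*}
  \tfrac{1}{2}\|\sqrt{g}\,v^\varepsilon(t)\|_{L^2(\Gamma)}^2+\int_0^t\|\sqrt{g}\,A_gv^\varepsilon\|_{L^2(\Gamma)}^2\,ds &\leq \tfrac{1}{2}\|\sqrt{g}\,v_0^\varepsilon\|_{L^2(\Gamma)}^2+C_2\int_0^t\|\sqrt{g}\,\nabla_\Gamma v^\varepsilon\|_{L^2(\Gamma)}^2\,ds \\
  &\quad +\bigl|R_\varepsilon(v^\varepsilon;t)\bigr|+\bigl|(gA_gv^\varepsilon,\zeta^\varepsilon)_{L^2(0,t;L^2(\Gamma))}\bigr|.
\end{align*}
The last term is controlled by Young's inequality and the $L^2$ part of \eqref{E:AwS_Res}, so that half of $\int_0^t\|\sqrt{g}\,A_gv^\varepsilon\|^2\,ds$ can be absorbed into the left-hand side, leaving a residual of order $c_t\varepsilon^{2-6\alpha}$. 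The term $R_\varepsilon(v^\varepsilon;t)$ is estimated using \eqref{E:AWv_Res} together with the $L^\infty_tL^2_\Gamma$-bound on $\nabla_\Gamma v^\varepsilon$ from \eqref{E:AEna_Gr2}. The middle term on the right is likewise bounded via \eqref{E:AEna_Gr2}. Because $\mathcal{I}_{\varepsilon,\alpha}\geq 1$ by construction, all error contributions collapse into $c_t\mathcal{I}_{\varepsilon,\alpha}$, delivering \eqref{E:AEna_v2} and the auxiliary estimate $\int_0^t\|A_gv^\varepsilon(s)\|_{L^2(\Gamma)}^2\,ds\leq c_t\mathcal{I}_{\varepsilon,\alpha}$.

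For \eqref{E:AEna_mu2}, I will first control $\|F'(v^\varepsilon)\|_{L^2(\Gamma)}$ using the polynomial bound $|F'(z)|\leq c(|z|^3+1)$ from \eqref{E:Int_Fzero}, the Sobolev embedding $H^1(\Gamma)\hookrightarrow L^6(\Gamma)$ from \eqref{E:SoSu_H1}, and the uniform $H^1(\Gamma)$-bound $\|v^\varepsilon(t)\|_{H^1(\Gamma)}^2\leq c_t\mathcal{I}_{\varepsilon,\alpha}$, which follows by combining \eqref{E:AEna_v2} with \eqref{E:AEna_Gr2}. This yields the pointwise-in-time bound $\|F'(v^\varepsilon(s))\|_{L^2(\Gamma)}^2\leq c_s(\mathcal{I}_{\varepsilon,\alpha}^3+1)\leq c_s\mathcal{I}_{\varepsilon,\alpha}^3$ and hence, after integration over $(0,t)$, the desired $c_t\mathcal{I}_{\varepsilon,\alpha}^3$-estimate. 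For $\mu^\varepsilon$ I use the triangle inequality in \eqref{E:ACW_wSt}, namely $\|\mu^\varepsilon\|_{L^2(\Gamma)}^2\leq c(\|A_gv^\varepsilon\|_{L^2(\Gamma)}^2+\|F'(v^\varepsilon)\|_{L^2(\Gamma)}^2+\|\zeta^\varepsilon\|_{L^2(\Gamma)}^2)$, integrate in time, and bound the three pieces by the auxiliary $A_gv^\varepsilon$-estimate derived above, the $F'(v^\varepsilon)$-estimate just obtained, and \eqref{E:AwS_Res}, respectively; each piece fits inside $c_t\mathcal{I}_{\varepsilon,\alpha}^3$.

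The main obstacle is the structural one in the first step: setting up the double integration by parts so that $\|\sqrt{g}\,A_gv^\varepsilon\|_{L^2(\Gamma)}^2$ appears with the correct (positive) sign on the left-hand side. This is precisely what makes $\int_0^t\|A_gv^\varepsilon\|^2$ controllable, and in turn what allows the $L^2$-bound on $\mu^\varepsilon$ to be extracted through the strong-form relation \eqref{E:ACW_wSt} rather than through the weak form, where $\mu^\varepsilon$ is only tested against gradients. The remaining effort is essentially bookkeeping of $\varepsilon$- and $t$-dependent factors, which is routine because $\mathcal{I}_{\varepsilon,\alpha}$ already contains the dominant $\varepsilon^{2-25\alpha/3}$ contribution arising from the energy estimate \eqref{E:AEna_GL}.
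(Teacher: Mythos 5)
Your proof is correct, and for the bound on $\mu^\varepsilon$ it takes a genuinely different route. For \eqref{E:AEna_v2}, the paper simply tests $\eta=v^\varepsilon$ in \eqref{E:ACW_WFv} and controls the Dirichlet pairing $\int_0^t(g\nabla_\Gamma\mu^\varepsilon,\nabla_\Gamma v^\varepsilon)\,ds$ directly by H\"older's inequality and Proposition~\ref{P:AEna_Gr2}; you instead integrate by parts once, substitute the strong-form identity \eqref{E:ACW_wSt}, integrate by parts a second time to make $\int_\Gamma gF''(v^\varepsilon)|\nabla_\Gamma v^\varepsilon|^2$ appear with the right sign via $F''\geq-C_2$, and absorb the $\zeta^\varepsilon$ contribution by Young. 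Both paths deliver \eqref{E:AEna_v2}, but yours also extracts the auxiliary bound $\int_0^t\|A_gv^\varepsilon\|_{L^2(\Gamma)}^2\,ds\leq c_t\mathcal{I}_{\varepsilon,\alpha}$, which the paper never derives. For \eqref{E:AEna_mu2} the two routes then diverge further: the paper tests $\eta=\mu^\varepsilon$ in the weak form \eqref{E:ACW_wWe} and closes with Young's inequality, reusing the Dirichlet-pairing bound; you read the $\mu^\varepsilon$ estimate directly off the strong relation $\mu^\varepsilon=-A_gv^\varepsilon+F'(v^\varepsilon)+\zeta^\varepsilon$ via the triangle inequality and plug in your $A_gv^\varepsilon$ control. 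Your manipulations are legitimate: $v^\varepsilon\in L^2(0,T;H^3(\Gamma))$ licenses the first integration by parts \eqref{E:IbP_Ag}, and $F'(v^\varepsilon)\in L^2(0,T;H^1(\Gamma))$ from Lemma~\ref{L:DtEn_gSu}(ii) licenses the second. The trade-off is that the paper's argument is shorter and uses only the weak forms, staying agnostic about second-order spatial regularity of $v^\varepsilon$, whereas your version is more in the spirit of parabolic regularity: it produces a genuine $L^2_tL^2_\Gamma$ bound on $A_gv^\varepsilon$ at the cost of the double integration by parts and the sign bookkeeping required to keep $\|\sqrt{g}\,A_gv^\varepsilon\|^2$ on the left.
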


\begin{proof}
  Let $\eta=v^\varepsilon$ in \eqref{E:ACW_WFv} with $T$ replaced by $t$.
  Then, by \eqref{E:DtL2_gSu}, we have
  \begin{align} \label{Pf_AE2:Equ}
    \frac{1}{2}\|\sqrt{g}\,v^\varepsilon(t)\|_{L^2(\Gamma)}^2-\frac{1}{2}\|\sqrt{g}\,v_0^\varepsilon\|_{L^2(\Gamma)}^2+\int_0^t(g\nabla\mu^\varepsilon,\nabla_\Gamma v^\varepsilon)_{L^2(\Gamma)}\,ds = R_\varepsilon(v^\varepsilon;t).
  \end{align}
  Moreover, we see by \eqref{E:G_Bdd}, H\"{o}lder's inequality, and \eqref{E:AEna_Gr2} that
  \begin{align} \label{Pf_AE2:gmv}
    \begin{aligned}
      \left|\int_0^t(g\nabla\mu^\varepsilon,\nabla_\Gamma v^\varepsilon)_{L^2(\Gamma)}\,ds\right| &\leq c\|\nabla_\Gamma\mu^\varepsilon\|_{L^2(0,t;L^2(\Gamma))}\|\nabla_\Gamma v^\varepsilon\|_{L^2(0,t;L^2(\Gamma))} \\
      &\leq c_t(|E_g(v_0^\varepsilon)|+\varepsilon^{2-25\alpha/3}+1).
    \end{aligned}
  \end{align}
  Also, by \eqref{E:AWv_Res}, \eqref{E:AEna_Gr2}, and $\varepsilon^{1-3\alpha}\leq\varepsilon^{1-25\alpha/6}\leq(|E_g(v_0^\varepsilon)|+\varepsilon^{2-25\alpha/3}+1)^{1/2}$,
  \begin{align*}
    |R_\varepsilon(v^\varepsilon;t)| \leq c_t\varepsilon^{1-3\alpha}\|\nabla_\Gamma v^\varepsilon\|_{L^2(0,t;L^2(\Gamma))} \leq c_t(|E_g(v_0^\varepsilon)|+\varepsilon^{2-25\alpha/3}+1).
  \end{align*}
  Applying the above estimates and \eqref{E:G_Bdd} to \eqref{Pf_AE2:Equ}, we obtain \eqref{E:AEna_v2}.

  Let us show \eqref{E:AEna_mu2}.
  By \eqref{E:SoSu_H1}, \eqref{E:AEna_Gr2}, and \eqref{E:AEna_v2}, we have
  \begin{align*}
    \|[(v^\varepsilon)^3](t)\|_{L^2(\Gamma)}^2 = \|v^\varepsilon(t)\|_{L^6(\Gamma)}^6 \leq c\|v^\varepsilon(t)\|_{H^1(\Gamma)}^6 \leq c_t\mathcal{I}_{\varepsilon,\alpha}^3
  \end{align*}
  for all $t\geq0$.
  Since $|F'(z)|\leq c(|z|^3+1)$ by Remark \ref{R:Po_Grow}, it follows that
  \begin{align} \label{Pf_AE2:F}
    \int_0^t\|F'(v^\varepsilon)\|_{L^2(\Gamma)}^2\,ds \leq c\int_0^t\Bigl(\|(v^\varepsilon)^3\|_{L^2(\Gamma)}^2+|\Gamma|\Bigr)\,ds \leq c_t\mathcal{I}_{\varepsilon,\alpha}^3.
  \end{align}
  Here, $|\Gamma|$ is the area of $\Gamma$ and we also used $\mathcal{I}_{\varepsilon,\alpha}\geq1$ in the last inequality.
  To estimate $\mu^\varepsilon$, let $\eta=\mu^\varepsilon$ in \eqref{E:ACW_wWe} with $T$ replaced by $t$.
  Then,
  \begin{align} \label{Pf_AE2:mu}
    \int_0^t\|\sqrt{g}\,\mu^\varepsilon\|_{L^2(\Gamma)}^2\,ds = \int_0^t(g\nabla_\Gamma v^\varepsilon,\nabla_\Gamma\mu^\varepsilon)_{L^2(\Gamma)}\,ds+\int_0^t(g\{F'(v^\varepsilon)+\zeta^\varepsilon\},\mu^\varepsilon)_{L^2(\Gamma)}\,ds.
  \end{align}
  Moreover, it follows from H\"{o}lder's and Young's inequalities and \eqref{E:G_Bdd} that
  \begin{multline*}
    \left|\int_0^t(g\{F'(v^\varepsilon)+\zeta^\varepsilon\},\mu^\varepsilon)_{L^2(\Gamma)}\,ds\right| \\
    \leq \frac{1}{2}\int_0^t\|\sqrt{g}\,\mu^\varepsilon\|_{L^2(\Gamma)}^2\,ds+c\int_0^t\Bigl(\|F'(v^\varepsilon)\|_{L^2(\Gamma)}^2+\|\zeta^\varepsilon\|_{L^2(\Gamma)}^2\Bigr)\,ds.
  \end{multline*}
  We apply this inequality to \eqref{Pf_AE2:mu}, and use \eqref{E:AwS_Res}, \eqref{Pf_AE2:gmv}, and \eqref{Pf_AE2:F}.
  Then, since
  \begin{align} \label{Pf_AE2:Iea}
    \varepsilon^{2-6\alpha} \leq |E_g(v_0^\varepsilon)|+\varepsilon^{2-25\alpha/3}+1 \leq \mathcal{I}_{\varepsilon,\alpha} \leq \mathcal{I}_{\varepsilon,\alpha}^3,
  \end{align}
  where the last inequality is due to $\mathcal{I}_{\varepsilon,\alpha}\geq1$, we find that
  \begin{align*}
    \frac{1}{2}\int_0^t\|\sqrt{g}\,\mu^\varepsilon\|_{L^2(\Gamma)}^2\,ds \leq c_t\mathcal{I}_{\varepsilon,\alpha}^3.
  \end{align*}
  By this inequality, \eqref{E:G_Bdd}, and \eqref{Pf_AE2:F}, we obtain \eqref{E:AEna_mu2}.
\end{proof}

\begin{proposition} \label{P:AEna_Dt}
  For all $T\geq0$, we have
  \begin{align} \label{E:AEna_Dt}
    \|\partial_tv^\varepsilon\|_{L^2(0,T;[H^1(\Gamma)]')} \leq c_T\mathcal{I}_{\varepsilon,\alpha}^{1/2}.
  \end{align}
\end{proposition}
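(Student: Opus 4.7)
The plan is to use the weak form \eqref{E:ACW_WFv} to express the duality pairing $\langle\partial_t v^\varepsilon,\eta\rangle_{H^1(\Gamma)}$ for an arbitrary test function $\eta\in L^2(0,T;H^1(\Gamma))$, and then to estimate each term on the right-hand side using the uniform bound on $\nabla_\Gamma\mu^\varepsilon$ from \eqref{E:AEna_Gr2} and the residual estimate \eqref{E:AWv_Res}. The only small twist is that \eqref{E:ACW_WFv} naturally pairs $\partial_t v^\varepsilon$ with $g\eta$ rather than $\eta$.

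More precisely, I would fix $\eta\in L^2(0,T;H^1(\Gamma))$ and substitute $\eta/g$ for the test function in \eqref{E:ACW_WFv}; this is admissible since $g\in C^3(\Gamma)$ and $g\geq c^{-1}>0$ by \eqref{E:G_Bdd}, so that $\eta/g\in L^2(0,T;H^1(\Gamma))$ with
\[
  \|\eta/g\|_{L^2(0,T;H^1(\Gamma))}+\|\nabla_\Gamma(\eta/g)\|_{L^2(0,T;L^2(\Gamma))} \leq c\|\eta\|_{L^2(0,T;H^1(\Gamma))}
\]
by the product rule and the regularity of $g$. This yields
\[
  \int_0^T\langle\partial_tv^\varepsilon,\eta\rangle_{H^1(\Gamma)}\,dt = -\int_0^T(g\nabla_\Gamma\mu^\varepsilon,\nabla_\Gamma(\eta/g))_{L^2(\Gamma)}\,dt+R_\varepsilon(\eta/g;T).
\]
For the first term, H\"older's inequality, \eqref{E:G_Bdd}, and the above estimate give a bound by $c\|\nabla_\Gamma\mu^\varepsilon\|_{L^2(0,T;L^2(\Gamma))}\|\eta\|_{L^2(0,T;H^1(\Gamma))}$, and Proposition \ref{P:AEna_Gr2} together with the definition of $\mathcal{I}_{\varepsilon,\alpha}$ and $\mathcal{I}_{\varepsilon,\alpha}\geq 1$ controls $\|\nabla_\Gamma\mu^\varepsilon\|_{L^2(0,T;L^2(\Gamma))}$ by $c_T\mathcal{I}_{\varepsilon,\alpha}^{1/2}$. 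For the residual term, \eqref{E:AWv_Res} gives $|R_\varepsilon(\eta/g;T)|\leq c\varepsilon^{1-3\alpha}(1+T)^3\|\eta\|_{L^2(0,T;H^1(\Gamma))}$, and since $\alpha\in[0,6/25]$ we have $\varepsilon^{1-3\alpha}\leq 1\leq\mathcal{I}_{\varepsilon,\alpha}^{1/2}$.

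Combining these two estimates gives
\[
  \left|\int_0^T\langle\partial_tv^\varepsilon,\eta\rangle_{H^1(\Gamma)}\,dt\right| \leq c_T\mathcal{I}_{\varepsilon,\alpha}^{1/2}\|\eta\|_{L^2(0,T;H^1(\Gamma))}
\]
for every $\eta\in L^2(0,T;H^1(\Gamma))$, and since $\partial_tv^\varepsilon\in L^2(0,T;[H^1(\Gamma)]')$ by Proposition \ref{P:ACW_Reg}, duality yields \eqref{E:AEna_Dt}. There is no real obstacle here; the only point that deserves care is the substitution $\eta\mapsto\eta/g$, which is necessary because \eqref{E:ACW_WFv} does not test $\partial_tv^\varepsilon$ directly against an arbitrary element of $H^1(\Gamma)$, but rather against elements of the form $g\eta$. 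Once this is done, the result is a routine combination of the already-established estimates \eqref{E:AWv_Res} and \eqref{E:AEna_Gr2}.
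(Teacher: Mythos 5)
Your proposal is correct and follows essentially the same route as the paper: both use the weak form \eqref{E:ACW_WFv}, replace the test function by $g^{-1}\eta$ to pair $\partial_t v^\varepsilon$ against an arbitrary $\eta$, and then estimate the Dirichlet form term via H\"older's inequality and Proposition \ref{P:AEna_Gr2} and the residual via \eqref{E:AWv_Res}; the paper simply performs the substitution $\eta\mapsto g^{-1}\eta$ after the estimate rather than before, which is an immaterial reordering. The one small difference is that you justify $\varepsilon^{1-3\alpha}\leq \mathcal{I}_{\varepsilon,\alpha}^{1/2}$ by noting $\alpha\leq 6/25<1/3$ so $\varepsilon^{1-3\alpha}\leq 1$, whereas the paper invokes \eqref{Pf_AE2:Iea}, which gives $\varepsilon^{1-3\alpha}\leq \mathcal{I}_{\varepsilon,\alpha}^{1/2}$ for all $\alpha\geq 0$ (the standing assumption in Section \ref{SS:TFL_Est}) without using the upper bound $\alpha\leq 6/25$; your version is correct in the context of Theorem \ref{T:TFL_Weak} but slightly less general than what the section actually assumes.
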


\begin{proof}
  Let $\eta\in L^2(0,T;H^1(\Gamma))$.
  Since \eqref{E:ACW_WFv} holds, we have
  \begin{align*}
    \left|\int_0^T\langle\partial_tv^\varepsilon,g\eta\rangle_{H^1(\Gamma)}\,dt\right| &\leq \left|\int_0^T(g\nabla_\Gamma\mu^\varepsilon,\nabla_\Gamma\eta)_{L^2(\Gamma)}\,dt\right|+|R_\varepsilon(\eta;T)| \\
    &\leq c_T\mathcal{I}_{\varepsilon,\alpha}^{1/2}\|\nabla_\Gamma\eta\|_{L^2(0,T;L^2(\Gamma))}
  \end{align*}
  by \eqref{E:G_Bdd}, H\"{o}lder's inequality, \eqref{E:AWv_Res}, \eqref{E:AEna_Gr2}, and \eqref{Pf_AE2:Iea}.
  Hence, replacing $\eta$ by $g^{-1}\eta$, and using the regularity of $g$ and \eqref{E:G_Bdd}, we find that
  \begin{align*}
    \left|\int_0^T\langle\partial_tv^\varepsilon,\eta\rangle_{H^1(\Gamma)}\,dt\right| \leq c_T\mathcal{I}_{\varepsilon,\alpha}^{1/2}\|\eta\|_{L^2(0,T;H^1(\Gamma))}
  \end{align*}
  for all $\eta\in L^2(0,T;H^1(\Gamma))$, which yields \eqref{E:AEna_Dt}.
\end{proof}

\subsection{Weak solution to the limit problem} \label{SS:TFL_Lim}
Now, we are almost ready to prove Theorem \ref{T:TFL_Weak}, but we need to define a weak solution to the limit problem \eqref{E:CH_Lim}.

Let $T>0$ and $\eta$ be a test function on $\Gamma\times(0,T)$.
We multiply \eqref{E:CH_Lim} by $g\eta$, integrate over $\Gamma\times(0,T)$, and carry out integration by parts \eqref{E:IbP_Ag}.
Then, we get the following weak formulation of \eqref{E:CH_Lim}, as we expect from \eqref{E:ACW_WFv} and \eqref{E:ACW_wWe}.

\begin{definition} \label{D:CLi_WeSo}
  For a given $v_0\in H^1(\Gamma)$ and $T>0$, we say that a pair $(v,\mu)$ is a weak solution to \eqref{E:CH_Lim} on $[0,T)$ if it satisfies
  \begin{align*}
    v\in\mathcal{E}_T(\Gamma)\cap L^\infty(0,T;H^1(\Gamma)), \quad \mu\in L^2(0,T;H^1(\Gamma)),
  \end{align*}
  and for all $\eta\in L^2(0,T;H^1(\Gamma))$,
  \begin{align}
    &\int_0^T\langle\partial_tv,g\eta\rangle_{H^1(\Gamma)}\,dt+\int_0^T(g\nabla_\Gamma\mu,\nabla_\Gamma\eta)_{L^2(\Gamma)}\,dt = 0, \label{E:CLi_Wv} \\
    &\int_0^T(g\mu,\eta)_{L^2(\Gamma)}\,dt = \int_0^T(g\nabla_\Gamma v,\nabla_\Gamma\eta)_{L^2(\Gamma)}\,dt+\int_0^T(gF'(v),\eta)_{L^2(\Gamma)}\,dt, \label{E:CLi_Wmu}
  \end{align}
  and $v(0)=v_0$ a.e. on $\Gamma$.
\end{definition}

\begin{definition} \label{D:CLi_GlW}
  For a given $v_0\in H^1(\Gamma)$, we say that a pair $(v,\mu)$ is a global weak solution to \eqref{E:CH_Lim} if it is a weak solution to \eqref{E:CH_Lim} on $[0,T)$ for all $T>0$.
\end{definition}

As in the thin-domain problem \eqref{E:CH_CTD}, we can show the existence of a weak solution to \eqref{E:CH_Lim} by the Galerkin method, but it can be also obtained from Theorem \ref{T:TFL_Weak} (see Corollary \ref{C:CLi_Ex} below).
Let us prove the uniqueness of a weak solution to \eqref{E:CH_Lim}, which is also used in the proof of Theorem \ref{T:TFL_Weak}.

\begin{proposition} \label{P:CLi_Uni}
  Suppose that Assumption \ref{A:Poten} is satisfied.
  Then, for each $T>0$ and $v_0\in H^1(\Gamma)$, there exists at most one weak solution to \eqref{E:CH_Lim} on $[0,T)$.
\end{proposition}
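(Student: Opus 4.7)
The plan is the standard energy estimate using the inverse weighted Laplacian $L_g$, analogous to the uniqueness proof for the Cahn--Hilliard equation on flat domains. Let $(v_1,\mu_1)$ and $(v_2,\mu_2)$ be two weak solutions on $[0,T)$ corresponding to the same initial datum $v_0$, and set $v := v_1 - v_2$, $\mu := \mu_1 - \mu_2$. First I would check that $v(t)$ has zero weighted mean: setting $\eta \equiv 1$ in \eqref{E:CLi_Wv} for each $v_i$ yields $\int_\Gamma g v_i(t)\,d\mathcal{H}^2 = \int_\Gamma g v_0\,d\mathcal{H}^2$ for all $t \in [0,T]$ (using Lemma \ref{L:DtL2_gSu} or a localization in time), so $\int_\Gamma g v(t)\,d\mathcal{H}^2 = 0$. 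Thus $L_g v(t)$ is well defined and belongs to $L^2(0,T;H^1(\Gamma))$ by Lemma \ref{L:InA_Sur}, with $v \in \mathcal{E}_T(\Gamma)$ inherited from $v_1,v_2$.

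Next, I would test the difference of \eqref{E:CLi_Wv} with the admissible function $\eta = L_g v$. By Lemma \ref{L:DtIA_Sur}, the time derivative term produces
\begin{align*}
  \int_0^t \langle \partial_t v, g L_g v\rangle_{H^1(\Gamma)}\,ds = \frac{1}{2}\int_\Gamma g|\nabla_\Gamma L_g v(t)|^2\,d\mathcal{H}^2.
\end{align*}
For the Dirichlet term, I would apply the defining identity \eqref{E:InA_Sur} in the form $(g\nabla_\Gamma L_g v, \nabla_\Gamma \mu)_{L^2(\Gamma)} = \langle v, g\mu\rangle_{H^1(\Gamma)} = (gv,\mu)_{L^2(\Gamma)}$, and then use the difference of \eqref{E:CLi_Wmu} tested with $\eta = v$ to rewrite $(g\mu,v)_{L^2(\Gamma)}$ as $\|\sqrt{g}\,\nabla_\Gamma v\|_{L^2(\Gamma)}^2 + (g(F'(v_1)-F'(v_2)),v)_{L^2(\Gamma)}$. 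Combining these gives the identity
\begin{align*}
  \frac{1}{2}\int_\Gamma g|\nabla_\Gamma L_g v(t)|^2\,d\mathcal{H}^2 + \int_0^t \|\sqrt{g}\,\nabla_\Gamma v\|_{L^2(\Gamma)}^2\,ds = -\int_0^t \bigl(g(F'(v_1)-F'(v_2)),\, v\bigr)_{L^2(\Gamma)}\,ds.
\end{align*}

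For the nonlinear term, writing $F'(v_1)-F'(v_2) = \bigl(\int_0^1 F''(\tau v_1+(1-\tau)v_2)\,d\tau\bigr)\,v$ and using $F'' \geq -C_2$ from Assumption \ref{A:Poten}, I obtain the pointwise bound $(F'(v_1)-F'(v_2))v \geq -C_2 v^2$, hence the right-hand side is controlled by $C_2 \int_0^t \|\sqrt{g}\,v\|_{L^2(\Gamma)}^2\,ds$. The key trick is to reinterpret $\|\sqrt{g}\,v\|_{L^2(\Gamma)}^2 = (gv,v)_{L^2(\Gamma)} = (g\nabla_\Gamma L_g v,\nabla_\Gamma v)_{L^2(\Gamma)}$ via \eqref{E:InA_Sur}, and then split this by Cauchy--Schwarz and Young's inequality to absorb $\tfrac{1}{2}\|\sqrt{g}\,\nabla_\Gamma v\|_{L^2(\Gamma)}^2$ into the left-hand side and leave a term $\tfrac{C_2^2}{2}\|\sqrt{g}\,\nabla_\Gamma L_g v\|_{L^2(\Gamma)}^2$ on the right. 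With $y(t) := \int_\Gamma g|\nabla_\Gamma L_g v(t)|^2\,d\mathcal{H}^2$ and $y(0)=0$, Grönwall's inequality then forces $y \equiv 0$, so $\nabla_\Gamma L_g v \equiv 0$; combined with $\int_\Gamma g L_g v\,d\mathcal{H}^2 = 0$ and $g \geq c > 0$, this yields $L_g v \equiv 0$ and hence $v \equiv 0$. Finally, inserting $v = 0$ into the difference of \eqref{E:CLi_Wmu} gives $\int_0^T (g\mu,\eta)_{L^2(\Gamma)}\,dt = 0$ for all $\eta \in L^2(0,T;H^1(\Gamma))$, whence $\mu \equiv 0$ by density of $H^1(\Gamma)$ in $L^2(\Gamma)$ and $g \geq c > 0$.

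The main obstacle is the chained manipulation in the middle step: one must verify that $L_g v$ is an admissible test function (regularity in time via Lemma \ref{L:DtIA_Sur} applied to $v \in \mathcal{E}_T(\Gamma)$ with zero weighted mean) and that $v$ itself is admissible in \eqref{E:CLi_Wmu} (this is immediate since $v \in L^2(0,T;H^1(\Gamma))$), and then correctly chain the two weak identities so that the troublesome coupling term $(g\mu,v)_{L^2(\Gamma)}$ is expressed in a form whose only remaining difficulty is the sign-indefinite nonlinearity, which is handled by the one-sided $F''$ bound in Assumption \ref{A:Poten}.
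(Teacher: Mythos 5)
Your proposal is correct and follows essentially the same route as the paper's proof: showing the zero weighted mean of the difference, testing the two weak identities with $L_gv$ and $v$ respectively, chaining them via \eqref{E:InA_Sur} and Lemma \ref{L:DtIA_Sur}, controlling the nonlinearity by the one-sided bound $F''\geq-C_2$, absorbing $\|\sqrt{g}\,v\|_{L^2(\Gamma)}^2$ through the identity $(gv,v)_{L^2(\Gamma)}=(g\nabla_\Gamma L_gv,\nabla_\Gamma v)_{L^2(\Gamma)}$ and Young's inequality, and closing with Gr\"onwall. The only cosmetic difference is at the end: the paper reads off $\nabla_\Gamma\chi_v\equiv 0$ from the absorbed Dirichlet term and applies the weighted Poincar\'e inequality \eqref{E:Poi_Sur} directly, whereas you pass through $\nabla_\Gamma L_gv\equiv 0\Rightarrow L_gv\equiv 0\Rightarrow v\equiv 0$, which is an equivalent but slightly more indirect route.
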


\begin{proof}
  Let $(v_1,\mu_1)$ and $(v_2,\mu_2)$ be weak solutions to \eqref{E:CH_Lim} on $(0,T)$ with same initial data $v_0$.
  We define $\chi_v:=v_1-v_2$ and $\chi_\mu:=\mu_1-\mu_2$ on $\Gamma\times(0,T)$.
  Then,
  \begin{align*}
    \chi_v\in\mathcal{E}_T(\Gamma)\cap L^\infty(0,T;H^1(\Gamma)), \quad \chi_\mu\in L^2(0,T;H^1(\Gamma)),
  \end{align*}
  and $\chi_v(0)=0$ a.e. on $\Gamma$, and for all $\eta\in L^2(0,T;H^1(\Gamma))$,
  \begin{align} \label{Pf_CLU:v}
    \int_0^T\langle\partial_t\chi_v,g\eta\rangle_{H^1(\Gamma)}\,dt+\int_0^T(g\nabla_\Gamma\chi_\mu,\nabla_\Gamma\eta)_{L^2(\Gamma)}\,dt = 0
  \end{align}
  and, with $N(v_1,v_2):=F'(v_1)-F'(v_2)$,
  \begin{align} \label{Pf_CLU:mu}
    \int_0^T(g\chi_\mu,\eta)_{L^2(\Gamma)}\,dt = \int_0^T(g\nabla_\Gamma\chi_v,\nabla_\Gamma\eta)_{L^2(\Gamma)}\,dt+\int_0^T(gN(v_1,v_2),\eta)_{L^2(\Gamma)}\,dt.
  \end{align}
  Let $L_g$ be the operator given in Lemma \ref{L:InA_Sur}.
  For each $t\in[0,T]$, we have
  \begin{align} \label{Pf_CLU:TA}
    \int_\Gamma g\chi_v(t)\,d\mathcal{H}^2 = \int_\Gamma g\chi_v(t)\,d\mathcal{H}^2-\int_\Gamma g\chi_v(0)\,d\mathcal{H}^2 = \int_0^t\langle\partial_t\chi_v,g\rangle_{H^1(\Gamma)}\,ds = 0
  \end{align}
  by $\chi_v(0)=0$ a.e. on $\Gamma$, \eqref{E:DtL2_gSu}, and \eqref{Pf_CLU:v} with $\eta\equiv1$ and $T$ replaced by $t$.
  Hence, we can apply $L_g$ to $\chi_v(t)$, and we use \eqref{E:DtIA_Sur} and then \eqref{Pf_CLU:v} with $\eta=L_g\chi_v$ to get
  \begin{align*}
    \frac{1}{2}\|\sqrt{g}\,\nabla_\Gamma L_g\chi_v(t)\|_{L^2(\Gamma)}^2-\frac{1}{2}\|\sqrt{g}\,\nabla_\Gamma L_g\chi_v(0)\|_{L^2(\Gamma)}^2 &= \int_0^t\langle\partial_t\chi_v,gL_g\chi_v\rangle_{H^1(\Gamma)}\,ds \\
    &= -\int_0^t(g\nabla_\Gamma\chi_\mu,\nabla_\Gamma L_g\chi_v)_{L^2(\Gamma)}\,ds.
  \end{align*}
  Using \eqref{E:InA_Sur} with $f=\chi_v$ and $\eta=\chi_\mu$, we further get
  \begin{align} \label{Pf_CLU:Lg}
    \frac{1}{2}\|\sqrt{g}\,\nabla_\Gamma L_g\chi_v(t)\|_{L^2(\Gamma)}^2-\frac{1}{2}\|\sqrt{g}\,\nabla_\Gamma L_g\chi_v(0)\|_{L^2(\Gamma)}^2 = -\int_0^t(g\chi_\mu,\chi_v)_{L^2(\Gamma)}\,ds.
  \end{align}
  To estimate the right-hand side, let $\eta=\chi_v$ in \eqref{Pf_CLU:mu}.
  Then, since
  \begin{align*}
    N(v_1,v_2) = F'(v_1)-F'(v_2) = F''(\lambda v_1+(1-\lambda)v_2)(v_1-v_2)
  \end{align*}
  a.e. on $\Gamma\times(0,T)$ with some constant $\lambda\in(0,1)$ by the mean value theorem,
  \begin{align*}
    \int_0^t(gN(v_1,v_2),\chi_v)_{L^2(\Gamma)}\,ds &= \int_0^t\left(\int_\Gamma gF''(\lambda v_1+(1-\lambda)v_2)|\chi_v|^2\,d\mathcal{H}^2\right)\,ds \\
    &\geq -C_2\int_0^t\left(\int_\Gamma g|\chi_v|^2\,d\mathcal{H}^2\right)\,ds
  \end{align*}
  by \eqref{E:Poten} (note that $g$ is positive on $\Gamma$).
  Hence, by \eqref{Pf_CLU:mu} with $\eta=\chi_v$,
  \begin{align*}
    \int_0^t(g\chi_\mu,\chi_v)_{L^2(\Gamma)}\,ds \geq \int_0^t\|\sqrt{g}\,\nabla_\Gamma\chi_v\|_{L^2(\Gamma)}^2\,ds-C_2\int_0^t\|\sqrt{g}\,\chi_v\|_{L^2(\Gamma)}^2\,ds.
  \end{align*}
  Applying this inequality to \eqref{Pf_CLU:Lg}, we find that
  \begin{multline} \label{Pf_CLU:Ine}
    \frac{1}{2}\|\sqrt{g}\,\nabla_\Gamma L_g\chi_v(t)\|_{L^2(\Gamma)}^2+\int_0^t\|\sqrt{g}\,\nabla_\Gamma\chi_v\|_{L^2(\Gamma)}^2\,ds \\
    \leq \frac{1}{2}\|\sqrt{g}\,\nabla_\Gamma L_g\chi_v(0)\|_{L^2(\Gamma)}^2+C_2\int_0^t\|\sqrt{g}\,\chi_v\|_{L^2(\Gamma)}^2\,ds.
  \end{multline}
  Setting $f=\eta=\chi_v$ in \eqref{E:InA_Sur} and using H\"{o}lder's inequality, we further get
  \begin{align*}
    \|\sqrt{g}\,\chi_v\|_{L^2(\Gamma)}^2 = (g\nabla_\Gamma L_g\chi_v,\nabla_\Gamma\chi_v)_{L^2(\Gamma)} \leq \|\sqrt{g}\,\nabla_\Gamma L_g\chi_v\|_{L^2(\Gamma)}\|\sqrt{g}\,\nabla_\Gamma\chi_v\|_{L^2(\Gamma)}
  \end{align*}
  and thus it follows from Young's inequality that
  \begin{align} \label{Pf_CLU:L2}
    C_2\int_0^t\|\sqrt{g}\,\chi_v\|_{L^2(\Gamma)}^2\,ds \leq \frac{1}{2}\int_0^t\|\sqrt{g}\,\nabla_\Gamma\chi_v\|_{L^2(\Gamma)}^2\,ds+c\int_0^t\|\sqrt{g}\,\nabla_\Gamma L_g\chi_v\|_{L^2(\Gamma)}^2\,ds.
  \end{align}
  We apply this inequality and $\chi_v(0)=0$ a.e. on $\Gamma$ to \eqref{Pf_CLU:Ine} to get
  \begin{align*}
    \frac{1}{2}\|\sqrt{g}\,\nabla_\Gamma L_g\chi_v(t)\|_{L^2(\Gamma)}^2+\frac{1}{2}\int_0^t\|\sqrt{g}\,\nabla_\Gamma\chi_v\|_{L^2(\Gamma)}^2\,ds \leq c\int_0^t\|\sqrt{g}\,\nabla_\Gamma L_g\chi_v\|_{L^2(\Gamma)}^2\,ds
  \end{align*}
  for all $t\in[0,T]$.
  Thus, Gronwall's inequality and \eqref{E:G_Bdd} imply that
  \begin{align*}
    \int_0^T\|\sqrt{g}\,\nabla_\Gamma\chi_v\|_{L^2(\Gamma)}^2\,dt = 0, \quad \nabla_\Gamma\chi_v = 0 \quad\text{a.e. on}\quad \Gamma\times(0,T),
  \end{align*}
  and we get $\chi_v=0$, i.e. $v_1=v_2$ a.e. on $\Gamma\times(0,T)$ by \eqref{E:Poi_Sur}, since \eqref{Pf_CLU:TA} holds.
  Also,
  \begin{align*}
    \int_0^T\|\sqrt{g}\,\chi_\mu\|_{L^2(\Gamma)}\,dt = 0, \quad \chi_\mu = 0 \quad\text{a.e. on}\quad \Gamma\times(0,T)
  \end{align*}
  by \eqref{Pf_CLU:mu} with $\eta=\chi_\mu$ and by \eqref{E:G_Bdd}.
  Hence, $\mu_1=\mu_2$ a.e. on $\Gamma\times(0,T)$.
\end{proof}

\subsection{Weak convergence and characterization of the limit} \label{SS:TFL_WC}
Let us give the proof of Theorem \ref{T:TFL_Weak}.
Recall that we write $c_t=c(1+t)^\gamma$ for $t\geq0$ with some general constants $c\geq1$ and $\gamma>0$ independent of $t$.

\begin{proof}[Proof of Theorem \ref{T:TFL_Weak}]
  Suppose that the assumptions of Theorem \ref{T:TFL_Weak} are satisfied.
  Let
  \begin{align*}
    v^\varepsilon = \mathcal{M}_\varepsilon u^\varepsilon, \quad \mu^\varepsilon = \mathcal{M}_\varepsilon w^\varepsilon \quad\text{on}\quad \Gamma\times(0,\infty), \quad v_0^\varepsilon = \mathcal{M}_\varepsilon u_0^\varepsilon \quad\text{on}\quad \Gamma
  \end{align*}
  for the unique global weak solution $(u^\varepsilon,w^\varepsilon)$ to \eqref{E:CH_CTD} with initial data $u_0^\varepsilon\in H^1(\Omega_\varepsilon)$.
  Since the assumptions of Proposition \ref{P:TFL_Est} are satisfied by the condition (a) of Theorem \ref{T:TFL_Weak}, we can use the results in the previous subsections.
  Moreover,
  \begin{align} \label{Pf_TW:Iea}
    \mathcal{I}_{\varepsilon,\alpha} = \|v_0^\varepsilon\|_{L^2(\Gamma)}^2+|E_g(v_0^\varepsilon)|+\varepsilon^{2-25\alpha/3}+1 \leq c
  \end{align}
  by the condition (a), $\alpha\leq6/25$, and $0<\varepsilon<1$.
  Hence, for each $T>0$,
  \begin{itemize}
    \item $\{v^\varepsilon\}_\varepsilon$ is bounded in $\mathcal{E}_T(\Gamma)\cap L^\infty(0,T;H^1(\Gamma))$,
    \item $\{\mu^\varepsilon\}_\varepsilon$ is bounded in $L^2(0,T;H^1(\Gamma))$
  \end{itemize}
  by Propositions \ref{P:AEna_Gr2}--\ref{P:AEna_Dt} (recall that $\mathcal{E}_T(\Gamma)$ is defined by \eqref{E:Def_ET}).
  Therefore, there exist a sequence $\{\varepsilon_k\}_k$ of positive numbers and functions
  \begin{align*}
    v_T \in \mathcal{E}_T(\Gamma)\cap L^\infty(0,T;H^1(\Gamma)), \quad \mu_T\in L^2(0,T;H^1(\Gamma))
  \end{align*}
  such that $\varepsilon_k\to0$ as $k\to\infty$ and
  \begin{align} \label{Pf_TW:WC}
    \begin{alignedat}{3}
      \lim_{k\to\infty}v^{\varepsilon_k} &= v_T &\quad &\text{weakly-$\ast$ in} &\quad &L^\infty(0,T;H^1(\Gamma)), \\
      \lim_{k\to\infty}\partial_tv^{\varepsilon_k} &= \partial_tv_T &\quad &\text{weakly in} &\quad &L^2(0,T;[H^1(\Gamma)]'), \\
      \lim_{k\to\infty}\mu^{\varepsilon_k} &= \mu_T &\quad &\text{weakly in} &\quad &L^2(0,T;H^1(\Gamma)).
    \end{alignedat}
  \end{align}
  Due to the Aubin--Lions lemma (see \cite[Theorem II.5.16]{BoyFab13}), we may also have
  \begin{align*}
    \lim_{k\to\infty}v^{\varepsilon_k} = v_T \quad\text{strongly in}\quad L^2(0,T;L^2(\Gamma))
  \end{align*}
  and thus $v^{\varepsilon_k}\to v_T$ a.e. on $\Gamma\times(0,T)$ by taking a subsequence.
  Hence,
  \begin{align*}
    \lim_{k\to\infty}F'(v^{\varepsilon_k}) = F'(v_T) \quad\text{a.e. on}\quad \Gamma\times(0,T)
  \end{align*}
  by $F\in C^3(\mathbb{R})$.
  Moreover, it follows from \eqref{E:AEna_mu2} and \eqref{Pf_TW:Iea} that
  \begin{align*}
    \|F'(v^{\varepsilon_k})\|_{L^2(0,T;L^2(\Gamma))} \leq c_T \quad\text{for all}\quad k\in\mathbb{N}.
  \end{align*}
  Thus, we can apply Lemma \ref{L:WeDo} to find that
  \begin{align} \label{Pf_TW:Non}
    \lim_{k\to\infty}F'(v^{\varepsilon_k}) = F'(v_T) \quad\text{weakly in}\quad L^2(0,T;L^2(\Gamma)).
  \end{align}
  Now, let $\eta\in L^2(0,T;H^1(\Gamma))$.
  By \eqref{E:ACW_WFv} and \eqref{E:ACW_wWe}, we have
  \begin{align} \label{Pf_TW:Wk}
    \int_0^T\langle\partial_tv^{\varepsilon_k},g\eta\rangle_{H^1(\Gamma)}\,dt+\int_0^T(g\nabla_\Gamma\mu^{\varepsilon_k},\nabla_\Gamma\eta)_{L^2(\Gamma)}\,dt = R_{\varepsilon_k}(\eta;T)
  \end{align}
  and
  \begin{align*}
    \int_0^T(g\mu^{\varepsilon_k},\eta)_{L^2(\Gamma)}\,dt = \int_0^T(g\nabla_\Gamma v^{\varepsilon_k},\nabla_\Gamma\eta)_{L^2(\Gamma)}\,dt+\int_0^T(g\{F'(v^{\varepsilon_k})+\zeta^{\varepsilon_k}\},\eta)_{L^2(\Gamma)}\,dt.
  \end{align*}
  We send $\varepsilon_k\to0$ in these equalities and apply \eqref{Pf_TW:WC}, \eqref{Pf_TW:Non}, and
  \begin{align} \label{Pf_TW:Res}
    \begin{aligned}
      |R_{\varepsilon_k}(\eta;T)| &\leq c_T\varepsilon_k^{1-3\alpha}\|\nabla_\Gamma\eta\|_{L^2(0,T;L^2(\Gamma))} \to 0, \\
      \|\zeta^{\varepsilon_k}\|_{L^2(0,T;L^2(\Gamma))} &\leq c_T\varepsilon_k^{1-3\alpha} \to 0
    \end{aligned}
  \end{align}
  by \eqref{E:AWv_Res}, \eqref{E:AwS_Res}, and $\alpha\leq6/25$.
  Then, we find that $(v_T,\mu_T)$ satisfies \eqref{E:CLi_Wv} and \eqref{E:CLi_Wmu}.
  We also need to verify the initial condition on $v_T$.
  To this end, let
  \begin{align*}
    \eta(y,t) = f(t)\eta_0(y), \quad (y,t) \in \Gamma\times[0,T],
  \end{align*}
  where $f\in C^1([0,T])$ satisfies $f(0)=1$ and $f(T)=0$, and $\eta_0\in H^1(\Gamma)$ is any function.
  We take this $\eta$ in \eqref{Pf_TW:Wk} and apply \eqref{E:DtL2_gSu} and $v^{\varepsilon_k}(0)=v_0^{\varepsilon_k}$ a.e. on $\Gamma$ to get
  \begin{align*}
    -(v_0^{\varepsilon_k},g\eta_0)_{L^2(\Gamma)}-\int_0^T(v^{\varepsilon_k},g\partial_t\eta)_{L^2(\Gamma)}\,dt+\int_0^T(g\nabla_\Gamma\mu^\varepsilon,\nabla_\Gamma\eta)_{L^2(\Gamma)}\,dt = R_{\varepsilon_k}(\eta;T).
  \end{align*}
  Let $\varepsilon_k\to0$.
  Then, by the condition (b) of Theorem \ref{T:TFL_Weak}, \eqref{Pf_TW:WC}, and \eqref{Pf_TW:Res},
  \begin{align*}
    -(v_0,g\eta_0)_{L^2(\Gamma)}-\int_0^T(v_T,g\partial_t\eta)_{L^2(\Gamma)}\,dt+\int_0^T(g\nabla_\Gamma\mu_T,\nabla_\Gamma\eta)_{L^2(\Gamma)}\,dt = 0.
  \end{align*}
  On the other hand, we take the above $\eta$ in \eqref{E:CLi_Wv} and use \eqref{E:DtL2_gSu} to get
  \begin{align*}
    -(v_T(0),g\eta_0)_{L^2(\Gamma)}-\int_0^T(v_T,g\partial_t\eta)_{L^2(\Gamma)}\,dt+\int_0^T(g\nabla_\Gamma\mu_T,\nabla_\Gamma\eta)_{L^2(\Gamma)}\,dt = 0.
  \end{align*}
  From the above two relations, we deduce that
  \begin{align*}
    (v_T(0),g\eta_0)_{L^2(\Gamma)} = (v_0,g\eta_0)_{L^2(\Gamma)} \quad\text{for all}\quad \eta_0\in H^1(\Gamma),
  \end{align*}
  and $v_T(0)=v_0$ a.e. on $\Gamma$ by the density of $H^1(\Gamma)$ in $L^2(\Gamma)$ and \eqref{E:G_Bdd}.
  Hence, $(v_T,\mu_T)$ is a unique weak solution to \eqref{E:CH_Lim} on $[0,T)$, where the uniqueness is due to Proposition \ref{P:CLi_Uni}.
  The above discussions also show that, for any sequence $\{\varepsilon_\ell\}_\ell$ of positive numbers convergent to zero, there exists a subsequence $\{\varepsilon_k\}_k$ such that $\{(v^{\varepsilon_k},\mu^{\varepsilon_k})\}_k$ converges to the unique weak solution $(v_T,\mu_T)$ to \eqref{E:CH_Lim} on $[0,T)$ in the sense of \eqref{Pf_TW:WC}.
  Thus, we have the convergence \eqref{E:TFL_Weak} of the full sequence $\{(v^\varepsilon,\mu^\varepsilon)\}_\varepsilon$ to $(v_T,\mu_T)$.

  If $T<T'$, then $v_T=v_{T'}$ and $\mu_T=\mu_{T'}$ a.e. on $\Gamma\times(0,T)$ by the uniqueness of a weak solution to \eqref{E:CH_Lim} on $[0,T)$.
  Hence, setting $v:=v_T$ and $\mu:=\mu_T$ on $\Gamma\times(0,T)$ for each $T>0$, we find that the convergence result \eqref{E:TFL_Weak} holds for all $T>0$ and $(v,\mu)$ is a unique global weak solution to \eqref{E:CH_Lim}.
\end{proof}

\begin{remark} \label{R:TFLW}
  If we use the estimate \eqref{E:AwR_Wor} for $\nabla_\Gamma\zeta^\varepsilon$ instead of \eqref{E:AwS_Res}, then we have the estimates in Section \ref{SS:TFL_AEn} with $\varepsilon^{2-25\alpha/3}$ replaced by $\varepsilon^{2-10\alpha}$.
  Thus, in this case, it is necessary to assume $\alpha\leq1/5$ to obtain the results of Theorem \ref{T:TFL_Weak}, which is weaker than the condition $\alpha\leq6/25$ in Theorem \ref{T:TFL_Weak}.
  Recall that \eqref{E:AwR_Wor} comes from a natural idea to estimate $\nabla_\Gamma\zeta_G^\varepsilon$, as explained in Remarks \ref{R:ANL_TGr} and \ref{R:ACW_wSt}.
  Also, we note that it is not clear whether the bound $6/25$ is optimal, but we do not touch this problem in this paper.
\end{remark}

As a consequence of Theorem \ref{T:TFL_Weak}, we also have the following result.

\begin{corollary} \label{C:CLi_Ex}
  Suppose that Assumption \ref{A:Poten} is satisfied.
  Then, for each $v_0\in H^1(\Gamma)$, there exists a unique global weak solution to \eqref{E:CH_Lim}.
\end{corollary}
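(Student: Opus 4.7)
The plan is to obtain existence from Theorem \ref{T:TFL_Weak} by exhibiting a family of thin-domain initial data whose weighted averages converge to $v_0$; uniqueness is immediate from Proposition \ref{P:CLi_Uni}, so the only real work is on the existence side.

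My candidate is the simplest one: set $u_0^\varepsilon := \bar{v}_0$, the constant extension of $v_0$ in the normal direction. I must then verify conditions (a) and (b) of Theorem \ref{T:TFL_Weak}, for which $\alpha = 0$ will suffice. For the thin-domain part of (a), the $L^2(\Omega_\varepsilon)$-norm scales like $\varepsilon^{1/2}$ by \eqref{E:Lp_CE}, and the gradient energy does too because $\nabla\bar{v}_0 = \mathbf{R}\,\overline{\nabla_\Gamma v_0}$ by \eqref{E:CEGr_NB} with $\mathbf{R}$ bounded on $\overline{\mathcal{N}_\delta}$. For the potential part, I would use $|F(z)|\leq c(|z|^4+1)$ from \eqref{E:Int_Fzero} together with the Sobolev embedding $H^1(\Gamma)\hookrightarrow L^4(\Gamma)$ from \eqref{E:SoSu_H1} to obtain $\int_{\Omega_\varepsilon}|F(\bar{v}_0)|\,dx\leq c\varepsilon$. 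Combining these ingredients gives $\varepsilon^{-1}\|u_0^\varepsilon\|_{L^2(\Omega_\varepsilon)}^2+\varepsilon^{-1}|E_\varepsilon(u_0^\varepsilon)|\leq c$, which is \eqref{E:TFu0_CTD} with $\alpha=0$.

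For the surface part of (a) and for (b), the key observation is that $\mathcal{M}_\varepsilon\bar{v}_0$ differs from $v_0$ only through the Jacobian factor. Since $v_0(y)$ is constant in $r$, one computes $\mathcal{M}_\varepsilon\bar{v}_0(y) = v_0(y)\cdot\frac{1}{\varepsilon g(y)}\int_{\varepsilon g_0(y)}^{\varepsilon g_1(y)}J(y,r)\,dr$, and combining $\frac{1}{\varepsilon g}\int_{\varepsilon g_0}^{\varepsilon g_1}1\,dr=1$ with the smallness $|J-1|\leq c\varepsilon$ from \eqref{E:J_Diff} yields the pointwise estimate $|\mathcal{M}_\varepsilon\bar{v}_0-v_0|\leq c\varepsilon|v_0|$ on $\Gamma$. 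In particular $\mathcal{M}_\varepsilon u_0^\varepsilon\to v_0$ strongly, hence weakly, in $L^2(\Gamma)$, giving (b). Moreover, Lemma \ref{L:Ave_Hk} applied to $\bar{v}_0$ together with the $\varepsilon$-scaling of $\|\bar{v}_0\|_{H^1(\Omega_\varepsilon)}$ established above produces $\|\mathcal{M}_\varepsilon u_0^\varepsilon\|_{H^1(\Gamma)}\leq c\|v_0\|_{H^1(\Gamma)}$, uniformly in $\varepsilon$. The potential term $\|F(\mathcal{M}_\varepsilon u_0^\varepsilon)\|_{L^1(\Gamma)}$ is then handled exactly as on the thin domain via \eqref{E:Int_Fzero} and \eqref{E:SoSu_H1}, yielding \eqref{E:TFu0_Sur}.

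With (a) and (b) verified, Theorem \ref{T:TFL_Weak} delivers a global weak solution $(v,\mu)$ to \eqref{E:CH_Lim} with $v|_{t=0}=v_0$, and uniqueness from Proposition \ref{P:CLi_Uni} completes the corollary. I do not expect any genuine obstacle here: the only place requiring a moment's care is the near-commutation $\mathcal{M}_\varepsilon\circ\overline{(\,\cdot\,)}=\mathrm{Id}+O(\varepsilon)$, but this is a purely pointwise consequence of \eqref{E:J_Diff}. The whole argument is a routine bootstrap from the thin-film limit theorem itself, whose real content lies not in this corollary but in the analysis leading up to Theorem \ref{T:TFL_Weak}.
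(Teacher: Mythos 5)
Your argument is correct; the only (small) divergence from the paper is the choice of thin--domain data. You take $u_0^\varepsilon = \bar{v}_0$, whereas the paper divides by the Jacobian, setting $u_0^\varepsilon(x) = \bar{v}_0(x)/J(\pi(x),d(x))$, which makes $\mathcal{M}_\varepsilon u_0^\varepsilon = v_0$ \emph{exactly}. With the paper's choice, condition (b) is trivial (the sequence is constantly $v_0$) and \eqref{E:TFu0_Sur} reduces to a bound independent of $\varepsilon$ by construction; with your choice, you pick up the extra step of showing $|\mathcal{M}_\varepsilon\bar{v}_0 - v_0|\le c\varepsilon|v_0|$ pointwise from \eqref{E:J_Diff} and then invoking Lemma \ref{L:Ave_Hk} together with \eqref{E:CEGr_Bd}, \eqref{E:Lp_CE} to keep $\|\mathcal{M}_\varepsilon\bar{v}_0\|_{H^1(\Gamma)}$ uniformly bounded. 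The two choices are equally valid and lead to essentially identical verifications of condition (a); the paper's Jacobian-corrected data is marginally slicker because it eliminates the $\varepsilon$-dependence of the averaged initial data altogether, while yours is marginally more natural to write down. There is no gap in your reasoning.
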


\begin{proof}
  Let $v_0\in H^1(\Gamma)$ be any function.
  For $\varepsilon>0$, we define
  \begin{align*}
    u_0^\varepsilon(x) := \frac{\bar{v}_0(x)}{J\bigl(\pi(x),d(x)\bigr)} = \frac{v_0(y)}{J(y,r)}, \quad x = y+r\bm{\nu}(y) \in \Omega_\varepsilon.
  \end{align*}
  Then, $\mathcal{M}_\varepsilon u_0^\varepsilon=v_0$ on $\Gamma$.
  Since $|F(z)|\leq c(|z|^4+1)$ by Remark \ref{R:Po_Grow}, we also have
  \begin{align*}
    \|u_0^\varepsilon\|_{L^2(\Omega_\varepsilon)}^2+|E_\varepsilon(u_0^\varepsilon)| &\leq c\Bigl(\|u_0^\varepsilon\|_{L^2(\Omega_\varepsilon)}^2+\|\nabla u_0^\varepsilon\|_{L^2(\Omega_\varepsilon)}^2+\|u_0^\varepsilon\|_{L^4(\Omega_\varepsilon)}^4+|\Omega_\varepsilon|\Bigr) \\
    &\leq c\varepsilon\Bigl(\|v_0\|_{H^1(\Gamma)}^2+\|v_0\|_{L^4(\Gamma)}^4+1\Bigr) \leq c\varepsilon\Bigl(\|v_0\|_{H^1(\Gamma)}^2+1\Bigr)
  \end{align*}
  by \eqref{E:CEGr_Bd}, \eqref{E:J_Bdd}, \eqref{E:Lp_CE}, and \eqref{E:SoSu_H1}.
  Hence, the conditions (a) and (b) in Theorem \ref{T:TFL_Weak} hold and we get a unique global weak solution to \eqref{E:CH_Lim} by Theorem \ref{T:TFL_Weak}.
\end{proof}

\subsection{Difference estimates} \label{SS:TFL_DE}
Let us estimate the difference of $\mathcal{M}_\varepsilon u^\varepsilon$ and $v$ on $\Gamma$.
Recall that $L_g$ is the operator given in Lemma \ref{L:InA_Sur}.

\begin{theorem} \label{T:DES_Pre}
  Under the assumptions of Theorem \ref{T:DiE_Sur}, we have
  \begin{multline} \label{E:DES_Pre}
    \|\nabla_\Gamma L_g(\mathcal{M}_\varepsilon u^\varepsilon-v)\|_{C([0,T];L^2(\Gamma))}+\|\mathcal{M}_\varepsilon u^\varepsilon-v\|_{L^2(0,T;H^1(\Gamma))} \\
    \leq ce^{cT}\Bigl\{\|\nabla_\Gamma L_g(\mathcal{M}_\varepsilon u_0^\varepsilon-v_0)\|_{L^2(\Gamma)}+\varepsilon^{1-3\alpha}(1+T)^3\Bigr\}
  \end{multline}
  for all $T>0$, where $c>0$ is a constant independent of $\varepsilon$ and $T$.
\end{theorem}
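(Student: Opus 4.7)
The plan is to adapt the uniqueness argument for the limit problem (Proposition \ref{P:CLi_Uni}) to the difference
\[ \chi_v := \mathcal{M}_\varepsilon u^\varepsilon - v, \quad \chi_\mu := \mathcal{M}_\varepsilon w^\varepsilon - \mu, \]
while carrying through two sources of error: the residual $R_\varepsilon$ in the weak form \eqref{E:ACW_WFv} of $v^\varepsilon$ and the correction $\zeta^\varepsilon$ in the strong form \eqref{E:ACW_wSt} of $\mu^\varepsilon$. By \eqref{E:AWv_Res} and \eqref{E:AwS_Res}, both contribute the rate $\varepsilon^{1-3\alpha}(1+T)^3$, which will match the second term on the right-hand side of \eqref{E:DES_Pre}.

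First I would verify that $L_g \chi_v(t)$ is well defined. Testing \eqref{E:ACW_WFv} with $\eta \equiv 1$ gives $R_\varepsilon(1;t)=0$, since the residual in Proposition \ref{P:ACW_WFv} is controlled by $\nabla_\Gamma \eta$ only, so $\int_\Gamma g\,v^\varepsilon(t)\,d\mathcal{H}^2$ is conserved in time; the same holds for $v$ via \eqref{E:CLi_Wv}. Combined with \eqref{E:DES_Ini}, this yields $\int_\Gamma g\,\chi_v(t)\,d\mathcal{H}^2 = 0$ for all $t\in[0,T]$, so Lemma \ref{L:InA_Sur} defines $L_g \chi_v(t)\in H^1(\Gamma)$, and Lemma \ref{L:DtIA_Sur} gives
\[ \frac{1}{2}\|\sqrt{g}\,\nabla_\Gamma L_g \chi_v(t)\|_{L^2(\Gamma)}^2 - \frac{1}{2}\|\sqrt{g}\,\nabla_\Gamma L_g \chi_v(0)\|_{L^2(\Gamma)}^2 = \int_0^t \langle \partial_t \chi_v,\, g L_g \chi_v\rangle_{H^1(\Gamma)}\,ds. \]

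Next, I would subtract \eqref{E:CLi_Wv} from \eqref{E:ACW_WFv} and \eqref{E:CLi_Wmu} from \eqref{E:ACW_wWe} to obtain weak equations for $(\chi_v,\chi_\mu)$, test the first with $\eta = L_g \chi_v$ (using \eqref{E:InA_Sur} with $f=\chi_v$ and $\eta = \chi_\mu$ to rewrite $(g\nabla_\Gamma \chi_\mu,\nabla_\Gamma L_g \chi_v)_{L^2(\Gamma)} = (g\chi_\mu,\chi_v)_{L^2(\Gamma)}$) and the second with $\eta = \chi_v$, and combine them to obtain
\begin{multline*}
\frac{1}{2}\|\sqrt{g}\,\nabla_\Gamma L_g \chi_v(t)\|_{L^2(\Gamma)}^2 + \int_0^t \|\sqrt{g}\,\nabla_\Gamma \chi_v\|_{L^2(\Gamma)}^2\,ds = \frac{1}{2}\|\sqrt{g}\,\nabla_\Gamma L_g \chi_v(0)\|_{L^2(\Gamma)}^2 \\ - \int_0^t (g[F'(v^\varepsilon)-F'(v)],\chi_v)_{L^2(\Gamma)}\,ds - \int_0^t (g\zeta^\varepsilon,\chi_v)_{L^2(\Gamma)}\,ds + R_\varepsilon(L_g \chi_v;t).
\end{multline*}
The nonlinear term is bounded below by $-C_2 \int_0^t \|\sqrt{g}\,\chi_v\|_{L^2(\Gamma)}^2\,ds$ via the mean value theorem and the lower bound $F'' \ge -C_2$ from \eqref{E:Poten}, while the identity $\|\sqrt{g}\,\chi_v\|_{L^2(\Gamma)}^2 = (g\nabla_\Gamma L_g \chi_v,\nabla_\Gamma \chi_v)_{L^2(\Gamma)}$ (from \eqref{E:InA_Sur} with $f = \eta = \chi_v$) together with Young's inequality lets me absorb part of the $\int_0^t \|\sqrt{g}\,\nabla_\Gamma \chi_v\|^2\,ds$ term at the cost of a Gronwall-type contribution $c\int_0^t \|\sqrt{g}\,\nabla_\Gamma L_g \chi_v\|^2\,ds$, exactly as in the uniqueness proof.

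The remaining errors are handled as follows: $R_\varepsilon(L_g \chi_v;t)$ is controlled by \eqref{E:AWv_Res}, and $\left|\int_0^t(g\zeta^\varepsilon,\chi_v)_{L^2(\Gamma)}\,ds\right|$ by Cauchy--Schwarz and \eqref{E:AwS_Res}, with the $\|\chi_v\|_{L^2(\Gamma)}$ factor dominated via Poincaré's inequality \eqref{E:Poi_Sur} (available thanks to the mass conservation established above). After Young's inequality each error produces a contribution bounded by $c\varepsilon^{2-6\alpha}(1+T)^6$ plus absorbable pieces of $\int_0^t \|\sqrt{g}\,\nabla_\Gamma \chi_v\|^2\,ds$ and $\int_0^t \|\sqrt{g}\,\nabla_\Gamma L_g \chi_v\|^2\,ds$. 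Gronwall's inequality then delivers the $e^{cT}$ prefactor and the bound on $\|\nabla_\Gamma L_g \chi_v\|_{C([0,T];L^2(\Gamma))}$, and combining the left-over $\int_0^T \|\nabla_\Gamma \chi_v\|^2\,dt$ with \eqref{E:Poi_Sur} once more yields the $L^2(0,T;H^1(\Gamma))$-bound on $\chi_v$ required in \eqref{E:DES_Pre}. The principal bookkeeping obstacle is checking that $L_g \chi_v$ is an admissible test function in the version of \eqref{E:ACW_WFv} truncated to $[0,t]$ and that Lemma \ref{L:DtIA_Sur} applies to $\chi_v \in \mathcal{E}_T(\Gamma)$, so that the residual estimate \eqref{E:AWv_Res} with $\eta = L_g \chi_v$ really yields the stated rate; once these regularity points are settled, the rest is a standard Gronwall computation.
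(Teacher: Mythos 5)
Your proposal reproduces the paper's argument almost step for step: same decomposition $\chi_v = \mathcal{M}_\varepsilon u^\varepsilon - v$, same verification that $\int_\Gamma g\chi_v(t)\,d\mathcal{H}^2=0$ from \eqref{E:DES_Ini} and $R_\varepsilon(1;t)=0$, same use of $L_g$ with Lemma \ref{L:DtIA_Sur} and the identities from \eqref{E:InA_Sur}, same treatment of the $F''$ lower bound, and the same residual bounds \eqref{E:AWv_Res}, \eqref{E:AwS_Res} feeding into Gronwall. The only cosmetic deviation is routing the $\zeta^\varepsilon$ error through Poincar\'e's inequality \eqref{E:Poi_Sur} rather than through the interpolation identity $\|\sqrt{g}\,\chi_v\|_{L^2}^2 = (g\nabla_\Gamma L_g\chi_v,\nabla_\Gamma\chi_v)_{L^2}$ as in \eqref{Pf_CLU:L2}, which leads to the same absorbable terms.
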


\begin{proof}
  Let $v^\varepsilon=\mathcal{M}_\varepsilon u^\varepsilon$, $\mu^\varepsilon=\mathcal{M}_\varepsilon w^\varepsilon$ on $\Gamma\times(0,\infty)$, $v_0^\varepsilon=\mathcal{M}_\varepsilon u_0^\varepsilon$ on $\Gamma$, and
  \begin{align*}
    \chi_v^\varepsilon := v^\varepsilon-v, \quad \chi_\mu^\varepsilon := \mu^\varepsilon-\mu \quad\text{on}\quad \Gamma\times(0,\infty), \quad \chi_{v,0}^\varepsilon := v_0^\varepsilon-v_0 \quad\text{on}\quad \Gamma.
  \end{align*}
  For each $T>0$, we see by Propositions \ref{P:ACW_Reg}, \ref{P:ACW_WFv}, \ref{P:ACW_wWe}, and Definition \ref{D:CLi_GlW} that
  \begin{align*}
    \chi_v^\varepsilon\in\mathcal{E}_T(\Gamma)\cap L^\infty(0,T;H^1(\Gamma)), \quad \chi_\mu^\varepsilon\in L^2(0,T;H^1(\Gamma)),
  \end{align*}
  and $\chi_v^\varepsilon(0)=\chi_{v,0}^\varepsilon$ a.e. on $\Gamma$, and for all $\eta\in L^2(0,T;H^1(\Gamma))$,
  \begin{align} \label{Pf_DS:v}
    \int_0^T\langle\partial_t\chi_v^\varepsilon,g\eta\rangle_{H^1(\Gamma)}\,dt+\int_0^T(g\nabla_\Gamma\chi_\mu^\varepsilon,\nabla_\Gamma\eta)_{L^2(\Gamma)}\,dt = R_\varepsilon(\eta;T)
  \end{align}
  and
  \begin{multline} \label{Pf_DS:mu}
    \int_0^T(g\chi_\mu^\varepsilon,\eta)_{L^2(\Gamma)}\,dt = \int_0^T(g\nabla_\Gamma\chi_v^\varepsilon,\nabla_\Gamma\eta)_{L^2(\Gamma)}\,dt\\
    +\int_0^T(g\{F'(v^\varepsilon)-F'(v)\},\eta)_{L^2(\Gamma)}\,dt+\int_0^T(g\zeta^\varepsilon,\eta)_{L^2(\Gamma)}\,dt.
  \end{multline}
  By \eqref{E:DES_Ini}, we have $\int_\Gamma g\chi_{v,0}^\varepsilon\,d\mathcal{H}^2=0$.
  Thus, noting that $R_\varepsilon(\eta;t)=0$ for $\eta\equiv1$ by \eqref{E:AWv_Res}, we see by \eqref{E:DtL2_gSu} and \eqref{Pf_DS:v} with $\eta\equiv1$ and $T$ replaced by $t\in[0,T]$ that
  \begin{align} \label{Pf_DS:TA}
    \int_\Gamma g\chi_v^\varepsilon(t)\,d\mathcal{H}^2 = \int_\Gamma g\chi_v^\varepsilon(t)\,d\mathcal{H}^2-\int_\Gamma g\chi_{v,0}^\varepsilon\,d\mathcal{H}^2 = \int_0^t\langle\partial_t\chi_v^\varepsilon,g\rangle_{H^1(\Gamma)}\,ds = 0.
  \end{align}
  Hence, we can apply $L_g$ to $\chi_v^\varepsilon(t)$, and we can deduce that
  \begin{multline*}
    \frac{1}{2}\|\sqrt{g}\,\nabla_\Gamma L_g\chi_v^\varepsilon(t)\|_{L^2(\Gamma)}^2+\int_0^t\|\sqrt{g}\,\nabla_\Gamma\chi_v^\varepsilon\|_{L^2(\Gamma)}^2\,ds \\
    \leq \frac{1}{2}\|\sqrt{g}\,\nabla_\Gamma L_g\chi_{v,0}^\varepsilon\|_{L^2(\Gamma)}^2+C_2\int_0^t\|\sqrt{g}\,\chi_v^\varepsilon\|_{L^2(\Gamma)}^2\,ds \\
    +|R_\varepsilon(L_g\chi_v^\varepsilon;t)|+\left|\int_0^t(g\zeta^\varepsilon,\chi_v^\varepsilon)_{L^2(\Gamma)}\,ds\right|
  \end{multline*}
  as in the proof of Proposition \ref{P:CLi_Uni} (see \eqref{Pf_CLU:Ine}), by setting $\eta=L_g\chi_v^\varepsilon$ in \eqref{Pf_DS:v} and $\eta=\chi_v^\varepsilon$ in \eqref{Pf_DS:mu} with $T$ replaced by $t$.
  Moreover,
  \begin{align*}
    |R_\varepsilon(L_g\chi_v^\varepsilon;t)| &\leq c\varepsilon^{1-3\alpha}(1+t)^3\|\sqrt{g}\,\nabla_\Gamma L_g\chi_v^\varepsilon\|_{L^2(0,t;L^2(\Gamma))}, \\
    \left|\int_0^t(g\zeta^\varepsilon,\chi_v^\varepsilon)_{L^2(\Gamma)}\,ds\right| &\leq c\varepsilon^{1-3\alpha}(1+t)^3\|\sqrt{g}\,\chi_v^\varepsilon\|_{L^2(0,t;L^2(\Gamma))}
  \end{align*}
  by \eqref{E:G_Bdd}, \eqref{E:AWv_Res}, \eqref{E:AwS_Res}, and H\"{o}lder's inequality.
  Thus, by Young's inequality,
  \begin{multline*}
    \frac{1}{2}\|\sqrt{g}\,\nabla_\Gamma L_g\chi_v^\varepsilon(t)\|_{L^2(\Gamma)}^2+\int_0^t\|\sqrt{g}\,\nabla_\Gamma\chi_v^\varepsilon\|_{L^2(\Gamma)}^2\,ds \\
    \leq \frac{1}{2}\|\sqrt{g}\,\nabla_\Gamma L_g\chi_{v,0}^\varepsilon\|_{L^2(\Gamma)}^2+c\left(\int_0^t\|\sqrt{g}\,\nabla_\Gamma L_g\chi_v^\varepsilon\|_{L^2(\Gamma)}^2\,ds+\varepsilon^{2-6\alpha}(1+t)^6\right) \\
    +C\int_0^t\|\sqrt{g}\,\chi_v^\varepsilon\|_{L^2(\Gamma)}^2\,ds
  \end{multline*}
  with constants $c,C>0$ independent of $\varepsilon$, $t$, and $T$.
  To the last term, we apply \eqref{Pf_CLU:L2} with $C_2$ and $\chi_v$ replaced by $C$ and $\chi_v^\varepsilon$, respectively.
  Then, we find that
  \begin{multline} \label{Pf_DS:Fin}
    \frac{1}{2}\|\sqrt{g}\,\nabla_\Gamma L_g\chi_v^\varepsilon(t)\|_{L^2(\Gamma)}^2+\frac{1}{2}\int_0^t\|\sqrt{g}\,\nabla_\Gamma\chi_v^\varepsilon\|_{L^2(\Gamma)}^2\,ds \\
    \leq \frac{1}{2}\|\sqrt{g}\,\nabla_\Gamma L_g\chi_{v,0}^\varepsilon\|_{L^2(\Gamma)}^2+c\left(\int_0^t\|\sqrt{g}\,\nabla_\Gamma L_g\chi_v^\varepsilon\|_{L^2(\Gamma)}^2\,ds+\varepsilon^{2-6\alpha}(1+t)^6\right).
  \end{multline}
  Since we can apply \eqref{E:Poi_Sur} to $\chi_v^\varepsilon(t)$ by \eqref{Pf_DS:TA}, we also have
  \begin{align*}
    \|\chi_v^\varepsilon(t)\|_{H^1(\Gamma)} \leq c\|\nabla_\Gamma\chi_v^\varepsilon(t)\|_{L^2(\Gamma)} \quad\text{for all}\quad t\in[0,T].
  \end{align*}
  Applying this inequality, \eqref{E:G_Bdd}, and $(1+t)^6\leq(1+T)^6$ to \eqref{Pf_DS:Fin}, we obtain
  \begin{multline*}
    \|\nabla_\Gamma L_g\chi_v^\varepsilon(t)\|_{L^2(\Gamma)}^2+\int_0^t\|\chi_v^\varepsilon\|_{H^1(\Gamma)}^2\,ds \\
    \leq c\left(\|\nabla_\Gamma L_g\chi_{v,0}^\varepsilon\|_{L^2(\Gamma)}^2+\varepsilon^{2-6\alpha}(1+T)^6+\int_0^t\|\nabla_\Gamma L_g\chi_v^\varepsilon\|_{L^2(\Gamma)}^2\,ds\right)
  \end{multline*}
  for all $t\in[0,T]$.
  Hence, \eqref{E:DES_Pre} follows from Gronwall's inequality.
\end{proof}

\begin{proof}[Proof of Theorem \ref{T:DiE_Sur}]
  Since $\int_\Gamma g(\mathcal{M}_\varepsilon u_0^\varepsilon-v_0)\,d\mathcal{H}^2=0$ by \eqref{E:DES_Ini}, we have
  \begin{align*}
    \|\nabla_\Gamma L_g(\mathcal{M}_\varepsilon u_0^\varepsilon-v_0)\|_{L^2(\Gamma)} \leq \|\mathcal{M}_\varepsilon u_0^\varepsilon-v_0\|_{[H^1(\Gamma)]'} \leq \|\mathcal{M}_\varepsilon u_0^\varepsilon-v_0\|_{L^2(\Gamma)}
  \end{align*}
  by \eqref{E:IAS_Bdd}.
  By this inequality and \eqref{E:DES_Pre}, we get \eqref{E:DiE_Sur}.
\end{proof}

We also derive the difference estimate \eqref{E:DiE_CTD} of $u^\varepsilon$ and $v$ on $\Omega_\varepsilon$.

\begin{proof}[Proof of Theorem \ref{T:DiE_CTD}]
  It follows from \eqref{E:ADif_Lp}, \eqref{E:ATG_L2D}, and \eqref{E:TFLE_H1} that
  \begin{align*}
    &\varepsilon^{-1/2}\Bigl(\left\|u^\varepsilon-\overline{\mathcal{M}_\varepsilon u^\varepsilon}\right\|_{L^2(0,T;L^2(\Omega_\varepsilon))}+\left\|\overline{\mathbf{P}}\nabla u^\varepsilon-\overline{\nabla_\Gamma\mathcal{M}_\varepsilon u^\varepsilon}\right\|_{L^2(0,T;L^2(\Omega_\varepsilon))}\Bigr) \\
    &\leq c\varepsilon^{1/2}\|u^\varepsilon\|_{L^2(0,T;H^2(\Omega_\varepsilon))} \leq c\varepsilon^{1-\alpha}(1+T).
  \end{align*}
  Also, we use \eqref{E:Lp_CE} and then \eqref{E:DiE_Sur} to find that
  \begin{align*}
    &\varepsilon^{-1/2}\Bigl(\left\|\overline{\mathcal{M}_\varepsilon u^\varepsilon}-\bar{v}\right\|_{L^2(0,T;L^2(\Omega_\varepsilon))}+\left\|\overline{\nabla_\Gamma\mathcal{M}_\varepsilon u^\varepsilon}-\overline{\nabla_\Gamma v}\right\|_{L^2(0,T;L^2(\Omega_\varepsilon))}\Bigr) \\
    &\leq c\|\mathcal{M}_\varepsilon u^\varepsilon-v\|_{L^2(0,T;H^1(\Gamma))} \leq ce^{cT}\Bigl\{\|\mathcal{M}_\varepsilon u_0^\varepsilon-v_0\|_{L^2(\Gamma)}+\varepsilon^{1-3\alpha}(1+T)^3\Bigr\}.
  \end{align*}
  Combining these estimates, we get \eqref{E:DiE_CTD}.
  Also, since $\partial_{\nu_\varepsilon}u^\varepsilon(t)=0$ on $\partial\Omega_\varepsilon$ for a.a. $t>0$ by Proposition \ref{P:CHT_Reg}, we can use \eqref{E:ND_CTD} and \eqref{E:TFLE_H1} to obtain
  \begin{align*}
    \varepsilon^{-1/2}\|\partial_\nu u^\varepsilon\|_{L^2(0,T;L^2(\Omega_\varepsilon))} \leq c\varepsilon^{1/2}\|u^\varepsilon\|_{L^2(0,T;H^2(\Omega_\varepsilon))} \leq c\varepsilon^{1-\alpha}(1+T).
  \end{align*}
  Hence, \eqref{E:DiE_ND} is valid.
\end{proof}

\section{Uniform elliptic regularity estimate} \label{S:Uni_ER}
This section is devoted to the proof of Lemma \ref{L:UER_CTD}.
As mentioned after the lemma, the proof requires long and careful discussions, since we need to show that the constant $c$ in \eqref{E:UER_CTD} is independent of $\varepsilon$.
Thus, we emphasize that the constants appearing below are all independent of $\varepsilon$.
We split the proof in several subsections below.

\subsection{Preliminaries} \label{SS:UER_Pr}
First, we give some auxiliary results.

\begin{lemma} \label{L:Op_Cov}
  Let $\mathcal{O}:=(0,1)^2$.
  There exist a finite number of $C^5$ local parametrizations $\psi_\ell\colon\mathcal{O}\to\Gamma$ with $\ell=1,\dots,L$ such that $\{\psi_\ell(\mathcal{O})\}_{\ell=1}^L$ is an open cover of $\Gamma$.
\end{lemma}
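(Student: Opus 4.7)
The plan is straightforward and combines the definition of a $C^5$ embedded surface with the compactness of $\Gamma$. First, I would invoke the standard local chart property: since $\Gamma$ is a $C^5$ closed surface in $\mathbb{R}^3$, for each $p\in\Gamma$ there exist an open neighborhood $V_p\subset\mathbb{R}^2$ of some point $q_p$ and a $C^5$ immersion $\varphi_p\colon V_p\to\mathbb{R}^3$ such that $\varphi_p(q_p)=p$ and $\varphi_p$ is a homeomorphism from $V_p$ onto an open neighborhood $U_p$ of $p$ in $\Gamma$.

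Next, I would fix a radius $r_p>0$ small enough that the open square $Q_p:=q_p+(-r_p,r_p)^2$ is contained in $V_p$, and define the affine bijection $A_p\colon\mathcal{O}\to Q_p$ by $A_p(s):=q_p+r_p(2s_1-1,2s_2-1)$ for $s=(s_1,s_2)\in\mathcal{O}$. Setting $\tilde{\psi}_p:=\varphi_p\circ A_p$ produces a $C^5$ map from $\mathcal{O}$ to $\Gamma$. Its image $\varphi_p(Q_p)$ is open in $\Gamma$, because $\varphi_p$ is a homeomorphism from $V_p$ onto the open subset $U_p\subset\Gamma$ and $Q_p$ is open in $V_p$; moreover, this image contains $p=\varphi_p(q_p)$.

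Finally, the collection $\{\tilde{\psi}_p(\mathcal{O})\}_{p\in\Gamma}$ is then an open cover of the compact set $\Gamma$, so a finite subcover indexed by $p_1,\dots,p_L$ exists. Setting $\psi_\ell:=\tilde{\psi}_{p_\ell}$ gives the desired parametrizations, completing the proof.

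There is essentially no hard step in this argument; the only points to verify carefully are that $\psi_\ell(\mathcal{O})$ is open in $\Gamma$ (which follows from the homeomorphism property of the original chart maps together with the openness of $Q_p\subset V_p$) and that $C^5$ regularity is preserved (which is automatic under composition with an affine map). Alternatively, one could observe that the result is purely topological--differential and appears in essentially every textbook on surfaces or manifolds; the argument above simply makes explicit how to reparametrize each chart domain onto the fixed reference square $\mathcal{O}$.
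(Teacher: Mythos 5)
Your proof is correct and follows essentially the same route as the paper: invoke the local chart property of a $C^5$ surface, shrink each chart domain to a square (the paper uses a rectangle) inside the original domain, reparametrize affinely onto $\mathcal{O}$, and extract a finite subcover by compactness of $\Gamma$. The only cosmetic difference is that you spell out the homeomorphism/openness verification a bit more explicitly, which the paper leaves implicit.
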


\begin{proof}
  Since $\Gamma$ is a $C^5$ surface without boundary, for each $y\in\Gamma$, there exist an open set $\mathcal{U}_y$ in $\mathbb{R}^2$ and a $C^5$ local parametrization $\psi_y\colon\mathcal{U}_y\to\Gamma$ such that $y\in\psi_y(\mathcal{U}_y)$.
  Then, since $\mathcal{U}_y$ is open in $\mathbb{R}^2$, we can take an open rectangular
  \begin{align*}
    \mathcal{O}_y = (a_y^1,b_y^1)\times(a_y^2,b_y^2) \subset \mathcal{U}_y
  \end{align*}
  such that $y\in\psi_y(\mathcal{O}_y)$.
  We restrict $\psi_y$ to $\mathcal{O}_y$ and replace it with
  \begin{align*}
    \tilde{\psi}_y(s_1,s_2) := \psi_y((1-s_1)a_y^1+s_1b_y^1,(1-s_2)a_y^2+s_2b_y^2), \quad (s_1,s_2) \in \mathcal{O} = (0,1)^2.
  \end{align*}
  Then, $\psi_y$ is defined on $\mathcal{O}$ and $y\in\psi_y(\mathcal{O})$.
  Since $\Gamma$ is compact and $\{\psi_y(\mathcal{O})\}_{y\in\Gamma}$ is an open cover of $\Gamma$, it has a finite subcover.
  Thus, by relabeling, we get a finite number of $C^5$ local parametrizations $\psi_\ell\colon\mathcal{O}\to\Gamma$ such that $\{\psi_\ell(\mathcal{O})\}_{\ell=1}^L$ is an open cover of $\Gamma$.
\end{proof}

Let $\mathcal{O}=(0,1)^2$.
We take a $C^5$ local parametrization and its inverse mapping
\begin{align*}
  \psi\colon\mathcal{O} \to \psi(\mathcal{O})\subset\Gamma, \quad \psi^{-1} = (\psi_1^{-1},\psi_2^{-1})\colon\psi(\mathcal{O}) \to \mathcal{O}.
\end{align*}
Let $\theta=(\theta_{ij})_{i,j}$ be the Riemannian metric of $\Gamma$ on $\mathcal{O}$ given by
\begin{align} \label{E:Def_Rie}
  \theta_{ij}(s') := \partial_{s_i}\psi(s')\cdot\partial_{s_j}\psi(s'), \quad s'=(s_1,s_2) \in \mathcal{O}, \quad i,j=1,2,
\end{align}
and let $\theta^{-1}=(\theta^{ij})_{i,j}$ be the inverse matrix of $\theta$.
We write $\zeta^\flat:=\zeta\circ\psi$ on $\mathcal{O}$ for a function $\zeta$ on $\Gamma$.
Let us construct a local parametrization of $\overline{\Omega_\varepsilon}$ by using $\psi$ and give related results.
Recall that $\bar{\eta}$ is the constant extension of a function $\eta$ on $\Gamma$ in the normal direction of $\Gamma$.

\begin{lemma} \label{L:LP_CTD}
  For $s=(s',s_3)\in\mathcal{O}\times[0,\varepsilon]$, we define
  \begin{align} \label{E:Def_LPC}
    \Psi_\varepsilon(s) := \psi(s')+r_\varepsilon(s)\bm{\nu}^\flat(s'), \quad r_\varepsilon(s) := (\varepsilon-s_3)g_0^\flat(s')+s_3g_1^\flat(s').
  \end{align}
  Then, $\Psi_\varepsilon\colon\mathcal{O}\times[0,\varepsilon]\to\Psi_\varepsilon(\mathcal{O}\times[0,\varepsilon])\subset\overline{\Omega_\varepsilon}$ is invertible with inverse mapping
  \begin{align} \label{E:LPC_Inv}
    \Psi_\varepsilon^{-1}(x) = \left(\bar{\psi}_1^{-1}(x),\bar{\psi}_2^{-1}(x),\frac{d(x)-\varepsilon\bar{g}_0(x)}{\bar{g}(x)}\right), \quad x \in \Psi_\varepsilon(\mathcal{O}\times[0,\varepsilon]).
  \end{align}
  Moreover, if $\mathcal{K}$ is a compact subset of $\mathcal{O}$, then
  \begin{align} \label{E:LPC_Bd}
    \begin{aligned}
      |\partial_s^\alpha\Psi_\varepsilon(s)| &\leq c, \quad s\in\mathcal{K}\times[0,\varepsilon], \quad |\alpha| = 1,2,3, \\
      |\partial_x^\alpha\Psi_\varepsilon^{-1}(x)| &\leq c, \quad x\in\Psi_\varepsilon(\mathcal{K}\times[0,\varepsilon]), \quad |\alpha| = 1,2,3.
    \end{aligned}
  \end{align}
  Here, $\alpha=(\alpha_1,\alpha_2,\alpha_3)^{\mathrm{T}}\in\mathbb{Z}_{\geq0}^3$ is a multi-index and
  \begin{align*}
    \partial_s^\alpha = \partial_{s_1}^{\alpha_1}\partial_{s_2}^{\alpha_2}\partial_{s_3}^{\alpha_3}, \quad \partial_x^\alpha = \partial_{x_1}^{\alpha_1}\partial_{x_2}^{\alpha_2}\partial_{x_3}^{\alpha_3}, \quad |\alpha| = \alpha_1+\alpha_2+\alpha_3.
  \end{align*}
\end{lemma}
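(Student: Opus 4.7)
The plan is to establish invertibility of $\Psi_\varepsilon$ by explicitly constructing the inverse from the Fermi-coordinate decomposition of $\mathcal{N}_\delta$, and then to read off the uniform derivative bounds by tracking the $\varepsilon$-dependence of each factor. The formula for $\Psi_\varepsilon^{-1}$ is essentially forced by \eqref{E:Fermi}, and the bounds then follow from the $C^5$ regularity of $\Gamma$ together with the observation that $r_\varepsilon = O(\varepsilon)$.

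For invertibility, fix $s=(s',s_3)\in\mathcal{O}\times[0,\varepsilon]$. Since $r_\varepsilon(s)$ is a convex combination of $\varepsilon g_0^\flat(s')$ and $\varepsilon g_1^\flat(s')$, it lies in the interval $[\varepsilon g_0^\flat(s'),\varepsilon g_1^\flat(s')]\subset[-\delta,\delta]$ for $\varepsilon\leq\varepsilon_\delta$. Hence $\Psi_\varepsilon(s)\in\overline{\Omega_\varepsilon}\subset\overline{\mathcal{N}_\delta}$, and applying \eqref{E:Fermi} to $x=\Psi_\varepsilon(s)=\psi(s')+r_\varepsilon(s)\bm{\nu}^\flat(s')$ yields $\pi(x)=\psi(s')$ and $d(x)=r_\varepsilon(s)$. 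The first identity gives $s'=\psi^{-1}(\pi(x))=(\bar{\psi}_1^{-1}(x),\bar{\psi}_2^{-1}(x))$. Writing $r_\varepsilon(s)=\varepsilon g_0^\flat(s')+s_3(g_1^\flat-g_0^\flat)(s')$ and using $g^\flat\geq c>0$ from \eqref{E:G_Bdd}, one solves $s_3=(d(x)-\varepsilon\bar{g}_0(x))/\bar{g}(x)$. A direct substitution, using the same identities together with $\bar{g}_0\circ\Psi_\varepsilon=g_0^\flat$ etc., shows that $\Psi_\varepsilon$ and the map defined by \eqref{E:LPC_Inv} are mutual inverses on $\Psi_\varepsilon(\mathcal{O}\times[0,\varepsilon])$.

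For the first bound in \eqref{E:LPC_Bd}, the pulled-back objects $\psi$, $\bm{\nu}^\flat$, $g_0^\flat$, $g_1^\flat$ are respectively $C^5$, $C^4$, $C^3$, $C^3$ on $\mathcal{O}$, so all their derivatives of order $\leq 3$ are bounded by constants depending only on $\mathcal{K}$ and on the data of $\Gamma$, $g_0$, $g_1$. The function $r_\varepsilon$ is affine in $s_3$, so $\partial_{s_3}^2 r_\varepsilon\equiv 0$ and $\partial_{s_3}r_\varepsilon=g^\flat(s')$, while the coefficients $(\varepsilon-s_3)$ and $s_3$ multiplying $s'$-derivatives are bounded by $\varepsilon\leq 1$. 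Expanding $\partial_s^\alpha(\psi+r_\varepsilon\bm{\nu}^\flat)$ by Leibniz therefore gives $|\partial_s^\alpha\Psi_\varepsilon|\leq c$ uniformly in $\varepsilon$ and on $\mathcal{K}\times[0,\varepsilon]$. For the second bound, each component of $\Psi_\varepsilon^{-1}$ is handled separately: the maps $\bar{\psi}_i^{-1}=\psi_i^{-1}\circ\pi$ are compositions of $\psi^{-1}\in C^5(\psi(\mathcal{O}))$ with $\pi\in C^4(\overline{\mathcal{N}_\delta})$, hence $C^4$ with derivatives uniformly bounded on the fixed compact set $\pi^{-1}(\psi(\mathcal{K}))\cap\overline{\Omega_\varepsilon}\supset\Psi_\varepsilon(\mathcal{K}\times[0,\varepsilon])$; for the third component, $d\in C^5(\overline{\mathcal{N}_\delta})$ and $\bar{g}_0,\bar{g}\in C^3(\overline{\mathcal{N}_\delta})$ with $\bar{g}\geq c>0$, so the quotient and Leibniz rules produce, for $|\alpha|\leq 3$, a finite sum of terms each bounded independently of $\varepsilon$ (the explicit factor $\varepsilon$ in $\varepsilon\bar{g}_0$ only improves the estimate).

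The main obstacle is bookkeeping: one must verify at every differentiation that no constant silently acquires an $\varepsilon^{-1}$. The two decisive checkpoints are that $r_\varepsilon$ and all its $s$-derivatives are $O(1)$ uniformly in $\varepsilon$, and that $\bar{g}$ is bounded below by a positive constant independent of $\varepsilon$ via \eqref{E:G_Bdd}, so that no division by a vanishing quantity arises in the expansion of $\partial_x^\alpha\bigl((d-\varepsilon\bar{g}_0)/\bar{g}\bigr)$.
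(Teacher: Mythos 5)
Your proof is correct and follows essentially the same route as the paper: invertibility via the Fermi decomposition \eqref{E:Fermi} giving $\pi(\Psi_\varepsilon(s))=\psi(s')$, $d(\Psi_\varepsilon(s))=r_\varepsilon(s)$, and then uniform derivative bounds from the regularity of $\psi$, $\bm{\nu}^\flat$, $g_i^\flat$, $d$, $\pi$, the affine dependence of $r_\varepsilon$ on $s_3$, the lower bound $\bar g\geq c>0$, and $0<\varepsilon<1$. The only minor slip is calling $\pi^{-1}(\psi(\mathcal{K}))\cap\overline{\Omega_\varepsilon}$ a ``fixed'' compact set (it depends on $\varepsilon$); what is fixed is $\pi^{-1}(\psi(\mathcal{K}))\cap\overline{\mathcal{N}_\delta}$, which contains $\Psi_\varepsilon(\mathcal{K}\times[0,\varepsilon])$ for all $\varepsilon\leq\varepsilon_\delta$ and on which the bounds from \eqref{E:CEGr_Bd} are taken.
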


\begin{proof}
  Let $x=\Psi_\varepsilon(s)$.
  Then, we see by \eqref{E:Fermi} that $\pi(x)=\psi(s')$ and
  \begin{align*}
   d(x) = r_\varepsilon(s) = (\varepsilon-s_3)g_0(\psi(s'))+s_3g_1(\psi(s')) = (\varepsilon-s_3)\bar{g}_0(x)+s_3\bar{g}_1(x).
  \end{align*}
  Solving these equations with respect to $s$, we obtain \eqref{E:LPC_Inv}.

  Let $\mathcal{K}$ be a compact subset of $\mathcal{O}$.
  Then, since $\psi$, $\bm{\nu}^\flat$, $g_0^\flat$, and $g_1^\flat$ are of class $C^3$ on $\mathcal{O}$, and since $r_\varepsilon$ is affine in $s_3$ and $0<\varepsilon<1$, we have the first line of \eqref{E:LPC_Bd}.
  Also, since $\psi^{-1}$ is of class $C^5$ on $\psi(\mathcal{O})$ and $\psi(\mathcal{K})$ is compact in $\psi(\mathcal{O})$, it follows that
  \begin{align*}
    |\nabla_\Gamma^\ell\psi_i^{-1}(y)| \leq c, \quad y\in\psi(\mathcal{K}), \quad \ell = 1,2,3, \quad i=1,2.
  \end{align*}
  Moreover, $d\in C^5(\overline{\mathcal{N}_\delta})$ and $g_0,g_1\in C^3(\Gamma)$.
  By these facts, \eqref{E:CEGr_Bd}, \eqref{E:G_Bdd}, and $0<\varepsilon<1$, we find that the second line of \eqref{E:LPC_Bd} is valid.
\end{proof}

\begin{remark} \label{R:LP_CTD}
  The reason why we do not scale $[0,\varepsilon]$ to $[0,1]$ is because we would like to avoid having the negative power of $\varepsilon$ when we differentiate $\Psi_\varepsilon^{-1}$.
  If we set
  \begin{align*}
    \zeta_\varepsilon(s) := \psi(s')+\varepsilon r_1(s)\bm{\nu}^\flat(s'), \quad r_1(s) := (1-s_3)g_0^\flat(s')+s_3g_0^\flat(s'),
  \end{align*}
  then $\zeta_\varepsilon$ maps $\mathcal{O}\times[0,1]$ into $\overline{\Omega_\varepsilon}$, but its inverse mapping is
  \begin{align*}
    \zeta_\varepsilon^{-1}(x) = \left(\bar{\psi}_1^{-1}(x),\bar{\psi}_2^{-1}(x),\frac{d(x)-\bar{g}_0(x)}{\varepsilon\bar{g}(x)}\right), \quad x \in \zeta_\varepsilon(\mathcal{O}\times[0,1]).
  \end{align*}
  Since $\nabla d=\bar{\bm{\nu}}$ is of order one in $\overline{\Omega_\varepsilon}$ (although $d$ is of order $\varepsilon$), the derivatives of the third component of $\zeta_\varepsilon^{-1}$ is of order $\varepsilon^{-1}$.
  Hence, if we use $\zeta_\varepsilon$ in the following discussions, then we are enforced to balance this $\varepsilon^{-1}$ with some positive power of $\varepsilon$ in order to derive uniform estimates.
  We can avoid such a terrible task if we do not scale $[0,\varepsilon]$.

  Of course, sometimes it is more convenient to scale $[0,\varepsilon]$ to $[0,1]$.
  For example, we use $\zeta_\varepsilon$ to prove Lemma \ref{L:Ipl_CTD} (see Section \ref{SS:PA_28} below), since we need to rely on a result on a domain with fixed thickness but it is not required to compute the derivatives of $\zeta_\varepsilon^{-1}$.
\end{remark}

\begin{lemma} \label{L:Det_LPC}
  Let $\nabla_s\Psi_\varepsilon$ be the gradient matrix of $\Psi_\varepsilon$, which is written as
  \begin{align*}
    \nabla_s\Psi_\varepsilon = (\partial_{s_i}\Psi_{\varepsilon,j})_{i,j}
    =
    \begin{pmatrix}
      \partial_{s_1}\Psi_\varepsilon & \partial_{s_2}\Psi_\varepsilon & \partial_{s_3}\Psi_\varepsilon
    \end{pmatrix}^{\mathrm{T}}
  \end{align*}
  under our notation.
  Here, $\Psi_{\varepsilon,j}$ is the $j$-th component of $\Psi_\varepsilon$ and $\partial_{s_i}\Psi_\varepsilon\in\mathbb{R}^3$ is considered as a column vector.
  Let $J$ be the function given by \eqref{E:Def_J}.
  Then,
  \begin{align} \label{E:Det_LPC}
    \det\nabla_s\Psi_\varepsilon(s) = g^\flat(s')J(\psi(s'),r_\varepsilon(s))\sqrt{\det\theta(s')}, \quad s = (s',s_3) \in \mathcal{O}\times[0,\varepsilon].
  \end{align}
  Moreover, when $\mathcal{K}$ is a compact subset of $\mathcal{O}$, we have
  \begin{align}
    c^{-1} \leq \det\nabla_s\Psi_\varepsilon(s) &\leq c, \quad s\in\mathcal{K}\times[0,\varepsilon], \label{E:DeLC_Bd} \\
    |[\partial_s^\alpha(\det\nabla_s\Psi_\varepsilon)](s)| &\leq c, \quad s\in\mathcal{K}\times[0,\varepsilon], \quad |\alpha| = 1,2,3. \label{E:DeLC_ds}
  \end{align}
\end{lemma}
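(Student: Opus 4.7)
The plan is to compute $\det \nabla_s \Psi_\varepsilon$ by direct expansion, separating the contributions from the tangent and normal directions at each point of $\psi(\mathcal{O})$. First I would differentiate the definition \eqref{E:Def_LPC}: since $r_\varepsilon$ is affine in $s_3$ with $\partial_{s_3} r_\varepsilon = g^\flat$ and $\partial_{s_3}\psi = \partial_{s_3}\bm{\nu}^\flat = 0$, the third row is simply $\partial_{s_3}\Psi_\varepsilon = g^\flat \bm{\nu}^\flat$. For $i=1,2$, the product rule gives
\begin{align*}
  \partial_{s_i}\Psi_\varepsilon = \partial_{s_i}\psi + (\partial_{s_i}r_\varepsilon)\bm{\nu}^\flat + r_\varepsilon\,\partial_{s_i}\bm{\nu}^\flat.
\end{align*}
To rewrite $\partial_{s_i}\bm{\nu}^\flat$, I would use the Weingarten identity: for the tangent vector $\mathbf{t}=\partial_{s_i}\psi$, the chain rule and $\mathbf{W}=-\nabla_\Gamma\bm{\nu}$ (together with symmetry of $\mathbf{W}$) yield $\partial_{s_i}\bm{\nu}^\flat = -\mathbf{W}^\flat \partial_{s_i}\psi$. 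This gives the clean expression $\partial_{s_i}\Psi_\varepsilon = (\mathbf{I}_3-r_\varepsilon\mathbf{W}^\flat)\partial_{s_i}\psi + (\partial_{s_i}r_\varepsilon)\bm{\nu}^\flat$.

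The determinant computation then proceeds as follows. Factor $g^\flat$ out of the third row of $\nabla_s\Psi_\varepsilon$. Since $\bm{\nu}^\flat$ now appears in the third row, the normal contributions $(\partial_{s_i}r_\varepsilon)\bm{\nu}^\flat$ in the first two rows can be eliminated by row operations without affecting the determinant. What remains is the determinant of the matrix with rows $(\mathbf{M}\partial_{s_1}\psi)^{\mathrm{T}}, (\mathbf{M}\partial_{s_2}\psi)^{\mathrm{T}}, (\bm{\nu}^\flat)^{\mathrm{T}}$, where $\mathbf{M} := \mathbf{I}_3 - r_\varepsilon\mathbf{W}^\flat$. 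Using $\mathbf{W}\bm{\nu}=0$ so that $\mathbf{M}\bm{\nu}^\flat = \bm{\nu}^\flat$, the third row equals $(\mathbf{M}\bm{\nu}^\flat)^{\mathrm{T}}$, and the matrix factors as $\mathbf{A}\mathbf{M}^{\mathrm{T}}$ with $\mathbf{A}$ having rows $\partial_{s_1}\psi^{\mathrm{T}}, \partial_{s_2}\psi^{\mathrm{T}}, (\bm{\nu}^\flat)^{\mathrm{T}}$. Since $\det\mathbf{M} = \det(\mathbf{I}_3 - r_\varepsilon\mathbf{W}) = J(\psi(s'),r_\varepsilon(s))$ (as noted in Section \ref{SS:Pr_Sur}), and $\det\mathbf{A} = (\partial_{s_1}\psi\times\partial_{s_2}\psi)\cdot\bm{\nu}^\flat = \sqrt{\det\theta(s')}$ by the standard area-form identity for the Riemannian metric \eqref{E:Def_Rie}, multiplying yields \eqref{E:Det_LPC}.

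For the bounds \eqref{E:DeLC_Bd}, each of the three factors is bounded above and below by positive constants on $\mathcal{K}\times[0,\varepsilon]$: $g^\flat$ by \eqref{E:G_Bdd}, $J$ by \eqref{E:J_Bdd} (applicable since $r_\varepsilon(s)\in[\varepsilon g_0^\flat, \varepsilon g_1^\flat]\subset[-\delta,\delta]$), and $\sqrt{\det\theta}$ by continuity and positivity of $\det\theta$ on the compact set $\mathcal{K}$ (since $\psi$ is an immersion). For the derivative estimate \eqref{E:DeLC_ds}, the same three factors are of class $C^3$ (using $\psi\in C^5$, $\bm{\nu}\in C^4$, $\kappa_k\in C^3$, $g\in C^3$), so their product is $C^3$ with continuous derivatives on $\mathcal{K}\times[0,\varepsilon_\delta]$, hence uniformly bounded there by compactness, and the bound on $\mathcal{K}\times[0,\varepsilon]\subset\mathcal{K}\times[0,\varepsilon_\delta]$ follows.

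The only delicate point is the sign: the identity $\det\mathbf{A} = \sqrt{\det\theta}$ (rather than $-\sqrt{\det\theta}$) requires that $\psi$ be orientation-preserving with respect to $\bm{\nu}$. This can be arranged without loss of generality by swapping the two coordinates $s_1 \leftrightarrow s_2$ in the parametrizations from Lemma \ref{L:Op_Cov} if necessary, and I would mention this convention at the start. Other than that, the proof is a bookkeeping exercise and I anticipate no serious obstacle.
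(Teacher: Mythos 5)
Your argument is correct and follows essentially the same route as the paper: both exploit the Weingarten identity and $\mathbf{W}\bm{\nu}=0$ to extract the factor $\mathbf{I}_3-r_\varepsilon\mathbf{W}^\flat$ (whose determinant is $J$) and reduce to the Riemannian area-form identity, with your row operations and the cross-product formula $\det\mathbf{A}=(\partial_{s_1}\psi\times\partial_{s_2}\psi)\cdot\bm{\nu}^\flat$ playing the same role as the paper's column operations and the squaring trick $(\det\mathbf{A}_2)^2=\det(\mathbf{A}_2^{\mathrm{T}}\mathbf{A}_2)=(g^\flat)^2\det\theta$. Your explicit remark that the orientation of $\psi$ must be fixed so that $\det\mathbf{A}>0$ is a point the paper's proof passes over silently when it takes the positive square root, so flagging it is a minor improvement in rigor rather than a divergence in method.
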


\begin{proof}
  Let $\eta$ be a function on $\Gamma$.
  Then, for $s'\in\mathcal{O}$ and $i=1,2$, we have
  \begin{align*}
    \eta^\flat(s') = \eta(\psi(s')) = \bar{\eta}(\psi(s')), \quad \partial_{s_i}\eta^\flat(s') = \partial_{s_i}\psi(s')\cdot\nabla\bar{\eta}(\psi'(s)).
  \end{align*}
  Moreover, it follows from \eqref{E:CEGr_Sur} with $y=\psi(s')\in\Gamma$ that
  \begin{align*}
    \nabla\bar{\eta}(\psi'(s)) = \nabla_\Gamma\eta(\psi(s')) = (\nabla_\Gamma\eta)^\flat(s').
  \end{align*}
  Hence, $\partial_{s_i}\eta^\flat=\partial_{s_i}\psi\cdot(\nabla_\Gamma\eta)^\flat$ on $\mathcal{O}$.
  Also, $-\nabla_\Gamma\bm{\nu}=\mathbf{W}=\mathbf{W}^{\mathrm{T}}$ on $\Gamma$.
  Thus,
  \begin{align*}
    \partial_{s_i}\Psi_\varepsilon = (\mathbf{I}_3-r_\varepsilon\mathbf{W}^\flat)\partial_{s_i}\psi+(\partial_{s_i}r_\varepsilon)\bm{\nu}^\flat, \quad \partial_{s_3}\Psi_\varepsilon = g^\flat\bm{\nu}^\flat
  \end{align*}
  in $\mathcal{O}\times[0,\varepsilon]$ with $i=1,2$.
  Moreover, since $\mathbf{W}^\flat\bm{\nu}^\flat=0$, we can write
  \begin{align*}
    \partial_{s_i}\Psi_\varepsilon = (\mathbf{I}_3-r_\varepsilon\mathbf{W}^\flat)\{\partial_{s_i}\psi+(\partial_{s_i}r_\varepsilon)\bm{\nu}^\flat\}, \quad \partial_{s_3}\Psi_\varepsilon = (\mathbf{I}_3-r_\varepsilon\mathbf{W}^\flat)(g^\flat\bm{\nu}^\flat).
  \end{align*}
  Hence, $\nabla_s\Psi_\varepsilon$ is expressed as
  \begin{align*}
    \nabla_s\Psi_\varepsilon =
    \begin{pmatrix}
      \partial_{s_1}\Psi_\varepsilon & \partial_{s_2}\Psi_\varepsilon & \partial_{s_3}\Psi_\varepsilon
    \end{pmatrix}^{\mathrm{T}}
    = [(\mathbf{I}_3-r_\varepsilon\mathbf{W}^\flat)\mathbf{A}_1]^{\mathrm{T}}
  \end{align*}
  in $\mathcal{O}\times[0,\varepsilon]$, where the matrix $\mathbf{A}_1$ is given by
  \begin{align*}
    \mathbf{A}_1 :=
    \begin{pmatrix}
      \partial_{s_1}\psi+(\partial_{s_1}r_\varepsilon)\bm{\nu}^\flat & \partial_{s_2}\psi+(\partial_{s_2}r_\varepsilon)\bm{\nu}^\flat & g^\flat\bm{\nu}^\flat
    \end{pmatrix}.
  \end{align*}
  Recall that we consider vectors in $\mathbb{R}^3$ as column vectors.
  Now, we see that
  \begin{align*}
    \det\mathbf{A}_1 = \det\mathbf{A}_2, \quad \mathbf{A}_2 :=
    \begin{pmatrix}
      \partial_{s_1}\psi & \partial_{s_2}\psi & g^\flat\bm{\nu}^\flat
    \end{pmatrix}
  \end{align*}
  by elementary column operations.
  Also, letting $\mathbf{0}_2:=(0,0)^{\mathrm{T}}\in\mathbb{R}^2$, we have
  \begin{align*}
    (\det\mathbf{A}_2)^2 = \det(\mathbf{A}_2^{\mathrm{T}}\mathbf{A}_2) = \det
    \begin{pmatrix}
      \theta & \mathbf{0}_2 \\
      \mathbf{0}_2^{\mathrm{T}} & (g^\flat)^2
    \end{pmatrix}
    = (g^\flat)^2\det\theta
  \end{align*}
  by $\partial_{s_i}\psi\cdot\bm{\nu}^\flat=0$ for $i=1,2$.
  By these relations and \eqref{E:Def_J}, we find that
  \begin{align*}
    \det\nabla_s\Psi_\varepsilon = \det(\mathbf{I}_3-r_\varepsilon\mathbf{W}^\flat)\cdot\det\mathbf{A}_1 = g^\flat J(\psi,r_\varepsilon)\sqrt{\det\theta},
  \end{align*}
  i.e. \eqref{E:Det_LPC} holds.
  Also, let $\mathcal{K}$ be a compact subset of $\mathcal{O}$.
  Then,
  \begin{align} \label{Pf_DLP:det}
    c^{-1} \leq \det\theta(s') \leq c, \quad s'\in\mathcal{K},
  \end{align}
  since $\det\theta$ is continuous and positive on $\mathcal{O}$.
  Applying \eqref{E:G_Bdd}, \eqref{E:J_Bdd}, and \eqref{Pf_DLP:det} to \eqref{E:Det_LPC}, we obtain \eqref{E:DeLC_Bd}.
  Also, since $r_\varepsilon(s)$ is affine in $s_3$, and since $g^\flat(s')$, $\theta(s')$, and
  \begin{align*}
    J(\psi(s'),r_\varepsilon(s)) = \{1-r_\varepsilon(s)\kappa_1^\flat(s')\}\{1-r_\varepsilon(s)\kappa_2^\flat(s')\}
  \end{align*}
  are of class $C^3$ in $s'\in\mathcal{O}$, we have \eqref{E:DeLC_ds} by the compactness of $\mathcal{K}$ in $\mathcal{O}$ and \eqref{Pf_DLP:det}.
\end{proof}

\begin{lemma} \label{L:Hk_LPC}
  Let $k=0,1,2,3$ and $u\in H^k(\Omega_\varepsilon)$ be supported in $\Psi_\varepsilon(\mathcal{K}\times[0,\varepsilon])$ with some compact subset $\mathcal{K}$ of $\mathcal{O}$.
  We set $U:=u\circ\Psi_\varepsilon$ on $\mathcal{V}_\varepsilon:=\mathcal{O}\times(0,\varepsilon)$.
  Then,
  \begin{align} \label{E:Hk_LPC}
    U \in H^k(\mathcal{V}_\varepsilon), \quad c^{-1}\|U\|_{H^k(\mathcal{V}_\varepsilon)} \leq \|u\|_{H^k(\Omega_\varepsilon)} \leq c\|U\|_{H^k(\mathcal{V}_\varepsilon)}.
  \end{align}
\end{lemma}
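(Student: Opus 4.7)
The plan is to prove the norm equivalence \eqref{E:Hk_LPC} by a direct change of variables argument using $\Psi_\varepsilon$, keeping track of all $\varepsilon$-dependence through Lemmas \ref{L:LP_CTD} and \ref{L:Det_LPC}. Since $u$ is supported in $\Psi_\varepsilon(\mathcal{K}\times[0,\varepsilon])$ with $\mathcal{K}$ compact in $\mathcal{O}$, I only need uniform-in-$\varepsilon$ estimates of $\Psi_\varepsilon$ and its derivatives on $\mathcal{K}\times[0,\varepsilon]$, which are precisely what \eqref{E:LPC_Bd} and \eqref{E:DeLC_Bd}--\eqref{E:DeLC_ds} provide.

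First I would handle the case $k=0$. By the change of variables $x=\Psi_\varepsilon(s)$, for $u\in L^2(\Omega_\varepsilon)$ supported in $\Psi_\varepsilon(\mathcal{K}\times[0,\varepsilon])$,
\begin{align*}
  \|u\|_{L^2(\Omega_\varepsilon)}^2 = \int_{\mathcal{V}_\varepsilon}|U(s)|^2\det\nabla_s\Psi_\varepsilon(s)\,ds,
\end{align*}
and the two-sided bound \eqref{E:DeLC_Bd} on the Jacobian (valid on $\mathcal{K}\times[0,\varepsilon]$, outside of which $U$ vanishes) yields the desired equivalence of $L^2$ norms.

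For $k\geq 1$, I would iteratively apply the chain rule. A derivative $\partial_s^\alpha U$ of order $|\alpha|\leq k$ is, by Faà di Bruno's formula, a linear combination of terms of the form $(\partial_x^\beta u)\circ\Psi_\varepsilon$ with $|\beta|\leq|\alpha|$, multiplied by products of derivatives $\partial_s^{\gamma_j}\Psi_{\varepsilon,i_j}$ with $|\gamma_j|\leq|\alpha|$. By \eqref{E:LPC_Bd}, these products are uniformly bounded on $\mathcal{K}\times[0,\varepsilon]$. Hence, using the $L^2$ equivalence established above for each $\partial_x^\beta u$,
\begin{align*}
  \|\partial_s^\alpha U\|_{L^2(\mathcal{V}_\varepsilon)}^2 \leq c\sum_{|\beta|\leq|\alpha|}\|(\partial_x^\beta u)\circ\Psi_\varepsilon\|_{L^2(\mathcal{V}_\varepsilon)}^2 \leq c\sum_{|\beta|\leq|\alpha|}\|\partial_x^\beta u\|_{L^2(\Omega_\varepsilon)}^2 \leq c\|u\|_{H^k(\Omega_\varepsilon)}^2,
\end{align*}
which gives one side. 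The reverse inequality is symmetric: a derivative $\partial_x^\beta u$ on $\Psi_\varepsilon(\mathcal{K}\times[0,\varepsilon])$ is, by the chain rule applied to $u=U\circ\Psi_\varepsilon^{-1}$, a linear combination of $(\partial_s^\alpha U)\circ\Psi_\varepsilon^{-1}$ with $|\alpha|\leq|\beta|$, multiplied by products of derivatives of $\Psi_\varepsilon^{-1}$ which are uniformly bounded on $\Psi_\varepsilon(\mathcal{K}\times[0,\varepsilon])$ by the second line of \eqref{E:LPC_Bd}. Summing $|\beta|\leq k$ then gives $\|u\|_{H^k(\Omega_\varepsilon)}\leq c\|U\|_{H^k(\mathcal{V}_\varepsilon)}$.

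The proof is essentially bookkeeping; the only delicate point is to make sure every constant truly avoids any hidden factor of $\varepsilon$. This is guaranteed because (a) the derivative bounds \eqref{E:LPC_Bd} for $\Psi_\varepsilon$ and $\Psi_\varepsilon^{-1}$ on compact subsets of $\mathcal{O}$ are uniform in $\varepsilon\in(0,\varepsilon_\delta)$, stemming ultimately from the fact that $r_\varepsilon$ is affine in $s_3$ with $\|r_\varepsilon\|_{C^0}\leq c\varepsilon\leq c$ and $\partial_{s_3}r_\varepsilon=g^\flat$ independent of $\varepsilon$, together with the regularity of $d$, $\bar g_0$, $\bar g_1$, $\psi$, and $\bar\psi_i^{-1}$ on their respective fixed domains; and (b) the Jacobian bounds \eqref{E:DeLC_Bd}--\eqref{E:DeLC_ds} are uniform for the same reason. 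Once these are in place, the chain rule argument is a standard, purely algebraic manipulation, and no further analysis is required.
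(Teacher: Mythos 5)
Your proof is correct and takes essentially the same route as the paper: the $k=0$ case by change of variables with the two-sided Jacobian bound \eqref{E:DeLC_Bd}, and the $k\geq1$ cases by the (high-order) chain rule for $U=u\circ\Psi_\varepsilon$ and $u=U\circ\Psi_\varepsilon^{-1}$, with the uniform derivative bounds \eqref{E:LPC_Bd} on the compact set $\mathcal{K}\times[0,\varepsilon]$ absorbing all the coefficients. Your invocation of Fa\`a di Bruno is just a more explicit name for the pointwise inequality $|\partial_s^\alpha U|\leq c\sum_{|\beta|\leq|\alpha|}|(\partial_x^\beta u)\circ\Psi_\varepsilon|$ (and its inverse) that the paper writes down directly.
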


\begin{proof}
  When $k=0$, we have \eqref{E:Hk_LPC} by $\mathrm{supp}\,u\subset\Psi_\varepsilon(\mathcal{K}\times[0,\varepsilon])$ and \eqref{E:DeLC_Bd}.

  Let $k=1,2,3$.
  We differentiate $U(s)=u(\Psi_\varepsilon(s))$ and use \eqref{E:LPC_Bd} to get
  \begin{align*}
    |\partial_s^\alpha U(s)| \leq \sum_{|\beta|\leq|\alpha|}|\partial_x^\beta u(\Psi_\varepsilon(s))|, \quad s\in\mathcal{K}\times(0,\varepsilon), \quad |\alpha| = 1,2,3.
  \end{align*}
  Also, since $u(x)=U(\Psi_\varepsilon^{-1}(x))$ for $x\in\Psi_\varepsilon(\mathcal{V}_\varepsilon)$, it follows from \eqref{E:LPC_Bd} that
  \begin{align*}
    |\partial_x^\alpha u(x)| \leq \sum_{|\beta|\leq|\alpha|}|\partial_s^\beta U(\Psi_\varepsilon^{-1}(x))|, \quad x\in\Psi_\varepsilon(\mathcal{K}\times(0,\varepsilon)), \quad |\alpha| = 1,2,3.
  \end{align*}
  We get \eqref{E:Hk_LPC} by these inequalities, \eqref{E:DeLC_Bd}, and $\mathrm{supp}\,u\subset\Psi_\varepsilon(\mathcal{K}\times[0,\varepsilon])$.
\end{proof}

\begin{lemma} \label{L:LPC_We}
  Let $k=0,1$.
  Suppose that the functions
  \begin{align*}
    u \in H^1(\Omega_\varepsilon), \quad f\in H^k(\Omega_\varepsilon), \quad \mathbf{v} \in H^{1+k}(\Omega_\varepsilon)^3
  \end{align*}
  are supported in $\Psi_\varepsilon(\mathcal{K}\times[0,\varepsilon])$ with a compact subset $\mathcal{K}$ of $\mathcal{O}$ and satisfy
  \begin{align} \label{E:LPW_Ori}
    \int_{\Omega_\varepsilon}\nabla u\cdot\nabla\varphi\,dx = \int_{\Omega_\varepsilon}(f\varphi+\mathbf{v}\cdot\nabla\varphi)\,dx \quad\text{for all}\quad \varphi\in H^1(\Omega_\varepsilon).
  \end{align}
  For $s\in \mathcal{V}_\varepsilon=\mathcal{O}\times(0,\varepsilon)$, let
  \begin{align} \label{E:LPW_UA}
    \begin{aligned}
      U(s) &:= u(\Psi_\varepsilon(s)), \quad \mathbf{A}_\varepsilon(s) := \det\nabla_s\Psi_\varepsilon(s)[(\nabla\Psi_\varepsilon^{-1})^{\mathrm{T}}\nabla\Psi_\varepsilon^{-1}](\Psi_\varepsilon(s)), \\
      F(s) &:= f(\Psi_\varepsilon(s))\det\nabla_s\Psi_\varepsilon(s), \quad \mathbf{V}(s) := \det\nabla_s\Psi_\varepsilon(s)[(\nabla\Psi_\varepsilon^{-1})^{\mathrm{T}}\mathbf{v}](\Psi_\varepsilon(s)),
    \end{aligned}
  \end{align}
  where $[\varphi_1\varphi_2](x)=\varphi_1(x)\varphi_2(x)$ for functions $\varphi_1$ and $\varphi_2$ on $\Psi_\varepsilon(\mathcal{V}_\varepsilon)$.
  Then, we have
  \begin{align} \label{E:LPW_Ve}
    \int_{\mathcal{V}_\varepsilon}(\mathbf{A}_\varepsilon\nabla_sU)\cdot\nabla_s\Phi\,ds = \int_{\mathcal{V}_\varepsilon}(F\Phi+\mathbf{V}\cdot\nabla_s\Phi)\,ds \quad\text{for all}\quad \Phi\in H^1(\mathcal{V}_\varepsilon),
  \end{align}
  where $\nabla_s$ is the gradient in $s$.
  Moreover,
  \begin{alignat}{2}
    &(\mathbf{A}_\varepsilon(s)\mathbf{a})\cdot\mathbf{a} \geq c|\mathbf{a}|^2, &\quad &s\in\mathcal{K}\times[0,\varepsilon], \quad \mathbf{a}\in\mathbb{R}^3, \label{E:Ae_Elp} \\
    &|\partial_s^\alpha\mathbf{A}_\varepsilon(s)| \leq c, &\quad &s\in\mathcal{K}\times[0,\varepsilon], \quad |\alpha| = 0,1,2. \label{E:dAe_Bd}
  \end{alignat}
  Also, for $k=0,1$, we have
  \begin{align} \label{E:LPW_Sou}
    \|F\|_{H^k(\mathcal{V}_\varepsilon)} \leq c\|f\|_{H^k(\Omega_\varepsilon)}, \quad \|\mathbf{V}\|_{H^{1+k}(\mathcal{V}_\varepsilon)} \leq c\|\mathbf{v}\|_{H^{1+k}(\Omega_\varepsilon)}.
  \end{align}
\end{lemma}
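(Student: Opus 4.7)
The plan is to derive \eqref{E:LPW_Ve} from \eqref{E:LPW_Ori} by the change of variables $x=\Psi_\varepsilon(s)$, then separately verify the ellipticity, the derivative bounds, and the norm equivalence. For the first step, given $\Phi\in H^1(\mathcal{V}_\varepsilon)$, I would choose a cut-off $\chi\in C_c^\infty(\mathcal{O})$ with $\chi\equiv1$ on a neighborhood of $\mathcal{K}$, set $\tilde\Phi(s):=\chi(s')\Phi(s)$ on $\mathcal{V}_\varepsilon$, and define
\begin{align*}
  \varphi(x) := \begin{cases} \tilde\Phi(\Psi_\varepsilon^{-1}(x)) & x\in\Psi_\varepsilon(\mathcal{V}_\varepsilon), \\ 0 & \text{otherwise}. \end{cases}
\end{align*}
Since $\tilde\Phi$ vanishes near $\partial\mathcal{O}\times(0,\varepsilon)$, and since the faces $\mathcal{O}\times\{0\}$ and $\mathcal{O}\times\{\varepsilon\}$ are mapped into $\partial\Omega_\varepsilon$, this $\varphi$ belongs to $H^1(\Omega_\varepsilon)$. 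Substituting it into \eqref{E:LPW_Ori}, using that $u,f,\mathbf{v}$ are supported where $\chi\equiv1$ so that $\tilde\Phi$ can be replaced by $\Phi$ in each integral, and applying the change of variables with $dx=\det\nabla_s\Psi_\varepsilon\,ds$ together with the chain rule $(\nabla\varphi)\circ\Psi_\varepsilon=(\nabla\Psi_\varepsilon^{-1}\circ\Psi_\varepsilon)^{\mathrm{T}}\nabla_s\Phi$ yields \eqref{E:LPW_Ve}.

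For the ellipticity \eqref{E:Ae_Elp} I would write $\mathbf{A}_\varepsilon = (\det\nabla_s\Psi_\varepsilon)\,\mathbf{M}_\varepsilon^{\mathrm{T}}\mathbf{M}_\varepsilon$ with $\mathbf{M}_\varepsilon:=\nabla\Psi_\varepsilon^{-1}\circ\Psi_\varepsilon=(\nabla_s\Psi_\varepsilon)^{-1}$, so that $(\mathbf{A}_\varepsilon\mathbf{a})\cdot\mathbf{a}=(\det\nabla_s\Psi_\varepsilon)|\mathbf{M}_\varepsilon\mathbf{a}|^2$. By \eqref{E:LPC_Bd}, $|\nabla_s\Psi_\varepsilon|\leq c$ uniformly on $\mathcal{K}\times[0,\varepsilon]$, hence the smallest singular value of $\mathbf{M}_\varepsilon$ admits a uniform positive lower bound; combined with \eqref{E:DeLC_Bd}, this gives \eqref{E:Ae_Elp}. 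For \eqref{E:dAe_Bd} I would expand $\partial_s^\alpha\mathbf{A}_\varepsilon$ via the Leibniz rule. The derivatives of $\det\nabla_s\Psi_\varepsilon$ are bounded by \eqref{E:DeLC_ds}; the derivatives of $\mathbf{M}_\varepsilon$ are obtained by differentiating the identity $(\nabla_s\Psi_\varepsilon)\mathbf{M}_\varepsilon=\mathbf{I}_3$ successively, which expresses them as polynomial combinations of $\mathbf{M}_\varepsilon$ itself and of $\partial_s^\beta\nabla_s\Psi_\varepsilon$ for $|\beta|\leq|\alpha|$, all uniformly bounded on $\mathcal{K}\times[0,\varepsilon]$ by \eqref{E:LPC_Bd}.

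Finally, \eqref{E:LPW_Sou} follows from Lemma \ref{L:Hk_LPC} applied componentwise to $f$ and to the entries of $\mathbf{v}$, combined with the product rule and the uniform $C^2$-bounds on the factors $\det\nabla_s\Psi_\varepsilon$ and $\det\nabla_s\Psi_\varepsilon\,\mathbf{M}_\varepsilon^{\mathrm{T}}$ established above. The main obstacle, to my mind, is the bookkeeping in the first step: one must introduce the tangential cut-off $\chi$ carefully so that the extension-by-zero of $\varphi$ lies in $H^1(\Omega_\varepsilon)$ and, at the same time, so that $\chi$ may be removed from the integrals in \eqref{E:LPW_Ori} by virtue of the supports of $u$, $f$, and $\mathbf{v}$. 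Every subsequent step reduces to an application of the uniform bounds supplied by Lemmas \ref{L:LP_CTD} and \ref{L:Det_LPC}.
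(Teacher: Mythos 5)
Your proposal is correct and follows the same route as the paper: substitute a test function pulled back by $\Psi_\varepsilon$ into \eqref{E:LPW_Ori}, change variables using $\nabla u\circ\Psi_\varepsilon=(\nabla\Psi_\varepsilon^{-1}\circ\Psi_\varepsilon)\nabla_sU$ and $dx=\det\nabla_s\Psi_\varepsilon\,ds$ to get \eqref{E:LPW_Ve}, then read \eqref{E:Ae_Elp}--\eqref{E:LPW_Sou} off the uniform bounds in Lemmas \ref{L:LP_CTD}, \ref{L:Det_LPC}, and \ref{L:Hk_LPC}. Your ellipticity argument (lower bound on the smallest singular value of $(\nabla_s\Psi_\varepsilon)^{-1}$ from the upper bound on $|\nabla_s\Psi_\varepsilon|$, times the lower bound on $\det\nabla_s\Psi_\varepsilon$) is the same computation as the paper's $\mathbf{a}=[\nabla_s\Psi_\varepsilon]\mathbf{b}$ manipulation. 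The one place where you add genuine content beyond the paper's prose is the tangential cut-off $\chi\in C_c^\infty(\mathcal{O})$, $\chi\equiv1$ near $\mathcal{K}$: the paper merely asserts that extending $\Phi\circ\Psi_\varepsilon$ by zero "is allowed since $u$, $f$, $\mathbf{v}$ are supported in $\Psi_\varepsilon(\mathcal{K}\times[0,\varepsilon])$," which is exactly the observation that one may first replace $\Phi$ by $\chi\Phi$ without changing either side, and $\chi\Phi\circ\Psi_\varepsilon$ then extends by zero to an $H^1(\Omega_\varepsilon)$ test function because the faces $s_3\in\{0,\varepsilon\}$ land on $\partial\Omega_\varepsilon$ while $\chi$ kills the behavior near the lateral boundary. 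Your version makes this explicit, which is a real improvement in rigor rather than a different method. (Minor: with the paper's convention $\nabla\mathbf{u}=(\partial_i\mathrm{u}_j)_{i,j}$, the chain rule reads $(\nabla\varphi)\circ\Psi_\varepsilon=(\nabla\Psi_\varepsilon^{-1}\circ\Psi_\varepsilon)\nabla_s\Phi$ without the transpose you wrote; this is purely a notation mismatch and does not affect the argument.)
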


\begin{proof}
  When $x=\Psi_\varepsilon(s)\in\Psi_\varepsilon(\mathcal{V}_\varepsilon)$, we differentiate $u(x)=U(\Psi_\varepsilon^{-1}(x))$ to get
  \begin{align*}
    \nabla u(x) = \nabla\Psi_\varepsilon^{-1}(x)\nabla_sU(\Psi_\varepsilon^{-1}(x)), \quad \nabla u(\Psi_\varepsilon(s)) = \nabla\Psi_\varepsilon^{-1}(\Psi_\varepsilon(s))\nabla_sU(s).
  \end{align*}
  Thus, we have \eqref{E:LPW_Ve} by substituting $\varphi=\Phi\circ\Psi_\varepsilon$ for \eqref{E:LPW_Ori}, which is extended by zero outside $\Psi_\varepsilon(\mathcal{V}_\varepsilon)$ (this is allowed since $u$, $f$, and $\mathbf{v}$ are supported in $\Psi_\varepsilon(\mathcal{K}\times[0,\varepsilon])$), and by making the change of variables $x=\Psi_\varepsilon(s)$.

  Let us show \eqref{E:Ae_Elp}--\eqref{E:LPW_Sou}.
  Let $s\in\mathcal{K}\times[0,\varepsilon]$ and $\mathbf{a}\in\mathbb{R}^3$.
  Then,
  \begin{align*}
    (\mathbf{A}_\varepsilon(s)\mathbf{a})\cdot\mathbf{a} = |\mathbf{b}(s)|^2\det\nabla_s\Psi_\varepsilon(s), \quad \mathbf{b}(s) := [\nabla\Psi_\varepsilon^{-1}(\Psi_\varepsilon(s))]\mathbf{a}.
  \end{align*}
  Since $\Psi_\varepsilon^{-1}(\Psi_\varepsilon(s))=s$, we have $\nabla\Psi_\varepsilon^{-1}(\Psi_\varepsilon(s))\nabla_s\Psi_\varepsilon(s)=\mathbf{I}_3$.
  Thus,
  \begin{align*}
    \nabla\Psi_\varepsilon^{-1}(\Psi_\varepsilon(s)) = [\nabla_s\Psi_\varepsilon(s)]^{-1}, \quad \mathbf{a} = [\nabla_s\Psi_\varepsilon(s)]\mathbf{b}(s).
  \end{align*}
  Moreover, since \eqref{E:LPC_Bd} and \eqref{E:DeLC_Bd} holds for $s\in\mathcal{K}\times[0,\varepsilon]$, we find that
  \begin{align*}
    |\mathbf{a}|^2 = |[\nabla_s\Psi_\varepsilon(s)]\mathbf{b}(s)|^2 \leq c|\mathbf{b}(s)|^2\det\nabla_s\Psi_\varepsilon(s) = c(\mathbf{A}_\varepsilon(s)\mathbf{a})\cdot\mathbf{a}.
  \end{align*}
  Hence, \eqref{E:Ae_Elp} is valid.
  Also, we have \eqref{E:dAe_Bd} and \eqref{E:LPW_Sou} by \eqref{E:LPC_Bd}, \eqref{E:DeLC_Bd}, and \eqref{E:DeLC_ds}, since $f$ and $\mathbf{v}$ are supported in $\Psi_\varepsilon(\mathcal{K}\times[0,\varepsilon])$.
\end{proof}

\begin{lemma} \label{L:DQP_L2}
  Let $\{\mathbf{e}_1,\mathbf{e}_2,\mathbf{e}_3\}$ be the canonical basis of $\mathbb{R}^3$.
  For $i=1,2$, a nonzero $h\in\mathbb{R}$, and a function $\Phi$ on $\mathcal{V}_\varepsilon$ extended to $\mathbb{R}^3\setminus\mathcal{V}_\varepsilon$ by zero, we set
  \begin{align} \label{E:Def_Quo}
    \Phi_i^h(s) := \Phi(s+h\mathbf{e}_i), \quad D_i^h\Phi(s) := \frac{\Phi_i^h(s)-\Phi(s)}{h}, \quad s\in\mathcal{V}_\varepsilon.
  \end{align}
  Let $\mathcal{O}_1$ and $\mathcal{O}_2$ be open sets in $\mathbb{R}^2$ such that $\overline{\mathcal{O}_1}\subset\mathcal{O}_2$ and $\overline{\mathcal{O}_2}\subset\mathcal{O}$.
  Also, let $\Phi\in H^1(\mathcal{V}_\varepsilon)$ be supported in $\overline{\mathcal{O}_1}\times[0,\varepsilon]$.
  Then,
  \begin{align} \label{E:DQP_L2}
    \|D_i^h\Phi\|_{L^2(\mathcal{V}_\varepsilon)} \leq \|\partial_{s_i}\Phi\|_{L^2(\mathcal{V}_\varepsilon)}, \quad i=1,2
  \end{align}
  for all $h\in\mathbb{R}$ satisfying
  \begin{align} \label{E:DQP_h}
    0 < |h| < \frac{1}{2}\min\{\mathrm{dist}(\overline{\mathcal{O}_1},\mathbb{R}^2\setminus\mathcal{O}_2),\mathrm{dist}(\overline{\mathcal{O}_2},\mathbb{R}^2\setminus\mathcal{O})\},
  \end{align}
  where $\mathrm{dist}(\mathcal{A},\mathcal{B}):=\inf\{|a'-b'| \mid a'\in\mathcal{A}, \, b'\in\mathcal{B}\}$ for $\mathcal{A},\mathcal{B}\subset\mathbb{R}^2$.
\end{lemma}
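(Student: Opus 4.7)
The plan is to prove \eqref{E:DQP_L2} by the classical strategy for difference quotient estimates: establish the inequality first for smooth functions through the fundamental theorem of calculus and Cauchy--Schwarz, then extend to all of $H^1(\mathcal{V}_\varepsilon)$ by approximation. Note that only tangential translations (in $s_1$ or $s_2$) are involved, so the thin direction $s_3\in(0,\varepsilon)$ plays no role beyond being a passive parameter in the Fubini computation.

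First I would verify the support bookkeeping. Because $\Phi$ is supported in $\overline{\mathcal{O}_1}\times[0,\varepsilon]$ and $h$ satisfies \eqref{E:DQP_h}, any tangential translate $s\mapsto s\pm t h\mathbf{e}_i$ with $t\in[0,1]$ carries $\overline{\mathcal{O}_1}\times[0,\varepsilon]$ into $\mathcal{O}_2\times[0,\varepsilon]\subset\mathcal{O}\times[0,\varepsilon]=\overline{\mathcal{V}_\varepsilon}$. Hence the zero extension of $\Phi$ to $\mathbb{R}^2\times(0,\varepsilon)$ preserves the $H^1$ class, the translate $\Phi_i^h$ and the difference quotient $D_i^h\Phi$ are well-defined on $\mathcal{V}_\varepsilon$, and all changes of variables in the $s_i$-direction that I will need keep integrands inside $\mathcal{V}_\varepsilon$.

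Next, for $\Phi\in C^1(\overline{\mathcal{V}_\varepsilon})$ supported in $\overline{\mathcal{O}_1}\times[0,\varepsilon]$, the fundamental theorem of calculus and Cauchy--Schwarz in $t$ give
\begin{equation*}
  |D_i^h\Phi(s)|^2 = \left|\int_0^1 \partial_{s_i}\Phi(s+th\mathbf{e}_i)\,dt\right|^2 \leq \int_0^1 |\partial_{s_i}\Phi(s+th\mathbf{e}_i)|^2\,dt.
\end{equation*}
Integrating over $\mathcal{V}_\varepsilon$, swapping the order of integration via Fubini, and translating in $s_i$ (which is legitimate by the support analysis above) yields
\begin{equation*}
  \int_{\mathcal{V}_\varepsilon}|D_i^h\Phi|^2\,ds \leq \int_0^1\int_{\mathcal{V}_\varepsilon}|\partial_{s_i}\Phi(s+th\mathbf{e}_i)|^2\,ds\,dt = \|\partial_{s_i}\Phi\|_{L^2(\mathcal{V}_\varepsilon)}^2,
\end{equation*}
which is \eqref{E:DQP_L2} in the smooth case.

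Finally, for general $\Phi\in H^1(\mathcal{V}_\varepsilon)$ with the stated support, I would approximate. Pick a cutoff $\chi\in C_c^\infty(\mathcal{O}_2)$ with $\chi\equiv 1$ on $\overline{\mathcal{O}_1}$, so that $\chi(s')\Phi(s)=\Phi(s)$. Extending $\Phi$ by zero across $\partial\mathcal{O}\times(0,\varepsilon)$ produces an element of $H^1(\mathbb{R}^2\times(0,\varepsilon))$ thanks to the compactness of the tangential support inside $\mathcal{O}$. Mollifying only in the $s'$ variables by a standard tangential mollifier $\rho_\delta(s')$ and multiplying by $\chi$ yields a sequence $\Phi_k\in C^\infty(\overline{\mathcal{V}_\varepsilon})$ supported in $\overline{\mathcal{O}_1'}\times[0,\varepsilon]$, where $\overline{\mathcal{O}_1}\subset\mathcal{O}_1'\subset\overline{\mathcal{O}_1'}\subset\mathcal{O}_2$ for all sufficiently small mollification parameter, with $\Phi_k\to\Phi$ in $H^1(\mathcal{V}_\varepsilon)$. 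Applying the smooth-case estimate to $\Phi_k$ (the distance condition \eqref{E:DQP_h} still holds for $\overline{\mathcal{O}_1'}$ possibly at the price of slightly shrinking the admissible range of $h$, but since we have fixed $h$ we choose the mollification parameter small enough after the fact) and passing to the limit in $L^2(\mathcal{V}_\varepsilon)$ on both sides delivers \eqref{E:DQP_L2}.

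The only delicate point is the approximation step: one must produce smooth approximants that (i) converge to $\Phi$ in $H^1(\mathcal{V}_\varepsilon)$ and (ii) retain a tangential support inside $\mathcal{O}_2$ so that the smooth-case estimate applies uniformly in $k$. This is handled cleanly by mollifying only in the tangential variables and multiplying by the cutoff $\chi$; no extension or reflection in the $s_3$-direction is needed because the translations only act tangentially.
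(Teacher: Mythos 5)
Your proof follows the same core computation as the paper's: fundamental theorem of calculus along the $s_i$-direction, Cauchy--Schwarz in the auxiliary integration variable, Fubini, and a translation that stays inside $\mathcal{V}_\varepsilon$ thanks to the distance condition \eqref{E:DQP_h}. The one place you diverge is the smooth-approximation detour: the paper applies the formula $D_i^h\Phi(s)=\int_0^1\partial_{s_i}\Phi(s+\tau h\mathbf{e}_i)\,d\tau$ directly to $\Phi\in H^1(\mathcal{V}_\varepsilon)$, which is legitimate because Sobolev functions are absolutely continuous along almost every line parallel to a coordinate axis with classical derivative agreeing a.e.\ with the weak one, so no regularization is needed. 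Your approximation step does work, but be careful with the claim that tangential mollification produces $\Phi_k\in C^\infty(\overline{\mathcal{V}_\varepsilon})$ --- mollifying only in $s'$ gives smoothness in the tangential variables, not in $s_3$, so $\Phi_k$ is merely $H^1$ in the thin direction. That does not break the argument (your FTC computation only uses regularity in $s_i$), but the stated smoothness class is wrong and should be weakened to, say, ``$\Phi_k$ with $\Phi_k(\cdot,s_3)$ smooth in $s'$ for a.e.\ $s_3$,'' or simply dropped by invoking the absolute-continuity-on-lines characterization directly as the paper does.
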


Note that $i=3$ is excluded here.
Also, since $\mathcal{O}_1$, $\mathcal{O}_2$, and $\mathcal{O}$ are bounded and open in $\mathbb{R}^2$, and since $\overline{\mathcal{O}_1}\subset\mathcal{O}_2$ and $\overline{\mathcal{O}_2}\subset\mathcal{O}$, the right-hand side of \eqref{E:DQP_h} is positive.

\begin{proof}
  Let $i=1,2$ and $h$ satisfy \eqref{E:DQP_h}.
  Then, $D_i^h\Phi$ is supported in $\overline{\mathcal{O}_2}\times[0,\varepsilon]$, since $\Phi$ is supported in $\overline{\mathcal{O}_1}\times[0,\varepsilon]$.
  Moreover, when $s\in\mathcal{O}_2\times(0,\varepsilon)$, we have
  \begin{align*}
    D_i^h\Phi(s) = \frac{1}{h}\int_0^1\frac{\partial}{\partial\tau}\Bigl(\Phi(s+\tau h\mathbf{e}_i)\Bigr)\,d\tau = \int_0^1\partial_{s_i}\Phi(s+\tau h\mathbf{e}_i)\,d\tau
  \end{align*}
  by $s+\tau h\mathbf{e}_i\in\mathcal{V}_\varepsilon$ for $\tau\in[0,1]$.
  We take the square of both sides, apply H\"{o}lder's inequality to the right-hand side, and integrate with respect to $s$.
  Then,
  \begin{align*}
    \|D_i^h\Phi\|_{L^2(\mathcal{O}_2\times(0,\varepsilon))}^2 \leq \int_{\mathcal{O}_2\times(0,\varepsilon)}\left(\int_0^1|\partial_{s_i}\Phi(s+\tau h\mathbf{e}_i)|^2\,d\tau\right)\,ds.
  \end{align*}
  Hence, noting that $D_i^h\Phi$ is supported in $\overline{\mathcal{O}_2}\times[0,\varepsilon]$ and
  \begin{align*}
    \bigl(\mathcal{O}_2\times(0,\varepsilon)\bigr)+\tau h\mathbf{e}_i := \{s+\tau h\mathbf{e}_i \mid s\in\mathcal{O}_2\times(0,\varepsilon)\} \subset \mathcal{V}_\varepsilon
  \end{align*}
  for $\tau\in[0,1]$ by \eqref{E:DQP_h}, we apply Fubini's theorem and make translation in the right-hand side of the above inequality to get \eqref{E:DQP_L2}.
\end{proof}

\subsection{Localization} \label{SS:UER_Cu}
Let us start the proof of Lemma \ref{L:UER_CTD}.
For $k=0,1$, let $u\in H^{2+k}(\Omega_\varepsilon)$ satisfy $\partial_{\nu_\varepsilon}u=0$ on $\Omega_\varepsilon$.
Then, by integration by parts and H\"{o}lder's inequality,
\begin{align*}
  \|\nabla u\|_{L^2(\Omega_\varepsilon)}^2 = (-\Delta u,u)_{L^2(\Omega_\varepsilon)} \leq \|\Delta u\|_{L^2(\Omega_\varepsilon)}\|u\|_{L^2(\Omega_\varepsilon)}.
\end{align*}
Taking the square root, and applying Young's inequality, we find that
\begin{align} \label{E:IpGr_CTD}
  \|\nabla u\|_{L^2(\Omega_\varepsilon)} \leq \frac{1}{2}\Bigl(\|\Delta u\|_{L^2(\Omega_\varepsilon)}+\|u\|_{L^2(\Omega_\varepsilon)}\Bigr).
\end{align}
Let $f:=-\Delta u\in H^k(\Omega_\varepsilon)$.
Then, $u$ satisfies
\begin{align} \label{E:PoW_CTD}
  \int_{\Omega_\varepsilon}\nabla u\cdot\nabla\varphi\,dx = \int_{\Omega_\varepsilon}f\varphi\,dx \quad\text{for all}\quad \varphi\in H^1(\Omega_\varepsilon)
\end{align}
by integration by parts and $\partial_{\nu_\varepsilon}u=0$ on $\Omega_\varepsilon$.

Let $\mathcal{O}=(0,1)^2$.
By Lemma \ref{L:Op_Cov}, there exist $C^5$ local parametrizations $\psi_\ell\colon\mathcal{O}\to\Gamma$ such that $\{\psi_\ell(\mathcal{O})\}_{\ell=1}^L$ is a finite open cover of $\Gamma$.
Let $\{\eta_\ell\}_{\ell=1}^L$ be a $C^5$ partition of unity on $\Gamma$ subordinate to $\{\psi_\ell(\mathcal{O})\}_{\ell=1}^L$.
We take open sets $\mathcal{O}_{\ell,1}$, $\mathcal{O}_{\ell,2}$, and $\mathcal{O}_{\ell,3}$ in $\mathbb{R}^2$ such that
\begin{align*}
  \mathrm{supp}\,\eta_\ell \subset \psi_\ell(\overline{\mathcal{O}_{\ell,1}}), \quad \overline{\mathcal{O}_{\ell,1}} \subset \mathcal{O}_{\ell,2}, \quad \overline{\mathcal{O}_{\ell,2}} \subset \mathcal{O}_{\ell,3}, \quad \overline{\mathcal{O}_{\ell,3}} \subset \mathcal{O}.
\end{align*}
Let $\bar{\eta}_\ell=\eta_\ell$ be the constant extension of $\eta_\ell$ in the normal direction of $\Gamma$.
We set $u_\ell:=\bar{\eta}_\ell u$ on $\Omega_\varepsilon$.
Then, $u_\ell\in H^{2+k}(\Omega_\varepsilon)$ by $\eta\in C^5(\Gamma)$ and \eqref{E:CEGr_Bd}.
Also, $u=\sum_{\ell=1}^Lu_\ell$ on $\Omega_\varepsilon$.

Let $\ell=1,\dots,L$ and $\varphi\in H^1(\Omega_\varepsilon)$.
Since
\begin{align*}
  \nabla(\bar{\eta}_\ell u)\cdot\nabla\varphi = \nabla u\cdot\nabla(\bar{\eta}_\ell\varphi)+u\nabla\bar{\eta}_\ell\cdot\nabla\varphi-(\nabla u\cdot\nabla\bar{\eta}_\ell)\varphi \quad\text{in}\quad \Omega_\varepsilon
\end{align*}
and \eqref{E:PoW_CTD} holds with $\varphi$ replaced by $\bar{\eta}_\ell\varphi$, we see that $u_\ell=\bar{\eta}_\ell u$ satisfies
\begin{align*}
  \int_{\Omega_\varepsilon}\nabla u_\ell\cdot\nabla\varphi\,dx = \int_{\Omega_\varepsilon}(f_\ell\varphi+\mathbf{v}_\ell\cdot\nabla\varphi)\,dx \quad\text{for all}\quad \varphi\in H^1(\Omega_\varepsilon),
\end{align*}
where $f_\ell:=\bar{\eta}_\ell f-\nabla u\cdot\nabla\bar{\eta}_\ell$ and $\mathbf{v}_\ell:=u\nabla\bar{\eta}_\ell$ on $\Omega_\varepsilon$.
Here,
\begin{align*}
  \mathrm{supp}\,\zeta_\ell \subset \{y+r\bm{\nu}(y) \mid y\in\psi_\ell(\overline{\mathcal{O}_{\ell,1}}), \, \varepsilon g_0(y)\leq r\leq\varepsilon g_1(y)\}
\end{align*}
for $\zeta_\ell=u_\ell,f_\ell,\mathbf{v}_\ell$ by $\mathrm{supp}\,\eta_\ell\subset\psi_\ell(\overline{\mathcal{O}_{\ell,1}})$.
Also, for $k=0,1$,
\begin{align} \label{E:UER_fel}
  \|f_\ell\|_{H^k(\Omega_\varepsilon)} \leq c\Bigl(\|f\|_{H^k(\Omega_\varepsilon)}+\|u\|_{H^{1+k}(\Omega_\varepsilon)}\Bigr), \quad \|\mathbf{v}_\ell\|_{H^{1+k}(\Omega_\varepsilon)} \leq c\|u\|_{H^{1+k}(\Omega_\varepsilon)}
\end{align}
by \eqref{E:CEGr_Bd} and $\eta\in C^5(\Gamma)$.
Now, suppose first that
\begin{align} \label{E:uel_H2}
  \|u_\ell\|_{H^2(\Omega_\varepsilon)} \leq c\Bigl(\|f_\ell\|_{L^2(\Omega_\varepsilon)}+\|\mathbf{v}_\ell\|_{H^1(\Omega_\varepsilon)}\Bigr)
\end{align}
for all $\ell=1,\dots,L$.
Then, by \eqref{E:IpGr_CTD}, \eqref{E:UER_fel}, and $f=-\Delta u$, we have
\begin{align*}
  \|u_\ell\|_{H^2(\Omega_\varepsilon)} \leq c\Bigl(\|f\|_{L^2(\Omega_\varepsilon)}+\|u\|_{H^1(\Omega_\varepsilon)}\Bigr) \leq c\Bigl(\|\Delta u\|_{L^2(\Omega_\varepsilon)}+\|u\|_{L^2(\Omega_\varepsilon)}\Bigr),
\end{align*}
which gives \eqref{E:UER_CTD} with $k=0$ for $u=\sum_{\ell=1}^Lu_\ell$.
Next, suppose further that
\begin{align} \label{E:uel_H3}
  \|u_\ell\|_{H^3(\Omega_\varepsilon)} \leq c\Bigl(\|f_\ell\|_{H^1(\Omega_\varepsilon)}+\|\mathbf{v}_\ell\|_{H^2(\Omega_\varepsilon)}\Bigr)
\end{align}
for all $\ell=1,\dots,L$.
Then, we apply \eqref{E:UER_fel} and use \eqref{E:UER_CTD} with $k=0$ to get
\begin{align*}
  \|u_\ell\|_{H^3(\Omega_\varepsilon)} \leq c\Bigl(\|f\|_{H^1(\Omega_\varepsilon)}+\|u\|_{H^2(\Omega_\varepsilon)}\Bigr) \leq c\Bigl(\|\Delta u\|_{H^1(\Omega_\varepsilon)}+\|u\|_{L^2(\Omega_\varepsilon)}\Bigr).
\end{align*}
Thus, \eqref{E:UER_CTD} with $k=1$ holds for $u=\sum_{\ell=1}^Lu_\ell$.
Let us show first \eqref{E:uel_H2} and then \eqref{E:uel_H3}.
The proof relies on the method of difference quotient after the change of variables.

\subsection{Change of variables} \label{SS:UER_CoV}
Now, we fix and suppress the index $\ell$.
Let $\mathcal{O}=(0,1)^2$, and let $\mathcal{O}_1$, $\mathcal{O}_2$, and $\mathcal{O}_3$ be open sets in $\mathbb{R}^2$ such that
\begin{align} \label{E:O123}
  \overline{\mathcal{O}_1} \subset \mathcal{O}_2, \quad \overline{\mathcal{O}_2} \subset \mathcal{O}_3, \quad \overline{\mathcal{O}_3} \subset \mathcal{O}.
\end{align}
We define the mapping $\Psi_\varepsilon$ by \eqref{E:Def_LPC}, and suppose that
\begin{align} \label{E:su_zeta}
  \begin{aligned}
    &u\in H^{2+k}(\Omega_\varepsilon), \quad f\in H^k(\Omega_\varepsilon), \quad \mathbf{v} \in H^{1+k}(\Omega_\varepsilon), \quad k=0,1, \\
    &\mathrm{supp}\,\zeta \subset \Psi_\varepsilon(\overline{\mathcal{O}_1}\times[0,\varepsilon]), \quad \zeta = u,f,\mathbf{v},
  \end{aligned}
\end{align}
and \eqref{E:LPW_Ori} holds.
Let $U$, $\mathbf{A}_\varepsilon$, $F$, and $\mathbf{V}$ be given by \eqref{E:LPW_UA} on $\mathcal{V}_\varepsilon=\mathcal{O}\times(0,\varepsilon)$.
Then, the equation \eqref{E:LPW_Ve} and the estimates \eqref{E:Hk_LPC} and \eqref{E:LPW_Sou} are valid by Lemmas \ref{L:Hk_LPC} and \ref{L:LPC_We}.
Thus, to get \eqref{E:uel_H2} and \eqref{E:uel_H3} (recall that we suppress $\ell$), it is sufficient to show
\begin{align} \label{E:UER_Goal}
  \|U\|_{H^{2+k}(\mathcal{V}_\varepsilon)} \leq c\Bigl(\|F\|_{H^k(\mathcal{V}_\varepsilon)}+\|\mathbf{V}\|_{H^{1+k}(\mathcal{V}_\varepsilon)}\Bigr), \quad k=0,1.
\end{align}
Before showing \eqref{E:UER_Goal}, let us first give an $H^1$-estimate for $U$.
We see by \eqref{E:su_zeta} that
\begin{align} \label{E:supp_Z}
  \mathrm{supp}\,Z \subset \overline{\mathcal{O}_1}\times[0,\varepsilon], \quad Z = U,F,\mathbf{V}.
\end{align}
Hence, $U(\cdot,s_3)$ is supported in $\overline{\mathcal{O}_1}$ for a.a. $s_3\in(0,\varepsilon)$.
Since $\overline{\mathcal{O}_1}\subset\mathcal{O}$, we have
\begin{align*}
  \|U(\cdot,s_3)\|_{L^2(\mathcal{O})}^2 \leq \|\partial_{s_1}U(\cdot,s_3)\|_{L^2(\mathcal{O})}^2
\end{align*}
by Poincar\'{e}'s inequality on $\mathcal{O}=(0,1)^2$ (note that the constant $c=1$ is independent of $\varepsilon$).
Integrating both sides with respect to $s_3$, and taking the square root, we find that
\begin{align} \label{E:Poin_Ve}
  \|U\|_{L^2(\mathcal{V}_\varepsilon)} \leq \|\partial_{s_1}U\|_{L^2(\mathcal{V}_\varepsilon)} \leq \|\nabla_sU\|_{L^2(\mathcal{V}_\varepsilon)}.
\end{align}
Since \eqref{E:supp_Z} holds, we can use \eqref{E:Ae_Elp} and then \eqref{E:LPW_Ve} to find that
\begin{align*}
  \|\nabla_sU\|_{L^2(\mathcal{V}_\varepsilon)}^2 \leq c\int_{\mathcal{V}_\varepsilon}(\mathbf{A}_\varepsilon\nabla_sU)\cdot\nabla_sU\,ds = c\int_{\mathcal{V}_\varepsilon}(FU+\mathbf{V}\cdot\nabla_sU)\,ds.
\end{align*}
Using H\"{o}lder's and Young's inequalities and \eqref{E:Poin_Ve} to the right-hand side, we get
\begin{align*}
  \|\nabla_sU\|_{L^2(\mathcal{V}_\varepsilon)}^2 \leq \frac{1}{2}\|\nabla_sU\|_{L^2(\mathcal{V}_\varepsilon)}^2+c\Bigl(\|F\|_{L^2(\mathcal{V}_\varepsilon)}^2+\|\mathbf{V}\|_{L^2(\mathcal{V}_\varepsilon)}^2\Bigr).
\end{align*}
By \eqref{E:Poin_Ve} and the above inequality, we obtain
\begin{align} \label{E:UER_UH1}
  \|U\|_{H^1(\mathcal{V}_\varepsilon)} \leq 2\|\nabla_sU\|_{L^2(\mathcal{V}_\varepsilon)} \leq c\Bigl(\|F\|_{L^2(\mathcal{V}_\varepsilon)}+\|\mathbf{V}\|_{L^2(\mathcal{V}_\varepsilon)}\Bigr).
\end{align}
Now, let us show \eqref{E:UER_Goal}.
We deal with the cases $k=0$ and $k=1$ separately.

\subsection{Uniform \texorpdfstring{$H^2$}{H\textasciicircum2}-estimate} \label{SS:UER_H2}
First, let $k=0$.
We use the notation \eqref{E:Def_Quo} with
\begin{align} \label{E:DiQu_h}
  0 < |h| < \frac{1}{2}\min_{j=1,2,3}\delta_j, \quad \delta_j := \mathrm{dist}(\overline{\mathcal{O}_j},\mathbb{R}^2\setminus\mathcal{O}_{j+1}),
\end{align}
where $\mathcal{O}_4:=\mathcal{O}$.
Then, by Lemma \ref{L:DQP_L2}, \eqref{E:O123}, and \eqref{E:supp_Z}, we have
\begin{align} \label{E:DiQu_ZL2}
  \|D_i^hZ\|_{L^2(\mathcal{V}_\varepsilon)} \leq \|\partial_{s_i}Z\|_{L^2(\mathcal{V}_\varepsilon)}, \quad i=1,2, \quad Z = U,F,\mathbf{V}.
\end{align}
Moreover, $D_i^hU$ is supported in $\overline{\mathcal{O}_2}\times[0,\varepsilon]$ by \eqref{E:O123} and \eqref{E:supp_Z}.
Hence,
\begin{align} \label{E:DiQu_DhU}
  \|D_i^{-h}D_i^hU\|_{L^2(\mathcal{V}_\varepsilon)} \leq \|\partial_{s_i}D_i^hU\|_{L^2(\mathcal{V}_\varepsilon)} \leq \|\nabla_sD_i^hU\|_{L^2(\mathcal{V}_\varepsilon)}
\end{align}
by Lemma \ref{L:DQP_L2} with $\mathcal{O}_j$ replaced by $\mathcal{O}_{j+1}$ for $j=1,2$.

Let $i=1,2$ and $\Phi\in H^1(\mathcal{V}_\varepsilon)$ be supported in $\overline{\mathcal{O}_2}\times[0,\varepsilon]$.
Then, since $\Phi_i^{-h}$ is supported in $\overline{\mathcal{O}_3}\times[0,\varepsilon]$ by \eqref{E:DiQu_h}, we can replace $\Phi$ in \eqref{E:LPW_Ve} by $\Phi_i^{-h}$.
In the resulting equation, we make translation except for the integral of $F\Phi_i^{-h}$ to get
\begin{align*}
  \int_{\mathcal{V}_\varepsilon}([\mathbf{A}_\varepsilon]_i^h\nabla_sU_i^h)\cdot\nabla\Phi\,ds = \int_{\mathcal{V}_\varepsilon}(F\Phi_i^{-h}+\mathbf{V}_i^h\cdot\nabla_s\Phi)\,ds.
\end{align*}
We take the difference of this equation and \eqref{E:LPW_Ve}, and then divide by $h$.
Then,
\begin{align*}
  \int_{\mathcal{V}_\varepsilon}\{[\mathbf{A}_\varepsilon]_i^h\nabla_sD_i^hU+(D_i^h\mathbf{A}_\varepsilon)\nabla_sU\}\cdot\nabla_s\Phi\,ds = \int_{\mathcal{V}_\varepsilon}\{-FD_i^{-h}\Phi+(D_i^h\mathbf{V})\cdot\nabla_s\Phi\}\,ds.
\end{align*}
Since $D_i^hU$ is supported in $\overline{\mathcal{O}_2}\times[0,\varepsilon]$, we can take $\Phi=D_i^hU$ to get
\begin{multline} \label{E:DihU_Bi}
  \int_{\mathcal{V}_\varepsilon}([\mathbf{A}_\varepsilon]_i^h\nabla_sD_i^hU)\cdot\nabla_sD_i^hU\,ds \\
  = \int_{\mathcal{V}_\varepsilon}\{-FD_i^{-h}D_i^hU+[D_i^h\mathbf{V}-(D_i^h\mathbf{A}_\varepsilon)\nabla_sU]\cdot\nabla_sD_i^hU\}\,ds.
\end{multline}
Now, we recall that $[\mathbf{A}_\varepsilon]_i^h(s)=\mathbf{A}_\varepsilon(s+h\mathbf{e}_i)$ and
\begin{align*}
  D_i^h\mathbf{A}_\varepsilon(s) = \frac{1}{h}\int_0^1\frac{\partial}{\partial\tau}\Bigl(\mathbf{A}_\varepsilon(s+\tau h\mathbf{e}_i)\Bigr)\,d\tau = \int_0^1\partial_{s_i}\mathbf{A}_\varepsilon(s+\tau h\mathbf{e}_i)\,d\tau.
\end{align*}
Moreover, $s+\tau h\mathbf{e}_i\in\overline{\mathcal{O}_3}\times[0,\varepsilon]$ for $s\in\overline{\mathcal{O}_2}\times[0,\varepsilon]$ and $\tau\in[0,1]$ by \eqref{E:DiQu_h}.
Hence, we can use \eqref{E:Ae_Elp} and \eqref{E:dAe_Bd} with $\mathcal{K}=\overline{\mathcal{O}_3}$ to find that
\begin{align*}
  ([\mathbf{A}_\varepsilon]_i^h(s)\mathbf{a})\cdot\mathbf{a} \geq c|\mathbf{a}|^2, \quad |D_i^h\mathbf{A}_\varepsilon(s)| \leq c, \quad s\in\overline{\mathcal{O}_2}\times[0,\varepsilon], \quad \mathbf{a}\in\mathbb{R}^3.
\end{align*}
Since $D_i^hU$ is supported in $\overline{\mathcal{O}_2}\times[0,\varepsilon]$, we can apply these inequalities to \eqref{E:DihU_Bi}.
Then, we further use and H\"{o}lder's and Young's inequality, \eqref{E:DiQu_ZL2}, and \eqref{E:DiQu_DhU} to get
\begin{align*}
  \|\nabla_sD_i^hU\|_{L^2(\mathcal{V}_\varepsilon)}^2 &\leq c\int_{\mathcal{V}_\varepsilon}\{[\mathbf{A}_\varepsilon]_i^h\nabla_sD_i^hU\}\cdot\nabla_sD_i^hU\,ds \\
  &\leq \frac{1}{2}\|\nabla_sD_i^hU\|_{L^2(\mathcal{V}_\varepsilon)}^2+c\Bigl(\|F\|_{L^2(\mathcal{V}_\varepsilon)}^2+\|\partial_{s_i}\mathbf{V}\|_{L^2(\mathcal{V}_\varepsilon)}^2+\|\nabla_sU\|_{L^2(\mathcal{V}_\varepsilon)}^2\Bigr).
\end{align*}
By this inequality and \eqref{E:UER_UH1}, we find that (after taking the square root)
\begin{align*}
  \|\nabla_sD_i^hU\|_{L^2(\mathcal{V}_\varepsilon)} \leq c\Bigl(\|F\|_{L^2(\mathcal{V}_\varepsilon)}+\|\mathbf{V}\|_{H^1(\mathcal{V}_\varepsilon)}\Bigr),
\end{align*}
where $c>0$ is a constant independent of $\varepsilon$ and $h$.
Hence, letting $h\to0$, we get
\begin{align} \label{E:DsDiU_L2}
  \|\nabla_s\partial_{s_i}U\|_{L^2(\mathcal{V}_\varepsilon)} \leq c\Bigl(\|F\|_{L^2(\mathcal{V}_\varepsilon)}+\|\mathbf{V}\|_{H^1(\mathcal{V}_\varepsilon)}\Bigr), \quad i=1,2,
\end{align}
see also \cite[Section 5.8, Theorem 3 (ii)]{Eva10}.
It remains to estimate $\partial_{s_3}^2U$.
To this end, we test any $\Phi\in C_c^\infty(\mathcal{V}_\varepsilon)$ in \eqref{E:LPW_Ve} and carry out integration by parts.
Then, we find that
\begin{align} \label{E:div_AU}
  -\mathrm{div}_s(\mathbf{A}_\varepsilon\nabla_sU) = F-\mathrm{div}_s\mathbf{V} \quad\text{a.e. in}\quad \mathcal{V}_\varepsilon,
\end{align}
where $\mathrm{div}_s$ is the divergence in $s$.
We rewrite this equation as
\begin{align} \label{E:d33U_Ve}
  \begin{aligned}
    &-A_{\varepsilon,33}\partial_{s_3}^2U = F-\mathrm{div}_s\mathbf{V}+(\partial_{s_3}A_{\varepsilon,33})\partial_{s_3}U+G \quad\text{a.e. in}\quad \mathcal{V}_\varepsilon, \\
    &G := \sum_{(i,j)\in\Lambda}\partial_{s_i}(A_{\varepsilon,ij}\partial_{s_j}U), \quad \Lambda := \{(i,j)\in\{1,2,3\}^2 \mid (i,j)\neq(3,3)\},
  \end{aligned}
\end{align}
where $A_{\varepsilon,ij}$ is the $(i,j)$-entry of $\mathbf{A}_\varepsilon$.
Moreover, we see by \eqref{E:Ae_Elp} that
\begin{align} \label{E:Ae33_Bd}
  A_{\varepsilon,33}(s) = (\mathbf{A}_\varepsilon(s)\mathbf{e}_3)\cdot\mathbf{e}_3 \geq c|\mathbf{e}_3|^2 = c, \quad s\in\overline{\mathcal{O}_1}\times(0,\varepsilon),
\end{align}
where $\mathbf{e}_3=(0,0,1)^{\mathrm{T}}$.
Applying this and \eqref{E:dAe_Bd} with $\mathcal{K}=\overline{\mathcal{O}_1}$ to \eqref{E:d33U_Ve}, we get
\begin{align*}
  |\partial_{s_3}^2U| \leq cA_{\varepsilon,33}|\partial_{s_3}^2U| \leq c(|F|+|\mathrm{div}_s\mathbf{V}|+|\nabla_sU|+|\nabla_s\partial_{s_1}U|+|\nabla_s\partial_{s_2}U|)
\end{align*}
a.e. in $\mathcal{O}_1\times(0,\varepsilon)$.
Noting that $U$ is supported in $\overline{\mathcal{O}_1}\times[0,\varepsilon]$, we take the $L^2$-norm of the above inequality and apply \eqref{E:UER_UH1} and \eqref{E:DsDiU_L2} to find that
\begin{align} \label{E:d33U_L2}
  \|\partial_{s_3}^2U\|_{L^2(\mathcal{V}_\varepsilon)} \leq c\Bigl(\|F\|_{L^2(\mathcal{V}_\varepsilon)}+\|\mathbf{V}\|_{H^1(\mathcal{V}_\varepsilon)}\Bigr).
\end{align}
Hence, we obtain \eqref{E:UER_Goal} with $k=0$ by \eqref{E:UER_UH1}, \eqref{E:DsDiU_L2}, and \eqref{E:d33U_L2}.

\subsection{Uniform \texorpdfstring{$H^3$}{H\textasciicircum3}-estimate} \label{SS:UER_H3}
Now, let us prove \eqref{E:UER_Goal} for $k=1$.

Let $i=1,2$ and $\mathbf{X}_i:=\partial_{s_i}\mathbf{V}-(\partial_{s_i}\mathbf{A}_\varepsilon)\nabla_sU$ on $\mathcal{V}_\varepsilon$.
Then, since \eqref{E:supp_Z} holds,
\begin{align} \label{E:Xi_H1}
  \|\mathbf{X}_i\|_{H^1(\mathcal{V}_\varepsilon)} \leq c\Bigl(\|\mathbf{V}\|_{H^2(\mathcal{V}_\varepsilon)}+\|U\|_{H^2(\mathcal{V}_\varepsilon)}\Bigr) \leq c\Bigl(\|F\|_{L^2(\mathcal{V}_\varepsilon)}+\|\mathbf{V}\|_{H^2(\mathcal{V}_\varepsilon)}\Bigr)
\end{align}
by \eqref{E:dAe_Bd} with $\mathcal{K}=\overline{\mathcal{O}_1}$ and \eqref{E:UER_Goal} with $k=0$.
Let us show
\begin{align} \label{E:dsiU_WF}
  \int_{\mathcal{V}_\varepsilon}(\mathbf{A}_\varepsilon\nabla_s\partial_{s_i}U)\cdot\nabla_s\Phi\,ds = \int_{\mathcal{V}_\varepsilon}\{(\partial_{s_i}F)\Phi+\mathbf{X}_i\cdot\nabla_s\Phi\}\,ds
\end{align}
for $i=1,2$ and all $\Phi\in H^1(\mathcal{V}_\varepsilon)$.
Since \eqref{E:O123} and \eqref{E:supp_Z} hold, we may assume that
\begin{align*}
  \mathrm{supp}\,\Phi \subset \overline{\mathcal{O}_2}\times[0,\varepsilon]
\end{align*}
by multiplying $\Phi$ by a smooth cut-off function depending only on $s'=(s_1,s_2)$.
We extend $\Phi(\cdot,s_3)$ to $\mathbb{R}^2\setminus\mathcal{O}$ by zero for each $s_3\in(0,\varepsilon)$ and mollify $\Phi$ with respect to the variable $s'$.
Then, we have functions $\Phi_m\in H^1(\mathcal{V}_\varepsilon)$ with $m\in\mathbb{N}$ such that
\begin{align*}
  \partial_{s_1}\Phi_m,\partial_{s_2}\Phi_m\in H^1(\mathcal{V}_\varepsilon), \quad \mathrm{supp}\,\Phi_m \subset \overline{\mathcal{O}_3}\times[0,\varepsilon], \quad \lim_{m\to\infty}\|\Phi_m-\Phi\|_{H^1(\mathcal{V}_\varepsilon)} = 0.
\end{align*}
Note that $\partial_{s_3}\Phi_m$ also converges to $\partial_{s_3}\Phi$ in $L^2(\mathcal{V}_\varepsilon)$, since we do noting on $s_3$ in the extension and mollification with respect to $s'$.
Let $i=1,2$.
Since $\overline{\mathcal{O}_3}\subset\mathcal{O}$, we can replace $\Phi$ in \eqref{E:LPW_Ve} by $\partial_{s_i}\Phi_m$ and carry out integration by parts with respect to $s_i$.
Then, we find that \eqref{E:dsiU_WF} holds with $\Phi$ replaced by $\Phi_m$, and thus we have \eqref{E:dsiU_WF} for $\Phi$ by letting $m\to\infty$.

Now, using \eqref{E:dsiU_WF} instead of \eqref{E:LPW_Ve}, we repeat the arguments of Section \ref{SS:UER_H2} with $U$, $F$, and $\mathbf{V}$ replaced by $\partial_{s_i}U$, $\partial_{s_i}F$, and $\mathbf{X}_i$, respectively.
Then, we get
\begin{align*}
  \|\partial_{s_i}U\|_{H^2(\mathcal{V}_\varepsilon)} \leq c\Bigl(\|\partial_{s_i}F\|_{L^2(\mathcal{V}_\varepsilon)}+\|\mathbf{X}_i\|_{H^1(\mathcal{V}_\varepsilon)}\Bigr), \quad i=1,2.
\end{align*}
Thus, we further apply \eqref{E:Xi_H1} to find that
\begin{align} \label{E:dsiU_H2}
  \|\partial_{s_i}U\|_{H^2(\mathcal{V}_\varepsilon)} \leq c\Bigl(\|F\|_{H^1(\mathcal{V}_\varepsilon)}+\|\mathbf{V}\|_{H^2(\mathcal{V}_\varepsilon)}\Bigr), \quad i = 1,2.
\end{align}
The left-hand side of \eqref{E:dsiU_H2} includes all of the third order derivatives of $U$ except for $\partial_{s_3}^3U$.
To estimate this derivative, we take $\partial_{s_3}$ of \eqref{E:d33U_Ve}.
Then, we find that
\begin{align*}
  -A_{\varepsilon,33}\partial_{s_3}^3U = \partial_{s_3}F-\partial_{s_3}(\mathrm{div}_s\mathbf{V})+(\partial_{s_3}^2A_{\varepsilon,33})\partial_{s_3}U+2(\partial_{s_3}A_{\varepsilon,33})\partial_{s_3}^2U+\partial_{s_3}G
\end{align*}
a.e. in $\mathcal{V}_\varepsilon$, and it follows from \eqref{E:dAe_Bd} with $\mathcal{K}=\overline{\mathcal{O}_1}$ and \eqref{E:Ae33_Bd} that
\begin{align*}
  |\partial_{s_3}^3U| &\leq cA_{\varepsilon,33}|\partial_{s_3}^3U| \\
  &\leq c(|\partial_{s_3}F|+|\nabla_s^2\mathbf{V}|+|\nabla_sU|+|\nabla_s^2U|+|\nabla_s^2\partial_{s_1}U|+|\nabla_s^2\partial_{s_2}U|)
\end{align*}
a.e. in $\mathcal{O}_1\times(0,\varepsilon)$, where
\begin{align*}
  |\nabla_s^2Z| = (\textstyle\sum_{i,j=1}^3|\partial_{s_i}\partial_{s_j}Z|^2)^{1/2}, \quad Z = \mathbf{V},U,\partial_{s_1}U,\partial_{s_2}U.
\end{align*}
Noting that $U$ is supported in $\overline{\mathcal{O}_1}\times[0,\varepsilon]$, we take the $L^2$-norm of the above inequality and apply \eqref{E:UER_Goal} with $k=0$ and \eqref{E:dsiU_H2}.
Then, we obtain
\begin{align} \label{E:Ud3_third}
  \|\partial_{s_3}^3U\|_{L^2(\mathcal{V}_\varepsilon)} \leq c\Bigl(\|F\|_{H^1(\mathcal{V}_\varepsilon)}+\|\mathbf{V}\|_{H^2(\mathcal{V}_\varepsilon)}\Bigr).
\end{align}
Since we already verified \eqref{E:UER_Goal} for $k=0$, we conclude by \eqref{E:dsiU_H2} and \eqref{E:Ud3_third} that \eqref{E:UER_Goal} is valid for $k=1$.
This completes the proof of Lemma \ref{L:UER_CTD}.

\begin{remark} \label{R:UER_CTD}
  If $\Gamma$, $g_0$, and $g_1$ are more regular, then we can prove
  \begin{align*}
    \|\partial_{s_j}U\|_{H^3(\mathcal{V}_\varepsilon)} \leq c\Bigl(\|\partial_{s_j}F\|_{H^1(\mathcal{V}_\varepsilon)}+\|\mathbf{X}_j\|_{H^2(\mathcal{V}_\varepsilon)}\Bigr), \quad j = 1,2
  \end{align*}
  by repeating the arguments of Section \ref{SS:UER_H3} with $U$, $F$, and $\mathbf{V}$ replaced by $\partial_{s_j}U$, $\partial_{s_j}F$, and $\mathbf{X}_j$, respectively.
  Thus, estimating the last term as in \eqref{E:Xi_H1}, we get
  \begin{align*}
    \|\partial_{s_j}U\|_{H^3(\mathcal{V}_\varepsilon)} \leq c\Bigl(\|F\|_{H^2(\mathcal{V}_\varepsilon)}+\|\mathbf{V}\|_{H^3(\mathcal{V}_\varepsilon)}\Bigr), \quad j = 1,2.
  \end{align*}
  Also, taking $\partial_{s_3}^2$ of \eqref{E:d33U_Ve}, we can find as in \eqref{E:Ud3_third} that
  \begin{align*}
    \|\partial_{s_3}^4U\|_{L^2(\mathcal{V}_\varepsilon)} \leq c\Bigl(\|F\|_{H^2(\mathcal{V}_\varepsilon)}+\|\mathbf{V}\|_{H^3(\mathcal{V}_\varepsilon)}\Bigr).
  \end{align*}
  Thus, \eqref{E:UER_Goal} holds when $k=2$.
  Repeating this procedure, we can also get \eqref{E:UER_Goal} for $k\geq3$.
  Hence, the uniform elliptic regularity estimate \eqref{E:UER_CTD} is also valid for $k\geq2$, provided that $\Gamma$, $g_0$, and $g_1$ are sufficiently regular.
  We emphasize that this is highly nontrivial, since we confirmed that the constant $c$ in \eqref{E:UER_CTD} is independent of $\varepsilon$.
\end{remark}

\section{Proofs of auxiliary results} \label{S:Pf_Aux}
This section gives the proofs of some lemmas in Section \ref{S:Pre}.

\subsection{Proof of Lemma \ref{L:Ipl_CTD}} \label{SS:PA_28}
Let $\mathcal{O}=(0,1)^2$.
By Lemma \ref{L:Op_Cov}, we can take a finite number of $C^5$ local parametrizations $\psi_\ell\colon\mathcal{O}\to\Gamma$ such that $\{\psi_\ell(\mathcal{O})\}_{\ell=1}^L$ is an open cover of $\Gamma$.
Let $\{\eta_\ell\}_{\ell=1}^L$ be a $C^5$ partition of unity on $\Gamma$ subordinate to $\{\psi_\ell(\mathcal{O})\}_{\ell=1}^L$.
We may assume that $\eta_\ell$ is supported in $\psi_\ell(\mathcal{K}_\ell)$, where $\mathcal{K}_\ell$ is a compact subset of $\mathcal{O}$.

Let $u\in H^2(\Omega_\varepsilon)$ and $p\in[2,\infty]$.
We define $u_\ell:=\bar{\eta}_\ell u$ on $\Omega_\varepsilon$, where $\bar{\eta}_\ell$ is the constant extension of $\eta_\ell$ in the normal direction of $\Gamma$.
Then,
\begin{align*}
  \mathrm{supp}\,u_\ell \subset \{y+r\bm{\nu}(y) \mid y\in\psi_\ell(\mathcal{K}_\ell), \, \varepsilon g_0(y)\leq r\leq\varepsilon g_1(y)\}.
\end{align*}
Moreover, by $0\leq\eta_\ell\leq1$ and $\sum_{\ell=1}^L\eta_\ell=1$ on $\Gamma$, \eqref{E:CEGr_Bd}, and $\eta_\ell\in C^5(\Gamma)$,
\begin{align*}
  \|u\|_{L^p(\Omega_\varepsilon)} \leq \sum_{\ell=1}^L\|u_\ell\|_{L^p(\Omega_\varepsilon)}, \quad \|u_\ell\|_{\mathcal{X}(\Omega_\varepsilon)} \leq c\|u\|_{\mathcal{X}(\Omega_\varepsilon)}, \quad \mathcal{X} = L^2,H^2.
\end{align*}
Hence, to get \eqref{E:Ipl_CTD} for $u$, it is sufficient to prove the same inequality for each $u_\ell$.

Now, we fix and suppress the index $\ell$.
Let $\psi\colon\mathcal{O}\to\Gamma$ be a $C^5$ local parametrization.
Suppose that $u\in H^2(\Omega_\varepsilon)$ and there exists a compact subset $\mathcal{K}$ of $\mathcal{O}$ such that
\begin{align*}
  \mathrm{supp}\,u \subset \{y+r\bm{\nu}(y) \mid y\in\psi(\mathcal{K}), \, \varepsilon g_0(y)\leq r\leq\varepsilon g_1(y)\}.
\end{align*}
For $s=(s',s_3)\in\mathcal{O}\times[0,1]$, we define
\begin{align*}
  \zeta_\varepsilon(s) &:= \psi(s')+\varepsilon r_1(s)\bm{\nu}^\flat(s'), \quad r_1(s) := (1-s_3)g_0^\flat(s')+s_3g_1^\flat(s').
\end{align*}
Here, we write $\eta^\flat=\eta\circ\psi$ on $\mathcal{O}$ for a function $\eta$ on $\Gamma$.
Let
\begin{align*}
  \mathcal{V}_1 := \mathcal{O}\times(0,1) = (0,1)^3, \quad U := u\circ\zeta_\varepsilon \quad\text{on}\quad \mathcal{V}_1.
\end{align*}
Then, $U$ is supported in $\mathcal{K}\times[0,1]$.
Moreover, we see that
\begin{align} \label{Pf_28:Lp}
  c^{-1}\|U\|_{L^p(\mathcal{V}_1)} \leq \varepsilon^{-1/p}\|u\|_{L^p(\Omega_\varepsilon)} \leq c\|U\|_{L^p(\mathcal{V}_1)}, \quad \|U\|_{H^2(\mathcal{V}_1)} \leq c\varepsilon^{-1/2}\|u\|_{H^2(\Omega_\varepsilon)}.
\end{align}
Indeed, since $\bm{\nu}^\flat$, $g_0^\flat$, and $g_1^\flat$ are of class $C^3$ on $\mathcal{O}$, since $\mathcal{K}$ is compact in $\mathcal{O}$, and since $r_1$ is affine in $s_3$ and $0<\varepsilon<1$, it follows that
\begin{align*}
  |\partial_s^\alpha\zeta_\varepsilon(s)| \leq c, \quad s\in\mathcal{K}\times[0,1], \quad |\alpha| = 1,2.
\end{align*}
Hence, differentiating $U(s)=u(\zeta_\varepsilon(s))$ and taking $s\in\mathcal{K}\times(0,1)$, we get
\begin{align} \label{Pf_28:DU}
  |\nabla_s^kU(s)| \leq c\sum_{\ell=1}^k|\nabla^\ell u(\zeta_\varepsilon(s))|, \quad s\in\mathcal{K}\times(0,1), \quad k=1,2.
\end{align}
Also, as in the proof of \eqref{E:Det_LPC}, we can deduce that
\begin{align*}
  \det\nabla_s\zeta_\varepsilon(s) = \varepsilon g^\flat(s')J(\psi(s'),\varepsilon r_1(s))\sqrt{\det\theta(s')}, \quad s=(s',s_3) \in \mathcal{O}\times[0,1],
\end{align*}
where $J$ and $\theta$ be given by \eqref{E:Def_J} and \eqref{E:Def_Rie}, respectively.
Thus,
\begin{align} \label{Pf_28:det}
  c\varepsilon \leq \det\nabla_s\zeta_\varepsilon(s) \leq c\varepsilon, \quad s \in \mathcal{K}\times[0,1]
\end{align}
by \eqref{E:G_Bdd}, \eqref{E:J_Bdd}, and \eqref{Pf_DLP:det}.
Since $U=u\circ\zeta_\varepsilon$ is supported in $\mathcal{K}\times[0,1]$, we have \eqref{Pf_28:Lp} by change of variables, \eqref{Pf_28:DU}, and \eqref{Pf_28:det}.
In particular, $U\in H^2(\mathcal{V}_1)$ and thus
\begin{align*}
  \|U\|_{L^p(\mathcal{V}_1)} \leq c\|U\|_{L^2(\mathcal{V}_1)}^{1-\sigma}\|U\|_{H^2(\mathcal{V}_1)}^\sigma, \quad \sigma = \frac{3}{4}-\frac{3}{2p}
\end{align*}
by the interpolation inequality in $\mathcal{V}_1=(0,1)^3$ (see \cite[Theorem 5.8]{AdaFou03}).
Hence, we get \eqref{E:Ipl_CTD} by applying \eqref{Pf_28:Lp} to the above inequality.

\subsection{Proof of Lemma \ref{L:DtEn_gSu}} \label{SS:PA_215}
First, we prove (i).
Let
\begin{align} \label{Pf_215:vsp}
  v \in \mathcal{E}_T(\Gamma)\cap L^2(0,T;H^3(\Gamma)) \subset C([0,T];H^1(\Gamma)),
\end{align}
where the inclusion is due to Lemma \ref{L:ET_Emb}.
We regularize $v$ in time by cut-off, translation, and mollification as in \cite[Lemma II.5.10]{BoyFab13} to get functions
\begin{align} \label{Pf_215:vk}
  \begin{gathered}
    v_k\in C^\infty([0,T];H^3(\Gamma)) \quad \text{such that} \\
    \lim_{k\to\infty}\|v-v_k\|_{L^2(0,T;H^3(\Gamma))} = \lim_{k\to\infty}\|\partial_tv-\partial_tv_k\|_{L^2(0,T;[H^1(\Gamma)]')} = 0.
  \end{gathered}
\end{align}
For each $v_k$, we can show \eqref{E:DtGr_gSu} on $(0,T)$ by direct calculations and \eqref{E:IbP_Ag}, since $\Gamma$, $g$, and $\nabla_\Gamma$ are independent of time.
Hence, by integration by parts in time, we have
\begin{align} \label{Pf_215:Agk}
  -\frac{1}{2}\int_0^T\partial_t\zeta\|\sqrt{g}\,\nabla_\Gamma v_k\|_{L^2(\Gamma)}^2\,dt = \int_0^T\zeta\langle\partial_tv_k,-gA_gv_k\rangle_{H^1(\Gamma)}\,dt
\end{align}
for all $\zeta=\zeta(t)\in C_c^1(0,T)$.
Moreover, noting that
\begin{align*}
  \|\sqrt{g}\,\nabla_\Gamma v\|_{L^2(\Gamma)} \leq c\|v\|_{H^1(\Gamma)}, \quad \|gA_gv\|_{H^1(\Gamma)} = \|\mathrm{div}_\Gamma(g\nabla_\Gamma v)\|_{H^1(\Gamma)} \leq c\|v\|_{H^3(\Gamma)}
\end{align*}
by $g\in C^3(\Gamma)$, we deduce from H\"{o}lder's inequality and $\zeta\in C_c^1(0,T)$ that
\begin{align*}
  \left|\int_0^T\partial_t\zeta\|\sqrt{g}\,\nabla_\Gamma v_k\|_{L^2(\Gamma)}^2\,dt-\int_0^T\partial_t\zeta\|\sqrt{g}\,\nabla_\Gamma v\|_{L^2(\Gamma)}^2\,dt\right| &\leq c\sigma_{1,k}, \\
  \left|\int_0^T\zeta\langle\partial_tv_k,-gA_gv_k\rangle_{H^1(\Gamma)}\,dt-\int_0^T\zeta\langle\partial_tv,-gA_gv\rangle_{H^1(\Gamma)}\,dt\right| &\leq c\sigma_{2,k},
\end{align*}
where
\begin{align*}
  \sigma_{1,k} &:= \Bigl(\|v_k\|_{L^2(0,T;H^1(\Gamma))}+\|v\|_{L^2(0,T;H^1(\Gamma))}\Bigr)\|v_k-v\|_{L^2(0,T;H^1(\Gamma))}, \\
  \sigma_{2,k} &:= \|\partial_tv_k-\partial_tv\|_{L^2(0,T;[H^1(\Gamma)]')}\|v_k\|_{L^2(0,T;H^3(\Gamma))} \\
  &\qquad\qquad +\|\partial_tv\|_{L^2(0,T;[H^1(\Gamma)]')}\|v_k-v\|_{L^2(0,T;H^3(\Gamma))}.
\end{align*}
Since $\sigma_{1,k},\sigma_{2,k}\to0$ as $k\to\infty$ by \eqref{Pf_215:vk}, we send $k\to\infty$ in \eqref{Pf_215:Agk} to find that
\begin{align} \label{Pf_215:Agv}
  -\frac{1}{2}\int_0^T\partial_t\zeta\|\sqrt{g}\,\nabla_\Gamma v\|_{L^2(\Gamma)}^2\,dt = \int_0^T\zeta\langle\partial_tv,-gA_gv\rangle_{H^1(\Gamma)}\,dt
\end{align}
for all $\zeta\in C_c^1(0,T)$, which shows that \eqref{E:DtGr_gSu} holds for $v$ a.e. on $(0,T)$.

Next, let us prove (ii).
By the assumption on $F$ and Remark \ref{R:Po_Grow},
\begin{align} \label{Pf_215:Fbd}
  |F^{(j)}(z)| \leq c(|z|^{4-j}+1), \quad z\in\mathbb{R}, \quad j=1,2,3,
\end{align}
where $F^{(j)}=d^jF/dz^j$.
Let $v$ be in the class \eqref{Pf_215:vsp}.
We first observe that
\begin{align*}
  F(v) \in C([0,T];L^1(\Gamma)), \quad F'(v)\in L^2(0,T;H^1(\Gamma)).
\end{align*}
The second assertion can be shown by \eqref{E:SoSu_H1} and \eqref{Pf_215:Fbd} as in the proofs of \eqref{E:FL2_CTD} and \eqref{E:FH1_CTD}.
We omit details here.
To prove the first assertion, let $t,s\in[0,T]$.
Then, since
\begin{align*}
  |F(v(t))-F(v(s))| \leq c(|v(t)|^3+|v(s)|^3+1)|v(t)-v(s)| \quad\text{a.e. on}\quad \Gamma
\end{align*}
by the mean value theorem of $F$ and \eqref{Pf_215:Fbd} with $j=1$, we have
\begin{align} \label{Pf_215:Fvt}
  \begin{aligned}
    \|F(v(t))-F(v(s))\|_{L^1(\Gamma)} &\leq c\|\,|v(t)|^3+|v(s)|^3+1\,\|_{L^{4/3}(\Gamma)}\|v(t)-v(s)\|_{L^4(\Gamma)} \\
    &\leq c\Bigl(\|v(t)\|_{L^4(\Gamma)}^3+\|v(s)\|_{L^4(\Gamma)}^3+1\Bigr)\|v(t)-v(s)\|_{L^4(\Gamma)} \\
    &\leq c\Bigl(\|v(t)\|_{H^1(\Gamma)}^3+\|v(s)\|_{H^1(\Gamma)}^3+1\Bigr)\|v(t)-v(s)\|_{H^1(\Gamma)}
  \end{aligned}
\end{align}
by H\"{o}lder's inequality and \eqref{E:SoSu_H1}.
Thus, $F(v) \in C([0,T];L^1(\Gamma))$ by \eqref{Pf_215:vsp}.

Now, let us show \eqref{E:DtFu_gSu}.
We take the functions $v_k$ satisfying \eqref{Pf_215:vk}.
Then,
\begin{align} \label{Pf_215:CH1}
  \lim_{k\to\infty}\|v-v_k\|_{C([0,T];H^1(\Gamma))} = 0
\end{align}
by the continuous embedding (see Lemma \ref{L:ET_Emb})
\begin{align*}
  \mathcal{E}_T(\Gamma)\cap L^2(0,T;H^3(\Gamma)) \hookrightarrow C([0,T];H^1(\Gamma)).
\end{align*}
For each $v_k$, we have \eqref{E:DtFu_gSu} on $(0,T)$ by $F\in C^3(\mathbb{R})$.
Hence,
\begin{align} \label{Pf_215:gF}
  -\int_0^T\partial_t\zeta\left(\int_\Gamma gF(v_k)\,d\mathcal{H}^2\right)\,dt = \int_0^T\zeta\langle\partial_tv_k,gF'(v_k)\rangle_{H^1(\Gamma)}\,dt
\end{align}
for all $\zeta\in C_c^1(0,T)$ by integration by parts.
Hence, if we have
\begin{align}
  \lim_{k\to\infty}\|F(v_k)-F(v)\|_{C([0,T];L^1(\Gamma))} &= 0, \label{Pf_215:FL1} \\
  \lim_{k\to\infty}\|F'(v_k)-F'(v)\|_{L^2(0,T;H^1(\Gamma))} &= 0, \label{Pf_215:FdH1}
\end{align}
then we can show that \eqref{Pf_215:gF} holds with $v_k$ replaced by $v$ as in the proof of \eqref{Pf_215:Agv}, which implies that \eqref{E:DtFu_gSu} is valid for $v$.
Let us prove \eqref{Pf_215:FL1} and \eqref{Pf_215:FdH1}.
We see that
\begin{align*}
  &\|F(v_k)-F(v)\|_{C([0,T];L^1(\Gamma))} \leq c\sigma_{3,k}\|v_k-v\|_{C([0,T];H^1(\Gamma))}, \\
  &\sigma_{3,k} := \|v_k\|_{C([0,T];H^1(\Gamma))}^3+\|v\|_{C([0,T];H^1(\Gamma))}^3+1
\end{align*}
by replacing $v(s)$ with $v_k(t)$ in \eqref{Pf_215:Fvt}.
Thus, \eqref{Pf_215:FL1} follows from \eqref{Pf_215:CH1}.
Next,
\begin{align*}
  |F'(v_k(t))-F'(v(t))| \leq c(|v_k(t)|^2+|v(t)|^2+1)|v_k(t)-v(t)| \quad\text{a.e. on}\quad \Gamma
\end{align*}
for all $t\in[0,T]$ by the mean value theorem of $F'$ and \eqref{Pf_215:Fbd} with $j=2$.
Hence,
\begin{align*}
  \|F'(v_k(t))-F'(v(t))\|_{L^2(\Gamma)} &\leq \|\,|v_k(t)|^2+|v(t)|^2+1\,\|_{L^3(\Gamma)}\|v_k(t)-v(t)\|_{L^6(\Gamma)} \\
  &\leq c\Bigl(\|v_k(t)\|_{L^6(\Gamma)}^2+\|v(t)\|_{L^6(\Gamma)}^2+1\Bigr)\|v_k(t)-v(t)\|_{L^6(\Gamma)} \\
  &\leq c\Bigl(\|v_k(t)\|_{H^1(\Gamma)}^2+\|v(t)\|_{H^1(\Gamma)}^2+1\Bigr)\|v_k(t)-v(t)\|_{H^1(\Gamma)}
\end{align*}
by H\"{o}lder's inequality and \eqref{E:SoSu_H1}, and it follows that
\begin{align} \label{Pf_215:FdL2}
  \begin{aligned}
    &\|F'(v_k)-F'(v)\|_{L^2(0,T;L^2(\Gamma))} \leq c\sigma_{4,k}\|v_k-v\|_{L^2(0,T;H^1(\Gamma))}, \\
    &\sigma_{4,k} := \|v_k\|_{C([0,T];H^1(\Gamma))}^2+\|v\|_{C([0,T];H^1(\Gamma))}^2+1.
  \end{aligned}
\end{align}
Also, since $\nabla_\Gamma F'(v)=F''(v)\nabla_\Gamma v$, we have
\begin{align*}
  \nabla_\Gamma F'(v_k)-\nabla_\Gamma F'(v) = \{F''(v_k)-F''(v)\}\nabla_\Gamma v_k+F''(v)(\nabla_\Gamma v_k-\nabla_\Gamma v)
\end{align*}
a.e. on $\Gamma\times(0,T)$.
Thus, by the mean value theorem of $F''$ and \eqref{Pf_215:Fbd},
\begin{align*}
  |\nabla_\Gamma F'(v_k)-\nabla_\Gamma F'(v)| &\leq c\{(|v_k|+|v|+1)|v_k-v||\nabla_\Gamma v_k(t)|+(|v|^2+1)|\nabla_\Gamma v_k-\nabla_\Gamma v|\}.
\end{align*}
Using this estimate, H\"{o}lder's inequality, and \eqref{E:SoSu_H1}, we can deduce that
\begin{multline} \label{Pf_215:FdGr}
  \|\nabla_\Gamma F'(v_k)-\nabla_\Gamma F'(v)\|_{L^2(0,T;L^2(\Gamma))} \\
  \leq c\Bigl(\sigma_{5,k}\|v_k-v\|_{C([0,T];H^1(\Gamma))}+\sigma_v\|\nabla_\Gamma v_k-\nabla_\Gamma v\|_{L^2(0,T;H^1(\Gamma))}\Bigr)
\end{multline}
as in \eqref{Pf_215:FdL2}, where $\sigma_v:=\|v\|_{C(0,T;H^1(\Gamma))}^2+1$ and
\begin{align*}
  \sigma_{5,k} := \Bigl(\|v_k\|_{C([0,T];H^1(\Gamma))}+\|v\|_{C([0,T];H^1(\Gamma))}+1\Bigr)\|\nabla_\Gamma v_k\|_{L^2(0,T;H^1(\Gamma))}.
\end{align*}
Therefore, we obtain \eqref{Pf_215:FdH1} by applying \eqref{Pf_215:vk} and \eqref{Pf_215:CH1} to \eqref{Pf_215:FdL2} and \eqref{Pf_215:FdGr}.

\subsection{Proof of Lemma \ref{L:DtIA_Sur}} \label{SS:PA_216}
Let $L_g$ be the operator given in Lemma \ref{L:InA_Sur} and
\begin{align*}
  f \in H^1(0,T;[H^1(\Gamma)]'), \quad \langle f(t),g\rangle_{H^1(\Gamma)} = 0 \quad\text{for all}\quad t\in[0,T].
\end{align*}
Then, $\langle\partial_tf(t),g\rangle_{H^1(\Gamma)}=0$ for a.a. $t\in(0,T)$, since $g$ is independent of $t$.
Hence, we can apply $L_g$ to $f(t)$ and $\partial_tf(t)$, and it follows from \eqref{E:IAS_Bdd} that
\begin{align*}
  L_gf, L_g(\partial_tf) \in L^2(0,T;H^1(\Gamma)).
\end{align*}
Let $\delta\in(0,T/2)$ and $h\in\mathbb{R}$ such that $0<|h|<\delta/2$.
We set
\begin{align*}
  D^hf(t) := \frac{f(t+h)-f(t)}{h}, \quad D^hL_gf(t) := \frac{L_gf(t+h)-L_gf(t)}{h}, \quad t\in(\delta,T-\delta).
\end{align*}
Then, since $L_g$ is linear and satisfy \eqref{E:IAS_Bdd}, we have
\begin{align*}
  \|D^hL_gf\|_{L^2(\delta,T-\delta;H^1(\Gamma))} \leq c\|D^hf\|_{L^2(\delta,T-\delta;[H^1(\Gamma)]')} \leq c\|\partial_tf\|_{L^2(0,T;[H^1(\Gamma)]')},
\end{align*}
where $c>0$ is a constant independent of $h$ and $\delta$ (see e.g. \cite[Section 5.8, Theorem 3 (i)]{Eva10} for the last inequality).
Therefore,
\begin{gather}
  \partial_tL_gf \in L^2(\delta,T-\delta;H^1(\Gamma)), \notag \\
  \lim_{h\to0}D^hL_gf = \partial_tL_gf \quad\text{weakly in}\quad L^2(\delta,T-\delta;H^1(\Gamma)) \label{Pf_216:Dh}
\end{gather}
by \cite[Section 5.8, Theorem 3 (ii)]{Eva10}.
Here and in what follows, the limit $h\to0$ is taken up to a subsequence.
On the other hand, since
\begin{align*}
  \lim_{h\to0}D^hf = \partial_tf \quad\text{weakly in}\quad L^2(\delta,T-\delta;[H^1(\Gamma)]')
\end{align*}
by $f\in H^1(0,T;[H^1(\Gamma)]')$, and since
\begin{align*}
  L_g\colon L^2(0,T;[H^1(\Gamma)]') \to L^2(0,T;H^1(\Gamma))
\end{align*}
is linear and bounded by \eqref{E:IAS_Bdd}, we have
\begin{align} \label{Pf_216:LD}
  \lim_{h\to0}L_g(D^hf) = L_g(\partial_tf) \quad\text{weakly in}\quad L^2(\delta,T-\delta,H^1(\Gamma)).
\end{align}
Since $D^hL_gf=L_g(D^hf)$ a.e. on $\Gamma\times(\delta,T-\delta)$, we find by \eqref{Pf_216:Dh} and \eqref{Pf_216:LD} that
\begin{align*}
  \partial_tL_gf = L_g(\partial_tf) \quad\text{a.e. on}\quad \Gamma\times(\delta,T-\delta) \quad\text{for all}\quad \delta\in(0,T/2).
\end{align*}
Hence, the same equality holds a.e. on $\Gamma\times(0,T)$, and we conclude that
\begin{align*}
  \partial_tL_gf = L_g(\partial_tf) \in L^2(0,T;H^1(\Gamma)), \quad L_gf\in H^1(0,T;H^1(\Gamma)).
\end{align*}
Let us show \eqref{E:DtIA_Sur}.
Let $v\in\mathcal{E}_T(\Gamma)$ satisfy $\int_\Gamma gv(t)\,d\mathcal{H}^2=0$ for all $t\in[0,T]$.
Then,
\begin{align*}
  L_gv \in H^1(0,T;H^1(\Gamma)) \subset \mathcal{E}_T(\Gamma), \quad \partial_tL_gv = L_g(\partial_tv) \quad\text{a.e. on}\quad \Gamma\times(0,T)
\end{align*}
by the definition \eqref{E:Def_ET} of $\mathcal{E}_T(\Gamma)$ and the above discussions.
We see that
\begin{align*}
  \int_\Gamma g|\nabla_\Gamma L_gv|^2\,d\mathcal{H}^2 = (g\nabla_\Gamma L_gv,\nabla_\Gamma L_gv)_{L^2(\Gamma)} = \langle v,gL_gv\rangle_{H^1(\Gamma)} = (gv,L_gv)_{L^2(\Gamma)}
\end{align*}
on $[0,T]$ by \eqref{E:InA_Sur} (recall that we set $\langle f,\cdot\rangle_{H^1(\Gamma)}=(f,\cdot)_{L^2(\Gamma)}$ for $f\in L^2(\Gamma)$).
Then, noting that $v,L_gv\in\mathcal{E}_T(\Gamma)$, we take the time derivative and apply \eqref{E:DtL2_gSu} to get
\begin{align} \label{Pf_216:dtNo}
  \frac{d}{dt}\int_\Gamma g|\nabla_\Gamma L_gv|^2\,d\mathcal{H}^2 = \langle\partial_tv,gL_gv\rangle_{H^1(\Gamma)}+\langle\partial_tL_gv,gv\rangle_{H^1(\Gamma)}
\end{align}
a.e. on $(0,T)$.
Moreover, since $\partial_tL_gv=L_g(\partial_tv)\in L^2(0,T;H^1(\Gamma))$,
\begin{align*}
  \langle\partial_tL_gv,gv\rangle_{H^1(\Gamma)} = \langle L_g(\partial_tv),gv\rangle_{H^1(\Gamma)} = (L_g(\partial_tv),gv)_{L^2(\Gamma)} = (v,gL_g(\partial_tv))_{L^2(\Gamma)},
\end{align*}
and we use \eqref{E:InA_Sur} with $f=v$ and $\eta=L_g(\partial_tv)$ to get
\begin{align*}
  (v,gL_g(\partial_tv))_{L^2(\Gamma)} = \langle v,gL_g(\partial_tv)\rangle_{H^1(\Gamma)} = (g\nabla_\Gamma L_gv,\nabla_\Gamma L_g(\partial_tv))_{L^2(\Gamma)}.
\end{align*}
To the last term, we again apply \eqref{E:InA_Sur} with $f=\partial_tv$ and $\eta=v$.
Then,
\begin{align*}
  (g\nabla_\Gamma L_gv,\nabla_\Gamma L_g(\partial_tv))_{L^2(\Gamma)} = (g\nabla_\Gamma L_g(\partial_tv),\nabla_\Gamma L_gv)_{L^2(\Gamma)} = \langle\partial_tv,gL_gv\rangle_{H^1(\Gamma)}
\end{align*}
and thus $\langle\partial_tL_gv,gv\rangle_{H^1(\Gamma)}=\langle\partial_tv,gL_gv\rangle_{H^1(\Gamma)}$.
Applying this relation to \eqref{Pf_216:dtNo} and dividing both sides by two, we obtain \eqref{E:DtIA_Sur}.

\section{Galerkin method for the thin-domain problem} \label{S:Galer}
Let us briefly explain the outline of construction of a weak solution to the thin-domain problem \eqref{E:CH_CTD} by the Galerkin method.
In what follows, we fix and suppress the parameter $\varepsilon$.
For example, we write $\Omega$ and $u$ instead of $\Omega_\varepsilon$ and $u^\varepsilon$.
Note that constants in this section depend on $\varepsilon$ implicitly, but it does not matter since we do not send $\varepsilon\to0$.
Moreover, we abbreviate function spaces $\mathcal{X}(\Omega)$ on $\Omega$ to $\mathcal{X}$ for the sake of simplicity.

Let $u_0\in H^1$ be the initial data.
Suppose first that there exists a weak solution $(u_T,w_T)$ to \eqref{E:CH_CTD} on $[0,T)$ for all $T>0$.
Then, since $u_T=u_{T'}$ and $w_T=w_{T'}$ on $[0,T)$ for $T<T'$ by Proposition \ref{P:CHT_Uni}, we can get a global weak solution $(u,w)$ to \eqref{E:CH_CTD} by setting $u:=u_T$ and $w:=w_T$ on $[0,T)$ for each $T>0$.
Hence, it is enough to show the existence of a weak solution to \eqref{E:CH_CTD} on $[0,T)$ for each fixed $T>0$.

\subsection{Approximate solutions} \label{SS:Ga_App}
Let $\{\lambda_k\}_{k=1}^\infty$ and $\{\phi_k\}_{k=1}^\infty$ be the eigenvalues and corresponding eigenfunctions of the Neumann problem
\begin{align*}
  -\Delta\phi_k = \lambda_k\phi_k \quad\text{in}\quad \Omega, \quad \partial_\nu\phi_k = 0 \quad\text{on}\quad \partial\Omega
\end{align*}
such that $\lambda_1=0$ and $\phi_1$ is constant, $\lambda_k\leq\lambda_{k+1}$ for all $k\in\mathbb{N}$, and $\lambda_k\to\infty$ as $k\to\infty$.
By normalizing $\phi_k$, we may assume that (here, $\delta_{k\ell}$ is the Kronecker delta)
\begin{align} \label{E:Ga_Orth}
  (\phi_k,\phi_\ell)_{L^2} = \delta_{k\ell}, \quad (\nabla\phi_k,\nabla\phi_\ell)_{L^2} = (-\Delta\phi_k,\phi_\ell)_{L^2} = \lambda_k\delta_{k\ell},
\end{align}
so that $\{\phi_k\}_{k=1}^\infty$ is an orthonormal basis of $L^2$ and an orthogonal basis of $H^1$.
Let
\begin{align*}
  V_K := \mathrm{span}\{\phi_1,\dots,\phi_K\}, \quad \mathcal{P}_Ku := \sum_{k=1}^K(u,\phi_k)_{L^2}\phi_k, \quad u\in L^2, \quad K\in\mathbb{N},
\end{align*}
so that $\mathcal{P}_K\colon L^2\to V_K$ is an orthogonal projection.
By \eqref{E:Ga_Orth}, we see that
\begin{align} \label{E:Ga_OrPr}
  \|\mathcal{P}_Ku\|_{\mathcal{X}}\leq\|u\|_{\mathcal{X}}, \quad \lim_{K\to\infty}\|\mathcal{P}_Ku-u\|_{\mathcal{X}} = 0, \quad \mathcal{X} = L^2,H^1.
\end{align}
For each $K\in\mathbb{N}$, we look for functions
\begin{align*}
  u_K(t) = \sum_{k=1}^K\alpha_k(t)\phi_k, \quad w_K(t) = \sum_{k=1}^N\beta_k(t)\phi_k
\end{align*}
satisfying $u_K(0)=\mathcal{P}_Ku_0$ and, for all $\phi\in V_K$ and $t\in(0,T)$,
\begin{align} \label{E:Ga_ApPr}
  \begin{aligned}
    &(\partial_tu_K(t),\phi)_{L^2}+(\nabla w_K(t),\nabla\phi)_{L^2} = 0, \\
    &(w_K(t),\phi)_{L^2} = (\nabla u_K(t),\nabla\phi)_{L^2}+(F'(u_K(t)),\phi)_{L^2}.
  \end{aligned}
\end{align}
We set $\phi=\phi_k$ in \eqref{E:Ga_ApPr} and use \eqref{E:Ga_Orth} to derive equations for $\alpha_k$ and $\beta_k$.
Then, we substitute the equation of $\beta_k$ for the one of $\alpha_k$ to get the ODE system
\begin{align*}
  \frac{d\bm{\alpha}}{dt}(t) = \mathbf{f}(\bm{\alpha}(t)), \quad \bm{\alpha} = (\alpha_1,\dots,\alpha_K)^T\in\mathbb{R}^K, \quad \mathbf{f}\colon\mathbb{R}^K\to\mathbb{R}^K.
\end{align*}
Since $\mathbf{f}$ is locally Lipschitz continuous by Remark \ref{R:Po_Grow}, we can solve this system locally in time by the Cauchy--Lipschitz theorem to get a local-in-time solution to \eqref{E:Ga_ApPr}.

\subsection{Energy estimate} \label{SS:Ga_Ena}
Next, we derive estimates for $u_K$ and $w_K$.
Let
\begin{align*}
  E(u_K(t)) = \int_\Omega\left(\frac{|\nabla u_K(t)|^2}{2}+F(u_K(t))\right)\,dx
\end{align*}
be the Ginzburg--Landau free energy for $u_K(t)$.
Since $\partial_tu_K(t),w_K(t)\in V_K$,
\begin{align*}
  \frac{d}{dt}E(u_K(t)) &= \bigl(\nabla u_K(t),\nabla\partial_tu_K(t))_{L^2}+(F'(u_K(t)),\partial_tu_K(t))_{L^2} \\
  &= (w_K(t),\partial_tu_K(t))_{L^2} = -\|\nabla w_K(t)\|_{L^2}^2
\end{align*}
by \eqref{E:Ga_ApPr}.
Hence, integrating with respect to time, we obtain
\begin{align*}
  E(u_K(t))+\int_0^t\|\nabla w_K(s)\|_{L^2}^2\,ds = E(\mathcal{P}_Ku_0)
\end{align*}
as long as $u_K$ and $w_K$ exist.
Then, as in Proposition \ref{P:CHT_GLE}, we have
\begin{align*}
  \frac{1}{2}\|\nabla u_K(t)\|_{L^2}^2+\int_0^t\|\nabla w_K(s)\|_{L^2}^2\,ds \leq c\{|E(\mathcal{P}_Ku_0)|+1\}
\end{align*}
by Assumption \ref{A:Poten}.
Moreover, by Remark \ref{R:Po_Grow}, \eqref{E:Sob_CTD} with $p=4$, and \eqref{E:Ga_OrPr},
\begin{align*}
  |E(\mathcal{P}_Ku_0)| \leq c\Bigl(\|\nabla\mathcal{P}_Ku_0\|_{L^2}^2+\|\mathcal{P}_Ku_0\|_{L^4}^4+1\Bigr) \leq C(u_0).
\end{align*}
Here and in what follows, we write $C(u_0)$ for a general positive constant depending only on $\|u_0\|_{H^1}$ (recall that we fix and suppress $\varepsilon$).
Thus,
\begin{align} \label{E:Ga_EnGr}
  \|\nabla u_K(t)\|_{L^2}^2+\int_0^t\|\nabla w_K(s)\|_{L^2}^2\,ds \leq C(u_0).
\end{align}
By \eqref{E:Ga_Orth} and \eqref{E:Ga_EnGr}, the coefficients $\alpha_k(t)$ are bounded uniformly in $t$.
Hence, we can extend $u_K(t)$ and $w_K(t)$ up to $t=T$.
Also, we see by Poincar\'{e}'s inequality that
\begin{align*}
  \|u_K(t)\|_{H^1} \leq c\Bigl(\|\nabla u_K(t)\|_{L^2}+|(u_K(t),1)_{L^2}|\Bigr),
\end{align*}
and we take $\phi=1$ in \eqref{E:Ga_ApPr}, which is allowed since $\phi_1$ is constant, to get
\begin{align*}
  \frac{d}{dt}(u_K(t),1)_{L^2} = (\partial_tu_K(t),1)_{L^2} = 0, \quad (u_K(t),1)_{L^2} = (\mathcal{P}_Ku_0,1)_{L^2}.
\end{align*}
Hence, it follows from H\"{o}lder's inequality, \eqref{E:Ga_OrPr}, and \eqref{E:Ga_EnGr} that
\begin{align} \label{E:Ga_uKH1}
  \|u_K(t)\|_{H^1} \leq c\Bigl(\|\nabla u_K(t)\|_{L^2}+\|\mathcal{P}_Ku_0\|_{L^2}\Bigr) \leq C(u_0), \quad t\in[0,T].
\end{align}
Also, we set $\phi=1$ in the second line of \eqref{E:Ga_ApPr} to find that
\begin{align*}
  |(w_K(t),1)_{L^2}| = |(F'(u_K(t)),1)_{L^2}| \leq c\Bigl(\|u_K(t)\|_{L^3}^3+1\Bigr) \leq C(u_0)
\end{align*}
by Remark \ref{R:Po_Grow}, \eqref{E:Sob_CTD} with $p=3$, and \eqref{E:Ga_uKH1}.
Hence,
\begin{align} \label{E:Ga_wKH1}
  \int_0^T\|w_K(t)\|_{H^1}^2\,dt \leq c\int_0^T\Bigl(\|\nabla w_K(t)\|_{L^2}^2+|(w_K(t),1)_{L^2}|^2\Bigr)\,dt \leq (1+T)C(u_0)
\end{align}
by Poincar\'{e}'s inequality and \eqref{E:Ga_EnGr}.

We also estimate $\partial_tu_K$.
Let $\varphi\in L^2(0,T;H^1)$.
For a.a. $t\in(0,T)$, we have
\begin{align*}
  (\partial_tu_K(t),\varphi(t))_{L^2} = (\partial_tu_K(t),\mathcal{P}_K\varphi(t))_{L^2} = -(\nabla w_K(t),\nabla\mathcal{P}_K\varphi(t))_{L^2}
\end{align*}
by $\partial_tu_K(t)\in V_K$, \eqref{E:Ga_Orth}, and \eqref{E:Ga_ApPr} (note that we do not assume $\varphi(t)\in V_K$).
Hence,
\begin{align*}
  \left|\int_0^T(\partial_tu_K(t),\varphi(t))_{L^2}\,dt\right| &\leq \|\nabla w_K\|_{L^2(0,T;L^2)}\|\nabla\mathcal{P}_K\varphi\|_{L^2(0,T;L^2)} \\
  &\leq C(u_0)\|\varphi\|_{L^2(0,T;H^1)}
\end{align*}
by H\"{o}lder's inequality, \eqref{E:Ga_OrPr}, and \eqref{E:Ga_EnGr}.
This yields
\begin{align} \label{E:Ga_dtuK}
  \|\partial_tu_K\|_{L^2(0,T;[H^1]')} \leq C(u_0).
\end{align}

\subsection{Convergence to a weak solution} \label{SS:Ga_Con}
By \eqref{E:Ga_uKH1}--\eqref{E:Ga_dtuK}, there exist functions
\begin{align*}
  u\in\mathcal{E}_T\cap L^\infty(0,T;H^1), \quad w\in L^2(0,T;H^1),
\end{align*}
where $\mathcal{E}_T=\mathcal{E}_T(\Omega)$ is given in \eqref{E:Def_ET}, such that
\begin{align} \label{E:Ga_WeCo}
  \begin{alignedat}{3}
      \lim_{K\to\infty}u_K &= u &\quad &\text{weakly-$\ast$ in} &\quad &L^\infty(0,T;H^1), \\
      \lim_{K\to\infty}\partial_tu_K &= \partial_tu &\quad &\text{weakly in} &\quad &L^2(0,T;[H^1]'), \\
      \lim_{K\to\infty}w_K &= w &\quad &\text{weakly in} &\quad &L^2(0,T;H^1)
    \end{alignedat}
\end{align}
up to a subsequence.
Also, by Remark \ref{R:Po_Grow}, \eqref{E:Sob_CTD} with $p=6$, and \eqref{E:Ga_uKH1},
\begin{align*}
  \|F'(u_K(t))\|_{L^2} \leq c\Bigl(\|u_K(t)\|_{L^6}^3+1\Bigr) \leq c\Bigl(\|u_K(t)\|_{H^1}^3+1\Bigr) \leq C(u_0), \quad t\in[0,T].
\end{align*}
This gives $\|F'(u_K)\|_{L^2(0,T;L^2)}\leq T^{1/2}C(u_0)$, and thus we can prove that
\begin{align} \label{E:Ga_FCo}
  \lim_{K\to\infty}F'(u_K) = F'(u) \quad\text{weakly in}\quad L^2(0,T;L^2)
\end{align}
by applying the Aubin--Lions lemma (see \cite[Theorem II.5.16]{BoyFab13}) to  $\{u_K\}_K$ and using Lemma \ref{L:WeDo} as in the proof of Theorem \ref{T:TFL_Weak} (see Section \ref{SS:TFL_WC}).

Let us show \eqref{E:CHT_WF_u} and \eqref{E:CHT_WF_w}.
Let $\varphi\in L^2(0,T;H^1)$.
For a.a. $t\in(0,T)$,
\begin{align*}
  \|\mathcal{P}_L\varphi(t)-\varphi(t)\|_{H^1} \leq 2\|\varphi(t)\|_{H^1}, \quad \lim_{L\to\infty}\|\mathcal{P}_L\varphi(t)-\varphi(t)\|_{H^1} = 0
\end{align*}
by \eqref{E:Ga_OrPr}.
Thus, by the dominated convergence theorem, we have
\begin{align} \label{E:Ga_Test}
  \lim_{L\to\infty}\|\mathcal{P}_L\varphi-\varphi\|_{L^2(0,T;H^1)} = 0.
\end{align}
When $L\leq K$, we can take $\phi=\mathcal{P}_L\varphi(t)\in V_K$ in \eqref{E:Ga_ApPr} to get
\begin{align} \label{E:Ga_uKpL}
  \begin{aligned}
    &\int_0^T(\partial_tu_K,\mathcal{P}_L\varphi)_{L^2}\,dt+\int_0^T(\nabla w_K,\nabla\mathcal{P}_L\varphi)_{L^2}\,dt = 0, \\
    &\int_0^T(w_K,\mathcal{P}_L\varphi)_{L^2}\,dt = \int_0^T(\nabla u_K,\nabla\mathcal{P}_L\varphi)_{L^2}\,dt+\int_0^T(F'(u_K),\mathcal{P}_L\varphi)_{L^2}\,dt.
  \end{aligned}
\end{align}
In \eqref{E:Ga_uKpL}, we first send $K\to\infty$ and use \eqref{E:Ga_WeCo} and \eqref{E:Ga_FCo}.
After that, we send $L\to\infty$ and apply \eqref{E:Ga_Test}.
Then, we obtain \eqref{E:CHT_WF_u} and \eqref{E:CHT_WF_w}.

To verify the initial condition, let $f\in C^1([0,T])$ satisfy $f(0)=1$ and $f(T)=0$.
For any $\varphi_0\in H^1$, we take $\varphi(x,t):=f(t)\varphi_0(x)$ in the first line of \eqref{E:Ga_uKpL}.
Then,
\begin{align*}
  -(\mathcal{P}_Ku_0,\mathcal{P}_L\varphi_0)_{L^2}-\int_0^T(u_K,f'\mathcal{P}_L\varphi_0)_{L^2}\,dt+\int_0^T(\nabla w_K,f\nabla\mathcal{P}_L\varphi_0)_{L^2}\,dt = 0
\end{align*}
with $f'=df/dt$ by integration by parts with respect to time.
Again, we first send $K\to\infty$ and use \eqref{E:Ga_OrPr} and \eqref{E:Ga_WeCo}, and then send $L\to\infty$ and apply \eqref{E:Ga_OrPr}.
Then, we get
\begin{align*}
  -(u_0,\varphi_0)_{L^2}-\int_0^T(u,f'\varphi_0)_{L^2}\,dt+\int_0^T(\nabla w,f\nabla\varphi_0)_{L^2}\,dt = 0.
\end{align*}
On the other hand, we set $\varphi=f\varphi_0$ in \eqref{E:CHT_WF_u} and use \eqref{E:DtL2_CTD} to find that
\begin{align*}
  -(u(0),\varphi_0)_{L^2}-\int_0^T(u,f'\varphi_0)_{L^2}\,dt+\int_0^T(\nabla w,f\nabla\varphi_0)_{L^2}\,dt = 0.
\end{align*}
Thus, $(u(0),\varphi_0)_{L^2}=(u_0,\varphi_0)_{L^2}$ for all $\varphi_0\in H^1$, which yields $u(0)=u_0$ a.e. on $\Omega$ since $H^1$ is dense in $L^2$.
Hence, $(u,w)$ is a weak solution to \eqref{E:CH_CTD} on $[0,T)$.

\section*{Acknowledgments}
The work of the author was supported by JSPS KAKENHI Grant Number 23K12993.

\bibliographystyle{abbrv}
\bibliography{CH_stCTD_Ref}

\begin{thebibliography}{10}

\bibitem{Abo96}
M.~Abounouh.
\newblock The {C}ahn-{H}illiard equation on thin domains.
\newblock {\em Appl. Anal.}, 60(1-2):23--36, 1996.

\bibitem{AdaFou03}
R.~A. Adams and J.~J.~F. Fournier.
\newblock {\em Sobolev spaces}, volume 140 of {\em Pure and Applied Mathematics
  (Amsterdam)}.
\newblock Elsevier/Academic Press, Amsterdam, second edition, 2003.

\bibitem{AZVR19}
F.~Amiri, S.~Ziaei-Rad, N.~Valizadeh, and T.~Rabczuk.
\newblock On the use of local maximum entropy approximants for
  {C}ahn-{H}illiard phase-field models in 2{D} domains and on surfaces.
\newblock {\em Comput. Methods Appl. Mech. Engrg.}, 346:1--24, 2019.

\bibitem{ArFeLa17}
J.~M. Arrieta, F.~Ferraresso, and P.~D. Lamberti.
\newblock Spectral analysis of the biharmonic operator subject to {N}eumann
  boundary conditions on dumbbell domains.
\newblock {\em Integral Equations Operator Theory}, 89(3):377--408, 2017.

\bibitem{BloEll91}
J.~F. Blowey and C.~M. Elliott.
\newblock The {C}ahn-{H}illiard gradient theory for phase separation with
  nonsmooth free energy. {I}. {M}athematical analysis.
\newblock {\em European J. Appl. Math.}, 2(3):233--280, 1991.

\bibitem{BoyFab13}
F.~Boyer and P.~Fabrie.
\newblock {\em Mathematical tools for the study of the incompressible
  {N}avier-{S}tokes equations and related models}, volume 183 of {\em Applied
  Mathematical Sciences}.
\newblock Springer, New York, 2013.

\bibitem{CaeEll21}
D.~Caetano and C.~M. Elliott.
\newblock Cahn-{H}illiard equations on an evolving surface.
\newblock {\em European J. Appl. Math.}, 32(5):937--1000, 2021.

\bibitem{CaElGrPo23}
D.~Caetano, C.~M. Elliott, M.~Grasselli, and A.~Poiatti.
\newblock Regularization and separation for evolving surface {C}ahn-{H}illiard
  equations.
\newblock {\em SIAM J. Math. Anal.}, 55(6):6625--6675, 2023.

\bibitem{CahHil58}
J.~W. Cahn and J.~E. Hilliard.
\newblock Free energy of a nonuniform system. {I}. interfacial free energy.
\newblock {\em J. Chem. Phys.}, 28(2):258--267, 02 1958.

\bibitem{CaJaWo24}
G.~Cardone, W.~J\"ager, and J.~L. Woukeng.
\newblock Derivation and analysis of a nonlocal {H}ele-{S}haw-{C}ahn-{H}illiard
  system for flow in thin heterogeneous layers.
\newblock {\em Math. Models Methods Appl. Sci.}, 34(7):1343--1400, 2024.

\bibitem{Cia00}
P.~G. Ciarlet.
\newblock {\em Mathematical elasticity. {V}ol. {III}}, volume~29 of {\em
  Studies in Mathematics and its Applications}.
\newblock North-Holland Publishing Co., Amsterdam, 2000.
\newblock Theory of shells.

\bibitem{DaiDu16}
S.~Dai and Q.~Du.
\newblock Weak solutions for the {C}ahn-{H}illiard equation with degenerate
  mobility.
\newblock {\em Arch. Ration. Mech. Anal.}, 219(3):1161--1184, 2016.

\bibitem{DuJuTi11}
Q.~Du, L.~Ju, and L.~Tian.
\newblock Finite element approximation of the {C}ahn-{H}illiard equation on
  surfaces.
\newblock {\em Comput. Methods Appl. Mech. Engrg.}, 200(29-32):2458--2470,
  2011.

\bibitem{DziEll13_AN}
G.~Dziuk and C.~M. Elliott.
\newblock Finite element methods for surface {PDE}s.
\newblock {\em Acta Numer.}, 22:289--396, 2013.

\bibitem{EilEll08}
C.~Eilks and C.~M. Elliott.
\newblock Numerical simulation of dealloying by surface dissolution via the
  evolving surface finite element method.
\newblock {\em J. Comput. Phys.}, 227(23):9727--9741, 2008.

\bibitem{Ell89}
C.~M. Elliott.
\newblock The {C}ahn-{H}illiard model for the kinetics of phase separation.
\newblock In {\em Mathematical models for phase change problems (\'Obidos,
  1988)}, volume~88 of {\em Internat. Ser. Numer. Math.}, pages 35--73.
  Birkh\"auser, Basel, 1989.

\bibitem{EllGar96}
C.~M. Elliott and H.~Garcke.
\newblock On the {C}ahn-{H}illiard equation with degenerate mobility.
\newblock {\em SIAM J. Math. Anal.}, 27(2):404--423, 1996.

\bibitem{EllLuk91}
C.~M. Elliott and S.~Luckhaus.
\newblock A generalised diffusion equation for phase separation of a
  multi-component mixture with interfacial free energ.
\newblock {\em IMA Preprints Series, No. 887}, 1991.

\bibitem{EllRan15}
C.~M. Elliott and T.~Ranner.
\newblock Evolving surface finite element method for the {C}ahn-{H}illiard
  equation.
\newblock {\em Numer. Math.}, 129(3):483--534, 2015.

\bibitem{EllSti09}
C.~M. Elliott and B.~Stinner.
\newblock Analysis of a diffuse interface approach to an advection diffusion
  equation on a moving surface.
\newblock {\em Math. Models Methods Appl. Sci.}, 19(5):787--802, 2009.

\bibitem{ElStStWe11}
C.~M. Elliott, B.~Stinner, V.~Styles, and R.~Welford.
\newblock Numerical computation of advection and diffusion on evolving diffuse
  interfaces.
\newblock {\em IMA J. Numer. Anal.}, 31(3):786--812, 2011.

\bibitem{EAKDS01}
J.~Erlebacher, M.~J. Aziz, A.~Karma, N.~Dimitrov, and K.~Sieradzki.
\newblock Evolution of nanoporosity in dealloying.
\newblock {\em Nature}, 410:450--453, 2001.

\bibitem{Eva10}
L.~C. Evans.
\newblock {\em Partial differential equations}, volume~19 of {\em Graduate
  Studies in Mathematics}.
\newblock American Mathematical Society, Providence, RI, second edition, 2010.

\bibitem{FerPro23}
F.~Ferraresso and L.~Provenzano.
\newblock On the eigenvalues of the biharmonic operator with {N}eumann boundary
  conditions on a thin set.
\newblock {\em Bull. Lond. Math. Soc.}, 55(3):1154--1177, 2023.

\bibitem{Fif00}
P.~C. Fife.
\newblock Models for phase separation and their mathematics.
\newblock {\em Electron. J. Differential Equations}, pages No. 48, 26, 2000.

\bibitem{GKRR16}
H.~Garcke, J.~Kampmann, A.~R\"atz, and M.~R\"oger.
\newblock A coupled surface-{C}ahn-{H}illiard bulk-diffusion system modeling
  lipid raft formation in cell membranes.
\newblock {\em Math. Models Methods Appl. Sci.}, 26(6):1149--1189, 2016.

\bibitem{GaPaPi16}
A.~Gaudiello, G.~Panasenko, and A.~Piatnitski.
\newblock Asymptotic analysis and domain decomposition for a biharmonic problem
  in a thin multi-structure.
\newblock {\em Commun. Contemp. Math.}, 18(5):1550057, 27, 2016.

\bibitem{GiLaRy25}
Y.~Giga, M.~{\L}asica, and P.~Rybka.
\newblock Mosco convergence framework for singular limits of gradient flows on
  hilbert spaces with applications.
\newblock {\em arXiv:2511.16486}, 2025.

\bibitem{GilTru01}
D.~Gilbarg and N.~S. Trudinger.
\newblock {\em Elliptic partial differential equations of second order}.
\newblock Classics in Mathematics. Springer-Verlag, Berlin, 2001.
\newblock Reprint of the 1998 edition.

\bibitem{GoMiSc11}
G.~R. Goldstein, A.~Miranville, and G.~Schimperna.
\newblock A {C}ahn-{H}illiard model in a domain with non-permeable walls.
\newblock {\em Phys. D}, 240(8):754--766, 2011.

\bibitem{HalRau92_DH}
J.~K. Hale and G.~Raugel.
\newblock A damped hyperbolic equation on thin domains.
\newblock {\em Trans. Amer. Math. Soc.}, 329(1):185--219, 1992.

\bibitem{HalRau92_RD}
J.~K. Hale and G.~Raugel.
\newblock Reaction-diffusion equation on thin domains.
\newblock {\em J. Math. Pures Appl. (9)}, 71(1):33--95, 1992.

\bibitem{HalRau95}
J.~K. Hale and G.~Raugel.
\newblock A reaction-diffusion equation on a thin {L}-shaped domain.
\newblock {\em Proc. Roy. Soc. Edinburgh Sect. A}, 125(2):283--327, 1995.

\bibitem{JimKur16}
S.~Jimbo and K.~Kurata.
\newblock Asymptotic behavior of eigenvalues of the {L}aplacian on a thin
  domain under the mixed boundary condition.
\newblock {\em Indiana Univ. Math. J.}, 65(3):867--898, 2016.

\bibitem{Kos00}
S.~Kosugi.
\newblock A semilinear elliptic equation in a thin network-shaped domain.
\newblock {\em J. Math. Soc. Japan}, 52(3):673--697, 2000.

\bibitem{Kos02}
S.~Kosugi.
\newblock Semilinear elliptic equations on thin network-shaped domains with
  variable thickness.
\newblock {\em J. Differential Equations}, 183(1):165--188, 2002.

\bibitem{Kre14}
D.~Krej\v{c}i\v{r}\'{\i}k.
\newblock Spectrum of the {L}aplacian in narrow tubular neighbourhoods of
  hypersurfaces with combined {D}irichlet and {N}eumann boundary conditions.
\newblock {\em Math. Bohem.}, 139(2):185--193, 2014.

\bibitem{Lee18}
J.~M. Lee.
\newblock {\em Introduction to {R}iemannian manifolds}, volume 176 of {\em
  Graduate Texts in Mathematics}.
\newblock Springer, Cham, 2018.
\newblock Second edition of [ MR1468735].

\bibitem{LioMag72}
J.-L. Lions and E.~Magenes.
\newblock {\em Non-homogeneous boundary value problems and applications. {V}ol.
  {I}}.
\newblock Die Grundlehren der mathematischen Wissenschaften, Band 181.
  Springer-Verlag, New York-Heidelberg, 1972.
\newblock Translated from the French by P. Kenneth.

\bibitem{LiuWu19}
C.~Liu and H.~Wu.
\newblock An energetic variational approach for the {C}ahn-{H}illiard equation
  with dynamic boundary condition: model derivation and mathematical analysis.
\newblock {\em Arch. Ration. Mech. Anal.}, 233(1):167--247, 2019.

\bibitem{MMRH13}
M.~Mercker, A.~Marciniak-Czochra, T.~Richter, and D.~Hartmann.
\newblock Modeling and computing of deformation dynamics of inhomogeneous
  biological surfaces.
\newblock {\em SIAM J. Appl. Math.}, 73(5):1768--1792, 2013.

\bibitem{MPKHWJ12}
M.~Mercker, M.~Ptashnyk, J.~K\"uhnle, D.~Hartmann, M.~Weiss, and W.~J\"ager.
\newblock A multiscale approach to curvature modulated sorting in biological
  membranes.
\newblock {\em J. Theoret. Biol.}, 301:67--82, 2012.

\bibitem{Miu17}
T.-H. Miura.
\newblock Zero width limit of the heat equation on moving thin domains.
\newblock {\em Interfaces Free Bound.}, 19(1):31--77, 2017.

\bibitem{Miu20_03}
T.-H. Miura.
\newblock Navier-{S}tokes equations in a curved thin domain, {P}art {III}:
  thin-film limit.
\newblock {\em Adv. Differential Equations}, 25(9-10):457--626, 2020.

\bibitem{Miu21_02}
T.-H. Miura.
\newblock Navier-{S}tokes {E}quations in a {C}urved {T}hin {D}omain, {P}art
  {II}: {G}lobal {E}xistence of a {S}trong {S}olution.
\newblock {\em J. Math. Fluid Mech.}, 23(1):Paper No. 7, 60, 2021.

\bibitem{Miu22_01}
T.-H. Miura.
\newblock Navier-{S}tokes {E}quations in a {C}urved {T}hin {D}omain, {P}art
  {I}: {U}niform {E}stimates for the {S}tokes {O}perator.
\newblock {\em J. Math. Sci. Univ. Tokyo}, 29(2):149--256, 2022.

\bibitem{Miu23}
T.-H. Miura.
\newblock Error estimate for classical solutions to the heat equation in a
  moving thin domain and its limit equation.
\newblock {\em Interfaces Free Bound.}, 25(4):633--670, 2023.

\bibitem{Miu24_SNS}
T.-H. Miura.
\newblock Approximation of a solution to the stationary {N}avier-{S}tokes
  equations in a curved thin domain by a solution to thin-film limit equations.
\newblock {\em J. Math. Fluid Mech.}, 26(2):Paper No. 33, 35, 2024.

\bibitem{Miu24_GL}
T.-H. Miura.
\newblock Thin-film limit of the {G}inzburg–{L}andau heat flow in a curved
  thin domain.
\newblock {\em J. Differential Equations}, 422:1--56, 2025.

\bibitem{MiGiLi18}
T.-H. Miura, Y.~Giga, and C.~Liu.
\newblock An energetic variational approach for nonlinear diffusion equations
  in moving thin domains.
\newblock {\em Adv. Math. Sci. Appl.}, 27(1):115--141, 2018.

\bibitem{Nov08}
A.~Novick-Cohen.
\newblock The {C}ahn-{H}illiard equation.
\newblock In {\em Handbook of differential equations: evolutionary equations.
  {V}ol. {IV}}, Handb. Differ. Equ., pages 201--228. Elsevier/North-Holland,
  Amsterdam, 2008.

\bibitem{OCoSti16}
D.~O'Connor and B.~Stinner.
\newblock The {C}ahn-{H}illiard equation on an evolving surface.
\newblock {\em arXiv:1607.05627}, 2016.

\bibitem{PerSil13}
J.~V. Pereira and R.~P. Silva.
\newblock Reaction-diffusion equations in a noncylindrical thin domain.
\newblock {\em Bound. Value Probl.}, pages 2013:248, 10, 2013.

\bibitem{PrRiRy02}
M.~Prizzi, M.~Rinaldi, and K.~P. Rybakowski.
\newblock Curved thin domains and parabolic equations.
\newblock {\em Studia Math.}, 151(2):109--140, 2002.

\bibitem{PriRyb01}
M.~Prizzi and K.~P. Rybakowski.
\newblock The effect of domain squeezing upon the dynamics of
  reaction-diffusion equations.
\newblock {\em J. Differential Equations}, 173(2):271--320, 2001.

\bibitem{PriRyb03_Cur}
M.~Prizzi and K.~P. Rybakowski.
\newblock On inertial manifolds for reaction-diffusion equations on genuinely
  high-dimensional thin domains.
\newblock {\em Studia Math.}, 154(3):253--275, 2003.

\bibitem{Rat16}
A.~R\"atz.
\newblock A benchmark for the surface {C}ahn-{H}illiard equation.
\newblock {\em Appl. Math. Lett.}, 56:65--71, 2016.

\bibitem{RatVoi06}
A.~R\"atz and A.~Voigt.
\newblock P{DE}'s on surfaces---a diffuse interface approach.
\newblock {\em Commun. Math. Sci.}, 4(3):575--590, 2006.

\bibitem{Rau95}
G.~Raugel.
\newblock Dynamics of partial differential equations on thin domains.
\newblock In {\em Dynamical systems ({M}ontecatini {T}erme, 1994)}, volume 1609
  of {\em Lecture Notes in Math.}, pages 208--315. Springer, Berlin, 1995.

\bibitem{Rob01}
J.~C. Robinson.
\newblock {\em Infinite-dimensional dynamical systems}.
\newblock Cambridge Texts in Applied Mathematics. Cambridge University Press,
  Cambridge, 2001.
\newblock An introduction to dissipative parabolic PDEs and the theory of
  global attractors.

\bibitem{Sch96}
M.~Schatzman.
\newblock On the eigenvalues of the {L}aplace operator on a thin set with
  {N}eumann boundary conditions.
\newblock {\em Appl. Anal.}, 61(3-4):293--306, 1996.

\bibitem{TemZia97}
R.~Temam and M.~Ziane.
\newblock Navier-{S}tokes equations in thin spherical domains.
\newblock In {\em Optimization methods in partial differential equations
  ({S}outh {H}adley, {MA}, 1996)}, volume 209 of {\em Contemp. Math.}, pages
  281--314. Amer. Math. Soc., Providence, RI, 1997.

\bibitem{Wu22}
H.~Wu.
\newblock A review on the {C}ahn-{H}illiard equation: classical results and
  recent advances in dynamic boundary conditions.
\newblock {\em Electron. Res. Arch.}, 30(8):2788--2832, 2022.

\bibitem{Yac18}
T.~Yachimura.
\newblock Two-phase eigenvalue problem on thin domains with {N}eumann boundary
  condition.
\newblock {\em Differential Integral Equations}, 31(9-10):735--760, 2018.

\bibitem{Yan90}
E.~Yanagida.
\newblock Existence of stable stationary solutions of scalar reaction-diffusion
  equations in thin tubular domains.
\newblock {\em Appl. Anal.}, 36(3-4):171--188, 1990.

\bibitem{Yin92}
J.~X. Yin.
\newblock On the existence of nonnegative continuous solutions of the
  {C}ahn-{H}illiard equation.
\newblock {\em J. Differential Equations}, 97(2):310--327, 1992.

\bibitem{YuQuMaOl19}
V.~Yushutin, A.~Quaini, S.~Majd, and M.~Olshanskii.
\newblock A computational study of lateral phase separation in biological
  membranes.
\newblock {\em Int. J. Numer. Methods Biomed. Eng.}, 35(3):e3181, 21, 2019.

\bibitem{YuQuOl20}
V.~Yushutin, A.~Quaini, and M.~Olshanskii.
\newblock Numerical modeling of phase separation on dynamic surfaces.
\newblock {\em J. Comput. Phys.}, 407:109126, 25, 2020.

\bibitem{ZhWaQuOlMa21}
A.~Zhiliakov, Y.~Wang, A.~Quaini, M.~Olshanskii, and S.~Majd.
\newblock Experimental validation of a phase-field model to predict coarsening
  dynamics of lipid domains in multicomponent membranes.
\newblock {\em BBA - Biomembranes}, 1863(1):183446, 2021.

\bibitem{ZTLHMS19}
C.~Zimmermann, D.~Toshniwal, C.~M. Landis, T.~J.~R. Hughes, K.~K. Mandadapu,
  and R.~A. Sauer.
\newblock An isogeometric finite element formulation for phase transitions on
  deforming surfaces.
\newblock {\em Comput. Methods Appl. Mech. Engrg.}, 351:441--477, 2019.

\end{thebibliography}

\end{document}